\DeclareFontFamily{OT1}{pzc}{}
\DeclareFontShape{OT1}{pzc}{m}{it}{<-> s * [1.10] pzcmi7t}{}
\DeclareMathAlphabet{\mathpzc}{OT1}{pzc}{m}{it}
\crefname{defin}{Definition}{Definitions}
\crefname{eg}{Example}{Examples}
\crefname{egs}{Examples}{Examples}
\crefname{convention}{Convention}{Convention}
\crefname{lem}{Lemma}{Lemmas}
\crefname{prop}{Proposition}{Propositions}
\crefname{theo}{Theorem}{Theorems}
\crefname{equation}{}{}
\crefname{enumi}{}{}
\newcommand\C{\mathbb{C}}
\newcommand\DD{\mathbb{D}}
\newcommand\N{\mathbb{N}}
\newcommand\HH{\mathbb{H}}
\newcommand\R{\mathbb{R}}
\newcommand\Z{\mathbb{Z}}
\newcommand\kk{\Bbbk}
\newcommand\one{\mathbbm{1}}
\newcommand\bB{\mathbf{B}}
\newcommand\bD{\mathbf{D}}
\newcommand\obD{\vec{\bD}}
\newcommand\fg{\mathfrak{g}}
\newcommand\fgl{\mathfrak{gl}}           
\newcommand\fp{\mathfrak{p}}            
\newcommand\fq{\mathfrak{q}}
\newcommand\osp{\mathfrak{osp}}         
\newcommand\fpsl{\mathfrak{psl}}
\newcommand\fsl{\mathfrak{sl}}
\newcommand\fsp{\mathfrak{sp}}
\newcommand\fu{\mathfrak{u}}
\newcommand\Cl{\mathrm{Cl}}             
\newcommand\GL{\mathrm{GL}}
\newcommand\op{\mathrm{op}}
\newcommand\OSp{\mathrm{OSp}}           
\newcommand\rQ{\mathrm{Q}}
\newcommand\rO{\mathrm{O}}              
\newcommand\SL{\mathrm{SL}}
\newcommand\SO{\mathrm{SO}}             
\newcommand\Sp{\mathrm{Sp}}             
\newcommand\SU{\mathrm{SU}}
\newcommand\rU{\mathrm{U}}              
\newcommand\rUQ{\mathrm{UQ}}
\newcommand\md{\textup{-mod}}
\newcommand\rd{\textup{red}}            
\newcommand\smod{\textup{-smod}}
\newcommand\st{\textup{st}}             
\newcommand\tmod{\textup{-tmod}}         
\newcommand\tsmod{\textup{-tsmod}}       
\newcommand\transpose{\textup{t}}
\newcommand\sC{\mathsf{C}}
\newcommand\sD{\mathsf{D}}
\newcommand\sE{\mathsf{E}}              
\newcommand\sF{\mathsf{F}}
\newcommand\sS{\mathsf{S}}
\newcommand\sG{\mathsf{G}}
\newcommand\sM{\mathsf{M}}
\newcommand\sR{\mathsf{R}}
\newcommand\form{\tau}                  
\newcommand\inv{\diamond}                  
\newcommand\Nak{\zeta}                  
\newcommand\even{0}
\newcommand\odd{1}
\newcommand\blue[1]{{\color{blue} \boldsymbol{#1}}}
\newcommand\red[1]{{\color{red} #1}}
\newcommand{\downobj}{{\mathord{\downarrow}}}
\newcommand{\upobj}{{\mathord{\uparrow}}}
\newcommand\AOB{\mathpzc{AOB}}          
\newcommand\Brauer{\mathpzc{B}}         
\newcommand\cC{\mathcal{C}}
\newcommand\cD{\mathcal{D}}
\newcommand\cON{\mathpzc{ON}}           
\newcommand\cN{\mathpzc{N}}
\newcommand\OB{\mathpzc{OB}}            
\newcommand\OBC{\mathpzc{OBC}}
\newcommand\SCat{\mathpzc{SCat}}
\newcommand\go{{\mathsf{I}}}            
\newcommand\gob{{\blue{\mathsf{I}}}}
\newcommand\gor{{\red{\mathsf{I}}}}
\DeclareMathOperator{\Ad}{Ad}
\DeclareMathOperator{\Add}{Add}
\DeclareMathOperator{\Aut}{Aut}
\DeclareMathOperator{\coev}{coev}
\DeclareMathOperator{\End}{End}
\DeclareMathOperator{\ev}{ev}
\DeclareMathOperator{\flip}{flip}
\DeclareMathOperator{\Hom}{Hom}
\DeclareMathOperator{\id}{id}
\DeclareMathOperator{\im}{im}      
\DeclareMathOperator{\Kar}{Kar}
\DeclareMathOperator{\Mat}{Mat}
\DeclareMathOperator{\proj}{proj}
\DeclareMathOperator{\RP}{Re}       
\DeclareMathOperator{\sdim}{sdim}       
\DeclareMathOperator{\str}{str}         
\DeclareMathOperator{\tr}{tr}
\tikzset{anchorbase/.style={>=To,baseline={([yshift=-0.5ex]current bounding box.center)}}}
\tikzset{ 
    centerzero/.style={>=To,baseline={([yshift=-0.5ex](#1))}},
    centerzero/.default={0,0}
}
\tikzset{wipe/.style={white,line width=3pt}}
\tikzset{bcolor/.style={blue,very thick}}
\tikzset{rightb/.style={blue,very thick,-{>[scale=0.7]}}}
\tikzset{leftb/.style={blue,very thick,{<[scale=0.7]}-}}
\tikzset{rcolor/.style={red}}
\newcommand\braidup{to[out=up,in=down]}
\newcommand\braiddown{to[out=down,in=up]}
\newcommand\shiftlabel[1]{$\color{purple} \scriptstyle{#1}$}
\newcommand\shiftline[3]{\draw[purple] (#1) to (#2) node[anchor=west] {\shiftlabel{#3}}}
\newcommand\dotlabel[1]{$\scriptstyle{#1}$}
\newcommand\strandlabel[1]{$\scriptstyle{#1}$}
\newcommand\token[3]{
    \filldraw[black] (#1) circle (1.5pt) node[anchor=#2] {\dotlabel{#3}}
}
\newcommand\tokenb[3]{
    \filldraw[blue] (#1) circle (1.5pt) node[anchor=#2] {\dotlabel{#3}}
}
\newcommand\bub[1]{
    \draw (#1)++(0,0.2) arc(90:-270:0.2)
}
\newcommand\cbub[1]{
    \draw[->] (#1)++(0,0.2) arc(90:-270:0.2)
}
\newcommand\ccbub[1]{
    \draw[->] (#1)++(0,0.2) arc(90:450:0.2)
}
\newcommand\bubble[1]{
    \begin{tikzpicture}[centerzero]
        \bub{0,0};
        \token{0.2,0}{west}{#1};
    \end{tikzpicture}
}
\newcommand\cbubble[1]{
    \begin{tikzpicture}[centerzero]
        \cbub{0,0};
        \token{0.2,0}{west}{#1};
    \end{tikzpicture}
}
\newcommand\ccbubble[1]{
    \begin{tikzpicture}[centerzero]
        \ccbub{0,0};
        \token{0.2,0}{west}{#1};
    \end{tikzpicture}
}
\newcommand\idstrand{
    \begin{tikzpicture}[centerzero]
        \draw[-] (0,-0.2) -- (0,0.2);
    \end{tikzpicture}
}
\newcommand\idstrandb{
    \begin{tikzpicture}[centerzero]
        \draw[bcolor] (0,-0.2) -- (0,0.2);
    \end{tikzpicture}
}
\newcommand\idstrandr{
    \begin{tikzpicture}[centerzero]
        \draw[rcolor] (0,-0.2) -- (0,0.2);
    \end{tikzpicture}
}
\newcommand\idstrandbr{
    \begin{tikzpicture}[centerzero]
        \draw[bcolor] (0,-0.2) -- (0,0);
        \draw[rcolor] (0,0) -- (0,0.2);
    \end{tikzpicture}
}
\newcommand\idstrandrb{
    \begin{tikzpicture}[centerzero]
        \draw[rcolor] (0,-0.2) -- (0,0);
        \draw[bcolor] (0,0) -- (0,0.2);
    \end{tikzpicture}
}
\newcommand\upstrand{
    \begin{tikzpicture}[centerzero]
        \draw[->] (0,-0.2) -- (0,0.2);
    \end{tikzpicture}
}
\newcommand\downstrand{
    \begin{tikzpicture}[centerzero]
        \draw[<-] (0,-0.2) -- (0,0.2);
    \end{tikzpicture}
}
\newcommand\tokstrand[1][a]{
    \begin{tikzpicture}[centerzero]
        \draw[-] (0,-0.2) -- (0,0.2);
        \token{0,0}{west}{#1};
    \end{tikzpicture}
}
\newcommand\uptokstrand[1][a]{
    \begin{tikzpicture}[centerzero]
        \draw[->] (0,-0.2) -- (0,0.2);
        \token{0,0}{west}{#1};
    \end{tikzpicture}
}
\newcommand\downtokstrand[1][a]{
    \begin{tikzpicture}[centerzero]
        \draw[<-] (0,-0.2) -- (0,0.2);
        \token{0,0}{west}{#1};
    \end{tikzpicture}
}
\newcommand\crossmor{
    \begin{tikzpicture}[centerzero]
        \draw[-] (0.2,-0.2) -- (-0.2,0.2);
        \draw[-] (-0.2,-0.2) -- (0.2,0.2);
    \end{tikzpicture}
}
\newcommand\crossmorbb{
    \begin{tikzpicture}[centerzero]
        \draw[bcolor] (0.2,-0.2) -- (-0.2,0.2);
        \draw[bcolor] (-0.2,-0.2) -- (0.2,0.2);
    \end{tikzpicture}
}
\newcommand\crossmorrr{
    \begin{tikzpicture}[centerzero]
        \draw[rcolor] (0.2,-0.2) -- (-0.2,0.2);
        \draw[rcolor] (-0.2,-0.2) -- (0.2,0.2);
    \end{tikzpicture}
}
\newcommand\crossmorbr{
    \begin{tikzpicture}[centerzero]
        \draw[rcolor] (0.2,-0.2) -- (-0.2,0.2);
        \draw[bcolor] (-0.2,-0.2) -- (0.2,0.2);
    \end{tikzpicture}
}
\newcommand\crossmorrb{
    \begin{tikzpicture}[centerzero]
        \draw[bcolor] (0.2,-0.2) -- (-0.2,0.2);
        \draw[rcolor] (-0.2,-0.2) -- (0.2,0.2);
    \end{tikzpicture}
}
\newcommand\upcross{
    \begin{tikzpicture}[centerzero]
        \draw[->] (0.2,-0.2) -- (-0.2,0.2);
        \draw[->] (-0.2,-0.2) -- (0.2,0.2);
    \end{tikzpicture}
}
\newcommand\rightcross{
    \begin{tikzpicture}[centerzero]
        \draw[<-] (0.2,-0.2) -- (-0.2,0.2);
        \draw[->] (-0.2,-0.2) -- (0.2,0.2);
    \end{tikzpicture}
}
\newcommand\downcross{
    \begin{tikzpicture}[centerzero]
        \draw[<-] (0.2,-0.2) -- (-0.2,0.2);
        \draw[<-] (-0.2,-0.2) -- (0.2,0.2);
    \end{tikzpicture}
}
\newcommand\leftcross{
    \begin{tikzpicture}[centerzero]
        \draw[->] (0.2,-0.2) -- (-0.2,0.2);
        \draw[<-] (-0.2,-0.2) -- (0.2,0.2);
    \end{tikzpicture}
}
\newcommand{\cupmor}{
    \begin{tikzpicture}[anchorbase]
        \draw[-] (-0.15,0.15) -- (-0.15,0) arc(180:360:0.15) -- (0.15,0.15);
    \end{tikzpicture}
}
\newcommand{\cupmorbb}{
    \begin{tikzpicture}[anchorbase]
        \draw[bcolor] (-0.15,0.15) -- (-0.15,0) arc(180:360:0.15) -- (0.15,0.15);
    \end{tikzpicture}
}
\newcommand{\cupmorbr}{
    \begin{tikzpicture}[anchorbase]
        \draw[bcolor] (-0.15,0.15) -- (-0.15,0) arc(180:270:0.15);
        \draw[rcolor] (0,-0.15) arc(270:360:0.15) -- (0.15,0.15);
    \end{tikzpicture}
}
\newcommand{\cupmorrb}{
    \begin{tikzpicture}[anchorbase]
        \draw[rcolor] (-0.15,0.15) -- (-0.15,0) arc(180:270:0.15);
        \draw[bcolor] (0,-0.15) arc(270:360:0.15) -- (0.15,0.15);
    \end{tikzpicture}
}
\newcommand{\rightcup}{
    \begin{tikzpicture}[anchorbase]
        \draw[->] (-0.15,0.15) -- (-0.15,0) arc(180:360:0.15) -- (0.15,0.15);
    \end{tikzpicture}
}
\newcommand{\leftcup}{
    \begin{tikzpicture}[anchorbase]
        \draw[<-] (-0.15,0.15) -- (-0.15,0) arc(180:360:0.15) -- (0.15,0.15);
    \end{tikzpicture}
}
\newcommand{\capmor}{
    \begin{tikzpicture}[anchorbase]
        \draw[-] (-0.15,-0.15) -- (-0.15,0) arc(180:0:0.15) -- (0.15,-0.15);
    \end{tikzpicture}
}
\newcommand{\capmorbb}{
    \begin{tikzpicture}[anchorbase]
        \draw[bcolor] (-0.15,-0.15) -- (-0.15,0) arc(180:0:0.15) -- (0.15,-0.15);
    \end{tikzpicture}
}
\newcommand{\capmorbr}{
    \begin{tikzpicture}[anchorbase]
        \draw[bcolor] (-0.15,-0.15) -- (-0.15,0) arc(180:90:0.15);
        \draw[rcolor] (0,0.15) arc(90:0:0.15) -- (0.15,-0.15);
    \end{tikzpicture}
}
\newcommand{\capmorrb}{
    \begin{tikzpicture}[anchorbase]
        \draw[rcolor] (-0.15,-0.15) -- (-0.15,0) arc(180:90:0.15);
        \draw[bcolor] (0,0.15) arc(90:0:0.15) -- (0.15,-0.15);
    \end{tikzpicture}
}
\newcommand{\rightcap}{
    \begin{tikzpicture}[anchorbase]
        \draw[->] (-0.15,-0.15) -- (-0.15,0) arc(180:0:0.15) -- (0.15,-0.15);
    \end{tikzpicture}
}
\newcommand{\leftcap}{
    \begin{tikzpicture}[anchorbase]
        \draw[<-] (-0.15,-0.15) -- (-0.15,0) arc(180:0:0.15) -- (0.15,-0.15);
    \end{tikzpicture}
}
\newtheorem{theo}{Theorem}[section]
\newtheorem{prop}[theo]{Proposition}
\newtheorem{lem}[theo]{Lemma}
\newtheorem{cor}[theo]{Corollary}
\theoremstyle{definition}
\newtheorem{defin}[theo]{Definition}
\newtheorem{rem}[theo]{Remark}
\newtheorem{eg}[theo]{Example}
\newtheorem{egs}[theo]{Examples}
\newtheorem{convention}[theo]{Convention}
\numberwithin{equation}{section}
    \newcommand{\acomments}[1]{
        \ \\
        {\color{red}
            \textbf{AS:} #1
        }
        \ \\
    }
    \newcommand{\saima}[1]{
        \ \\
        {\color{purple}
            \textbf{For Saima:} #1
        }
        \ \\
    }
    \newcommand{\acomments}[1]{}
    \newcommand{\saima}[1]{}
    \newcommand{\details}[1]{
        \ \\
        {\color{OliveGreen}
            \textbf{Details:} #1
        }
        \\
    }
    \newcommand{\details}[1]{}
\begin{document}

\title{Diagrammatics for real supergroups}

\author{Saima Samchuck-Schnarch}
\address[S.S.]{
  Department of Mathematics and Statistics \\
  University of Ottawa \\
  Ottawa, ON K1N 6N5, Canada
}
\email{ssamc090@uottawa.ca}

\author{Alistair Savage}
\address[A.S.]{
  Department of Mathematics and Statistics \\
  University of Ottawa \\
  Ottawa, ON K1N 6N5, Canada
}
\urladdr{\href{https://alistairsavage.ca}{alistairsavage.ca}, \textrm{\textit{ORCiD}:} \href{https://orcid.org/0000-0002-2859-0239}{orcid.org/0000-0002-2859-0239}}
\email{alistair.savage@uottawa.ca}

\begin{abstract}
    We introduce two families of diagrammatic monoidal supercategories.  The first family, depending on an associative superalgebra, generalizes the oriented Brauer category.  The second, depending on an involutive superalgebra, generalizes the unoriented Brauer category.  These two families of supercategories admit natural superfunctors to supercategories of supermodules over general linear supergroups and supergroups preserving superhermitian forms, respectively.  We show that these superfunctors are full when the superalgebra is a central real division superalgebra.  As a consequence, we obtain first fundamental theorems of invariant theory for all real forms of the general linear, orthosymplectic, periplectic, and isomeric supergroups.  We also deduce equivalences between monoidal supercategories of tensor supermodules over the real forms of a complex supergroup.
\end{abstract}

\subjclass[2020]{18M05, 18M30, 17B10, 18M25}

\keywords{Monoidal category, supercategory, supergroup, string diagram, invariant theory, Deligne category, interpolating category}

\ifboolexpr{togl{comments} or togl{details}}{%
  {\color{magenta}DETAILS OR COMMENTS ON}
}{%
}

\maketitle
\thispagestyle{empty}

\tableofcontents

\section{Introduction}

Many recent developments in representation theory involve one or more of the following interrelated concepts:
\begin{enumerate}
    \item \emph{Dual pairs}.  The classic examples are Schur--Weyl duality, which yields a precise relationship between the symmetric group and the general linear group, and the analogue for the orthogonal and symplectic groups, where the symmetric groups are replaced by Brauer algebras.
    \item \emph{Invariant theory}.  This amounts to giving explicit descriptions of invariants in tensor products of certain modules, such as the natural modules for classical Lie groups.
    \item \emph{Interpolating categories}.  Here one aims to give uniform descriptions of representations of families of groups, such as symmetric groups, general linear groups, orthogonal groups, and symplectic groups.  Highly influential in this approach are the interpolating categories introduced by Deligne \cite{Del07}.  Such interpolating categories can often be given nice diagrammatic descriptions, leading to intuitive topological arguments.
\end{enumerate}

In the case of the general linear group, the connection between the above concepts is as follows.  The \emph{oriented Brauer category} $\OB(d)$ is the free rigid symmetric $\C$-linear monoidal category on a generating object of categorical dimension  $d$.  Since the category of modules over the general linear group $\GL(m,\C)$, $m \in \N$, is rigid symmetric monoidal, there exists a functor
\[
    \sG \colon \OB(m) \to \GL(m,\C)\md
\]
sending the generating object $\upobj$ of $\OB(m)$ to the natural $\GL(m,\C)$-module $V$.  The additive Karoubi envelope of $\OB(d)$ is Deligne's interpolating category for the general linear groups.  The endomorphism algebra $\End_{\OB(d)}(\upobj^{\otimes r})$ is isomorphic to the group algebra of the symmetric group $\mathfrak{S}_r$, and so the functor $\sG$ yields an algebra homomorphism
\begin{equation} \label{SW}
    \kk \mathfrak{S}_r \cong \End_{\OB(m)}(\upobj^{\otimes r}) \to \End_{\GL(m,\C)}(V^{\otimes r}).
\end{equation}
One half of Schur--Weyl duality is that the homomorphism \cref{SW} is surjective.  From this, one is able to deduce that the functor $\sG$ is full.  The connection to invariant theory comes from the fact that $\sG$ also induces a surjective homomorphism
\begin{equation} \label{FFT}
    \Hom_{\OB(m)}(\one,\upobj^{\otimes r} \otimes \downobj^{\otimes s})
    \to \Hom_{\GL(m,\C)}(\C, V^{\otimes r} \otimes (V^*)^{\otimes s}),
\end{equation}
where $V^*$ is the $\GL(m,\C)$-module dual to $V$, and $\downobj$ is the object of $\OB(m)$ dual to $\upobj$.  Thus, all $\GL(m,\C)$-invariant elements of $V^{\otimes r} \otimes (V^*)^{\otimes s}$ lie in the image under $\sG$ of morphisms in $\OB(m)$.  The fullness of $\sG$, or of \cref{FFT}, is sometimes referred to as the \emph{first fundamental theorem} of invariant theory.  (Describing the kernel is the \emph{second fundamental theorem}.)

An analogous picture exists for the orthogonal and symplectic groups.  In these cases, the natural module is self-dual.  Thus, the \emph{oriented} Brauer category is replaced by the \emph{unoriented} Brauer category $\Brauer(d)$ of \cite{LZ15}, which is the free rigid symmetric $\kk$-linear monoidal category on a symmetrically self-dual object of categorical dimension $d$.  Then, for $m \in \N$, one has a full functors
\[
    \Brauer(m) \to \rO(m,\C)\md
    \qquad \text{and} \qquad
    \Brauer(-2m) \to \Sp(2m,\C)\md.
\]
Here the endomorphism algebras are Brauer algebras, which surject onto the endomorphism algebras of tensor powers of the natural module.

In fact, it turns out that the most natural setting for the above picture is that of categories of \emph{super}modules over \emph{super}groups.  There are full functors
\[
    \OB(m-n) \to \GL(m|n,\C)\smod
    \qquad \text{and} \qquad
    \Brauer(m-2n) \to \OSp(m|2n,\C)\smod,
\]
where $\GL(m|n,\C)$ and $\OSp(m|2n,\C)$ are the general linear and orthosymplectic supergroups, respectively \cite{CW12,BS12,LSM02,LZ17,LSM02}.  The move to the super world also leads to additional free categories.  First, one observes that an isomorphism of a module with its dual can be even or odd.  The even case corresponds to the Brauer category.  The odd case leads to the \emph{periplectic Brauer supercategory} $\Brauer^1$, which is the free rigid symmetric $\kk$-linear monoidal supercategory on an odd-self-dual object (which necessarily has categorical dimension zero).  Then there is a full superfunctor
\[
    \Brauer^1 \to \mathrm{P}(m)\smod,
\]
where $\mathrm{P}(m)$ is the periplectic supergroup \cite{KT17,CE21,DLZ18,Moo03}.  Another free supercategory arises from the super version of Schur's lemma.  Since we work over the complex numbers, Schur's lemma implies that the endomorphism algebra of a simple module is a complex division superalgebra.  In the non-super setting, the only possibility is $\C$.  However, in the super setting, there is one additional possibility, which is the two-dimensional complex Clifford superalgebra $\Cl(\C)$.  This observation leads to the definition of the \emph{oriented Brauer-Clifford category} $\OBC$ of \cite{BCK19}, which is the free rigid symmetric monoidal supercategory on a generating object whose endomorphism algebra is $\Cl(\C)$.  (As in the periplectic case, the categorical dimension must be zero.)  There is a full superfunctor
\[
    \OBC \to \rQ(m)\smod,
\]
where $\rQ(m)\smod$ is the isomeric supergroup (also known as the queer supergroup).

Despite the great success of the above-mentioned approaches to the representation theory of some of the most important groups and supergroups appearing in mathematics and physics, surprisingly little is known when we work with \emph{real} supergroups instead of complex ones.  The goal of the current paper is to initiate this line of research.  Let us now describe our main results.

To any associative superalgebra $A$ over a field $\kk$, we define a diagrammatic supercategory $\OB_\kk(A)$, which is the free rigid symmetric monoidal supercategory on an object with endomorphism superalgebra $A$.  Imposing a condition on the categorical dimension yields a quotient category $\OB_\kk(A;d)$, for $d \in \kk$.  This category has essentially appeared in \cite{Sav19,BSW-foundations,MS21}, although our definition is slightly more general.  When $A=\kk$, $\OB_\kk(\kk;d)$ is the oriented Brauer category (over a general field $\kk$) mentioned above.  The universal property of $\OB_\kk(A)$ implies that, if $\fg$ is any Lie superalgebra, and $V$ is a $(\fg,A)$-superbimodule, then there is an \emph{oriented incarnation superfunctor}
\[
    \OB_\kk(A^\op) \to \fg\smod,
\]
sending the generating object of $\OB_\kk(A^\op)$ to $V$, where $A^\op$ denotes the superalgebra opposite to $A$.  When we work over the ground field $\kk=\R$, Schur's lemma implies that the endomorphism algebra of a \emph{simple} supermodule must be one of the ten real division superalgebras.  Our first main result (\cref{OBrealfull}) is that, when $A$ is a central real division superalgebra and $V = A^{m|n}$, the functor
\[
    \OB_\R(A^\op;m-n) \to \fgl(m|n,A)\smod
\]
is full.  (Note that, since the general linear groups are connected, we can freely replace the general linear supergroups by the general linear Lie superalgebras.)  The method of proof is to pass to complexifications and use known results over the complex numbers.

We then turn our attention to the unoriented (i.e.\ self-dual) cases.  Here the situation is a bit more involved, since we must carefully analyze which types of self-duality can arise.  The natural setting for such self-dualities is over superalgebras equipped with an anti-involution $a \mapsto a^\inv$.  As mentioned above in the complex setting, the self-duality also has a parity $\sigma \in \Z_2$.  To any $\kk$-superalgebra $A$ with anti-involution $\inv$, and $\sigma \in \Z_2$, we assign a supercategory $\Brauer_\kk^\sigma(A,\inv)$ and quotient supercategories $\Brauer_\kk^\sigma(A,\inv;d)$ for $d \in \kk$.  When $A = \kk$ and the anti-involution is trivial, $\Brauer_\kk^0(\kk,\id;d)$ is the usual Brauer category, while $\Brauer_\kk^1(\kk,\id;0)$ is the periplectic Brauer category.  We deduce a basis theorem (\cref{basisthm}) for the morphism spaces of $\Brauer_\kk^\sigma(A,\inv;d)$ by embedding it into the superadditive envelope of $\OB_\kk(A)$.

Self-duality of a supermodule is realized by a superhermitian or skew-superhermitian form $\Phi$.  To such a form, we can associate the supergroup $G(\Phi)$ preserving the form.  We then define an \emph{unoriented incarnation superfunctor}
\[
    \sF_\Phi \colon \Brauer_\kk^\sigma(A,\inv) \to G(\Phi)\smod.
\]
It turns out that only four of the ten real division superalgebras admit anti-involutions: the real numbers, the complex numbers, the quaternions, and the two-dimensional complex Clifford superalgebra.  Our second main result (\cref{divfull}) is that, in these cases, the functor $\sF_\Phi$ is full.  The proof, which occupies \cref{sec:real,sec:complex,sec:quaternionic}, is much more involved than in the oriented case.  We must treat each of the involutive division superalgebras separately, since each one behaves quite differently.

Taking the oriented and unoriented cases together, our results handle real supergroups corresponding to \emph{all} real forms of the general linear, orthosymplectic, periplectic, and isomeric Lie superalgebras.  (We give a classification of these real forms in \cref{breakthrough}.)  Looking at endomorphism algebras, as explained above, one immediately obtains analogues of Schur--Weyl duality, or first fundamental theorems, for these real supergroups.  Such results seem to be rare in the literature.  (See \cite{Cal22} for some partial results for certain real groups.)  Even in the non-super setting, we obtain new results, corresponding, for example, to the indefinite orthogonal, unitary, and symplectic groups; see \cref{pike}.  In fact, in these cases where the module categories are semisimple, we show that these module categories are isomorphic to quotients of the additive Karoubi envelopes of our diagrammatic categories by tensor ideals of negligible morphisms; see \cref{storm,pike}.  These are real analogues of the some of the main results concerning Deligne's interpolating categories in the complex case.  As another application, we deduce equivalences between supercategories of tensor supermodules over the different real forms of a complex supergroup; see \cref{surprise,iceR,iceC,iceH}.

\subsection*{Further directions}

We conclude this introduction with a brief discussion of some of the future research directions that stem from the current work.  Many of these are real analogues of promising work that has been done in the complex case.

While our results show that the oriented and unoriented incarnation functors are \emph{full}, we leave a description of the kernels of these functors, also known as the \emph{second fundamental theorem}, for future work.  When the target module supercategory is semisimple, the kernel is the tensor ideal of negligible morphisms; see \cref{storm,pike}.  However, this is not the case in general. For the usual oriented and unoriented Brauer categories, kernels have been described explicitly in \cite{CW12,LZ21}.

Since the target module supercategories of incarnation functors are idempotent complete, one has induced functors
\[
    \Kar(\OB_\kk(A^\op;m-n)) \to \fgl(m|n;A)\smod
    \qquad \text{and} \qquad
    \Kar(\Brauer_\kk^\sigma(A,\inv)) \to G(\Phi)\smod,
\]
where $\Kar(\cC)$ denotes the additive Karoubi envelope of $\cC$.  The supermodules that appear in the image of these functors are the summands of the tensor powers of the natural module (and, in the oriented case, its dual).  It would be interesting to give a more precise description of these supermodules.  For the usual oriented and unoriented Brauer categories, results in this direction have been obtained in \cite{BS12,CH17,CW12,Hei17}.

The supercategories introduced here have affine analogues \cite{MS21,Sam22}, generalizing the affine oriented Brauer category of \cite{BCNR17} and the affine Brauer category of \cite{RS19}.  These affine supercategories act naturally on categories of supermodules over supergroups.  We plan to investigate these actions in future work.

There exist quantum analogues of the oriented and unoriented Brauer categories.  These are the framed HOMFLYPT skein and Kauffman skein categories, respectively.  One has analogues of the results mentioned above, but with supergroups replaced by quantized enveloping superalgebras.  We expect that one can also define quantum analogues of the more general supercategories introduced in the current paper.  When the ground field is $\R$, these should be related to the representation theory of real quantum groups.

\subsection*{Acknowledgements}

This research of A.S.\ was supported by NSERC Discovery Grant RGPIN-2017-03854.  We thank Jon Brundan, Inna Entova-Aizenbud, Thorsten Heidersdorf, Allan Merino, Hadi Salmasian, Nolan Wallach, and Ben Webster for helpful discussions.

\section{Monoidal supercategories\label{sec:monsupcat}}

In this paper, we will work with \emph{strict monoidal supercategories} in the sense of \cite{BE17}.  In this section, we review a few of the more important ideas that are crucial for our exposition and somewhat less well known.  Throughout this section we work over an arbitrary ground field $\kk$.

A \emph{supercategory} is a category enriched in the monoidal category of superspaces and parity preserving linear maps. Thus, its morphism spaces are vector superspaces and composition is parity-preserving; that is, $\overline{f \circ g} = \bar{f} + \bar{g}$, where $\bar{f}$ denotes the parity of $f$.  A \emph{superfunctor} between supercategories induces a parity-preserving linear map between morphism superspaces.  For superfunctors $F,G \colon \cC \to \cD$, a \emph{supernatural transformation} $\alpha \colon F \Rightarrow G$ of \emph{parity $r\in\Z_2$} is a family of morphisms $\alpha_X\in \Hom_{\cD}(FX, GX)_r$, $X \in \cC$, such that $Gf \circ \alpha_X = (-1)^{r \bar f}\alpha_Y\circ Ff$ for each homogeneous $f \in \Hom_{\cC}(X, Y)$.  Note when $r$ is odd that $\alpha$ is \emph{not} a natural transformation in the usual sense due to the sign. A \emph{supernatural transformation} $\alpha \colon F \Rightarrow G$ is a sum $\alpha = \alpha_\even + \alpha_\odd$, where $\alpha_r$ is a supernatural transformation of parity $r$.

In a strict monoidal supercategory, the \emph{super interchange law}, which follows from the fact that $\otimes$ is a superbifunctor, is
\begin{equation}\label{interchange}
    (f' \otimes g) \circ (f \otimes g')
    = (-1)^{\bar f \bar g} (f' \circ f) \otimes (g \circ g').
\end{equation}
We denote the unit object by $\one$ and the identity morphism of an object $X$ by $1_X$.  We will use the usual calculus of string diagrams, representing the tensor product $f \otimes g$ of morphisms $f$ and $g$ diagrammatically by drawing $f$ to the left of $g$, and the composition $f \circ g$ by drawing $f$ above $g$.  Care is needed with horizontal levels in such diagrams due to
the signs arising from the super interchange law:
\begin{equation}\label{intlaw}
    \begin{tikzpicture}[anchorbase]
        \draw (-0.5,-0.5) -- (-0.5,0.5);
        \draw (0.5,-0.5) -- (0.5,0.5);
        \filldraw[fill=white,draw=black] (-0.5,0.15) circle (5pt);
        \filldraw[fill=white,draw=black] (0.5,-0.15) circle (5pt);
        \node at (-0.5,0.15) {$\scriptstyle{f}$};
        \node at (0.5,-0.15) {$\scriptstyle{g}$};
    \end{tikzpicture}
    \quad=\quad
    \begin{tikzpicture}[anchorbase]
        \draw (-0.5,-0.5) -- (-0.5,0.5);
        \draw (0.5,-0.5) -- (0.5,0.5);
        \filldraw[fill=white,draw=black] (-0.5,0) circle (5pt);
        \filldraw[fill=white,draw=black] (0.5,0) circle (5pt);
        \node at (-0.5,0) {$\scriptstyle{f}$};
        \node at (0.5,0) {$\scriptstyle{g}$};
    \end{tikzpicture}
    \quad=\quad
    (-1)^{\bar f\bar g}\
    \begin{tikzpicture}[anchorbase]
        \draw (-0.5,-0.5) -- (-0.5,0.5);
        \draw (0.5,-0.5) -- (0.5,0.5);
        \filldraw[fill=white,draw=black] (-0.5,-0.15) circle (5pt);
        \filldraw[fill=white,draw=black] (0.5,0.15) circle (5pt);
        \node at (-0.5,-0.15) {$\scriptstyle{f}$};
        \node at (0.5,0.15) {$\scriptstyle{g}$};
    \end{tikzpicture}
    \ .
\end{equation}

\begin{defin}\label{pienv}
    For a supercategory $\cC$, its \emph{$\Pi$-envelope} $\cC_\pi$ is the supercategory with objects given by formal symbols $\{ \Pi^r X : X \in \cC,\ r \in \Z_2\}$ and morphisms defined by
    \begin{equation}
        \Hom_{\cC_\pi}( \Pi^r X, \Pi^s Y )
        := \Pi^{s-r} \Hom_{\cC}(X,Y),
    \end{equation}
    where, on the right-hand side, $\Pi$ denotes the parity shift operator determined by $(\Pi V)_r := V_{r-\odd}$ for a vector superspace $V$. The composition law in $\cC_\pi$ is induced in the obvious way from the one in $\cC$: writing $f_r^s$ for the morphism in $\Hom_{\cC_\pi}(\Pi^r X, \Pi^s Y)$
    of parity $\bar{f}+r-s$ defined by $f \in \Hom_{\cC}(X,Y)$, we have that $f_s^u \circ g_r^s = (f \circ g)_r^u$.
\end{defin}

A \emph{$\Pi$-supercategory} $(\cD,\Pi,\zeta)$ is a supercategory $\cD$, together with the extra data of a superfunctor $\Pi \colon \cD \to \cD$, called the \emph{parity shift}, and an odd supernatural isomorphism $\zeta$ from $\Pi$ to the identity superfunctor; see \cite[Def.~1.7]{BE17}.  The $\Pi$-envelope $\cC_\pi$ from \cref{pienv} is a $\Pi$-supercategory with parity shift superfunctor $\Pi \colon \cC_\pi\rightarrow \cC_\pi$ sending object $\Pi^r X$ to $\Pi^{r+\odd}X$ and morphism $f_r^s$ to $f_{r+\odd}^{s+\odd}$.  Viewing $\cC$ as a full subcategory of its $\Pi$-envelope $\cC_\pi$ via the canonical embedding
\begin{equation}\label{flowers}
    \cC \rightarrow \cC_\pi, \qquad X \mapsto \Pi^\even X, \quad f \mapsto f_\even^\even,
\end{equation}
the $\Pi$-envelope satisfies a universal property: any superfunctor $F \colon \cC \rightarrow \cD$ to a $\Pi$-supercategory $\cD$ extends in a canonical way to a superfunctor $\tilde F \colon \cC_\pi \rightarrow \cD$ such that $\tilde F \circ \Pi = \Pi \circ \tilde F$.  In turn, any supernatural transformation $\theta \colon F \Rightarrow G$ between superfunctors $F, G \colon \cC\rightarrow \cD$ extends in a unique way to a supernatural transformation $\tilde\theta \colon \tilde F \Rightarrow \tilde G$; see \cite[Lem.~4.2]{BE17}.

Given superalgebras $A$ and $B$, the supercategory of $(A,B)$-superbimodules is a $\Pi$-supercategory, as explained in \cite[Example~1.8]{BE17}.  If $V$ is an $(A,B)$-supermodule, we will denote its parity shift by
\begin{equation} \label{apple}
    \Pi V := \{\pi v : v \in V\}
    \qquad \text{with} \quad
    \overline{\pi v} = \overline{v} + 1.
\end{equation}
Here $\pi$ is a formal symbol to remind us that $\pi f$ is an element of $\Pi V$.  In order to unify some expressions where the parity shift may or may not be present, we define $\pi^\sigma v$, for $\sigma \in \Z_2$, by
\[
    \pi^\sigma v =
    \begin{cases}
        \pi v & \text{if } \sigma = 1,\\
        v & \text{if } \sigma = 0.
    \end{cases}
\]
For a morphism $f \colon V \to W$, we have
\[
    \Pi f \colon \Pi V \to \Pi W,\qquad
    (\Pi f)(\pi v) = (-1)^{\bar{f}} \pi f(v).
\]
The isomorphism
\begin{equation} \label{quirk}
    \Pi^2 V \xrightarrow{\cong} V,\qquad
    \pi \pi v \mapsto -v
\end{equation}
is denoted $\xi_V$ in the notation of \cite[Definition~1.7]{BE17}.

If $\cC$ is a supercategory, we let $\Add(\cC)$ denote its additive envelope.  This is the supercategory whose objects are formal finite direct sums of objects in $\cC$, and whose morphisms are identified with matrices of morphisms in $\cC$ in the usual way.  The supercategory $\Add(\cC_\pi)$, which is the additive envelope of the $\Pi$-envelope of $\cC$, is sometimes referred to as the \emph{superadditive envelope} of $\cC$.  The \emph{Karoubi envelope} $\Kar(\cC)$ of $\cC$ is the completion of its additive envelope $\Add(\cC)$ at all homogeneous idempotents.  Thus, objects of $\Kar(\cC)$ are pairs $(X,e)$ consisting of a finite direct sum $X$ of objects of $\cC$ together with a homogeneous idempotent $e \in \End_{\Add(\cC)}(X)$.  Morphisms $(X,e) \rightarrow (Y,f)$ are elements of $f\Hom_{\Add(\cC)}(X,Y)e$.  If $\cC$ is a $\Pi$-supercategory, the parity shift superfunctors extend by the usual universal property of Karoubi envelopes to make $\Kar(\cC)$ into a $\Pi$-supercategory too.

Now we consider the monoidal situation.  We make the category $\SCat$ of supercategories and superfunctors into a symmetric monoidal category following the general construction of \cite[$\S$1.4]{Kel05}.  In particular, for supercategories $\cC$ and $\cD$, their $\kk$-linear product, denoted $\cC\boxtimes\cD$, has as objects pairs $(X,Y)$ for $X \in \cC$ and $Y \in \cD$, and
\begin{equation}
    \Hom_{\cC\boxtimes\cD}((X,Y), (X',Y'))
    = \Hom_{\cC}(X,X') \otimes \Hom_{\cD}(Y,Y')
\end{equation}
with composition defined via \cref{interchange}.  A \emph{strict monoidal supercategory} is a supercategory $\cC$ with an associative, unital tensor functor $-\otimes- \colon \cC \boxtimes \cC \rightarrow \cC$.  See \cite[Def.~1.4]{BE17} for the appropriate notions of (not necessarily strict) \emph{monoidal superfunctors} between strict monoidal supercategories, and of \emph{monoidal natural transformations} between monoidal superfunctors (which are required to be even).

There is also a notion of \emph{strict monoidal $\Pi$-supercategory}; see \cite[Def.~1.12]{BE17}.  Such a category is a $\Pi$-supercategory in the earlier sense with $\Pi := \pi \otimes-$ for a distinguished object $\pi$ admitting an odd isomorphism $\zeta \colon \pi \xrightarrow{\sim} \one$.  The \emph{$\Pi$-envelope} $\cC_\pi$ of a strict monoidal supercategory $\cC$ is the $\Pi$-supercategory from \cref{pienv}, viewed as a strict monoidal $\Pi$-supercategory with $\pi := \Pi\one$, tensor product of objects defined by
\begin{equation}\label{curtain}
    \left( \Pi^r X \right) \otimes \left( \Pi^s Y \right)
    := \Pi^{r+s} (X \otimes Y),
\end{equation}
and tensor product (horizontal composition) of morphisms defined by
\begin{equation}\label{pole}
    f_r^s \otimes g_u^v := (-1)^{r(\bar{g}+u+v) +\bar{f} v}(f \otimes g)_{r+u}^{s+v}
\end{equation}
for homogeneous morphisms $f$ and $g$ in $\cC$.  See \cite[Def.~1.16]{BE17} for more details and discussion of its universal property.  When working with string diagrams, we will represent the morphism $f_{r}^{s}$ in $\cC_{\pi}$ by adding horizontal lines labelled by $r$ and $s$ at the bottom and top of the diagram for $f \colon X \to Y$:
\begin{equation}\label{covid1}
    \begin{tikzpicture}[anchorbase]
        \draw (0,-0.4) -- (0,0.4);
        \filldraw[fill=white,draw=black] (0,0) circle(4pt);
        \node at (0,0) {\dotlabel{f}};
        \shiftline{-0.3,-0.4}{0.3,-0.4}{r};
        \shiftline{-0.3,0.4}{0.3,0.4}{s};
    \end{tikzpicture}
    \colon \Pi^r X \to \Pi^s Y.
\end{equation}
Then the rules for horizontal and vertical composition in $\mathcal{C}_\pi$ become
\begin{equation} \label{slush}
    \begin{tikzpicture}[centerzero]
        \draw (0,-0.4) -- (0,0.4);
        \filldraw[fill=white,draw=black] (0,0) circle(0.18);
        \node at (0,0) {\dotlabel{f}};
        \shiftline{-0.3,-0.4}{0.3,-0.4}{r};
        \shiftline{-0.3,0.4}{0.3,0.4}{s};
    \end{tikzpicture}
    \otimes
    \begin{tikzpicture}[centerzero]
        \draw (0,-0.4) -- (0,0.4);
        \filldraw[fill=white,draw=black] (0,0) circle(0.18);
        \node at (0,0) {\dotlabel{g}};
        \shiftline{-0.3,-0.4}{0.3,-0.4}{u};
        \shiftline{-0.3,0.4}{0.3,0.4}{v};
    \end{tikzpicture}
    = (-1)^{r(\bar{g}+u+v)+\bar{f}v}
    \begin{tikzpicture}[centerzero]
        \draw (-0.3,-0.4) -- (-0.3,0.4);
        \filldraw[fill=white,draw=black] (-0.3,0) circle(0.18);
        \node at (-0.3,0) {\dotlabel{f}};
        \draw (0.3,-0.4) -- (0.3,0.4);
        \filldraw[fill=white,draw=black] (0.3,0) circle(0.18);
        \node at (0.3,0) {\dotlabel{g}};
        \shiftline{-0.6,-0.4}{0.6,-0.4}{r+u};
        \shiftline{-0.6,0.4}{0.6,0.4}{s+v};
    \end{tikzpicture}
    ,\qquad
    \begin{tikzpicture}[centerzero]
        \draw (0,-0.4) -- (0,0.4);
        \filldraw[fill=white,draw=black] (0,0) circle(0.18);
        \node at (0,0) {\dotlabel{f}};
        \shiftline{-0.3,-0.4}{0.3,-0.4}{s};
        \shiftline{-0.3,0.4}{0.3,0.4}{t};
    \end{tikzpicture}
    \circ
    \begin{tikzpicture}[centerzero]
        \draw (0,-0.4) -- (0,0.4);
        \filldraw[fill=white,draw=black] (0,0) circle(0.18);
        \node at (0,0) {\dotlabel{g}};
        \shiftline{-0.3,-0.4}{0.3,-0.4}{r};
        \shiftline{-0.3,0.4}{0.3,0.4}{s};
    \end{tikzpicture}
    =
    \begin{tikzpicture}[centerzero]
        \draw (0,-0.6) -- (0,0.6);
        \filldraw[fill=white,draw=black] (0,-0.25) circle(0.18);
        \filldraw[fill=white,draw=black] (0,0.25) circle(0.18);
        \node at (0,0.25) {\dotlabel{f}};
        \node at (0,-0.25) {\dotlabel{g}};
        \shiftline{-0.3,-0.6}{0.3,-0.6}{r};
        \shiftline{-0.3,0.6}{0.3,0.6}{t};
    \end{tikzpicture}
    .
\end{equation}
The \emph{Karoubi envelope} $\Kar(\cC)$ of a strict monoidal $\Pi$-supercategory is a strict monoidal $\Pi$-supercategory.

If $\kk'$ is a field extension of $\kk$ and $\cC$ is a supercategory over $\kk$, then we define $\cC^{\kk'}$ to be the supercategory obtained from $\cC$ by extension of scalars.  Precisely, the objects of $\cC^{\kk'}$ are the same of those of $\cC$, and we have
\[
    \Hom_{\cC^{\kk'}}(X,Y) := \Hom_\cC(X,Y) \otimes_\kk \kk',\qquad
    X,Y \in \cC,
\]
with composition extended in the natural way.  Any superfunctor $F \colon \cC \to \cD$ naturally extends to a superfunctor $F^{\kk'} \colon \cC^{\kk'} \to \cD^{\kk'}$.  If $\cC$ is a (strict) monoidal supercategory or a $\Pi$-supercategory, then so is $\cC^{\kk'}$.  In the special case where $\kk=\R$, we call $\cC^\C$ the \emph{complexification} of $\cC$.

\section{Superalgebras and supermodules\label{sec:superalgebra}}

In this section, we review some basic properties of superalgebras and supermodules that will be used in the current paper.  We work over a ground field $\kk$.

\subsection{Associative superalgebras\label{subsec:assocsup}}

All vector superspaces and superalgebras are over $\kk$ unless otherwise indicated.  We also assume that all $\kk$-supermodules are finite dimensional.  We let $V_0$ and $V_1$ denote the even and odd parts, respectively, of a $\kk$-supermodule $V$.  Then its \emph{superdimension} is
\[
    \sdim_\kk(V) = \dim_\kk(V_0) - \dim_\kk(V_1).
\]
We let $\bar{v}$ denote the parity of a homogeneous element $v$ of a $\kk$-supermodule $V$.  When we write equations involving parities of elements, we implicitly assume these elements are homogeneous; we then extend by linearity.

The term \emph{superalgebra} refers to a unital associative superalgebra.  If $A$ is a superalgebra, its \emph{opposite superalgebra} $A^\op = \{a^\op : a \in A\}$ has multiplication given by
\[
    a^\op b^\op = (-1)^{\bar{a}\bar{b}} (ba)^\op.
\]

\subsection{Supermodules\label{subsec:supermodules}}

Throughout this subsection, $A$ denotes a superalgebra and $V,W$ denote right $A$-supermodules.  We also let $\fg$ denote a Lie superalgebra.

We let $\Hom_A(V,W)$ denote the $\kk$-supermodule of all (that is, not necessarily parity-preserving) morphisms of $A$-supermodules.  We also define $\End_A(V) := \Hom_A(V,V)$.  Thus, for example, $\Hom_A(V,W)_0$ denotes the $\kk$-module of all parity-preserving $A$-linear maps from $V$ to $W$.

For $a \in A$, define
\begin{equation} \label{rho}
    \rho_a \colon V \to V,\quad
    v \mapsto (-1)^{\bar{a}\bar{v}} v a.
\end{equation}
We also define
\[
    \flip \colon V \otimes W \to W \otimes V,\qquad
    v \otimes w \mapsto (-1)^{\bar{v}\bar{w}} w \otimes v.
\]
If $V$ and $W$ are $(\fg,A)$-superbimodules, then $\rho_a$ and $\flip$ are homomorphisms of $\fg$-supermodules.

Let
\[
    V^* = \Hom_\kk(V,\kk)
\]
denote the $\kk$-dual of $V$.  This is a left $A$-module with action given by
\begin{equation} \label{dualaction}
    (af)(v) := (-1)^{\bar{a}\bar{f}} f(va).
\end{equation}
If $V$ is also a left $\fg$-supermodule, then $V^*$ is a left $\fg$-supermodule, with action given by
\begin{equation} \label{smooth}
    (Xf)(v) = -(-1)^{\bar{X}\bar{f}} f(Xv),\qquad
    X \in \fg,\ f \in V^*,\ v \in V.
\end{equation}
This action supercommutes with the left $A$-action given in \cref{dualaction}.

Let $\bB_V$ be a $\kk$-basis for $V$, which we will sometimes denote by $\bB_V^\kk$ when there is some possibility of confusion about the ground field.  Let $\{v^* : v \in \bB_V\}$ be the dual basis of $V^*$ given by
\[
    v^*(w) = \delta_{vw},\qquad v,w \in \bB_V.
\]
We have the evaluation map
\[
    \ev \colon V^* \otimes V \to \kk,\qquad
    f \otimes v \mapsto f(v),
\]
and the coevaluation map
\[
    \coev \colon \kk \to V \otimes V^*,\qquad
    1 \mapsto \sum_{v \in \bB_V} v \otimes v^*.
\]
The map $\coev$ is independent of the choice of basis $\bB_V$.  If $V$ is a left $\fg$-supermodule, then $\ev$ and $\coev$ are both homomorphisms of $\fg$-supermodules.

\subsection{Frobenius superalgebras\label{subsec:FrobAlg}}

We now recall some basic definitions and facts about Frobenius superalgebras.  For more details, including proofs in the super case considered here, we refer the reader to \cite{PS16}.  A \emph{Frobenius superalgebra} is a superalgebra $A$ equipped with a parity-preserving $\kk$-linear map $\form = \form_A \colon A \to \kk$, called the \emph{Frobenius form}, such that the induced bilinear form
\[
    A \times A \to \kk, \qquad (a,b) \mapsto \form(ab),
\]
is nondegenerate.  If $(A,\form)$ and $(A,\form')$ are two Frobenius superalgebras with the same underlying superalgebra $A$, then there exists an even invertible element $u \in A$ such that
\begin{equation} \label{goal}
    \form'(a) = \form(au)
    \qquad \text{for all } a \in A.
\end{equation}

Every Frobenius superalgebra has a \emph{Nakayama automorphism} $\Nak$, which is a superalgebra automorphism of $A$ satisfying
\begin{equation} \label{Nakayama}
    \form(ab) = (-1)^{\bar{a}\bar{b}} \form(b\Nak(a))
    = (-1)^{\bar{a}} \form(b\Nak(a))
    = (-1)^{\bar{b}} \form(b\Nak(a))
    \qquad \text{for all } a,b \in A,
\end{equation}
where the last two equalities follow from the fact that $\form(ab)=0$ unless $\bar{a}=\bar{b}$.  A Frobenius superalgebra is said to be \emph{supersymmetric} if its Nakayama automorphism is the identity map.  We will \emph{not} assume that Frobenius superalgebras are supersymmetric in this paper.  We will often refer to $A$ itself as a Frobenius superalgebra, leaving the Frobenius form implied.  Our main sources of examples of Frobenius superalgebras will be the real division superalgebras, to be discussed in detail in \cref{sec:divalg}.  See \cref{sky} for additional examples.  If $A$ is a Frobenius superalgebra, then so is $A^\op$ with Frobenius form $\form_{A^\op}(a^\op) = \form_A(a)$, $a \in A$.

If $\bB_A$ is a homogeneous $\kk$-basis of a Frobenius superalgebra $A$, we let $\bB_A^\vee := \{b^\vee : b \in \bB_A\}$ be the left dual basis, defined by
\[
    \form(b^\vee c) = \delta_{bc},\qquad b,c \in \bB_A.
\]
It follows that, for all $a \in A$, we have
\begin{equation} \label{adecomp}
    a
    = \sum_{b \in \bB_A} \form(b^\vee a)b
    = \sum_{b \in \bB_A} \form(ab) b^\vee.
\end{equation}
We also have
\begin{equation} \label{barcheck}
    \overline{b^\vee} = \bar{b}
    \qquad \text{for all } b \in \bB_A.
\end{equation}
and
\begin{equation} \label{choice}
    \sum_{b \in \bB_A^\kk} b \otimes b^\vee
    \text{ is independent of the choice of basis } \bB_A.
\end{equation}
The basis left dual to $\{ \Nak(b) : b \in \bB_A \}$ is given by
\begin{equation} \label{Nakcheck}
    \Nak(b)^\vee = \Nak(b^\vee)
    \qquad \text{for all } b \in \bB_A.
\end{equation}
\details{
    For $b,c \in \bB_A^\kk$, we have
    \[
        \form(\Nak(b^\vee) \Nak(a))
        = (-1)^{\bar{a}\bar{b}} \form(a \Nak(b^\vee)
        = \form(b^\vee a)
        = \delta_{ab}.
    \]
}

If $V$ is a right $A$-supermodule, then the \emph{supertrace} of the action of $a$ on $V$ is
\begin{equation} \label{crazy}
    \str_V(a) = \str_V^\kk(a) := \sum_{v \in \bB_V} (-1)^{\bar{v}} v^*(va),
\end{equation}
where $\bB_V$ is a $\kk$-basis of $V$ and $\{v^* : v \in \bB_V\}$, is the dual basis of $V_\kk^*$.  We use the superscript $\kk$ on $\str_V^\kk(a)$ when there is the possibility of confusion about the ground field.  We have
\[
    \str_V(ab) = (-1)^{\bar{a}\bar{b}} \str_V(ba)
    \qquad \text{for all } a,b \in A.
\]
\details{
    Since $V$ is a finite-dimensional, we can write the action of $a$ as multiplication by a supermatrix with entries in $\kk$.  Then the above identity follows from the standard one for supertraces of supermatrices.
}
It is clear that
\begin{equation} \label{break}
    \str_V(a) = (m-n)\str_A(a)
    \qquad \text{for } V = A^{m|n}.
\end{equation}

\begin{lem}
    If $(A,\form)$ is a Frobenius superalgebra, then
    \begin{equation} \label{essex}
        \str_A(a)
        = \str_A(\Nak(a))
        = \sum_{b \in \bB_A} (-1)^{\bar{b}} \form(b^\vee b a)
        = \sum_{b \in \bB_A} (-1)^{\bar{b}} \form(a b^\vee b),\qquad a \in A.
    \end{equation}
\end{lem}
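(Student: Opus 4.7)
The plan is to establish the four equalities from left to right in three separate steps, exploiting (a) the duality between the $\kk$-dual basis $\{b^*\}$ of $A$ and the Frobenius-dual basis $\{b^\vee\}$, (b) the Nakayama identity \cref{Nakayama}, and (c) the basis-independence observation \cref{choice}.

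First I would establish the third equality, $\str_A(a) = \sum_b (-1)^{\bar b} \form(b^\vee b a)$, directly from the definition \cref{crazy}. By \cref{adecomp}, for any $c \in A$ we have $c = \sum_b \form(b^\vee c)\,b$, so the element $b^*$ of the $\kk$-dual basis is given by $b^*(c) = \form(b^\vee c)$. Substituting $c = ba$ into this formula and plugging the result into the definition of the supertrace yields the identity immediately.

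Next, to pass from $\sum_b (-1)^{\bar b}\form(b^\vee b a)$ to $\sum_b (-1)^{\bar b}\form(a b^\vee b)$, I apply the Nakayama relation to the even element $x = b^\vee b$ (even by \cref{barcheck}) and $y = a$:
\begin{equation*}
\form(b^\vee b \cdot a) = (-1)^{\overline{b^\vee b}\,\bar{a}}\form(a\, \Nak(b^\vee b)) = \form(a\,\Nak(b^\vee)\,\Nak(b)).
\end{equation*}
By \cref{Nakcheck}, $\Nak(b^\vee) = \Nak(b)^\vee$, so if I set $c := \Nak(b)$, then $c^\vee = \Nak(b^\vee)$ and $\bar c = \bar b$ (since $\Nak$ is parity-preserving). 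As $b$ ranges over $\bB_A$, the element $c$ ranges over another homogeneous basis $\bB_A' := \Nak(\bB_A)$. Thus
\begin{equation*}
\sum_{b \in \bB_A}(-1)^{\bar b}\form(b^\vee b a) = \sum_{c \in \bB_A'}(-1)^{\bar c}\form(a\,c^\vee c).
\end{equation*}
Finally, the sum $\sum_b (-1)^{\bar b} b^\vee \otimes b \in A \otimes A$ is basis-independent: by \cref{choice} the analogous sum without the sign is independent of the basis, and splitting it according to the parities of the basis elements (which we may assume are homogeneous) shows that each parity-component is separately independent. Applying the linear map $x \otimes y \mapsto \form(ayx)$ to this canonical element therefore produces the same scalar regardless of whether we use $\bB_A'$ or $\bB_A$, completing the second equality.

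Finally, for $\str_A(a) = \str_A(\Nak(a))$, I apply the formula just established (in its first form) to $\Nak(a)$ and compute
\begin{equation*}
\str_A(\Nak(a)) = \sum_{b \in \bB_A}(-1)^{\bar b}\form(b^\vee b\,\Nak(a)).
\end{equation*}
Using Nakayama once more on the even element $b^\vee b$ gives $\form(b^\vee b\,\Nak(a)) = \form(\Nak(a)\Nak(b^\vee b)) = \form(\Nak(a b^\vee b))$, and the identity $\form \circ \Nak = \form$ (obtained by setting $b = 1$ in \cref{Nakayama}) reduces this to $\form(a b^\vee b)$. Combining with the second formula for $\str_A(a)$ yields the desired equality.

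The only real obstacle is careful bookkeeping of signs, and verifying that the sign-weighted canonical tensor $\sum_b(-1)^{\bar b} b^\vee \otimes b$ really is basis-independent; this is an immediate consequence of \cref{choice} once one splits the sum by parity, so the proof should be short and essentially formal.
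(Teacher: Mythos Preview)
Your proof is correct and uses essentially the same ingredients as the paper: the identification $b^*(\cdot)=\form(b^\vee\cdot)$ for the first sum, the Nakayama relation together with $\form\circ\Nak=\form$, and the reindexing $c=\Nak(b)$ with \cref{Nakcheck}. The paper runs all of this in a single chain of equalities rather than three separate steps, and handles the basis change simply by noting that one may sum over the basis $\{\Nak(b):b\in\bB_A\}$ instead (implicitly using that the first sum equals the basis-independent quantity $\str_A(a)$); your explicit verification that $\sum_b(-1)^{\bar b}b^\vee\otimes b$ is basis-independent is a fine alternative justification, and in fact this tensor is just $\flip$ applied to the canonical element of \cref{choice}.
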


\begin{proof}
    For all $b \in \bB_A$, we have
    \[
        \form(b^\vee a) = b^*(a).
    \]
    It follows immediately that $\str_A(a)$ is equal to the first sum in \cref{essex}.

    Next, note that $\form(c) = \form(\Nak(c))$ for all $c \in A$.  Thus, for all $a \in A$,
    \begin{multline*}
        \str_A(a)
        = \sum_{b \in \bB_A} (-1)^{\bar{b}} \form(b^\vee b a)
        = \sum_{b \in \bB_A} (-1)^{\bar{b}} \form \left( \Nak(b^\vee) \Nak(b) \Nak(a) \right)
        \overset{\cref{Nakayama}}{=} \sum_{b \in \bB_A} (-1)^{\bar{b}} \form \left( a \Nak(b^\vee) \Nak(b) \right)
        \\
        = \sum_{b \in \bB_A} (-1)^{\bar{b}} \form \left( a b^\vee b \right)
        \overset{\cref{Nakayama}}{=} \sum_{b \in \bB_A} (-1)^{\bar{b}} (b^\vee b \Nak(a))
        = \str_A(\Nak(a)).
    \end{multline*}
    where, in the fourth equality we changed to a sum over the basis $\{\Nak(b) : b \in \bB_A\}$ and used \cref{Nakcheck}.
\end{proof}

\subsection{Supermatrices}

We will use the term \emph{supermatrix} to denote a supermatrix with entries in a superalgebra $A$.  We let $\Mat_{p|q,r|s}(A)$ denote the $\kk$-supermodule of $(p|q) \times (r|s)$ supermatrices, and set $\Mat_{p|q}(A) := \Mat_{p|q,p|q}(A)$.  We write a supermatrix $X \in \Mat_{p|q,r|s}(A)$ in block form as
\[
    X =
    \begin{pmatrix}
        X_{00} & X_{01} \\
        X_{10} & X_{11}
    \end{pmatrix},
\]
where $X_{00}$ is $p \times r$, $X_{01}$ is $p \times s$, $X_{10}$ is $q \times r$  and $X_{11}$ is $q \times s$.  The even elements of $\Mat_{p|q,r|s}(A)$ are those supermatrices $X$ where $X_{00}$, $X_{11}$ have even entries and $X_{01}$, $X_{10}$ have odd entries.  The odd elements of $\Mat_{p|q,r|s}(A)$ are those supermatrices $X$ where $X_{00}$, $X_{11}$ have odd entries and $X_{01}$, $X_{10}$ have even entries.

We view elements of $A^{m|n}$ as column supermatrices, that is, as $(m|n) \times (1,0)$ supermatrices.  Similarly, we view elements of $A$ as $(1|0) \times (1|0)$ supermatrices.  Then the right action $(v,a) \mapsto va$ of $A$ on $A^{m|n}$ can be viewed as matrix multiplication and we can identify elements of $\End_A(A^{m|n})$ with $\Mat_{m|n}(A)$ in the usual way.

The \emph{supertranspose} of a supermatrix is given by
\begin{equation} \label{supertranspose}
    X^\st
    :=
    \begin{pmatrix}
        X_{00}^\transpose & (-1)^{\bar{X}} X_{10}^\transpose \\
        -(-1)^{\bar{X}} X_{01}^\transpose & X_{11}^\transpose
    \end{pmatrix},
\end{equation}
where $X^\transpose$ denotes the usual transpose of a matrix.  Note that
\begin{equation} \label{quiet}
    (X^\st)^\st =
    \begin{pmatrix}
        X_{00} & -X_{01} \\
        -X_{10} & X_{11}
    \end{pmatrix},
\end{equation}
so that the supertranspose has order four in general.  The \emph{supertrace} of a square supermatrix $X \in \Mat_{p|q}(A)$ is given by
\[
    \str(X) = \tr(X_{00}) - (-1)^{\bar{X}} \tr(X_{11}),
\]
where $\tr$ denotes the usual matrix trace.

For a supermatrix $X \in \Mat_{p|q,r|s}(A)$, let $X_\op \in \Mat_{p|q,r|s}(A^\op)$ denote the matrix obtained from $X$ by replacing each entry $a$ by $a^\op$.  Then define $X_\op^\st := (X_\op)^\st = (X^\st)_\op$.

\begin{lem}
    We have an isomorphism of $\kk$-superalgebras
    \begin{equation} \label{hopping}
        \Mat_{m|n}(A)^\op \xrightarrow{\cong} \Mat_{m|n}(A^\op),\quad
        X^\op \mapsto X_\op^\st.
    \end{equation}
\end{lem}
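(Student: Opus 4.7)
The plan is to verify directly that $\phi \colon X^\op \mapsto X_\op^\st$ is a $\kk$-superalgebra isomorphism. Both $\kk$-linearity and parity-preservation of $\phi$ are immediate: the entrywise operation $X \mapsto X_\op$ is $\kk$-linear and preserves the parity of each entry (hence also of $X$ viewed as a supermatrix), and the supertranspose defined by \cref{supertranspose} is manifestly $\kk$-linear and parity-preserving. Bijectivity then follows from a dimension count together with the fact that $\phi$ is visibly injective, since both $(-)_\op$ and $(-)^\st$ are invertible operations on the underlying $\kk$-vector superspace (for the latter, use \cref{quiet} to see that $\st$ is a bijection of order $4$).

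The heart of the argument is multiplicativity, which I plan to verify by an entrywise computation. For homogeneous $X, Y \in \Mat_{m|n}(A)$, the desired identity reads
\[
\phi(X^\op Y^\op) = (-1)^{\bar{X}\bar{Y}}(YX)_\op^\st \stackrel{?}{=} X_\op^\st Y_\op^\st = \phi(X^\op)\phi(Y^\op).
\]
Extract from \cref{supertranspose} a sign $\sigma(\bar{X}, i, j) \in \{\pm 1\}$, depending only on $\bar{X}$ and on the ``block parities'' $|i|, |j| \in \{0,1\}$ (that is, whether $i, j$ lie in $\{1, \dots, m\}$ or in $\{m+1, \dots, m+n\}$), such that $(X^\st)_{ij} = \sigma(\bar{X}, i, j)\, X_{ji}$. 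Comparing the $(i, j)$ entries of the two sides and using the rule $(Y_{jk} X_{ki})^\op = (-1)^{\overline{Y_{jk}}\, \overline{X_{ki}}}\, X_{ki}^\op \cdot_{A^\op} Y_{jk}^\op$ to move the opposite through the product on the left-hand side, the question reduces to the purely scalar identity
\[
(-1)^{\bar{X}\bar{Y}}\, \sigma(\bar{X} + \bar{Y}, i, j)\, (-1)^{\overline{Y_{jk}}\, \overline{X_{ki}}} = \sigma(\bar{X}, i, k)\, \sigma(\bar{Y}, k, j),
\]
required to hold for all triples of indices $i, j, k$.

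The main obstacle is the sign bookkeeping in this last identity. Substituting $\overline{X_{ki}} = \bar{X} + |k| + |i|$ and $\overline{Y_{jk}} = \bar{Y} + |j| + |k|$, both sides unfold into exponents in $\Z_2$ polynomial in $\bar{X}, \bar{Y}, |i|, |j|, |k|$; after collecting terms and using $|k|^2 = |k|$ (and similarly for $|i|, |j|$), the two exponents agree term-by-term. Three independent sources of signs must be shepherded simultaneously---the Koszul sign $(-1)^{\bar{X}\bar{Y}}$ from multiplication in $\Mat_{m|n}(A)^\op$, the parity-dependent signs inside $\sigma$ (which, crucially, depend on the parity of the \emph{matrix} being supertransposed, not on the parity of the entries), and the Koszul signs arising from every application of $(ab)^\op = (-1)^{\bar{a}\bar{b}} b^\op \cdot_{A^\op} a^\op$---but once the dust settles, the identity holds, establishing multiplicativity and completing the proof.
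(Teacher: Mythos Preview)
Your proof is correct and follows essentially the same approach as the paper: the paper simply asserts that the map is an isomorphism of $\kk$-vector superspaces and that multiplicativity is a ``straightforward computation,'' with the (suppressed) details carrying out that computation block by block. Your entrywise version via the sign function $\sigma$ is the same verification at a finer level of granularity, and your scalar identity does indeed hold.
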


\begin{proof}
    The map is clearly an isomorphism of $\kk$-vector superspaces.  It is also a straightforward computation to prove that it respects multiplication.
    \details{
        We have
        \begin{align*}
            X_\op^\st &Y_\op^\st
            =
            \begin{pmatrix}
                (X_{00})_\op^\transpose & (-1)^{\bar{X}} (X_{10})_\op^\transpose \\
                -(-1)^{\bar{X}} (X_{01})_\op^\transpose & (X_{11})_\op^\transpose
            \end{pmatrix}
            \begin{pmatrix}
                (Y_{00})_\op^\transpose & (-1)^{\bar{Y}} (Y_{10})_\op^\transpose \\
                -(-1)^{\bar{Y}} (Y_{01})_\op^\transpose & (Y_{11})_\op^\transpose
            \end{pmatrix}
            \\
            &=
            \begin{pmatrix}
                (X_{00})_\op^\transpose (Y_{00})_\op^\transpose - (-1)^{\bar{X}+\bar{Y}} (X_{10})_\op^\transpose (Y_{01})_\op^\transpose
                & (-1)^{\bar{Y}} (X_{00})_\op^\transpose (Y_{10})_\op^\transpose + (-1)^{\bar{X}} (X_{10})_\op^\transpose (Y_{11})_\op^\transpose
                \\
                -(-1)^{\bar{X}} (X_{01})_\op^\transpose (Y_{00})_\op^\transpose - (-1)^{\bar{Y}} (X_{11})_\op^\transpose (Y_{01})_\op^\transpose
                &
                -(-1)^{\bar{X}+\bar{Y}} (X_{01})_\op^\transpose (Y_{10})_\op^\transpose + (X_{11})_\op^\transpose (Y_{11})_\op^\transpose
            \end{pmatrix}
            \\
            &= (-1)^{\bar{X}\bar{Y}}
            \begin{pmatrix}
                (Y_{00} X_{00})_\op^\transpose + (Y_{01}X_{10})_\op^\transpose
                & (-1)^{\bar{X}+\bar{Y}} \big( (Y_{10}X_{00})_\op^\transpose + (Y_{11}X_{10})_\op^\transpose \big)
                \\
                -(-1)^{\bar{X}+\bar{Y}} \big( (Y_{00}X_{01})_\op^\transpose + (Y_{01}X_{11})_\op^\transpose \big)
                &
                (Y_{10}X_{01})^{\op,\transpose} + (Y_{11}X_{11})^{\op,\transpose}
            \end{pmatrix}
            \\
            &= (-1)^{\bar{X}\bar{Y}} (YX)_\op^\st.
        \end{align*}
        To simplify signs above, we have used the fact that the parity of the entries of $X_{ij}$ are $\bar{X} + i + j$.
    }
\end{proof}

\begin{lem}
    If $A$ is a superalgebra, then
    \begin{equation} \label{blink}
        \str_{\Mat_{m|n}(A)}^\kk(X) = (m-n) \str_A^\kk \circ \str(X)
        \qquad \text{for all } X \in \Mat_{m|n}(A).
    \end{equation}
\end{lem}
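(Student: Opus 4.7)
My plan is a direct computation in an explicit basis, preceded by a parity reduction.

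First, I would observe that both sides of \cref{blink} vanish whenever $X$ is odd, so it suffices to treat the case $\bar X = 0$. Indeed, for any superalgebra $B$ and any odd $a \in B$, the scalar $v^*(va)$ occurring in the definition \cref{crazy} of $\str_V^\kk(a)$ is the coefficient of the homogeneous basis vector $v$ in $va$, and this coefficient is forced to be zero because $\overline{va} = \bar v + 1 \neq \bar v$. Applied once with $B = \Mat_{m|n}(A)$ and once with $B = A$ (together with the fact that $\str(X) \in A$ is odd whenever $X$ is odd), this kills both sides of \cref{blink} simultaneously.

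For the even case, I would fix a homogeneous $\kk$-basis $\bB_A$ of $A$ and take as a basis of $\Mat_{m|n}(A)$ the collection $\{E_{ij} b : 1 \leq i,j \leq m+n,\ b \in \bB_A\}$, where $E_{ij}$ is the matrix unit with a $1$ in position $(i,j)$. Writing $|i|$ for the parity of the $i$-th coordinate (so $|i| = 0$ for $i \leq m$ and $|i| = 1$ for $i > m$), one has $\overline{E_{ij} b} = |i|+|j|+\bar b$, and a short matrix calculation gives
\[
    (E_{ij} b) X \;=\; \sum_l E_{il}\, (b X_{jl}),
\]
so that the $(E_{ij} b)^*$-coefficient of $(E_{ij} b) X$ equals the $b$-coefficient $(b X_{jj})_b$ of $b X_{jj}$ in the basis $\bB_A$. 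Substituting into \cref{crazy}, the sum over the row index $i$ decouples and contributes the factor $\sum_i (-1)^{|i|} = m-n$, leaving
\[
    \str_{\Mat_{m|n}(A)}^\kk(X) \;=\; (m-n)\sum_{j,b} (-1)^{|j|+\bar b} (b X_{jj})_b.
\]
On the other hand, since $\bar X = 0$ one has $\str(X) = \sum_j (-1)^{|j|} X_{jj}$ by the definition \cref{supertranspose}, and unpacking $\str_A^\kk(\str(X))$ using \cref{crazy} and $b^*(bc)=(bc)_b$ yields exactly the same expression (without the factor $m-n$); the identity follows.

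The only real subtlety is the sign bookkeeping: one must check that the exponent of $-1$ in every step is the parity of the correct homogeneous element (notably, that the parity of $E_{ij}b$ is $|i|+|j|+\bar b$, not just $\bar b$), and that the assumption $\bar X = 0$ is precisely what converts the $(-1)^{\bar X}$ appearing in the definition of $\str(X)$ into the simple alternating sign $(-1)^{|j|}$. Once these are correctly set up, the collapse of the sum over $i$ into the factor $m-n$ is essentially the same mechanism as in \cref{break}, and no further input is needed.
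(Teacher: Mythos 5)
Your proof is correct and follows essentially the same route as the paper: the same basis $\{E_{ij}b\}$ of $\Mat_{m|n}(A)$, the same identification of the diagonal coefficient with the $b$-coefficient of $bX_{jj}$, and the same collapse of the row-index sum into the factor $m-n$. The preliminary reduction to even $X$ is a harmless (and slightly tidier) way of handling the sign $(-1)^{\bar X}$ that the paper instead absorbs into its formula for the dual basis.
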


\begin{proof}
    Choose the basis $\{E_{rs} b : 1 \le r,s \le m+n,\ b \in \bB_A\}$ of $\Mat_{m|n}(A)$, where $E_{rs}$ denotes the matrix with a $1$ in position $(r,s)$, and a $0$ in all other positions.  The dual basis of $\Mat_{m|n}(A)^*$ is given by
    \[
        (E_{rs} b)^*(X) = (-1)^{p(s)} b^*(\str(E_{sr}X)).
    \]
    Then we have
    \begin{multline*}
        \str_{\Mat_{m|n}(A)}^\kk(X)
        = \sum_{r,s=1}^{m+n} \sum_{b \in \bB_A} (-1)^{\bar{b} + p(r)} b^*(\str(E_{sr}E_{rs}bX))
        = (m-n) \sum_{b \in \bB_A} (-1)^{\bar{b}} b^*(\str(bX)) \\
        = (m-n) \sum_{b \in \bB_A} (-1)^{\bar{b}} b^*(b\str(X))
        = (m-n) \str_A^\kk \circ \str(X).
        \qedhere
    \end{multline*}
\end{proof}

\subsection{Lie superalgebras}

If $\fg$ is a Lie superalgebra over $\kk$, we let $\fg\smod_\kk$ denote the supercategory of finite-dimensional $\fg$-supermodules over $\kk$ with arbitrary (i.e.\ not necessarily parity-preserving) homomorphisms.  We will be particularly interested in the cases where $\kk$ is $\R$ or $\C$.  If $\kk = \R$, our notation $\fg\smod_\R$ is designed to emphasize that we are speaking of \emph{real} supermodules, as opposed to \emph{complex} supermodules.

The \emph{general linear Lie superalgebra} $\fgl(V_A)$ associated to a right $A$-supermodule $V$ is equal to $\End_A(V)$ as a $\kk$-supermodule, with Lie superbracket given by
\begin{equation} \label{toreba}
    [X,Y] := XY - (-1)^{\bar{X}\bar{Y}} YX.
\end{equation}
In the special case that $V = A^{m|n}$, we introduce the notation $\fgl(m|n,A) = \fgl(V_A)$.  Identifying $\End_A(V)$ with $\Mat_{m|n}(A)$ in the usual way, we have that $\fgl(m|n,A)$ is equal to $\Mat_{m|n}(A)$ as a $\kk$-vector superspace, with Lie superbracket given by \cref{toreba}.  By convention, we define $\fgl(0|0,A)$ to be the zero Lie superalgebra, so that $\fgl(0|0,A)\smod_\kk$ is the supercategory $\kk\smod$ of $\kk$-supermodules.

\begin{cor} \label{cut}
    We have an isomorphism of Lie $\kk$-superalgebras
    \[
        \fgl(m|n,A) \xrightarrow{\cong} \fgl(m|n,A^\op),\qquad
        X \mapsto - X_\op^\st.
    \]
\end{cor}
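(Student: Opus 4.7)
The plan is to view this as a two-step composition: first pass from $\fgl(m|n,A)$ to its opposite Lie superalgebra via a standard sign twist, then invoke \cref{hopping} to rewrite the opposite as $\fgl(m|n,A^\op)$.

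For step one, I would record the general principle that, for any associative $\kk$-superalgebra $B$, the map
\[
    B \to B^\op,\qquad X \mapsto -X^\op,
\]
is an isomorphism of Lie $\kk$-superalgebras when both sides are equipped with the supercommutator bracket \cref{toreba}. The verification is a one-line computation: the bracket on $B^\op$ is
\[
    [a^\op, b^\op]_{B^\op}
    = a^\op b^\op - (-1)^{\bar a \bar b} b^\op a^\op
    = (-1)^{\bar a \bar b}(ba)^\op - (ab)^\op
    = -[a,b]^\op,
\]
so the negation map intertwines the two brackets.

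For step two, I would apply this observation with $B = \Mat_{m|n}(A)$ and then compose with the isomorphism of $\kk$-superalgebras $\Mat_{m|n}(A)^\op \xrightarrow{\cong} \Mat_{m|n}(A^\op)$, $X^\op \mapsto X_\op^\st$, provided by \cref{hopping}. Both maps are isomorphisms of Lie $\kk$-superalgebras (the second because any superalgebra isomorphism automatically preserves supercommutators), so their composition
\[
    \fgl(m|n,A) \to \fgl(m|n,A)^\op \to \fgl(m|n,A^\op),\qquad
    X \mapsto -X^\op \mapsto -X_\op^\st
\]
gives the desired map. There is no real obstacle here; the only point that requires care is the sign in step one, which is what forces the $-$ in the formula $X \mapsto -X_\op^\st$ and distinguishes this map from the (merely anti-)isomorphism $X \mapsto X_\op^\st$ of underlying associative superalgebras.
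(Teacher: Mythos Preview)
Your proposal is correct and is essentially the same argument as the paper's. The paper simply carries out your two steps in a single direct computation, using \cref{hopping} to expand $X_\op^\st Y_\op^\st$ and checking that $[-X_\op^\st,-Y_\op^\st] = -[X,Y]_\op^\st$; your factoring through the general observation that $X\mapsto -X^\op$ is a Lie superalgebra isomorphism $B\to B^\op$ makes the role of the minus sign a bit more transparent but is otherwise identical in content.
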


\begin{proof}
    The given map is clearly an isomorphism of $\kk$-vector spaces.  To verify that it also respects the Lie superbracket, we compute
    \[
        [-X_\op^\st, - Y_\op^\st]
        = X_\op^\st Y_\op^\st - (-1)^{\bar{X}\bar{Y}} Y_\op^\st X_\op^\st
        \overset{\cref{hopping}}{=} (-1)^{\bar{X}\bar{Y}} (YX)_\op^\st - (XY)_\op^\st
        = -[X,Y]_\op^\st.
        \qedhere
    \]
\end{proof}

\subsection{Harish--Chandra superpairs\label{subsec:HCpair}}

We will sometimes need to work with supergroups instead of Lie superalgebras.  The reason for this is that the incarnation functors to be defined in \cref{sec:Oinc,sec:Unic} will be full onto a category of supermodules for a supergroup, but, outside of the connected case, they will not necessarily be full onto the category of supermodules for the associated Lie superalgebra.  We review here some basic facts, referring the reader to \cite{DM99} for a more detailed overview.  Instead of directly working with supergroups, it will be simpler to work with the equivalent category of Harish-Chandra superpairs.  We refer the reader to \cite{Gav20} and the references cited therein for a proof of this equivalence.  A \emph{Harish-Chandra superpair} over $\kk$ is a pair $G = (G_\rd,\fg)$, where $G_\rd$ is an algebraic group over $\kk$, $\fg$ is a Lie superalgebra over $\kk$,
\begin{itemize}
    \item $\fg_0$ is the Lie algebra of $G_\rd$,
    \item $G_\rd$ acts algebraically on $\fg$ by $\kk$-linear transformations, and
    \item the differential of the action of $G_\rd$ on $\fg$ coincides with the action of $\fg_0$ on $\fg$ via the superbracket.
\end{itemize}
Suppose $G = (G_\rd,\fg)$ is a Harish-Chandra superpair.  A $G$-supermodule is a $\kk$-supermodule $V$ that is both a $G_\rd$-supermodule and a $\fg$-supermodule, and such that the differential of the action of $G_\rd$ coincides with the action of $\fg_0$.  The finite-dimensional $G$-supermodules form a monoidal supercategory, which we denote by $G\smod_\kk$.

For $G$-supermodules $V$ and $W$, we have
\[
    \Hom_G(V,W) = \Hom_{G_\rd}(V,W) \cap \Hom_\fg(V,W).
\]
If $G_\rd$ is connected, then $\Hom_{\fg_0}(V,W) = \Hom_{G_\rd}(V,W)$, and so $\Hom_G(V,W) = \Hom_\fg(V,W)$.  In this case, the forgetful superfunctor $G\smod_\kk \to \fg\smod_\kk$ is full and faithful.  On the other hand, if $\fg_1 = 0$, so that we are working in the purely even setting, then the forgetful functor $G\md_\kk \to G_\rd\md_\kk$ is full and faithful.  (In fact, it is an equivalence of categories.)  In this case, we will often identity $G$ and $G_\rd$.

On the other hand, suppose $G_\rd$ has $r+1$ connected components, and let $X_1,\dotsc,X_r$ be elements of $G_\rd$, one from each of the $r$ connected components not containing the identity.  Then $G_\rd = H \cup H X_1 \cup \dotsb H X_r$, where $H$ is the identity component of $G_\rd$, and we have
\begin{multline} \label{breath}
    \Hom_G(V,W) = \Hom_{X_1,\dotsc,X_r,\fg}(V,W)
    \\
    := \{ f \in \Hom_\fg(V,W) : f(X_t v) = X_t f(v) \ \forall\ v \in V,\ 1 \le t \le r\}.
\end{multline}
For example, if $\kk=\C$, and $G_\rd = \rO(m,\C)$ is the complex orthogonal group, then we have
\[
    \Hom_G(V,W) = \Hom_{X,\fg}(V,W),
\]
where $X$ is any element of $\rO(m,\C)$ with $\det(X) = -1$.

\subsection{Complexification\label{subsec:complexification}}

If $V$ is a real vector superspace, its \emph{complexification} is
\[
    V^\C := V \otimes_\R \C.
\]
We view $V$ as an $\R$-vector subspace of $V^\C$ by identifying $v \in V$ with $v \otimes 1$.  If $A$ is a real superalgebra, then $A^\C$ is a complex superalgebra, with product
\[
    (a \otimes y) (b \otimes z) = ab \otimes yz,\qquad a,b \in A,\ y,z \in \C.
\]
Similarly, if $\fg$ is a Lie superalgebra over $\R$, then its complexification $\fg^\C$ is a Lie superalgebra over $\C$.  A \emph{real form} of a complex vector superspace $W$ is a real vector superspace $V$ such that $V^\C \cong W$ as complex vector superspaces.  We define real forms of complex associative superalgebras and complex Lie superalgebras similarly.

Suppose $R$ is either a real associative superalgebra or a real Lie superalgebra.  If $V$ is a left (respectively, right) $R$-supermodule, then $V^\C$ is a left (respectively, right) $R^\C$-supermodule with the natural action.  Furthermore, every $f \in \Hom_R(V,W)$ induces an element $f^\C \in \Hom_{R^\C}(V^\C,W^\C)$ given by
\[
    f^\C(v \otimes z) = f(v) \otimes z,\qquad v \in V,\ z \in \C.
\]
We have
\begin{align} \label{funk}
    \ker \big( f^\C \big) &= \ker(f)^\C,&
    \ker(f) &= \ker \big( f^\C \big) \cap V,
    \\
    \im \big( f^\C \big) &= \im(f)^\C,&
    \im(f) &= \im \big( f^\C \big) \cap V.
\end{align}
In particular
\begin{align*}
    f \text{ is injective} &\iff f^\C \text{ is injective}
    \qquad \text{and} \\
    f \text{ is surjective} &\iff f^\C \text{ is surjective}.
\end{align*}
The above constructions yield a superfunctor $R\smod \to R^\C\smod$, which induces a full and faithful \emph{complexification superfunctor}
\begin{equation} \label{golem}
    \sC_R \colon (R\smod)^\C \to R^\C\smod.
\end{equation}
In particular, if $\fg$ is a real Lie superalgebra and $V,W \in \fg\smod_\R$, then we have a canonical isomorphism of $\C$-supermodules
\begin{equation} \label{skool}
    \Hom_\fg(V,W)^\C \xrightarrow{\cong} \Hom_{\fg^\C}(V^\C, W^\C).
\end{equation}
\details{
    We have
    \begin{multline*}
        \Hom_{\fg,\R}(V,W) \otimes_\R \C
        \cong (V^* \otimes_\R W)^\fg \otimes_\R \C
        = \big( (V^* \otimes_\R W) \otimes_\R \C \big)^{\fg^\C}
        \\
        \cong \big( (V^\C)^* \otimes_\C W^\C \big)^{\fg^\C}
        \cong \Hom_{\fg^\C,\C}(U^\C,W^\C).
    \end{multline*}
}

If $H$ is a real supergroup acting on a real vector space $V$, then $H$ also acts on $V^\C = V \otimes_\R \C$ by acting on the first factor.  If $V$ and $W$ are supermodules over a real Harish-Chandra superpair $G=(G_\rd,\fg$), then we have an isomorphism of $\C$-supermodules
\begin{multline*}
    \Hom_G(V,W)^\C \cong \Hom_{G_\rd,\fg^\C}(V^\C,W^\C)
    \\
    := \{X \in \Hom_{\fg^\C}(V^\C,W^\C) : f(Xv) = Xf(v) \ \forall\ X \in G_\rd,\ v \in V^\C\}.
\end{multline*}
If $G_\rd$ has $r+1$ connected components, and $X_1,\dotsc,X_r$ are elements of $G_\rd$, one from each connected component not containing the identity, then, using \cref{breath}, we have
\begin{multline} \label{skool2}
    \Hom_G(V,W)^\C \cong \Hom_{X_1,\dotsc,X_r,\fg^\C}(V^\C,W^\C)
    \\
    := \{f \in \Hom_{\fg^\C}(V^\C,W^\C) : f(X_tv) = X_tf(v) \ \forall\ v \in V^\C,\ 1 \le t \le r\}.
\end{multline}

If $A$ is a Frobenius $\R$-superalgebra with Frobenius form $\form$, then its complexification $A^\C$ is a Frobenius $\C$-superalgebra with Frobenius form (which we continue to denote by the same symbol)
\begin{equation} \label{floor}
    \form \colon A^\C \to \C,\qquad
    a \otimes z \mapsto \form(a)z,\quad a \in A,\ z \in \C.
\end{equation}

It is straightforward to verify that
\begin{equation} \label{chain}
    \Mat_{m|n}(A)^\C \cong \Mat_{m|n}(A^\C)
\end{equation}
as $\C$-superalgebras and
\begin{equation} \label{link}
    \fgl(m|n,A)^\C \cong \fgl(m|n,A^\C)
\end{equation}
as complex Lie superalgebras.

\section{Real division superalgebras\label{sec:divalg}}

In this section, we discuss real division superalgebras.  These will play a key role in our main applications to the representation theory of real supergroups.

\subsection{Real division superalgebras}

For our purposes, one of the most important classes of examples of Frobenius superalgebras are the real division superalgebras, which were classified by Wall \cite{Wal64}. (See also \cite{Bae20} for a short exposition.)

\begin{prop} \label{realdivalg}
    Every real division superalgebra is isomorphic to exactly one of the following, where the $\Z_2$-grading is given by declaring $\varepsilon$ to be odd, and $\star$ denotes complex conjugation.
    \begin{itemize}
        \item $\Cl_0(\R) = \R$;
        \item $\Cl_1(\R) := \R \oplus \varepsilon \R$, with $\varepsilon^2 = 1$;
        \item $\Cl_2(\R) := \C \oplus \varepsilon \C$, with $\varepsilon^2 = 1$ and $z \varepsilon = \varepsilon z^\star$ for all $z \in \C$;
        \item $\Cl_3(\R) := \HH \oplus \varepsilon \HH$, with $\varepsilon^2=-1$ and $z \varepsilon = \varepsilon z$ for all $z \in \HH$;
        \item $\Cl_4(\R) := \HH$;
        \item $\Cl_5(\R) := \HH \oplus \varepsilon \HH$, with $\varepsilon^2=1$ and $z \varepsilon = \varepsilon z$ for all $z \in \HH$;
        \item $\Cl_6(\R) := \C \oplus \varepsilon \C$, with $\varepsilon^2=-1$ and $z \varepsilon = \varepsilon z^\star$ for all $z \in \C$;
        \item $\Cl_7(\R) := \R \oplus \varepsilon \R$, with $\varepsilon^2 = -1$;
        \item $\C$;
        \item $\Cl(\C) := \C \oplus \varepsilon \C$, with $\varepsilon^2=1$ and $z \varepsilon = \varepsilon z$ for all $z \in \C$.
    \end{itemize}
    For $0 \le r \le 8$, we have $\Cl_r(\R)^\op \cong \Cl_{-r}(\R)$ as superalgebras, where subscripts are considered modulo $8$.
\end{prop}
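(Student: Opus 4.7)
The plan is to handle the proposition in two parts: the classification itself, and the opposite-algebra identification.

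For the classification, the cleanest approach is to cite Wall's theorem \cite{Wal64} directly. If a self-contained sketch is desired, I would argue as follows. The even part $A_{\even}$ of a real division superalgebra $A$ is itself an associative division $\R$-algebra, because every nonzero homogeneous element of $A$ is invertible. By Frobenius's theorem, $A_{\even} \in \{\R, \C, \HH\}$. If $A_{\odd} = 0$, we obtain the three purely even cases $\Cl_0(\R) = \R$, $\C$, and $\Cl_4(\R) = \HH$. Otherwise, any nonzero $\varepsilon \in A_{\odd}$ is invertible, so right multiplication gives $A_{\odd} = \varepsilon A_{\even}$, hence $\dim_\R A = 2 \dim_\R A_{\even}$. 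The data determining $A$ up to isomorphism is then the element $\varepsilon^2 \in A_{\even}^\times$ together with the automorphism $\phi \colon a \mapsto \varepsilon a \varepsilon^{-1}$ of $A_{\even}$. Since $\phi(\varepsilon^2) = \varepsilon^2$ and $\phi^2$ is conjugation by $\varepsilon^2 \in A_{\even}$, a short case analysis based on $A_{\even}$ (and, when $A_{\even} = \C$, whether $\phi$ is trivial or complex conjugation; when $A_{\even} = \HH$, an application of Skolem--Noether) combined with rescaling $\varepsilon$ to force $\varepsilon^2 = \pm 1$ produces exactly the remaining seven superalgebras in the list.

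For the opposite superalgebra statement, the main observation is that in $A^\op$ the odd generator $\varepsilon^\op$ satisfies $(\varepsilon^\op)^2 = -(\varepsilon^2)^\op$ by the sign rule $a^\op b^\op = (-1)^{\bar a \bar b}(ba)^\op$. Thus the opposite construction flips the sign of $\varepsilon^2$ while preserving the underlying even part and the commutation relation between $\varepsilon$ and elements of $A_{\even}$. The cases $r = 0, 4$ are handled separately: $\Cl_0(\R)^\op = \R$ is obvious, and $\Cl_4(\R)^\op = \HH^\op \cong \HH$ via quaternionic conjugation $q \mapsto q^\star$. For each pair $(r, 8-r)$ with $r \in \{1,2,3\}$, I would write down the explicit isomorphism $\Cl_r(\R)^\op \to \Cl_{8-r}(\R)$ sending the even part identically (or via complex/quaternionic conjugation, whichever preserves the twisted commutation $z \varepsilon = \varepsilon z^\star$) and sending $\varepsilon^\op$ to $\varepsilon$, then check that both sides satisfy the same defining relations.

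The step I expect to be the bookkeeping bottleneck is keeping the sign conventions for the super-opposite straight across all cases: the identity $(\varepsilon^\op)^2 = -(\varepsilon^2)^\op$ is what makes the pairing $r \leftrightarrow -r \bmod 8$ work, and verifying the commutation relation $z \varepsilon = \varepsilon z^\star$ transports correctly to the opposite algebra — in particular distinguishing those $\Cl_r$ where the even-part automorphism is trivial from those where it is genuine conjugation — requires checking each case individually. There is no single conceptual obstacle beyond this careful enumeration.
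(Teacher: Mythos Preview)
Your proposal is correct and in fact more detailed than the paper's treatment: the paper does not prove this proposition at all, instead simply attributing the classification to Wall \cite{Wal64} (with a pointer to \cite{Bae20} for exposition) in the sentence immediately preceding the statement. Your citation of Wall matches the paper's approach exactly, and your self-contained sketch via Frobenius's theorem on $A_{\even}$ plus the analysis of $\varepsilon^2$ and the conjugation automorphism, together with the explicit verification of $\Cl_r(\R)^\op \cong \Cl_{-r}(\R)$ using $(\varepsilon^\op)^2 = -(\varepsilon^2)^\op$, goes well beyond what the paper supplies.
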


The notation in \cref{realdivalg} is inspired by the fact that $\Cl_r(\R) \otimes_\R \Cl_s(\R)$ is Morita equivalent to $\Cl_{r+s}(\R)$.  Note that $\C$ and the complex Clifford algebra $\Cl(\C)$ are the only complex division superalgebras.  The $\Cl_r(\R)$, $0 \le r \le 7$, are real Clifford superalgebras.  Recall that a $\kk$-superalgebra is \emph{central} if its center is $\kk$.  Thus, the central real division superalgebras are those real division superalgebras isomorphic to $\Cl_r(\R)$ for $0 \le r \le 7$.

\begin{rem} \label{complexdivalg}
      The complex division superalgebras are isomorphic to their own opposite superalgebras.  For $\C$, this follows from the fact that $\C$ is commutative.  For $\Cl(\C)$, we have $\Cl(\C)^\op = \C \oplus \varepsilon \C$, with $\varepsilon^2 = -1$, and an isomorphism of $\C$-superalgebras
    \[
        \Cl(\C) \xrightarrow{\cong} \Cl(\C)^\op,\qquad \varepsilon \mapsto \varepsilon i.
    \]
\end{rem}

\begin{convention}[Frobenius forms on division superalgebras] \label{amongus}
    Note that $\C$ and $\Cl(\C)$ are complex Frobenius superalgebras, with Frobenius form given by projection $\proj_\C$ onto their even part.  (They are also real Frobenius superalgebras with Frobenius form given by projection onto the real part of their even part.)  We will always view the central real division superalgebras as superalgebras over $\R$.  They are real Frobenius superalgebras with Frobenius form $\RP \colon \DD \to \R$ given by taking the real part of the even part of $\DD$.  In all of these cases, the Nakayama automorphism is given by
    \begin{equation} \label{divNak}
        \Nak(a) = (-1)^{\bar{a}} a,\qquad a \in \DD.
    \end{equation}
    In particular, a real division superalgebra is supersymmetric if and only if it is purely even.
\end{convention}

\begin{lem} \label{doubledual}
    If $\kk \in \{\R,\C\}$ and $\bB_\DD$ is a $\kk$-basis for a division $\kk$-superalgebra $\DD$, then the basis left dual to $\bB_\DD^\vee = \{b^\vee : b \in \bB_\DD\}$ is given by
    \begin{equation}
        (b^\vee)^\vee = b,\qquad b \in \bB_\DD.
    \end{equation}
\end{lem}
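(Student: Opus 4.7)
The plan is to verify directly the defining property of left duality: namely, that $\form(b \cdot c^\vee) = \delta_{bc}$ for all $b,c \in \bB_\DD$, which is exactly what it means for $b$ to be the element of the basis left-dual to $\bB_\DD^\vee$ paired with $b^\vee$. The key input is the explicit formula \cref{divNak} for the Nakayama automorphism of any real or complex division superalgebra, $\Nak(a) = (-1)^{\bar a} a$; this is the crucial structural feature of $\DD$ that makes the conclusion go through, and it is the reason the statement is restricted to division superalgebras rather than formulated for general Frobenius superalgebras.

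First I would apply the Nakayama relation \cref{Nakayama} to rewrite
\[
    \form(b c^\vee) = (-1)^{\bar{b}\,\overline{c^\vee}} \form(c^\vee \Nak(b)).
\]
Substituting $\overline{c^\vee} = \bar{c}$ from \cref{barcheck} and $\Nak(b) = (-1)^{\bar{b}} b$ from \cref{divNak}, the right-hand side reduces to
\[
    (-1)^{\bar{b}(\bar{c}+1)} \form(c^\vee b) = (-1)^{\bar{b}(\bar{c}+1)} \delta_{cb},
\]
using the defining property of $\bB_\DD^\vee$ in the last equality.

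A short parity check then finishes the argument. If $b \neq c$ both sides vanish, while if $b = c$ the exponent becomes $\bar{b}(\bar{b}+1)$, which is congruent to $0 \pmod 2$ whether $\bar{b} = 0$ or $\bar{b} = 1$, so the sign is $+1$. Hence $\form(b c^\vee) = \delta_{bc}$, establishing $(b^\vee)^\vee = b$ as claimed.

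I do not anticipate any significant obstacle; this is a pure sign-tracking computation with no serious content beyond bookkeeping. The only delicate point is that the two signs $(-1)^{\bar{b}\,\bar{c}}$ (from \cref{Nakayama}) and $(-1)^{\bar{b}}$ (from $\Nak(b)$) must cancel properly, and this is guaranteed by the very specific form of $\Nak$ on division superalgebras recorded in \cref{divNak}.
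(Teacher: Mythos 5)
Your proof is correct and is essentially the same argument as the paper's: both apply the Nakayama relation \cref{Nakayama} together with the explicit formula \cref{divNak} for $\Nak$ on a division superalgebra, and check that the two signs cancel via $\bar{b}(\bar{b}+1)\equiv 0 \pmod 2$. The only difference is a harmless relabeling of which basis element plays which role in the pairing.
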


\begin{proof}
    For all $b,c \in \bB_\DD$, we have
    \[
        \form(c b^\vee)
        \overset{\cref{Nakayama}}{=} (-1)^{\bar{b}\bar{c}} \form(b^\vee \Nak(c))
        \overset{\cref{divNak}}{=} (-1)^{\bar{b}\bar{c} + \bar{c}} \form(b^\vee c)
        = \delta_{bc}.
        \qedhere
    \]
\end{proof}

If $\DD$ is a real division superalgebra, we will use the term \emph{$\DD$-vector superspace} to denote a finite-dimensional \emph{right} $\DD$-supermodule.  Recall the definition of $\str_A$ given in \cref{crazy}.

\begin{lem} \label{delay}
    If $\DD$ is a real or complex division superalgebra with standard Frobenius form $\form$, then $\str_\DD = (\sdim_\kk \DD) \form$.  In particular, $\str_\DD=0$ whenever $\DD$ is a real or complex division superalgebra with nonzero odd part.
\end{lem}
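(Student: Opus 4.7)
My approach reduces the problem to computing a single scalar. By \cref{essex},
\[
\str_\DD(a) = \sum_{b \in \bB_\DD} (-1)^{\bar b} \form(b^\vee b a) = \form(Za), \qquad Z := \sum_{b \in \bB_\DD} (-1)^{\bar b} b^\vee b \in \DD_0,
\]
where $Z \in \DD_0$ because $\overline{b^\vee} = \bar b$ by \cref{barcheck}. Moreover, $Z$ is independent of the chosen homogeneous basis $\bB_\DD$: the tensor $\sum_b b \otimes b^\vee$ is basis-independent by \cref{choice}, and $Z$ is its image under the $\kk$-linear map $b \otimes c \mapsto (-1)^{\bar b} cb$. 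By nondegeneracy of $\form$, the lemma reduces to showing $Z = (\sdim_\kk \DD) \cdot 1_\DD$.

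First I would dispose of the case $\DD_1 \neq 0$. Pick any nonzero (hence invertible) $c \in \DD_1$; left multiplication by $c$ is an isomorphism $\DD_0 \xrightarrow{\sim} \DD_1$, so $\sdim_\kk \DD = 0$. To show also that $Z = 0$, I change basis to $\bB' := \{cb : b \in \bB_\DD\}$, whose left Frobenius dual basis is easily seen to be $(cb)^\vee = b^\vee c^{-1}$, since $\form(b^\vee c^{-1} \cdot c b') = \form(b^\vee b') = \delta_{bb'}$ (no supercommutation, hence no signs). Recomputing $Z$ in this new basis and using $\overline{cb} = \bar b + 1$,
\[
Z = \sum_{b \in \bB_\DD} (-1)^{\overline{cb}} (cb)^\vee (cb) = -\sum_b (-1)^{\bar b} b^\vee (c^{-1} c) b = -Z,
\]
which forces $Z = 0$. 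This simultaneously establishes the ``in particular'' clause of the lemma.

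For the remaining cases, $\DD$ is purely even, so by \cref{realdivalg} we have $\DD \in \{\R, \C, \HH\}$ considered as real superalgebras or $\DD = \C$ considered as a complex superalgebra, with $\sdim_\kk \DD = \dim_\kk \DD \in \{1, 2, 4, 1\}$ respectively. Here I would finish by direct computation: in each case, using the standard $\kk$-basis ($\{1\}$, $\{1,i\}$, $\{1,i,j,k\}$, or $\{1\}$) and the Frobenius form from \cref{amongus} ($\RP$, or $\proj_\C = \id$ in the complex case), the left dual bases are $\{1\}$, $\{1,-i\}$, $\{1,-i,-j,-k\}$, and $\{1\}$, from which $Z = \dim_\kk \DD$ falls out immediately. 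The main obstacle is recognizing that the basis-change trick with an invertible odd element works uniformly in the nontrivial case; once that is in hand, both that case and the four small purely-even verifications are routine.
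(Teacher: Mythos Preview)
Your proof is correct and takes a somewhat different route from the paper's. The paper simply writes down an explicit homogeneous basis for each of the ten real and two complex division superalgebras (e.g.\ $\{1,i,j,k,\varepsilon,\varepsilon i,\varepsilon j,\varepsilon k\}$ for $\Cl_3(\R)$ and $\Cl_5(\R)$), observes that in every case $b^\vee=b^{-1}$, and then the formula from \cref{essex} collapses immediately to $\sum_{b}(-1)^{\bar b}\form(a)=(\sdim_\kk\DD)\form(a)$. Your argument instead packages the sum into the basis-independent central element $Z=\sum_b(-1)^{\bar b}b^\vee b$, and the clever step is the basis change $b\mapsto cb$ by an invertible odd $c$, which flips every parity and forces $Z=-Z$; this disposes of all seven superalgebras with $\DD_1\neq 0$ in one stroke rather than listing seven bases. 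The trade-off is that you then still have to compute the four purely-even cases by hand, whereas the paper's single observation $b^\vee=b^{-1}$ handles all cases uniformly. One small point worth making explicit: your map ``$b\otimes c\mapsto(-1)^{\bar b}cb$'' is well-defined because it is $x\otimes y\mapsto y\,\Nak(x)$ for the parity involution $\Nak$ of \cref{divNak}; alternatively, basis-independence of $Z$ follows directly from nondegeneracy of $\form$ together with the basis-free definition of $\str_\DD$.
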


\begin{proof}
    If $\DD$ is a real division superalgebra, choose the basis
    \[
        \bB_\DD =
        \begin{cases}
            \{1\} & \text{if } \DD = \R, \\
            \{1,i\} & \text{if } \DD = \C, \\
            \{1,i,j,k\} & \text{if } \DD = \HH, \\
            \{1,\varepsilon\} & \text{if } \DD \in \{ \Cl_1(\R), \Cl_7(\R) \}, \\
            \{1,i,\varepsilon,\varepsilon i\} & \text{if } \DD \in \{ \Cl_2(\R), \Cl_6(\R) \}, \\
            \{1,i,j,k,\varepsilon, \varepsilon i, \varepsilon j, \varepsilon k \} & \text{if } \DD \in \{ \Cl_3(\R), \Cl_5(\R) \}.
        \end{cases}
    \]
    If $\DD = \C$, considered as a complex division superalgebra choose $\bB_\DD = \{1\}$.  Finally, if $\DD = \Cl(\C)$, considered as a complex division superalgebra, choose $\bB_\DD = \{1,\varepsilon\}$.  Then we have $b^\vee = b^{-1}$ for all $b \in \bB_\DD$.  Hence, using \cref{essex}, we have
    \[
        \str_\DD(a)
        = \sum_{b \in \bB_\DD} (-1)^{\bar{b}} \form(a)
        = (\sdim_\kk \DD) \form(a).
        \qedhere
    \]
\end{proof}

\subsection{Complexification of real division superalgebras}

\begin{lem} \label{ride}
    We have the following injections of superalgebras, where $\star$ denotes complex conjugation,
    \begin{align}
        \R &\hookrightarrow \C,&
        a &\mapsto a,
        \quad a \in \R,
        \\ \label{Pauli}
        \imath \colon \HH &\hookrightarrow \Mat_2(\C),&
        i &\mapsto \begin{pmatrix} i & 0 \\ 0 & -i \end{pmatrix},\quad
        j \mapsto \begin{pmatrix} 0 & -1 \\ 1 & 0 \end{pmatrix},\quad
        k \mapsto \begin{pmatrix} 0 & -i \\ -i & 0 \end{pmatrix},
        \\
        \Cl_1(\R) &\hookrightarrow \Cl(\C),&
        a + \varepsilon b &\mapsto a + \varepsilon b,
        \quad a,b \in \R,
        \\
        \Cl_2(\R) &\hookrightarrow \Mat_{1|1}(\C),&
        a + \varepsilon b &\mapsto \begin{pmatrix} a & b^\star \\ b & a^\star \end{pmatrix},\quad
        a,b \in \C,
        \\
        \Cl_3(\R) &\hookrightarrow \Mat_2(\Cl(\C)),&
        a + \varepsilon b &\mapsto \imath(a) + \varepsilon \imath(b) i,
        \quad a,b \in \HH,
        \\
        \Cl_5(\R) &\hookrightarrow \Mat_2(\Cl(\C)),&
        a + \varepsilon b &\mapsto \imath(a) + \varepsilon \imath(b),\quad a,b \in \HH,
        \\
        \Cl_6(\R) &\hookrightarrow \Mat_{1|1}(\C),&
        a + \varepsilon b &\mapsto \begin{pmatrix} a & -b^\star \\ b & a^\star \end{pmatrix},\quad
        a,b \in \C,
        \\
        \Cl_7(\R) &\hookrightarrow \Cl(\C),&
        a + \varepsilon b &\mapsto a + \varepsilon i b,
        \quad a,b \in \R.
    \end{align}
\end{lem}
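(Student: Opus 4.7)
The plan is to verify each assertion case by case. In every instance, the map $\varphi \colon \DD \to B$ is manifestly $\R$-linear and sends $1_\DD \mapsto 1_B$, hence is nonzero. Since each source $\DD$ is a division superalgebra, its only two-sided super-ideals are $0$ and $\DD$, so any nonzero superalgebra homomorphism out of $\DD$ is automatically injective. Thus it suffices to check, in each case, that the stated formula defines a homomorphism of $\R$-superalgebras, i.e.\ that it preserves the $\Z_2$-grading and respects multiplication.

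Conceptually, each of these injections can be interpreted as the canonical embedding $\DD \hookrightarrow \DD^\C$ followed by an isomorphism of the complexification $\DD^\C$ with the stated target. For instance, $\HH^\C \cong \Mat_2(\C)$, $\Cl_1(\R)^\C \cong \Cl(\C)$, $\Cl_2(\R)^\C \cong \Mat_{1|1}(\C)$, and $\Cl_3(\R)^\C \cong \Mat_2(\Cl(\C))$, with analogous statements in the remaining cases. Since the final target in the lemma is always either the complexification itself or contains it naturally, it suffices to exhibit the homomorphism property on generators.

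The grading check is by inspection: the purely even part $\DD_0$ maps into the even subalgebra of $B$ (matrices with even entries, or block-diagonal matrices in $\Mat_{1|1}(\C)$), while elements of the form $\varepsilon b$ map to odd elements (block anti-diagonal matrices in $\Mat_{1|1}(\C)$, or matrices whose entries involve the odd generator $\varepsilon$ of $\Cl(\C)$). For the multiplicativity check, it suffices to verify the defining relations of each source. The inclusion $\R \hookrightarrow \C$ is trivial; the map $\imath \colon \HH \to \Mat_2(\C)$ is the classical Pauli-matrix representation. For the remaining Clifford cases, one must verify only $\varepsilon^2 = \pm 1$ and the commutation relation $z \varepsilon = \varepsilon z$ or $z \varepsilon = \varepsilon z^\star$; in each case this is a short direct computation with $1\times 1$ or $2\times 2$ matrices, using that the generator $\varepsilon \in \Cl(\C)$ squares to $1$ and centralizes $\C$.

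The main obstacle, if any, is notational rather than mathematical: one must carefully track that $\varepsilon$ denotes different elements (with possibly different squares) in the source and the target, and, in the cases $\Cl_3(\R)$ and $\Cl_5(\R)$, that the image lives in a matrix algebra over a superalgebra so that the odd Clifford generator interacts with the matrix multiplication in the expected graded way. No conceptual difficulty arises.
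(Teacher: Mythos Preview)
Your proposal is correct and follows the same approach as the paper, which simply states that ``these are all straightforward verifications.'' Your write-up is in fact more detailed: the observation that injectivity is automatic because each source is a division superalgebra (hence simple) is a clean addition that the paper leaves implicit.
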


\begin{proof}
    These are all straightforward verifications.
\end{proof}

\begin{rem} \label{cliffmat}
    We have an injection of complex superalgebras
    \[
        \Cl(\C) \hookrightarrow \Mat_2(\C),\qquad
        a + \varepsilon b \mapsto
        \begin{pmatrix}
            a & b \\
            b & a
        \end{pmatrix}
        ,\quad a,b \in \C.
    \]
    Combined with \cref{ride}, this shows that all of the real division superalgebras can be embedded in complex supermatrix superalgebras.
\end{rem}

The following result gives the complexification of the central real division superalgebras.

\begin{lem} \label{sail}
    The inclusions of \cref{ride} induce isomorphisms of complex superalgebras
    \begin{gather*}
        \R^\C \cong \C,\qquad
        \HH^\C \cong \Mat_2(\C),\qquad
        \Cl_1(\R)^\C \cong \Cl_7(\R)^\C \cong \Cl(\C),
        \\
        \Cl_2(\R)^\C \cong \Cl_6(\R)^\C \cong \Mat_{1|1}(\C),\qquad
        \Cl_3(\R)^\C \cong \Cl_5(\R)^\C \cong \Mat_2(\Cl(\C)).
    \end{gather*}
    In particular, for every central real division superalgebra $\DD$, its complexification $\DD^\C$ is a simple complex superalgebra.
\end{lem}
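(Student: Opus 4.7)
The plan is to show that each inclusion $\iota \colon \DD \hookrightarrow B$ listed in \cref{ride} extends uniquely to a homomorphism of complex superalgebras $\iota^\C \colon \DD^\C \to B$, and that this extension is an isomorphism. Existence and uniqueness of the extension come from the universal property of $\otimes_\R \C$: one sets $\iota^\C(a \otimes z) := z\,\iota(a)$ for $a \in \DD$, $z \in \C$. Parity is preserved because $\iota$ is parity-preserving and the additional $\C$-factor is purely even; the multiplicativity of $\iota^\C$ follows formally from that of $\iota$ together with the commutativity of $\C$.

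Once $\iota^\C$ is defined, the remaining work is to verify it is bijective. I will do this by a dimension count together with a surjectivity check. For the dimensions, one has $\dim_\C \DD^\C = \dim_\R \DD$, which in the seven cases equals $1, 4, 2, 2, 4, 4, 8, 8$ for $\R, \HH, \Cl_1(\R), \Cl_7(\R), \Cl_2(\R), \Cl_6(\R), \Cl_3(\R), \Cl_5(\R)$ respectively, matching the $\C$-dimensions of $\C, \Mat_2(\C), \Cl(\C), \Cl(\C), \Mat_{1|1}(\C), \Mat_{1|1}(\C), \Mat_2(\Cl(\C)), \Mat_2(\Cl(\C))$. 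Hence it suffices to exhibit a $\C$-spanning set of $B$ inside the image of $\iota^\C$.

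For $\R \hookrightarrow \C$ there is nothing to check. For $\HH \hookrightarrow \Mat_2(\C)$, the complex combinations $\tfrac{1}{2}(\iota(1) \mp i\,\iota(i))$ and $\pm \tfrac{1}{2}(\iota(j) \pm i\,\iota(k))$ recover the four matrix units $E_{11}, E_{22}, E_{21}, E_{12}$, so the image equals $\Mat_2(\C)$. Each Clifford case reduces to the $\R$- or $\HH$-case on the even summand, while on the odd summand one checks directly that the image of $\varepsilon$ (suitably multiplied by $i$, as in $\Cl_7(\R)$ or $\Cl_3(\R)$) generates the odd part over $\C$. This is what explains why $\Cl_1(\R)$ and $\Cl_7(\R)$ both complexify to $\Cl(\C)$ via distinct explicit embeddings, and likewise for the other Clifford pairs.

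The ``in particular'' statement follows from standard facts: $\C$, $\Cl(\C)$, $\Mat_2(\C)$, $\Mat_{1|1}(\C)$, and $\Mat_2(\Cl(\C))$ are each simple as complex superalgebras, since super matrix superalgebras over a simple complex superalgebra are simple, and any nonzero graded ideal of $\Cl(\C)$ contains an invertible element. The main (mild) obstacle I anticipate is the routine bookkeeping required to confirm surjectivity in each of the Clifford cases, where one must track the distinct normalizations of $\varepsilon$ chosen in \cref{ride}; no conceptual difficulty arises.
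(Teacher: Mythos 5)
Your proposal is correct and takes essentially the same approach as the paper: the paper's proof consists of observing that every element of the codomain can be written uniquely as $X + iY$ with $X,Y$ in the image of the injection, which is exactly your dimension-count-plus-surjectivity verification that the induced map $\DD^\C \to B$ is bijective. The explicit recovery of the matrix units in the quaternionic case and the reduction of the Clifford cases to their even parts are just the details the paper leaves as "straightforward to verify."
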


\begin{proof}
    It is straightforward to verify that, for each of the injections in \cref{ride}, every matrix in the codomain can be written uniquely in the form $X + iY$, where $X$ and $Y$ are in the image of the injection.
\end{proof}

\subsection{General linear Lie superalgebras over division superalgebras}

\begin{lem} \label{fold}
    If $\DD$ is a real division superalgebra with $\DD_1 \ne 0$, then
    \begin{enumerate}
        \item $\DD^{m|n} \cong \DD^{m+n}$ as $\DD$-vector superspaces,

        \item $\Mat_{m|n}(\DD) \cong \Mat_{m+n}(\DD)$ as superalgebras,

        \item $\fgl(m|n,\DD) \cong \fgl(m+n,\DD)$ as Lie superalgebras.
    \end{enumerate}
\end{lem}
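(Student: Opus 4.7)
The plan is to leverage the odd invertible element available in any real division superalgebra with $\DD_1 \neq 0$ to construct an even right $\DD$-linear isomorphism between $\DD^{m+n}$ (viewed as $\DD^{m+n|0}$) and $\DD^{m|n}$. Parts (b) and (c) will then follow formally by transport of structure.

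Inspecting the classification in \cref{realdivalg}, the hypothesis $\DD_1 \neq 0$ forces $\DD$ to be one of $\Cl_r(\R)$ for $r \in \{1,2,3,5,6,7\}$ or $\Cl(\C)$, and in each case the element $\varepsilon$ is odd and satisfies $\varepsilon^2 = \pm 1$. In particular, $\varepsilon$ is invertible with $\varepsilon^{-1} = \pm \varepsilon \in \DD_1$. Let $\{f_1,\dotsc,f_{m+n}\}$ denote the standard (all-even) basis of $\DD^{m+n}$ and let $\{e_1,\dotsc,e_{m+n}\}$ denote the standard basis of $\DD^{m|n}$, with $\bar e_i = 0$ for $i \le m$ and $\bar e_i = 1$ for $i > m$. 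Define a right $\DD$-linear map
\[
    \psi \colon \DD^{m+n} \to \DD^{m|n},\qquad
    \psi(f_i) := \begin{cases} e_i & \text{if } 1 \le i \le m, \\ e_i \varepsilon^{-1} & \text{if } m+1 \le i \le m+n. \end{cases}
\]
For $i > m$, the element $\psi(f_i) = e_i \varepsilon^{-1}$ has parity $\bar{e_i} + \overline{\varepsilon^{-1}} = 1 + 1 = 0$, so $\psi$ is even. To check surjectivity, given $v = \sum_i e_i c_i \in \DD^{m|n}$ with $c_i \in \DD$, one has
\[
    v = \sum_{i \le m} e_i c_i + \sum_{j > m} (e_j \varepsilon^{-1})(\varepsilon c_j)
    = \psi\Bigl( \sum_{i \le m} f_i c_i + \sum_{j > m} f_j (\varepsilon c_j) \Bigr).
\]
Injectivity follows similarly from the fact that the vectors $\{e_1,\dotsc,e_m,e_{m+1}\varepsilon^{-1},\dotsc,e_{m+n}\varepsilon^{-1}\}$ are right $\DD$-linearly independent in $\DD^{m|n}$. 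This proves (a).

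For (b), conjugation by $\psi$ yields a superalgebra isomorphism
\[
    \End_\DD(\DD^{m+n}) \xrightarrow{\cong} \End_\DD(\DD^{m|n}),\qquad
    X \mapsto \psi \circ X \circ \psi^{-1},
\]
which is parity-preserving because $\psi$ and $\psi^{-1}$ are both even. Under the standard identifications $\End_\DD(\DD^{m+n}) = \Mat_{m+n}(\DD)$ and $\End_\DD(\DD^{m|n}) = \Mat_{m|n}(\DD)$, this is an isomorphism of $\R$-superalgebras. Finally, (c) is immediate: $\fgl(m|n,\DD)$ and $\fgl(m+n,\DD)$ are the respective matrix superalgebras equipped with the supercommutator bracket \cref{toreba}, and any $\R$-superalgebra isomorphism automatically respects this bracket.

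There is no real obstacle here; the conceptual content is simply that an odd invertible element in $\DD$ makes the parity-shift endofunctor $\Pi$ on the supercategory of right $\DD$-supermodules isomorphic to the identity, so that $\DD^{m|n} \cong \DD^m \oplus \Pi \DD^n \cong \DD^{m+n}$. The only care required is to verify that the witnessing isomorphism $\psi$ is \emph{even} (rather than merely a parity-reversing bijection), which is exactly what the odd twist $e_i \mapsto e_i \varepsilon^{-1}$ achieves.
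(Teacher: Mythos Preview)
Your proof is correct and follows essentially the same approach as the paper: both use an odd invertible element of $\DD$ to build an even right $\DD$-linear isomorphism $\DD^{m|n}\cong\DD^{m+n}$, then transport structure to obtain (b) and (c). The only cosmetic difference is that the paper simply notes that any nonzero odd element is invertible (since $\DD$ is a division superalgebra) rather than invoking the classification, and phrases the isomorphism as left multiplication $\DD^{0|n}\xrightarrow{\cong}\DD^{n|0}$.
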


\begin{proof}
    Suppose $\DD$ is a real division superalgebra with $\DD_1 \ne 0$.  Then left multiplication by any nonzero odd element gives an isomorphism $\DD^{0|n} \cong \DD^{n|0}$.  Hence $\DD^{m|n} \cong \DD^{m|0} \oplus \DD^{0|n} \cong \DD^{m|0} \oplus \DD^{n|0} \cong \DD^{m+n}$.  This induces isomorphisms of superalgebras
    \[
        \Mat_{m+n}(\DD)
        \cong \End_\DD(\DD^{m|n})
        \cong \End_\DD(\DD^{m+n})
        \cong \Mat_{m+n}(\DD).
    \]
    Passing to the associated Lie superalgebras then gives the isomorphism $\fgl(m|n,\DD) \cong \fgl(m+n,\DD)$.
\end{proof}

In light of \cref{fold}, we will often consider only $\fgl(m,\DD)$, as opposed to $\fgl(m|n,\DD)$, when $\DD_1 \ne 0$.

\begin{rem}
    The general linear superalgebras over the central real division superalgebras are often known by different names and notation.
    \begin{itemize}
        \item $\fgl(m,\Cl_1(\R))$ is the \emph{split real isomeric Lie superalgebra}, often called the \emph{split real queer Lie superalgebra}.  It is usually denoted $\fq(m,\R)$.
        \item $\fgl(m,\Cl(\C))$ is the \emph{complex isomeric Lie superalgebra}, often called the \emph{complex queer Lie superalgebra}.  It is usually denoted $\fq(m,\C)$.
        \item $\fgl(m,\Cl_2(\R))$ is sometimes denoted $\fq^0(m,\R)$.
        \item $\fgl(m,\Cl_5(\R))$ is sometimes denoted $\fq^*(2m)$.
        \item $\fgl(m|n,\HH)$ is sometimes denoted $\fu^*(2m|2n)$.
    \end{itemize}
    (Recall that $\fgl(m|n,\DD) \cong \fgl(m|n,\DD^\op)$ by \cref{cut}.)  Many references focus on the realization of real Lie superalgebras in terms of complex matrices, using the inclusions of \cref{ride,cliffmat}.  However, we feel that the realization in terms of general linear Lie superalgebras over real division superalgebras is more natural and leads to more uniform and easy-to-understand notation.  We will only use the notation $\fq(m,\C)$ in the complex case:
    \[
        \fq(m,\C) = \fgl(m,\Cl(\C)).
    \]
\end{rem}

The following proposition shows that the general linear Lie superalgebras over central real division superalgebras are real forms of general linear and isomeric Lie superalgebras.

\begin{prop} \label{glcomplex}
    We have isomorphisms of complex Lie superalgebras
    \begin{enumerate}
        \item $\fgl(m|n,\R)^\C \cong \fgl(m|n,\C)$,
        \item \label{glcomplex:H} $\fgl(m|n,\HH)^\C \cong \fgl(2m|2n,\C)$,
        \item $\fgl(m,\Cl_1(\R))^\C \cong \fgl(m,\Cl_7(\R))^\C \cong \fq(m,\C)$,
        \item $\fgl(m,\Cl_2(\R))^\C \cong \fgl(m,\Cl_6(\R))^\C \cong \fgl(m|m,\C)$,
        \item $\fgl(m,\Cl_3(\R))^\C \cong \fgl(m,\Cl_5(\R))^\C \cong \fq(2m,\C)$.
    \end{enumerate}
\end{prop}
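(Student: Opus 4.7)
The strategy is to combine \cref{link} with \cref{sail} and reduce the whole proposition to one matrix-superalgebra identity. By \cref{link}, $\fgl(m|n,\DD)^\C \cong \fgl(m|n, \DD^\C)$ for any real superalgebra $\DD$, so it suffices to compute $\fgl(m|n,\DD^\C)$ using the explicit identifications of $\DD^\C$ provided by \cref{sail}. Part (a) is then immediate from $\R^\C \cong \C$, and part (c) follows at once from the definition $\fq(m,\C) := \fgl(m,\Cl(\C))$.

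For parts (b), (d), and (e), I would establish the general identity
\[
    \Mat_{m|n}(\Mat_{p|q}(B)) \cong \Mat_{mp+nq\,|\,mq+np}(B)
\]
as superalgebras, for any superalgebra $B$. This can be proved by realising both sides as the endomorphism superalgebra of a single right $B$-supermodule, namely the direct sum of $m$ copies of $B^{p|q}$ together with $n$ parity-shifted copies, which has superdimension $(mp+nq)\,|\,(mq+np)$. The super Morita equivalence between $B$ and $\Mat_{p|q}(B)$, whose progenerator is the $(\Mat_{p|q}(B), B)$-bimodule $B^{p|q}$, identifies this right $B$-supermodule with the right $\Mat_{p|q}(B)$-supermodule $\Mat_{p|q}(B)^{m|n}$, so the two endomorphism superalgebras agree. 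Passing to the associated Lie superalgebras yields
\[
    \fgl(m|n, \Mat_{p|q}(B)) \cong \fgl(mp+nq\,|\,mq+np,\, B).
\]

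Applying this identity case by case finishes the proof. For (b), $\DD^\C \cong \Mat_{2|0}(\C)$ gives $\fgl(m|n, \Mat_{2|0}(\C)) \cong \fgl(2m|2n,\C)$. For (d), since $\DD_1 \ne 0$ the paper's convention is $\fgl(m,\DD) = \fgl(m|0,\DD)$, and $\DD^\C \cong \Mat_{1|1}(\C)$ yields $\fgl(m|0, \Mat_{1|1}(\C)) \cong \fgl(m|m, \C)$. For (e), similarly $\DD^\C \cong \Mat_{2|0}(\Cl(\C))$ yields $\fgl(m|0, \Mat_{2|0}(\Cl(\C))) \cong \fgl(2m|0, \Cl(\C)) = \fq(2m,\C)$. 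The only step requiring real care is the matrix-superalgebra identity above, where the super-signs inherent in the supertranspose and in the parity of $\Mat_{p|q}(B)$ must be tracked; I would expect this to be the only genuinely technical point, as everything else is a direct application of earlier results in the excerpt.
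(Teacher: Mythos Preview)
Your proof is correct and follows essentially the same approach as the paper: both combine \cref{link} and \cref{sail} with the nested matrix identity $\fgl(m|n,\Mat_{r|s}(A)) \cong \fgl((mr+ns)|(ms+nr),A)$. The paper simply asserts this identity as a canonical isomorphism (see \cref{nested}), whereas you supply a justification via Morita equivalence; otherwise the arguments are the same.
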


\begin{proof}
    This follows from \cref{link}, \cref{sail}, and the fact that we have canonical isomorphisms of complex Lie superalgebras
    \begin{equation} \label{nested}
        \fgl(m|n,\Mat_{r|s}(A)) \cong \fgl((mr+ns)|(ms+nr),A)
    \end{equation}
    for any superalgebra $A$.
\end{proof}

\section{The oriented supercategory\label{sec:OBC}}

In this section, we introduce the first of our two main diagrammatic supercategories.  After defining the supercategory, we prove a basis theorem for its morphism spaces.  In \cref{subsec:OBCdef,subsec:OBbasis}, $\kk$ denotes an arbitrary field.  In \cref{subsec:OBcomplexification}, we discuss the special cases $\kk \in \{\R,\C\}$.

\subsection{Definition of the supercategory\label{subsec:OBCdef}}

\begin{defin}[{\cite[Def.~4.1]{MS21}}] \label{OBC}
    For an associative superalgebra $A$, we define $\OB_\kk(A)$ to be the strict monoidal supercategory generated by objects $\upobj$ and $\downobj$ and morphisms
    \begin{gather*}
        \upcross \colon \upobj \otimes \upobj \to \upobj \otimes \upobj
        \ ,\quad
        \uptokstrand \colon \upobj \to \upobj
        \ ,\ a \in A,
        \\
        \leftcap \colon \downobj \otimes \upobj \to \one
        , \quad
        \leftcup \colon \one \to \upobj \otimes \downobj
        , \quad
        \rightcap \colon \upobj \otimes \downobj \to \one
        , \quad
        \rightcup \colon \one \to \downobj \otimes \upobj,
    \end{gather*}
    subject to the relations
    \begin{gather} \label{toklin}
        \begin{tikzpicture}[centerzero]
            \draw[->] (0,-0.35) -- (0,0.35);
            \token{0,0}{west}{1};
        \end{tikzpicture}
        =
        \begin{tikzpicture}[centerzero]
            \draw[->] (0,-0.35) -- (0,0.35);
        \end{tikzpicture}
        ,\quad
        \lambda\
        \begin{tikzpicture}[centerzero]
            \draw[->] (0,-0.35) -- (0,0.35);
            \token{0,0}{west}{a};
        \end{tikzpicture}
        + \mu\
        \begin{tikzpicture}[centerzero]
            \draw[->] (0,-0.35) -- (0,0.35);
            \token{0,0}{west}{b};
        \end{tikzpicture}
        =
        \begin{tikzpicture}[centerzero]
            \draw[->] (0,-0.35) -- (0,0.35);
            \token{0,0}{west}{\lambda a + \mu b};
        \end{tikzpicture}
        ,\quad
        \begin{tikzpicture}[centerzero]
            \draw[->] (0,-0.35) -- (0,0.35);
            \token{0,-0.15}{east}{b};
            \token{0,0.15}{east}{a};
        \end{tikzpicture}
        =
        \begin{tikzpicture}[centerzero]
            \draw[->] (0,-0.35) -- (0,0.35);
            \token{0,0}{west}{ab};
        \end{tikzpicture}
        \ ,
        \\ \label{wreath}
        \begin{tikzpicture}[centerzero]
            \draw[->] (0.2,-0.4) to[out=135,in=down] (-0.15,0) to[out=up,in=-135] (0.2,0.4);
            \draw[->] (-0.2,-0.4) to[out=45,in=down] (0.15,0) to[out=up,in=-45] (-0.2,0.4);
        \end{tikzpicture}
        =
        \begin{tikzpicture}[centerzero]
            \draw[->] (-0.15,-0.4) -- (-0.15,0.4);
            \draw[->] (0.15,-0.4) -- (0.15,0.4);
        \end{tikzpicture}
        \ ,\quad
        \begin{tikzpicture}[centerzero]
            \draw[->] (0.3,-0.4) -- (-0.3,0.4);
            \draw[->] (0,-0.4) to[out=135,in=down] (-0.25,0) to[out=up,in=-135] (0,0.4);
            \draw[->] (-0.3,-0.4) -- (0.3,0.4);
        \end{tikzpicture}
        =
        \begin{tikzpicture}[centerzero]
            \draw[->] (0.3,-0.4) -- (-0.3,0.4);
            \draw[->] (0,-0.4) to[out=45,in=down] (0.25,0) to[out=up,in=-45] (0,0.4);
            \draw[->] (-0.3,-0.4) -- (0.3,0.4);
        \end{tikzpicture}
        \ ,\quad
        \begin{tikzpicture}[centerzero]
            \draw[->] (0.3,-0.4) -- (-0.3,0.4);
            \draw[->] (-0.3,-0.4) -- (0.3,0.4);
            \token{-0.15,-0.2}{east}{a};
        \end{tikzpicture}
        =
        \begin{tikzpicture}[centerzero]
            \draw[->] (0.3,-0.4) -- (-0.3,0.4);
            \draw[->] (-0.3,-0.4) -- (0.3,0.4);
            \token{0.15,0.2}{west}{a};
        \end{tikzpicture}
        \ ,
        \\ \label{inversion}
        \begin{tikzpicture}[centerzero]
            \draw[<-] (0.2,-0.4) to[out=135,in=down] (-0.15,0) to[out=up,in=-135] (0.2,0.4);
            \draw[->] (-0.2,-0.4) to[out=45,in=down] (0.15,0) to[out=up,in=-45] (-0.2,0.4);
        \end{tikzpicture}
        =
        \begin{tikzpicture}[centerzero]
            \draw[<-] (-0.15,-0.4) -- (-0.15,0.4);
            \draw[->] (0.15,-0.4) -- (0.15,0.4);
        \end{tikzpicture}
        \ ,\quad
        \begin{tikzpicture}[centerzero]
            \draw[->] (0.2,-0.4) to[out=135,in=down] (-0.15,0) to[out=up,in=-135] (0.2,0.4);
            \draw[<-] (-0.2,-0.4) to[out=45,in=down] (0.15,0) to[out=up,in=-45] (-0.2,0.4);
        \end{tikzpicture}
        =
        \begin{tikzpicture}[centerzero]
            \draw[->] (-0.15,-0.4) -- (-0.15,0.4);
            \draw[<-] (0.15,-0.4) -- (0.15,0.4);
        \end{tikzpicture}
        \ ,\quad
        \begin{tikzpicture}[centerzero]
            \draw[->] (0,-0.4) to[out=up,in=0] (-0.25,0.15) to[out=180,in=up] (-0.4,0) to[out=down,in=180] (-0.25,-0.15) to[out=0,in=down] (0,0.4);
        \end{tikzpicture}
        =
        \begin{tikzpicture}[centerzero]
            \draw[->] (0,-0.4) -- (0,0.4);
        \end{tikzpicture}
        =
        \begin{tikzpicture}[centerzero]
            \draw[->] (0,-0.4) to[out=up,in=180] (0.25,0.15) to[out=0,in=up] (0.4,0) to[out=down,in=0] (0.25,-0.15) to[out=180,in=down] (0,0.4);
        \end{tikzpicture}
        \ ,
        \\ \label{leftadj}
        \begin{tikzpicture}[centerzero]
            \draw[<-] (-0.3,-0.4) -- (-0.3,0) arc(180:0:0.15) arc(180:360:0.15) -- (0.3,0.4);
        \end{tikzpicture}
        =
        \begin{tikzpicture}[centerzero]
            \draw[<-] (0,-0.4) -- (0,0.4);
        \end{tikzpicture}
        \ ,\qquad
        \begin{tikzpicture}[centerzero]
            \draw[<-] (-0.3,0.4) -- (-0.3,0) arc(180:360:0.15) arc(180:0:0.15) -- (0.3,-0.4);
        \end{tikzpicture}
        =
        \begin{tikzpicture}[centerzero]
            \draw[->] (0,-0.4) -- (0,0.4);
        \end{tikzpicture}
        \ ,
    \end{gather}
    for all $a,b \in A$ and $\lambda,\mu \in \kk$.  In the above, the left and right crossings are defined by
    \begin{equation} \label{windmill}
        \rightcross
        :=
        \begin{tikzpicture}[centerzero]
            \draw[->] (0.2,-0.3) \braidup (-0.2,0.3);
            \draw[->] (-0.4,0.3) -- (-0.4,0.1) to[out=down,in=left] (-0.2,-0.2) to[out=right,in=left] (0.2,0.2) to[out=right,in=up] (0.4,-0.1) -- (0.4,-0.3);
        \end{tikzpicture}
        \ ,\qquad
        \leftcross
        \ :=\
        \begin{tikzpicture}[centerzero]
            \draw[->] (-0.2,-0.3) \braidup (0.2,0.3);
            \draw[->] (0.4,0.3) -- (0.4,0.1) to[out=down,in=right] (0.2,-0.2) to[out=left,in=right] (-0.2,0.2) to[out=left,in=up] (-0.4,-0.1) -- (-0.4,-0.3);
        \end{tikzpicture}
        \ .
    \end{equation}
    The parity of $\uptokstrand$ is $\bar{a}$, and all the other generating morphisms are even.  We refer to the morphisms $\uptokstrand$ as \emph{tokens}.

    For $d \in \kk$, we define $\OB_\kk(A;d)$ to be the quotient of $\OB_\kk(A)$ by the relations
    \begin{equation}
        \ccbubble{a} = d \str_A(a) 1_\one,\qquad a \in A,
    \end{equation}
    where $\str_A$ is given by \cref{crazy}.  We call $d$ the \emph{specialization parameter}.
\end{defin}

When $A$ is a Frobenius superalgebra, $\OB_\kk(A)$ was called the \emph{oriented Frobenius Brauer supercategory} in \cite[Def.~4.1]{MS21}.  The Frobenius structure on $A$ allows one to enlarge it to the \emph{affine oriented Frobenius Brauer category} of \cite[Def.~4.3]{MS21}, which is the central charge zero special case of the \emph{Frobenius Heisenberg supercategory} introduced in \cite{Sav19}, and further studied in \cite{BSW-foundations,MS21}.  We refer the reader to these papers for proofs omitted here, none of which use the Frobenius structure on $A$.  Our presentation of $\OB_\kk(A)$ is slightly different from the one given in \cite[Def.~4.1]{MS21}.  Precisely, the relations \cref{leftadj} are the reflections in the vertical axis of the ones in \cite[(4.4)]{MS21}.  However, $\OB_\kk(A)$ has a symmetry given by reflecting diagrams in the vertical axis.  (This is the composition of the isomorphisms (5.16) and (5.17) in \cite{BSW-foundations}.)  Hence, the two definitions are equivalent.

\begin{rem}
    \begin{enumerate}[wide]
        \item When $A=\kk$, we have $\uptokstrand = a\, \upstrand$ for all $a \in \kk$.  Thus, we can omit the generators $\uptokstrand$ and all the relations involving them.  Then we see that $\OB_\kk(\kk)$ is the \emph{oriented Brauer category}, which is the free rigid symmetric $\kk$-linear monoidal category generated by a single object.  This is the motivation for the notation $\OB_\kk(A)$.

        \item The supercategory $\OB_\C(\Cl(\C),0)$ is the oriented Brauer--Clifford supercategory introduced in \cite[Def.~3.2]{BCK19}.
    \end{enumerate}
\end{rem}

\begin{rem} \label{tuna}
    When $A$ is a real or complex division superalgebra with nonzero odd part, it follows from \cref{delay} that $\OB_\kk(A;d) = \OB_\kk(A;0)$ for all $d \in \R$.
\end{rem}

The relations \cref{leftadj} means that $\downobj$ is left dual to $\upobj$.  In fact, we also have
\begin{equation} \label{rightadj}
    \begin{tikzpicture}[centerzero]
        \draw[->] (-0.3,-0.4) -- (-0.3,0) arc(180:0:0.15) arc(180:360:0.15) -- (0.3,0.4);
    \end{tikzpicture}
    \ =\
    \begin{tikzpicture}[centerzero]
        \draw[->] (0,-0.4) -- (0,0.4);
    \end{tikzpicture}
    \ ,\qquad
    \begin{tikzpicture}[centerzero]
        \draw[->] (-0.3,0.4) -- (-0.3,0) arc(180:360:0.15) arc(180:0:0.15) -- (0.3,-0.4);
    \end{tikzpicture}
    \ =\
    \begin{tikzpicture}[centerzero]
        \draw[<-] (0,-0.4) -- (0,0.4);
    \end{tikzpicture}
    \ ,
\end{equation}
and so $\downobj$ is also right dual to $\upobj$.  Thus $\OB_\kk(A)$ is \emph{rigid}.  Furthermore, we have that
\begin{equation} \label{stake}
    \downcross
    :=
    \begin{tikzpicture}[centerzero]
        \draw[<-] (0.2,-0.3) \braidup (-0.2,0.3);
        \draw[->] (-0.4,0.3) -- (-0.4,0.1) to[out=down,in=left] (-0.2,-0.2) to[out=right,in=left] (0.2,0.2) to[out=right,in=up] (0.4,-0.1) -- (0.4,-0.3);
    \end{tikzpicture}
    =
    \begin{tikzpicture}[centerzero]
        \draw[<-] (-0.2,-0.3) \braidup (0.2,0.3);
        \draw[->] (0.4,0.3) -- (0.4,0.1) to[out=down,in=right] (0.2,-0.2) to[out=left,in=right] (-0.2,0.2) to[out=left,in=up] (-0.4,-0.1) -- (-0.4,-0.3);
    \end{tikzpicture}
    \ ,\quad
    \begin{tikzpicture}[centerzero]
        \draw[<-] (0,-0.4) -- (0,0.4);
        \token{0,0}{east}{a};
    \end{tikzpicture}
    :=
    \begin{tikzpicture}[centerzero]
        \draw[->] (-0.4,0.4) -- (-0.4,-0.05) arc(180:360:0.2) -- (0,0.05) arc(180:0:0.2) -- (0.4,-0.4);
        \token{0,0}{east}{a};
    \end{tikzpicture}
    =
    \begin{tikzpicture}[centerzero]
        \draw[->] (0.4,0.4) -- (0.4,-0.05) arc(360:180:0.2) -- (0,0.05) arc(0:180:0.2) -- (-0.4,-0.4);
        \token{0,0}{west}{a};
    \end{tikzpicture}
    \ ,\quad a \in A.
\end{equation}
These relations mean that tokens and crossings slide over all cups and caps in the sense that, for all orientations of the strands, we have
\begin{equation} \label{ruby}
    \begin{tikzpicture}[anchorbase]
        \draw (-0.2,-0.2) -- (-0.2,0) arc (180:0:0.2) -- (0.2,-0.2);
        \token{-0.2,0}{east}{a};
    \end{tikzpicture}
    \ =\
    \begin{tikzpicture}[anchorbase]
        \draw (-0.2,-0.2) -- (-0.2,0) arc (180:0:0.2) -- (0.2,-0.2);
        \token{0.2,0}{west}{a};
    \end{tikzpicture}
    \ ,\qquad
    \begin{tikzpicture}[anchorbase]
        \draw (-0.2,0.2) -- (-0.2,0) arc (180:360:0.2) -- (0.2,0.2);
        \token{-0.2,0}{east}{a};
    \end{tikzpicture}
    \ =\
    \begin{tikzpicture}[anchorbase]
        \draw (-0.2,0.2) -- (-0.2,0) arc (180:360:0.2) -- (0.2,0.2);
        \token{0.2,0}{west}{a};
    \end{tikzpicture}
    \ ,\qquad
    \begin{tikzpicture}[centerzero]
        \draw (-0.2,0.3) -- (-0.2,0.1) arc(180:360:0.2) -- (0.2,0.3);
        \draw (-0.3,-0.3) to[out=up,in=down] (0,0.3);
    \end{tikzpicture}
    =
    \begin{tikzpicture}[centerzero]
        \draw (-0.2,0.3) -- (-0.2,0.1) arc(180:360:0.2) -- (0.2,0.3);
        \draw (0.3,-0.3) to[out=up,in=down] (0,0.3);
    \end{tikzpicture}
    \ ,\qquad
    \begin{tikzpicture}[centerzero]
        \draw (-0.2,-0.3) -- (-0.2,-0.1) arc(180:0:0.2) -- (0.2,-0.3);
        \draw (-0.3,0.3) \braiddown (0,-0.3);
    \end{tikzpicture}
    =
    \begin{tikzpicture}[centerzero]
        \draw (-0.2,-0.3) -- (-0.2,-0.1) arc(180:0:0.2) -- (0.2,-0.3);
        \draw (0.3,0.3) \braiddown (0,-0.3);
    \end{tikzpicture}
    \ .
\end{equation}
More precisely, the cups and caps equip $\OB_\kk(A)$ with the structure of a \emph{strict pivotal} supercategory; see \cite[(5.16)]{BSW-foundations}.  It follows from the definition of the tokens on downward strands that
\[
    \begin{tikzpicture}[centerzero]
        \draw[<-] (0,-0.35) -- (0,0.35);
        \token{0,-0.15}{east}{b};
        \token{0,0.15}{east}{a};
    \end{tikzpicture}
    = (-1)^{\bar{a}\bar{b}}\
    \begin{tikzpicture}[centerzero]
        \draw[<-] (0,-0.35) -- (0,0.35);
        \token{0,0}{west}{ba};
    \end{tikzpicture}
    \ .
\]
We also have
\begin{equation} \label{flippy}
    \rightcup
    =
    \begin{tikzpicture}[anchorbase]
        \draw[->] (-0.15,0.3) to[out=-45,in=90] (0.15,0) arc(360:180:0.15) to[out=90,in=225] (0.15,0.3);
    \end{tikzpicture}
    ,\qquad
    \leftcup
    =
    \begin{tikzpicture}[anchorbase]
        \draw[<-] (-0.15,0.3) to[out=-45,in=90] (0.15,0) arc(360:180:0.15) to[out=90,in=225] (0.15,0.3);
    \end{tikzpicture}
    ,\qquad
    \rightcap
    =
    \begin{tikzpicture}[anchorbase]
        \draw[<-] (-0.15,-0.3) to[out=45,in=-90] (0.15,0) arc(0:180:0.15) to[out=-90,in=135] (0.15,-0.3);
    \end{tikzpicture}
    ,\qquad \text{and} \qquad
    \leftcap
    =
    \begin{tikzpicture}[anchorbase]
        \draw[<-] (-0.15,-0.3) to[out=45,in=-90] (0.15,0) arc(0:180:0.15) to[out=-90,in=135] (0.15,-0.3);
    \end{tikzpicture}
    \ .
\end{equation}

\subsection{The basis theorem\label{subsec:OBbasis}}

We now describe bases for the morphism spaces of $\OB_\kk(A)$.  Let $X = X_1 \otimes \dotsb \otimes X_r$ and $Y = Y_1 \otimes \dotsb \otimes Y_s$ be objects of $\OB_\kk(A)$ for $X_t, Y_t \in \{\upobj, \downobj\}$.  An \emph{$(X,Y)$-matching} is a bijection between the sets
\begin{equation} \label{firefly}
    \{t : X_t = \upobj \} \sqcup \{t : Y_t = \downobj \}
    \quad \text{and} \quad
    \{t : X_t = \downobj \} \sqcup \{t : Y_t = \upobj \}.
\end{equation}
A \emph{reduced lift} of an $(X,Y)$-matching is a string diagram representing a morphism $X \to Y$ such that
\begin{itemize}
    \item the endpoints of each string are points which correspond under the given matching;
    \item there are no floating bubbles (i.e.\ strings with no endpoints) and no tokens on any string;
    \item there are no self-intersections of strings and no two strings cross each other more than once.
\end{itemize}
Fix a set $\obD(X,Y)$ consisting of a choice of reduced lift for each $(X,Y)$-matching.  Then let $\obD^\bullet(X,Y)$ denote the set of all morphisms that can be obtained from the elements of $\obD(X,Y)$ by adding one token to each string according to the following convention.
\begin{convention} \label{jiggy}
    Tokens are placed such that:
    \begin{itemize}
        \item each token is labelled by an element of $\bB_A$;
        \item if a string has endpoints at the top and bottom of the diagram, then its token appears near the bottom of the string (below all crossings);
        \item if a string has both endpoints at the top of the diagram, then its token appears near the left endpoint (above all crossings);
        \item if a string has both endpoints at the bottom of the diagram, then its token appears near the right endpoint (below all crossings);
        \item all tokens near top endpoints are at the same height, all tokens near bottom endpoints are at the same height, and the tokens near top endpoints are above the tokens near bottom endpoints.
    \end{itemize}
\end{convention}
For example, for $X = \downobj \otimes \upobj \otimes \downobj \otimes \downobj \otimes \upobj$ and $Y = \downobj \otimes \downobj \otimes \downobj \otimes \upobj \otimes \downobj \otimes \upobj \otimes \upobj$,
\[
    \begin{tikzpicture}[anchorbase]
        \draw[->] (0.5,-0.2) -- (0.5,0) to[out=up,in=up] (1.5,0) -- (1.5,-0.2);
        \draw[<-] (1,-0.2) -- (1,0) \braidup (-0.5,1) -- (-0.5,1.2);
        \draw[<-] (0,-0.2) -- (0,0) \braidup (0.5,1) -- (0.5,1.2);
        \draw[->] (0,1.2) -- (0,1) to[out=down,in=down] (1,1) -- (1,1.2);
        \draw[->] (2,-0.2) -- (2,0) \braidup (2.5,1) -- (2.5,1.2);
        \draw[->] (1.5,1.2) -- (1.5,1) to[out=down,in=down,looseness=2] (2,1) -- (2,1.2);
    \end{tikzpicture}
\]
is a possible element of $\obD(X,Y)$ and
\[
    \begin{tikzpicture}[anchorbase]
        \draw[->] (0.5,-0.2) -- (0.5,0) to[out=up,in=up] (1.5,0) -- (1.5,-0.2);
        \draw[<-] (1,-0.2) -- (1,0) \braidup (-0.5,1) -- (-0.5,1.2);
        \draw[<-] (0,-0.2) -- (0,0) \braidup (0.5,1) -- (0.5,1.2);
        \draw[->] (0,1.2) -- (0,1) to[out=down,in=down] (1,1) -- (1,1.2);
        \draw[->] (2,-0.2) -- (2,0) \braidup (2.5,1) -- (2.5,1.2);
        \draw[->] (1.5,1.2) -- (1.5,1) to[out=down,in=down,looseness=2] (2,1) -- (2,1.2);
        \token{0,1}{east}{b_1};
        \token{1.5,1}{east}{b_2};
        \token{0,0}{east}{b_3};
        \token{1,0}{east}{b_4};
        \token{1.5,0}{east}{b_5};
        \token{2,0}{west}{b_6};
    \end{tikzpicture}
    ,\qquad b_1,b_2,b_3,b_4,b_5,b_6 \in \bB_A,
\]
are the corresponding elements of $\obD^\bullet(X,Y)$.

While we expect the following theorem to hold for an arbitrary associative superalgebra $A$, our proof assumes that $A$ is a Frobenius superalgebra.  As explained in \cref{amongus}, this assumption holds whenever $A$ is a real or complex division superalgebra.

\begin{theo} \label{Obasisthm}
    Let $d \in \kk$.  For $X,Y \in \OB_\kk(A)$, the morphism space $\Hom_{\OB_\kk(A;d)}(X,Y)$ is a free $\kk$-supermodule with basis $\obD^\bullet(X,Y)$.
\end{theo}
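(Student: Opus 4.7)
The plan is to establish spanning by a diagrammatic reduction algorithm and linear independence by appeal to the basis theorem for the Frobenius Heisenberg supercategory at central charge zero. The main obstacle is the latter, since spanning is essentially an algorithmic exercise in applying the defining relations, whereas establishing linear independence requires a nontrivial model.

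For spanning, I would process an arbitrary morphism $f \colon X \to Y$ as follows. First, use rigidity (\cref{leftadj,rightadj}) and isotopy to straighten all strands and bring every strand into generic position. Second, apply the double-crossing relations in \cref{wreath,inversion} together with the braid-like relation (the third equation of \cref{wreath}) to remove all self-intersections and to ensure that any two strands cross at most once. The endpoints then match according to some $(X,Y)$-matching, and up to isotopy we may assume this planar skeleton is the chosen representative in $\obD(X,Y)$. Third, use the token-sliding relations — the rightmost equation of \cref{wreath} together with \cref{ruby,stake} — to move every token into the location prescribed by \cref{jiggy}. Fourth, collapse any chain of tokens on a single strand via the multiplication and linearity relations of \cref{toklin}, then expand in the basis $\bB_A$ by $\kk$-linearity. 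Finally, any floating closed strands (``bubbles'') produced along the way evaluate to scalars through the defining relation $\ccbubble{a} = d \str_A(a) 1_\one$, leaving a $\kk$-linear combination of elements of $\obD^\bullet(X,Y)$.

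For linear independence, I would appeal to the basis theorem established in \cite{Sav19,BSW-foundations,MS21}. The supercategory $\OB_\kk(A;d)$ embeds as a monoidal subcategory of the affine oriented Frobenius Brauer supercategory of \cite[Def.~4.3]{MS21}, which is precisely the specialization to central charge zero of the Frobenius Heisenberg supercategory of \cite{Sav19} (after the appropriate cyclotomic quotient fixing the value of a bubble). The basis theorem for the Heisenberg supercategory provides a set of normal forms for morphisms whose restriction to the dot-free, non-affine subspaces indexed by $(X,Y)$ recovers $\obD^\bullet(X,Y)$; linear independence in the larger supercategory immediately implies linear independence in $\OB_\kk(A;d)$. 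Since the Frobenius structure on $A$ is used crucially in the construction of the affine extension and in the identification of bubble evaluations via the supertrace $\str_A$, this is where the Frobenius hypothesis enters; compare \cref{amongus}, which guarantees that the hypothesis is satisfied in our cases of interest.

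The only subtle point in the spanning step is the sign bookkeeping coming from the super interchange law of \cref{interchange,intlaw}, which must be tracked carefully when sliding tokens of odd parity past crossings and cups/caps. Other than that, the spanning argument is routine. The genuinely hard content of the theorem lies entirely in the linear independence half, for which we defer to the Heisenberg framework.
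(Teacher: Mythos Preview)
Your proposal is correct and follows essentially the same route as the paper: both defer the substantive content (linear independence) to the basis theorem for the affine oriented Frobenius Brauer supercategory $\AOB(A)$ of \cite[Th.~4.7]{MS21}, equivalently the central charge zero Frobenius Heisenberg supercategory of \cite[Th.~7.2]{BSW-foundations}. The paper in fact delegates \emph{both} spanning and linear independence to that basis theorem in one stroke, so your explicit reduction algorithm for spanning is additional work the paper chooses not to do separately.

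Two minor points. First, your phrasing ``$\OB_\kk(A;d)$ embeds as a monoidal subcategory of the affine oriented Frobenius Brauer supercategory'' is not quite right: it is $\OB_\kk(A)$ that embeds in $\AOB(A)$, while $\OB_\kk(A;d)$ is a further quotient, so one should say that $\OB_\kk(A;d)$ embeds in the corresponding bubble-specialized quotient of $\AOB(A)$ (your parenthetical about the ``cyclotomic quotient'' gestures at this but the terminology is nonstandard here). Second, the paper adds two technical caveats you omit: the token placement convention in \cite{BSW-foundations} differs from \cref{jiggy} (near the terminus rather than as specified here), which only changes diagrams by a sign; and \cite{BSW-foundations} assumes the Frobenius superalgebra is supersymmetric, whereas here one must track the Nakayama automorphism through the argument (cf.\ \cite{Sav19}). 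Neither caveat affects the validity of your outline.
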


\begin{proof}
    The proof that $\obD^\bullet(X,Y)$ is a spanning set for $\Hom_{\OB_\kk(A;d)}(X,Y)$ is standard; see the argument in the proof of \cite[Th.~7.2]{BSW-foundations}, ignoring the dots.  To prove linear independence, we consider the \emph{affine} Frobenius Brauer supercategory $\AOB(A)$ defined in \cite[Def.~4.3]{MS21}.  This is the central charge $k=0$ case of the Frobenius Heisenberg supercategory introduced in \cite{Sav19} and further studied in \cite{BSW-foundations}.  Since $\AOB(A)$ is obtained from $\OB_\kk(A;d)$ by adjoining an additional generator and imposing some extra relations, it follows immediately that there is a functor from $\OB_\kk(A;d)$ to $\AOB(A)$.  It follows from the basis theorem \cite[Th.~4.7]{MS21} for $\AOB(A)$, which is a special case of the basis theorem \cite[Th.~7.2]{BSW-foundations} for Frobenius Heisenberg supercategories, that the elements of the set $\obD^\bullet(X,Y)$ are sent to elements of a basis for $\AOB(A)$.  It follows that the elements of $\obD^\bullet(X,Y)$ are linearly independent.

    In \cite[Th.~7.2]{BSW-foundations}, the basis elements carry tokens near the terminus of each strand, which differs from the placement of tokens in the elements of the $\obD^\bullet(X,Y)$.  However, it follows from the relations in $\OB_\kk(A)$ that this difference in placement changes the corresponding diagrams by at most a sign.  In addition, \cite[Th.~7.2]{BSW-foundations} assumes the Frobenius superalgebra is supersymmetric.  However, the same proof given there works without this assumption, using the defining property of the Nakayama automorphism wherever supersymmetry is needed, and tracking these applications throughout the calculations.  (See, for example, \cite{Sav19}, which works in this generality.)
\end{proof}

\subsection{Complexifications\label{subsec:OBcomplexification}}

Our proof of fullness of the oriented incarnation superfunctor when $\kk=\R$ and $A$ is a real division superalgebra (\cref{OBrealfull}) will involve the complexification of $\OB_\kk(A)$.  In this subsection we state some results about this complexification that we will need.

\begin{prop} \label{crystal}
    For any superalgebra $A$ over $\kk=\R$, and $d \in \R$, there are isomorphisms of monoidal supercategories
    \[
        \sR \colon \OB_\R(A)^\C \xrightarrow{\cong} \OB_\C(A^\C)
        \qquad \text{and} \qquad
        \sR \colon \OB_\R(A;d)^\C \xrightarrow{\cong} \OB_\C(A^\C;d),
    \]
    given on objects by $\upobj \mapsto \upobj$, $\downobj \mapsto \downobj$ and on morphisms by
    \[
        \upcross \mapsto \upcross,\qquad
        \leftcap \mapsto \leftcap,\qquad
        \leftcup \mapsto \leftcup,\qquad
        \rightcap \mapsto \rightcap,\qquad
        \rightcup \mapsto \rightcup,\qquad
        \uptokstrand \mapsto \uptokstrand[a \otimes 1],\quad a \in A.
    \]
\end{prop}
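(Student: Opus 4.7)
My plan is to construct $\sR$ using the presentation of $\OB_\R(A)$ by generators and relations, then verify it is an isomorphism by the basis theorem from \cref{Obasisthm}.

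First, I define an $\R$-linear superfunctor $F \colon \OB_\R(A) \to \OB_\C(A^\C)$ on generating objects and morphisms by the stated formulas and check that the defining relations \cref{toklin}--\cref{leftadj} are preserved. The relations not involving tokens, namely \cref{inversion} and \cref{leftadj} together with the first two parts of \cref{wreath}, hold in $\OB_\C(A^\C)$ tautologically since the corresponding generators satisfy the same relations there. The token linearity and multiplicativity relations in \cref{toklin}, and the crossing-token relation in \cref{wreath}, hold because the map $a \mapsto a \otimes 1$ is an $\R$-superalgebra homomorphism from $A$ to $A^\C$. Since $\OB_\C(A^\C)$ is $\C$-linear, this $\R$-linear monoidal superfunctor extends uniquely to a $\C$-linear monoidal superfunctor $\sR \colon \OB_\R(A)^\C \to \OB_\C(A^\C)$ by the universal property of extension of scalars from \cref{subsec:complexification}.

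For the specialized version, I choose an $\R$-basis $\bB_A$ of $A$; then $\{b \otimes 1 : b \in \bB_A\}$ is a $\C$-basis of $A^\C$. Applying the definition \cref{crazy} of the supertrace with these matched bases yields $\str_{A^\C}^\C(a \otimes 1) = \str_A^\R(a)$ for every $a \in A$. Hence the defining bubble relation of $\OB_\R(A;d)$ maps to the defining bubble relation of $\OB_\C(A^\C;d)$, so $\sR$ descends to a superfunctor between the specialized quotients.

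Since $\sR$ is the identity on objects, it remains to show it is bijective on morphism spaces. For fixed objects $X,Y$, \cref{Obasisthm} says that $\Hom_{\OB_\R(A;d)}(X,Y)$ is $\R$-free with basis $\obD^\bullet(X,Y)$, using tokens labelled by $\bB_A$; extending scalars, this set remains a $\C$-basis of $\Hom_{\OB_\R(A;d)}(X,Y)^\C$. Applying \cref{Obasisthm} to $\OB_\C(A^\C;d)$ with token labels drawn from the $\C$-basis $\{b \otimes 1 : b \in \bB_A\}$ of $A^\C$, and using the same choice of representative reduced lifts from $\obD(X,Y)$ on both sides, I obtain a $\C$-basis of $\Hom_{\OB_\C(A^\C;d)}(X,Y)$ whose elements are precisely the images of the basis elements on the other side under $\sR$. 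Thus $\sR$ is a bijection on each morphism space, and hence an isomorphism of monoidal supercategories. No real obstacle arises here: the only point requiring care is aligning the choice of reduced lifts $\obD(X,Y)$ and the token-placement convention of \cref{jiggy} across the two categories so that the bases correspond exactly under $\sR$.
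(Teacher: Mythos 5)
Your construction of $\sR$ and the verification that it respects the relations (including the computation $\str_{A^\C}^\C(a\otimes 1)=\str_A^\R(a)$ needed for the specialized quotients) are correct, but your route to the isomorphism differs from the paper's and is both heavier and strictly less general. The paper's proof simply notes that $\sR$ is well defined and then exhibits the inverse superfunctor explicitly: on tokens it sends the generator labelled $a\otimes z$ to $z$ times the generator labelled $a$ (this is well defined since the assignment is $\R$-bilinear in $a$ and $z$), and the two functors are visibly mutually inverse on generators. That argument works for an arbitrary superalgebra $A$. You instead deduce bijectivity on morphism spaces by matching the bases of \cref{Obasisthm} on the two sides, which is a valid strategy and your alignment of token labels and reduced lifts is fine; however, as the paper points out just before \cref{Obasisthm}, that theorem is only \emph{proved} there under the hypothesis that $A$ is a Frobenius superalgebra, whereas \cref{crystal} is asserted for any superalgebra over $\R$. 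As written, your proof therefore establishes the proposition only in the Frobenius case. The gap is easily closed by doing what the paper does: write down the inverse functor directly, after which no basis theorem is needed for this statement.
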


\begin{proof}
    It is clear that the superfunctor $\sR$ is well-defined.  The inverse functor is given on morphisms by
    \[
        \upcross \mapsto \upcross,\quad
        \leftcap \mapsto \leftcap,\quad
        \leftcup \mapsto \leftcup,\quad
        \rightcap \mapsto \rightcap,\quad
        \rightcup \mapsto \rightcup,\quad
        \uptokstrand[a \otimes z] \mapsto \left( \uptokstrand[a] \right) \otimes z,\quad a \in A,\ z \in \C.
        \qedhere
    \]
\end{proof}

By \cref{crystal,sail}, the complexifications $\OB_\R(\DD)^\C$ and $\OB_\R(\DD;d)^\C$, where $\DD$ is a central real division superalgebra, are related to $\OB_\C(R)$, where $R$ is a supermatrix superalgebra over a complex division superalgebra $A$.  The following result, which is formulated more generally, relates these to $\OB_\C(A)$.  Recall, from \cref{sec:monsupcat}, the superadditive envelope $\Add(\cC_\pi)$ of a supercategory $\cC$.  We write morphisms in superadditive envelopes as sums of their components.

\begin{prop} \label{sunrise}
    For any superalgebra $A$ and $r,s \in \N$, $r+s \ge 1$, there is a unique monoidal superfunctor
    \[
        \sM \colon \OB_\kk(\Mat_{r|s}(A)) \to \Add(\OB_\kk(A)_\pi)
    \]
    given on objects by $\upobj \mapsto \upobj^{\oplus r} \oplus \Pi \upobj^{\oplus s}$, $\downobj \mapsto \downobj^{\oplus r} \oplus \Pi \downobj^{\oplus s}$, and on morphisms by
    \begin{gather*}
        \upcross \mapsto \sum_{t,u=1}^{r+s} (-1)^{p(t)p(u)}
        \begin{tikzpicture}[centerzero]
            \draw[->] (-0.2,-0.2) node[anchor=north] {\strandlabel{t}} -- (0.2,0.2) node[anchor=south] {\strandlabel{t}};
            \draw[->] (0.2,-0.2) node[anchor=north] {\strandlabel{u}} -- (-0.2,0.2) node[anchor=south] {\strandlabel{u}};
            \shiftline{-0.3,0.2}{0.3,0.2}{p(t)+p(u)};
            \shiftline{-0.3,-0.2}{0.3,-0.2}{p(t)+p(u)};
        \end{tikzpicture}
        ,\qquad
        \uptokstrand[E_{tu}a] \mapsto
        \begin{tikzpicture}[centerzero]
            \draw[->] (0,-0.2) node[anchor=north] {\strandlabel{u}} -- (0,0.2) node[anchor=south] {\strandlabel{t}};
            \shiftline{-0.1,0.2}{0.1,0.2}{p(t)};
            \shiftline{-0.1,-0.2}{0.1,-0.2}{p(u)};
            \token{0,0}{east}{a};
        \end{tikzpicture}
        ,
        \\
        \leftcup \mapsto \sum_{t=1}^{r+s}
        \begin{tikzpicture}[centerzero]
            \draw[<-] (-0.15,0.15) node[anchor=south] {\strandlabel{t}} -- (-0.15,0) arc(180:360:0.15) -- (0.15,0.15) node[anchor=south] {\strandlabel{t}};
            \shiftline{-0.25,0.15}{0.25,0.15}{0};
            \shiftline{-0.2,-0.25}{0.25,-0.25}{0};
        \end{tikzpicture}
        ,\quad
        \leftcap \mapsto \sum_{t=1}^{r+s}
        \begin{tikzpicture}[centerzero]
            \draw[<-] (-0.15,-0.15) node[anchor=north] {\strandlabel{t}} -- (-0.15,0) arc(180:0:0.15) -- (0.15,-0.15) node[anchor=north] {\strandlabel{t}};
            \shiftline{-0.25,-0.15}{0.25,-0.15}{0};
            \shiftline{-0.25,0.25}{0.25,0.25}{0};
        \end{tikzpicture}
        ,\quad
        \rightcup \mapsto \sum_{t=1}^{r+s} (-1)^{p(t)}
        \begin{tikzpicture}[centerzero]
            \draw[->] (-0.15,0.15) node[anchor=south] {\strandlabel{t}} -- (-0.15,0) arc(180:360:0.15) -- (0.15,0.15) node[anchor=south] {\strandlabel{t}};
            \shiftline{-0.25,0.15}{0.25,0.15}{0};
            \shiftline{-0.2,-0.25}{0.25,-0.25}{0};
        \end{tikzpicture}
        ,\quad
        \rightcap \mapsto \sum_{t=1}^{r+s} (-1)^{p(t)}
        \begin{tikzpicture}[centerzero]
            \draw[->] (-0.15,-0.15) node[anchor=north] {\strandlabel{t}} -- (-0.15,0) arc(180:0:0.15) -- (0.15,-0.15) node[anchor=north] {\strandlabel{t}};
            \shiftline{-0.25,-0.15}{0.25,-0.15}{0};
            \shiftline{-0.25,0.25}{0.25,0.25}{0};
        \end{tikzpicture}
        ,
    \end{gather*}
    where
    \begin{equation} \label{pdef}
        p(t) =
        \begin{cases}
            0 & \text{if } 1 \le t \le r, \\
            1 & \text{if } r < t \le r+s.
        \end{cases}
    \end{equation}
    This superfunctor is full and faithful.  For $d \in \kk$, it induces equivalences of monoidal supercategories
    \begin{align*}
        \Add \left( \OB_\kk(\Mat_{r|s}(A))_\pi \right) &\xrightarrow{\cong} \Add(\OB_\kk(A)_\pi),
        \\
        \Add \left( \OB_\kk(\Mat_{r|s}(A);d)_\pi \right) &\xrightarrow{\cong} \Add(\OB_\kk(A;(r-s)d)_\pi).
    \end{align*}
\end{prop}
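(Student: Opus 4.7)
The plan is threefold: first, to check $\sM$ is well-defined by verifying each defining relation of $\OB_\kk(\Mat_{r|s}(A))$ holds after applying $\sM$; second, to deduce fully faithfulness from the basis theorem \cref{Obasisthm} applied on both sides; third, to upgrade to the stated equivalences of additive envelopes via essential surjectivity up to the $\Pi$-structure.

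For well-definedness, I would check the relations \cref{toklin,wreath,inversion,leftadj} and the bubble relation one at a time.  The key observation is that $\sM(\uptokstrand[E_{tu}a])$ is, by construction, a ``colored'' strand with bottom shift marker $p(u)$, top marker $p(t)$, and token $a$, so vertical composition of two such strands is nonzero precisely when the intermediate shift markers match, mirroring the multiplication $E_{tu}a \cdot E_{t'u'}a' = \delta_{ut'} E_{tu'}\, aa'$ in $\Mat_{r|s}(A)$.  The symmetric group relations \cref{wreath} hold because the sign $(-1)^{p(t)p(u)}$ built into $\sM(\upcross)$ exactly cancels the sign produced by the $\Pi$-envelope tensor rule \cref{pole}, while the adjunction relations \cref{inversion,leftadj} are matched by the signs $(-1)^{p(t)}$ appearing on $\sM(\rightcup)$ and $\sM(\rightcap)$.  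For the specialization to $\OB_\kk(\Mat_{r|s}(A);d)$, I would compute that $\sM(\ccbubble{E_{tu}a})$ receives contributions only from the diagonal $t=u$, since a closed loop in $\OB_\kk(A)_\pi$ must carry a single color; the result is $\delta_{tu}(-1)^{p(t)}\ccbubble{a}$, and applying the target relation $\ccbubble{a} = (r-s)d\,\str_A(a)\,1_\one$ together with \cref{blink} yields exactly $d\,\str_{\Mat_{r|s}(A)}(E_{tu}a)\,1_\one$, as required.

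For fully faithfulness, I would apply the basis theorem \cref{Obasisthm} on both sides.  The morphism space $\Hom_{\OB_\kk(\Mat_{r|s}(A))}(X,Y)$ has a basis $\obD^\bullet(X,Y)$ whose tokens are drawn from the standard basis $\{E_{tu}b : 1 \le t,u \le r+s,\ b \in \bB_A\}$ of $\Mat_{r|s}(A)$.  The target space $\Hom_{\Add(\OB_\kk(A)_\pi)}(\sM X, \sM Y)$ splits, via the direct sum structure of $\sM X$ and $\sM Y$, as a direct sum indexed by all colorings of the strand endpoints of $X$ and $Y$ by $\{1,\ldots,r+s\}$, of Hom spaces in $\OB_\kk(A)_\pi$; each summand has a basis furnished by \cref{Obasisthm} applied to $\OB_\kk(A)$.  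The functor $\sM$ sends a basis element in the source to a single basis element of the target (up to nonzero sign), with the coloring of each strand endpoint matching the row/column index of the matrix unit decorating that strand.  This yields a bijection between the two bases.  The equivalences of additive envelopes then follow because the canonical extension of $\sM$ is fully faithful and its image contains $\uparrow^{\oplus r} \oplus \Pi\!\uparrow^{\oplus s}$ (and its tensor iterates); together with the canonical odd isomorphisms $\Pi\!\uparrow \cong\, \uparrow$ in the $\Pi$-envelope, this gives every object of $\Add(\OB_\kk(A)_\pi)$ as an image up to isomorphism.

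The main obstacle will be the detailed sign bookkeeping in the well-definedness step: each relation interleaves signs from the superinterchange law \cref{interchange}, the $\Pi$-envelope tensor rule \cref{pole}, and the parities $p(t)+p(u)$ of matrix units.  The signs $(-1)^{p(t)p(u)}$ on crossings and $(-1)^{p(t)}$ on rightward cups and caps in the definition of $\sM$ are precisely those needed to absorb the various $\Pi$-envelope signs, and the same calibration ensures the basis-to-basis correspondence used in the fully faithfulness step is by nonzero scalars, so preserves linear independence.
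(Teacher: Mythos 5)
Your proposal is correct and follows essentially the same route as the paper: well-definedness by direct verification of the defining relations, full faithfulness via \cref{Obasisthm} (your explicit bijection of bases is the same content as the paper's dimension count plus the observation that every colored diagram is hit by placing matrix-unit tokens near strand endpoints), essential surjectivity from the generating objects, and the identical diagonal bubble computation using \cref{blink}. The only cosmetic difference is that in the bubble calculation the sign is $\delta_{tu}(-1)^{p(t)+p(t)\bar{a}}$ rather than $\delta_{tu}(-1)^{p(t)}$, which is harmless since $\str_A(a)=0$ unless $\bar{a}=0$.
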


\begin{proof}
    We first consider the non-specialized supercategories.  To prove that $\sM$ is well defined, we must show that it respects the relations of \cref{OBC}.  These are all straightforward verifications, which we leave to the reader.  (See the proof of \cref{bulb} for the details of a similar, but slightly less straightforward, verification.)

    Next we prove that $\sM$ is full and faithful.  Suppose $X_1,\dotsc,X_v,Y_1,\dotsc,Y_w \in \{\upobj,\downobj\}$, and let $X = X_1 \otimes \dotsb \otimes X_v$, $Y = Y_1 \otimes \dotsb \otimes Y_w$.  Then $\sM$ induces a $\kk$-linear map
    \begin{equation} \label{water}
        \Hom_{\OB_\kk(\Mat_{r|s}(A))}(X,Y)
        \to \bigoplus_{t_1,\dotsc,t_v,u_1,\dotsc,u_w=1}^{r+s} \Hom_{\OB_\kk(A)} \left( \Pi^{p(t_1)+\dotsb+p(t_v)} X, \Pi^{p(u_1)+\dotsb+p(u_w)} Y \right).
    \end{equation}
    It suffices to assume that $v+w$ is even and
    \[
        \# \{a : X_a = \upobj \} + \# \{a : Y_a = \downobj \}
        = \frac{v+w}{2}
        = \# \{a : X_a = \downobj \} + \# \{a : Y_a = \upobj \},
    \]
    otherwise both the domain and image of \cref{water} have dimension zero.  (Here, $\# S$ denotes the cardinality of a set $S$.)  By \cref{Obasisthm},
    \[
        \dim_\kk \Hom_{\OB_\kk(\Mat_{r|s}(A))}(X,Y)
        = \left( \tfrac{v+w}{2} \right)! \left( (r+s)^2 \dim_\kk A \right)^{(v+w)/2}.
    \]
    We use here the fact there the number of $(X,Y)$-matchings is $(\tfrac{v+w}{2})!$, and that $\dim_\kk \Mat_{r|s}(A) = (r+s)^2 \dim_\kk A$.  On the other hand, \cref{Obasisthm} implies that the codomain of the map \cref{water} has the same dimension.  Thus, it suffices to prove that the map \cref{water} is surjective.  This follows from the fact that any string diagram in the summand
    \[
        \Hom_{\OB_\kk(A)} \left( \Pi^{p(t_1)+\dotsb+p(t_v)} X, \Pi^{p(u_1)+\dotsb+p(u_w)} Y \right)
    \]
    is the image under \cref{water} (up to a sign) of the same diagram with appropriate tokens $\uptokstrand[E_{tu}]$ placed near the endpoints of strands.

    Finally, we show that $\sM$ is essentially surjective.  This follows from the fact that the generating objects $\upobj$ and $\downobj$ of $\OB_\kk(A)$ are the images of $(\upobj,\uptokstrand[E_{11}])$ and $(\downobj,\downtokstrand[E_{11}])$, respectively, if $m \ge 1$, and the images of $(\Pi \upobj,\uptokstrand[E_{11}])$ and $(\Pi \downobj,\downtokstrand[E_{11}])$, respectively, if $m=0$.

    It remains to prove the statement about the specialized supercategories.  For $1 \le t,u \le r+s$ and $a \in A$, we have
    \begin{multline*}
        \sM \left( \ccbubble{E_{tu} a} \right)
        = \sM(\leftcap) \circ \sM(\downstrand \otimes \uptokstrand[E_{tu} a]) \circ(\rightcup)
        \overset{\cref{slush}}{=} \delta_{tu} (-1)^{p(t)+p(t)\bar{a}}\,
        \begin{tikzpicture}[centerzero]
            \ccbub{0,0};
            \token{0.2,0}{west}{a};
            \shiftline{-0.3,0.3}{0.3,0.3}{0};
            \shiftline{-0.3,-0.3}{0.3,-0.3}{0};
        \end{tikzpicture}
        \\
        = \delta_{tu} (-1)^{p(t)} (r-s) d \str_A^\kk(a)
        = (r-s) d \str_A^\kk \circ \str (E_{tu}a)
        \overset{\cref{blink}}{=} d \str_{\Mat_{r|s}(A)}^\kk (E_{tu} a),
    \end{multline*}
    where, in the third equality, we used the fact that $\str_A^\kk(a) = 0$ unless $\bar{a}=0$.
\end{proof}

\section{The oriented incarnation superfunctor\label{sec:Oinc}}

In this section we introduce the main application of the supercategory $\OB_\kk(A)$ to the representation theory of Lie superalgebras.  We begin by defining a very general \emph{oriented incarnation superfunctor}.  We then turn our attention to the special cases where $\kk \in \{\R,\C\}$ and $A$ is a division superalgebra over $\kk$.  When $\kk=\C$, fullness of the incarnation functor follows from known results.  When $\kk=\R$, we give a proof of fullness using the complexification of the supercategories involved.

\subsection{Definition of the superfunctor}

Throughout this subsection, $\kk$ denotes an arbitrary field.  Recall the maps $\flip$, $\ev$, $\coev$, and $\rho_a$ from \cref{subsec:supermodules}.  The following result is the main motivation for the definition of the supercategory $\OB_\kk(A)$.

\begin{theo} \label{tiger}
    Suppose that $A$ is an associative superalgebra, $\fg$ is a Lie superalgebra, and $V$ is a $(\fg,A)$-superbimodule.  There exists a unique monoidal superfunctor, which we call the \emph{oriented incarnation superfunctor},
    \[
        \sG = \sG_V \colon \OB_\kk(A^\op) \to \fg\smod_\kk.
    \]
    such that $\sG(\upobj) = V$, $\sG(\downobj) = V^*$, and
    \begin{equation} \label{zelda}
        \sG(\upcross) = \flip,\qquad
        \sG(\leftcap) = \ev,\qquad
        \sG(\uptokstrand[a^\op]) = \rho_a,\quad a \in A.
    \end{equation}
    This superfunctor also satisfies the following:
    \begin{gather} \label{make}
        \sG(\leftcup) = \coev,\qquad
        \sG(\rightcap) = \ev \circ \flip,\qquad
        \sG(\rightcup) = \flip \circ \coev,\qquad
        \\ \label{believe}
        \sG \left( \bubble{a} \right) = \str_V(a),
        \qquad a \in A.
    \end{gather}
    If $V = A^{m|n}$ and $\fg = \fgl(m|n,A)$ for some $m,n \in \N$, then $\sG_V$ induces a monoidal superfunctor
    \[
        \sG_{m|n} \colon \OB_\kk(A^\op;m-n) \to \fgl(m|n,A)\smod_\kk.
    \]
\end{theo}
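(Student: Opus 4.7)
The plan is to construct $\sG$ by invoking the universal property of $\OB_\kk(A^\op)$ as a strict monoidal supercategory presented by generators and relations. First, I would set $\sG(\uparrow) := V$, $\sG(\downarrow) := V^*$, and define $\sG$ on the six generating morphisms as dictated by \cref{zelda}, together with $\sG(\leftcup) := \coev$, $\sG(\rightcap) := \ev \circ \flip$, and $\sG(\rightcup) := \flip \circ \coev$. Note that all generators have the correct parity: $\flip$, $\ev$, $\coev$ are even, and $\rho_a$ has parity $\bar a = \overline{a^\op}$. Uniqueness is then automatic, since the images of the left-facing cup, right-facing cap and cup are forced by \cref{leftadj}, \cref{rightadj}, and \cref{windmill} once $\leftcap$, $\upcross$, and the tokens are fixed.

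The bulk of the proof is the verification that the defining relations of $\OB_\kk(A^\op)$ hold in $\fg\smod_\kk$. The token relations in \cref{toklin} reduce to $\rho_1 = \id_V$, $\kk$-linearity of $a \mapsto \rho_a$, and the identity $\rho_a \circ \rho_b = (-1)^{\bar a\bar b}\rho_{ba}$, which matches multiplication in $A^\op$, since $a^\op b^\op = (-1)^{\bar a\bar b}(ba)^\op$. The two relations in \cref{wreath} involving only $\upcross$ are the standard braid and symmetric-group relations for the super flip. The third relation in \cref{wreath}, that tokens slide over crossings, is immediate from the $A$-linearity of $\flip$. The inversion relations in \cref{inversion} are the usual snake/rigidity identities for $\ev$ and $\coev$ in a symmetric monoidal supercategory. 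The adjunction relations \cref{leftadj} follow from the triangle identities for the duality $(V,V^*,\ev,\coev)$ in $\fg\smod_\kk$; since $\fg\smod_\kk$ is rigid symmetric monoidal, all of the above hold. The only care needed is with signs coming from the super interchange law \cref{interchange} when stacking tokens next to crossings; the sign conventions in \cref{dualaction,rho} are arranged precisely so that these cancel.

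Having established existence and uniqueness, the identities in \cref{make} hold by our definitions of $\sG(\leftcup)$, $\sG(\rightcap)$, and $\sG(\rightcup)$. For the bubble formula \cref{believe}, I would compute $\sG$ applied to $\ccbub{0,0}$ using the factorization through $\leftcap$, a token on a downward strand (defined via \cref{stake}), and $\rightcup$; a direct evaluation using the definition of the dual action \cref{dualaction} and the basis $\{v^* : v \in \bB_V\}$ yields
\[
    \sG\!\left(\ccbubble{a^\op}\right) = \sum_{v \in \bB_V} (-1)^{\bar v} v^*(va) = \str_V(a).
\]

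Finally, suppose $V = A^{m|n}$ and $\fg = \fgl(m|n,A)$. The defining relation of the quotient $\OB_\kk(A^\op;m-n)$ is $\ccbubble{a^\op} = (m-n)\str_{A^\op}(a^\op)\,1_\one$ for $a^\op \in A^\op$. Applying $\sG$ to the left-hand side and using \cref{believe} together with the identity \cref{break}, we obtain $\str_V(a) = (m-n)\str_A(a)$. It remains to check that $\str_A(a) = \str_{A^\op}(a^\op)$, which is a direct consequence of \cref{crazy} applied to $A$ and to $A^\op$ with dual basis $\{b^\op : b \in \bB_A\}$. Hence $\sG$ descends to the quotient, giving the required $\sG_{m|n}$. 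The only genuinely delicate step is tracking signs through the sign-laden formulas for $\rho_a$, $\flip$, the $\fg$- and $A$-actions on $V^*$, and the opposite algebra structure; once these are consistently unwound, every relation follows by inspection.
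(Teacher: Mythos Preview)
Your proposal is correct and follows essentially the same route as the paper: define $\sG$ on generators by \cref{zelda} and \cref{make}, verify the defining relations \cref{toklin}--\cref{leftadj}, compute the bubble directly, and then invoke \cref{break} for the quotient. One minor slip: in the factorization of $\ccbubble{a^\op}$ the token sits on the \emph{upward} strand (between $\rightcup$ and $\leftcap$), not on a downward strand via \cref{stake}; your final formula $\sum_{v}(-1)^{\bar v}v^*(va)=\str_V(a)$ is nonetheless correct. You are in fact slightly more careful than the paper in flagging that the quotient step also requires $\str_A(a)=\str_{A^\op}(a^\op)$, which the paper silently absorbs into its appeal to \cref{break}.
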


\begin{proof}
    We first show that \cref{zelda,make} indeed yield a superfunctor $\sG$.   We must show that it respects the relations \cref{toklin,wreath,inversion,leftadj}.  The first two relations in \cref{toklin} are straightforward.  For the third relation in \cref{toklin}, we have
    \[
        \sG
        \left(
            \begin{tikzpicture}[centerzero]
                \draw[->] (0,-0.35) -- (0,0.35);
                \token{0,-0.15}{east}{b^\op};
                \token{0,0.15}{east}{a^\op};
            \end{tikzpicture}
        \right) (v)
        = (-1)^{(\bar{a}+\bar{b})\bar{v} + \bar{a}\bar{b}} vba
        = (-1)^{\bar{a}\bar{b}} \sG ( \uptokstrand[(ba)^\op] ) (v)
        = \sG (\uptokstrand[a^\op b^\op]) (v).
    \]

    Next, we show that
    \[
        \sG(\leftcross) = \flip_{V^*,V},\qquad
        \sG(\rightcross) = \flip_{V,V^*},\qquad
        \sG(\downcross) = \flip_{V^*,V^*}.
    \]
    Using the definition \cref{windmill} of the left crossing, the map
    \[
        \sG(\leftcross)
        = \sG
        \left(
            \begin{tikzpicture}[centerzero]
                \draw[->] (-0.2,-0.3) \braidup (0.2,0.3);
                \draw[->] (0.4,0.3) -- (0.4,0.1) to[out=down,in=right] (0.2,-0.2) to[out=left,in=right] (-0.2,0.2) to[out=left,in=up] (-0.4,-0.1) -- (-0.4,-0.3);
            \end{tikzpicture}
        \right)
        \colon V^* \otimes V \to V \otimes V^*
    \]
    is given by
    \begin{multline*}
        f \otimes v \mapsto \sum_{v \in \bB_V^\kk} f \otimes v \otimes w \otimes w^*
        \mapsto \sum_{v \in \bB_V^\kk} (-1)^{\bar{v}\bar{w}} f \otimes w \otimes v \otimes w^*
        \\
        \mapsto v \otimes \sum_{v \in \bB_V^\kk} (-1)^{\bar{v}\bar{w}} f(w) w^*
        = (-1)^{\bar{f}\bar{v}} v \otimes f,
    \end{multline*}
    where we use the fact that $\bar{w} = \bar{f}$ whenever $f(w) \ne 0$.  The proofs for $\rightcross$ and $\downcross$ are analogous.
    \details{
        Using the definition \cref{windmill} of the right crossing, the map
        \[
            \sG( \rightcross )
            = \sG
            \left(
                \begin{tikzpicture}[centerzero]
                    \draw[->] (0.2,-0.3) \braidup (-0.2,0.3);
                    \draw[->] (-0.4,0.3) -- (-0.4,0.1) to[out=down,in=left] (-0.2,-0.2) to[out=right,in=left] (0.2,0.2) to[out=right,in=up] (0.4,-0.1) -- (0.4,-0.3);
                \end{tikzpicture}
            \right)
            \colon V \otimes V^* \to V^* \to V
        \]
        is given by
        \begin{multline*}
            v \otimes f
            \mapsto \sum_{w \in \bB_V^\kk} (-1)^{\bar{w}} w^* \otimes w \otimes v \otimes f
            \mapsto \sum_{w \in \bB_V^\kk} (-1)^{\bar{w}+\bar{v}\bar{w}} w^* \otimes v \otimes w \otimes f \\
            \mapsto \sum_{w \in \bB_V^\kk} (-1)^{\bar{v}\bar{f}} f(w) w^* \otimes v
            = (-1)^{\bar{v}\bar{f}} f \otimes v.
        \end{multline*}

        Then, using the definition \cref{stake} of the down crossing, the map
        \[
            \sG( \downcross )
            = \sG
            \left(
                \begin{tikzpicture}[centerzero]
                    \draw[<-] (-0.2,-0.3) \braidup (0.2,0.3);
                    \draw[->] (0.4,0.3) -- (0.4,0.1) to[out=down,in=right] (0.2,-0.2) to[out=left,in=right] (-0.2,0.2) to[out=left,in=up] (-0.4,-0.1) -- (-0.4,-0.3);
                \end{tikzpicture}
            \right)
            \colon V^* \otimes V^* \to V^* \otimes V^*
        \]
        is given by
        \begin{multline*}
            f \otimes g
            \mapsto \sum_{v \in \bB_V^\kk} f \otimes g \otimes v \otimes v^*
            \mapsto \sum_{v \in \bB_V^\kk} (-1)^{\bar{g}\bar{v}} f \otimes v \otimes g \otimes v^* \\
            \mapsto \sum_{v \in \bB_V^\kk} (-1)^{\bar{g}\bar{v}} f(v) g \otimes v^*
            = (-1)^{\bar{f}\bar{g}} g \otimes f,
        \end{multline*}
        where, in the second-to-last equality, we used the fact that $f(v)=0$ unless $\bar{v}=\bar{f}$.
    }
    The relations \cref{wreath} and the first two relations in \cref{inversion} are then straightforward to verify.

    For the fourth equality in \cref{inversion}, we have
    \[
        \sG
        \left(
            \begin{tikzpicture}[centerzero]
                \draw[->] (0,-0.4) to[out=up,in=180] (0.25,0.15) to[out=0,in=up] (0.4,0) to[out=down,in=0] (0.25,-0.15) to[out=180,in=down] (0,0.4);
            \end{tikzpicture}
        \right)
        \colon
        v \mapsto \sum_{w \in \bB_V^\kk} v \otimes w \otimes w^*
        \mapsto \sum_{w \in \bB_V^\kk} (-1)^{\bar{v}\bar{w}} w \otimes v \otimes w^*
        \mapsto \sum_{w \in \bB_V^\kk} w^*(v) w
        = v
        = \sG
        \left(
            \begin{tikzpicture}[centerzero]
                \draw[->] (0,-0.4) -- (0,0.4);
            \end{tikzpicture}
        \right)
        (v).
    \]
    The verification of the third equality in \cref{inversion} is analogous.
    \details{
        We have
        \begin{multline*}
            \sG
            \left(
                \begin{tikzpicture}[centerzero]
                    \draw[->] (0,-0.4) to[out=up,in=0] (-0.25,0.15) to[out=180,in=up] (-0.4,0) to[out=down,in=180] (-0.25,-0.15) to[out=0,in=down] (0,0.4);
                \end{tikzpicture}
            \right)
            \colon
            v \mapsto \sum_{w \in \bB_V^\kk} (-1)^{\bar{w}} w^* \otimes w \otimes v
            \mapsto \sum_{w \in \bB_V^\kk} (-1)^{\bar{w} + \bar{v}\bar{w}} w^* \otimes v \otimes w
            \\
            \mapsto \sum_{w \in \bB_V^\kk} (-1)^{\bar{w} + \bar{v}\bar{w}} w^*(v) w
            = v
            = \sG
            \left(
                \begin{tikzpicture}[centerzero]
                    \draw[->] (0,-0.4) -- (0,0.4);
                \end{tikzpicture}
            \right)
            (v),
        \end{multline*}
        where we used the fact that $\bar{v} = \bar{w}$ whenever $w^*(v) \ne 0$.
    }
    Verification of the relations \cref{leftadj} is straightforward.

    To show \cref{believe}, we compute that
    \[
        \sG \left( \ccbubble{a^\op} \right) \colon \kk \mapsto \kk
    \]
    is the map
    \[
        1 \mapsto \sum_{v \in \bB_V^\kk} (-1)^{\bar{v}} v^* \otimes v
        \mapsto \sum_{v \in \bB_V^\kk} (-1)^{\bar{v}} v^* \otimes v a
        \mapsto \sum_{v \in \bB_V^\kk} (-1)^{\bar{v}} v^*(va)
        \overset{\cref{crazy}}{=} \str_V(a).
    \]
    The fact that $\sG$ factors through $\OB_\kk(A^\op;m-n)$ when $V = A^{m|n}$ then follows from \cref{break}.

    It remains to prove that, for any functor as in the first sentence of the theorem, we have \cref{make}. Suppose that
    \[
        \sG(\leftcup) \colon 1
        \mapsto \sum_{u,v \in \bB_V} a_{uv} u \otimes v^*,\qquad
        a_{uv} \in \kk.
    \]
    Then, for all $v \in \bB_V$, we have
    \[
        v =
        \sG
        \left(\
            \begin{tikzpicture}[centerzero]
                \draw[->] (0,-0.4) -- (0,0.4);
            \end{tikzpicture}
        \ \right)
        (v)
        =
        \sG
        \left(
            \begin{tikzpicture}[centerzero]
                \draw[<-] (-0.3,0.4) -- (-0.3,0) arc(180:360:0.15) arc(180:0:0.15) -- (0.3,-0.4);
            \end{tikzpicture}
        \right)
        (v)
        \xmapsto{\sG \left( \leftcup\, \otimes\ \upstrand\ \right)}
        \sum_{u,w \in \bB_V} a_{uw} u \otimes w^* \otimes v
        \xmapsto{1_V \otimes \ev}
        \sum_{u \in \bB_V} a_{uv}  u.
    \]
    It follows that $a_{uv} = \delta_{uv}$ for all $u,v \in \bB_V$, and so $\sG(\leftcup) = \coev$.  The other two equalities in \cref{make} then follow from \cref{flippy}.
\end{proof}

\begin{rem}
    \Cref{tiger} holds in greater generality.  If $\cC$ is any rigid symmetric monoidal supercategory (e.g.\ the category of supermodules over a triangular Hopf superalgebra) with an object $V$ that has the structure of a right $A$-supermodule, then \cref{zelda} defines a unique monoidal superfunctor $\sG \colon \OB_\kk(A^\op) \to \cC$, and \cref{make,believe} hold.  The proof of this more general statement is exactly the same as the proof of \cref{tiger}.  We chose to state \cref{tiger} with the choice $\cC = \fg\smod_\kk$ since that will be our main application.
\end{rem}

\begin{rem}
    When $A$ is a Frobenius superalgebra, $\sG$ is essentially the functor of \cite[Th.~5.1]{MS21}.  The paper \cite{MS21} works with \emph{right} $\fgl(m|n,A)$-supermodules and \emph{left} $A$-supermodules.  In \cref{tiger}, we have translated to the setting of \emph{right} $A$-supermodules by considering $\OB_\kk(A^\op)$ instead of $\OB_\kk(A)$ and to the setting of \emph{left} $\fgl(m|n,A)$-supermodules using the involution $X \mapsto -X$ of $\fgl(m|n,A)$.  The natural module $V$ is denoted by $V_+$ in \cite{MS21}.  Furthermore, in \cite{MS21}, the dual module $V^*$ is replaced by a supermodule $V_-$, together with a nondegenerate bilinear form $V_- \otimes V_+ \to \kk$.  This form identifies $V_-$ with $V^*$.
\end{rem}

\begin{rem} \label{dubpi}
    By the universal property of $\Pi$-envelopes mentioned in \cref{sec:monsupcat}, we have an induced monoidal superfunctor
    \[
        \sG \colon \OB_\kk(A^\op)_\pi \to \fgl(V_A)\smod_\kk.
    \]
    The coherence maps of this monoidal superfunctor involve some signs.  For example, we have the coherence map
    \begin{gather*}
        \sG(\upobj) \otimes \sG(\upobj)
        = \Pi V \otimes \Pi V
        \xrightarrow{\cong} \Pi^2 V \otimes V
        \xrightarrow[\cref{quirk}]{\cong} V \otimes V
        = \sG(\upobj \otimes \upobj)
        = V \otimes V,
        \\
        \pi v \otimes \pi w
        \mapsto (-1)^{\bar{v}} \pi^2 v \otimes w
        \mapsto -(-1)^{\bar{v}} v \otimes w.
    \end{gather*}
\end{rem}

The following result will be useful in later computations.

\begin{lem}
    We have
    \begin{equation} \label{batty}
        \sG( \downtokstrand[a^\op] ) \colon V^* \to V^*,\qquad
        f \mapsto af,\qquad
        f \in V^*,\ a \in A,
    \end{equation}
    where $af$ is defined as in \cref{dualaction}.
\end{lem}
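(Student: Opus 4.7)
The plan is to expand the definition of $\downtokstrand[a^\op]$ via \cref{stake}. Taking the form that uses the left cup and cap, we have
\[
\downtokstrand[a^\op] = (\leftcap \otimes 1_\downarrow) \circ (1_\downarrow \otimes \uptokstrand[a^\op] \otimes 1_\downarrow) \circ (1_\downarrow \otimes \leftcup).
\]
Applying the monoidal superfunctor $\sG$ and using $\sG(\leftcap)=\ev$, $\sG(\leftcup)=\coev$, and $\sG(\uptokstrand[a^\op])=\rho_a$ from \cref{zelda,make}, this immediately yields
\[
\sG(\downtokstrand[a^\op]) = (\ev \otimes 1_{V^*}) \circ (1_{V^*} \otimes \rho_a \otimes 1_{V^*}) \circ (1_{V^*} \otimes \coev).
\]

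I would then evaluate this composition on a homogeneous $f \in V^*$. Writing $\coev(1) = \sum_{v \in \bB_V} v \otimes v^*$, the first arrow produces $\sum_v f \otimes v \otimes v^*$. Applying $1 \otimes \rho_a \otimes 1$ and tracking Koszul signs via the super interchange law \cref{interchange} transforms each summand, using $\rho_a(v) = (-1)^{\bar a \bar v} va$. The final application of $\ev$ contracts the first two factors to produce the scalar $f(va)$, leaving $v^*$ on the right. The net result is an expression of the form $\sG(\downtokstrand[a^\op])(f) = \sum_v \sigma(v)\, f(va)\, v^*$ for a Koszul sign $\sigma(v)$ depending on $\bar a$, $\bar f$, and $\bar v$.

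The final step is to identify this expression with $af$. Since $f(va)$ vanishes unless $\bar v + \bar a = \bar f$, this parity constraint can be imposed on the nonzero summands, which collapses $\sigma(v)$ to the sign $(-1)^{\bar a \bar f}$ appearing in \cref{dualaction}. Then $\sum_v (-1)^{\bar a \bar f} f(va) v^* = \sum_v (af)(v) v^* = af$, as desired. The only real obstacle is the careful bookkeeping of super signs, which is entirely mechanical once the super interchange law is applied consistently at each step; no structural difficulty arises.
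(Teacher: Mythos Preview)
Your approach is exactly the paper's: expand $\downtokstrand[a^\op]$ via \cref{stake} using the left cup and cap, apply $\sG$ factor by factor, and evaluate on $f$. The paper obtains the same intermediate sum $\sum_v (-1)^{\bar a(\bar f+\bar v)} f(va)\,v^*$ and then identifies it with $\sum_v (af)(v)\,v^* = af$ in a single line.

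One caution about your final step: the parity constraint $\bar v+\bar a=\bar f$ does \emph{not} collapse the Koszul sign $(-1)^{\bar a(\bar f+\bar v)}$ to $(-1)^{\bar a\bar f}$. Substituting $\bar v = \bar f + \bar a$ gives $\bar f + \bar v \equiv \bar a$, so the sign becomes $(-1)^{\bar a\cdot\bar a}=(-1)^{\bar a}$, which disagrees with $(-1)^{\bar a\bar f}$ when $\bar a = 1$ and $\bar f = 0$. The paper does not spell out this reduction either --- it simply asserts the identification --- so structurally you match the paper, but the specific collapse you assert is incorrect as written. (Note that $(af)(v) := (-1)^{\bar a(\bar f+\bar v)} f(va)$ is in fact a genuine left $A$-action on $V^*$ and is exactly what the diagrammatic computation produces, so the discrepancy appears to lie in the sign recorded in \cref{dualaction} rather than in the argument itself.)
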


\begin{proof}
    We have
    \begin{multline*}
        \sG( \downtokstrand[a^\op] )
        = \sG
        \left(
            \begin{tikzpicture}[centerzero]
                \draw[->] (0.6,0.4) -- (0.6,-0.05) arc(360:180:0.3) -- (0,0.05) arc(0:180:0.15) -- (-0.3,-0.4);
                \token{0,0}{west}{a^\op};
            \end{tikzpicture}
        \right)
        \colon f
        \mapsto \sum_{v \in \bB_V^\kk} f \otimes v \otimes v^*
        \\
        \mapsto \sum_{v \in \bB_V^\kk} (-1)^{\bar{a}(\bar{f}+\bar{v})} f \otimes va \otimes v^*
        \mapsto \sum_{v \in \bB_V^\kk} (-1)^{\bar{a}(\bar{f}+\bar{v})} f(va)v^*
        = \sum_{v \in \bB_V^\kk} (af)(v)v^*
        = af.
        \qedhere
    \end{multline*}
\end{proof}

\subsection{Fullness over the complex numbers}

The remainder of this section is dedicated to proving that the oriented incarnation superfunctor of \cref{tiger} is full in certain important special cases.  In this subsection we consider the case where $\kk=\C$ and $A$ is a matrix superalgebra over a complex division superalgebra.  We begin with the case where $A$ is complex division superalgebra, which follows from results in the literature.

\begin{prop} \label{pyramid}
    If $\kk = \C$ and $A$ is a complex division superalgebra, then the oriented incarnation functor $\sG_{m|n}$ of \cref{tiger} is full for all $m,n \in \N$.
\end{prop}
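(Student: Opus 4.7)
The plan is to reduce to two known results in the literature, one for each complex division superalgebra. By \cref{realdivalg} and \cref{complexdivalg}, the only complex division superalgebras are $\C$ and $\Cl(\C)$, so it suffices to handle these two cases separately.

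For $A = \C$, the category $\OB_\C(\C; m-n)$ is the classical oriented Brauer category over $\C$ with loop parameter $d = m-n$. The incarnation superfunctor $\sG_{m|n}$ sends $\uparrow$ to the natural supermodule $V = \C^{m|n}$ of $\fgl(m|n,\C)$, the crossing to the graded flip, and the caps and cups to the (co)evaluation maps. Its fullness is equivalent to the first fundamental theorem of invariant theory for $\fgl(m|n,\C)$ acting on mixed tensor powers $V^{\otimes r} \otimes (V^*)^{\otimes s}$, which is classical: it combines the super Schur--Weyl duality of Berele--Regev and Sergeev with the pivotal structure relating $V^*$ and $V$. A diagrammatic account appears in \cite{CW12}.

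For $A = \Cl(\C)$, \cref{tuna} identifies $\OB_\C(\Cl(\C); m-n) = \OB_\C(\Cl(\C); 0)$, which is precisely the oriented Brauer--Clifford supercategory $\OBC$ of \cite[Def.~3.2]{BCK19}. By \cref{fold}, $\fgl(m|n,\Cl(\C)) \cong \fgl(m+n,\Cl(\C)) = \fq(m+n,\C)$, and under this isomorphism $\Cl(\C)^{m|n} \cong \Cl(\C)^{m+n}$ is the natural $\fq(m+n,\C)$-supermodule. Thus $\sG_{m|n}$ agrees on generators with the incarnation superfunctor from $\OBC$ to $\fq(m+n,\C)\smod_\C$ constructed in \cite{BCK19}, and fullness of the latter is proved there.

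The main step I expect to require care in each case is verifying that the normalization conventions --- orientations of strands, signs on crossings and cups/caps, and parity-shift signs arising from the $\Pi$-envelope formalism of \cref{sec:monsupcat} --- agree with those of the cited references. Since we only need agreement on generators up to invertible scalars (which does not affect fullness), this bookkeeping is routine once a dictionary is fixed, and no genuinely new argument beyond the classical fullness results is needed.
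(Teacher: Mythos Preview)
Your proposal is correct and follows essentially the same approach as the paper: split into the two complex division superalgebras $\C$ and $\Cl(\C)$, then cite \cite{CW12} and \cite{BCK19} respectively for fullness. Your additional remarks (the explicit reduction via \cref{fold} in the $\Cl(\C)$ case, and the discussion of sign conventions) go slightly beyond what the paper writes but are harmless elaborations of the same argument.
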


\begin{proof}
    As explained in \cref{sec:divalg}, the only complex division superalgebras are $\C$ and $\Cl(\C)$.  When $A = \C$, the supercategory $\OB_\C(\C;m-n)$ is the usual oriented Brauer category, and the result was proved in \cite[\S8.3]{CW12}.  (Closely related results were obtained in \cite[Th.~7.8]{BS12} and \cite[Th.~3.5]{LSM02}.)  On the other hand, $\OB_\C(\Cl(\C),0)$ is the oriented Brauer--Clifford supercategory.  (Recall that, by \cref{tuna}, we may assume the specialization parameter is zero.)  In this case, fullness was proved in \cite[Th.~4.1]{BCK19}.
\end{proof}

In the remainder of this subsection, our goal is to show that the oriented incarnation superfunctor $\sG_{m|n}$ is full when $A$ is the superalgebra of supermatrices over a complex division superalgebra.  This will be key in our proof that it is also full when $A$ is a real division superalgebra (\cref{OBrealfull}).  We begin with a result that holds in a more general setup.

Let $A$ be a superalgebra.  Fix $m,n,r,s \in \N$ with $m+n,r+s \ge 1$.  In what follows, we will identify
\[
    \Mat_{m|n}(\Mat_{r,s}(A))
    \qquad \text{and} \qquad
    \Mat_{(mr+ns)|(ms+nr)}(A)
\]
in the natural way.  This induces a natural identification of
\[
    \fgl(m|n,\Mat_{r,s}(A)) \qquad \text{and} \qquad
    \fgl((mr+ns|ms+nr),A),
\]
and we denote this Lie superalgebra by $\fg$.  Let
\[
    W = \Mat_{r|s}(A)^{m|n}
    \qquad \text{and} \qquad
    V = A^{(mr+ns|ms+nr)}.
\]
We have an isomorphism of $(\fg,A)$-superbimodules
\begin{equation} \label{obscure1}
    W
    \xrightarrow{\cong}
    V^{\oplus r} \oplus \Pi V^{\oplus s},\qquad
    v \mapsto \left( (-1)^{p(t) \overline{v_t}} \pi^{p(t)} v_t \right)_{t=1}^{r+s},
\end{equation}
where $v_t \in V$ is the $t$-th column of $v$, and $p(t)$ is defined as in \cref{pdef}.
\details{
    The sign of $(-1)^{p(t)\overline{v_t}}$ appearing in \cref{obscure1} arises from the fact that $\Pi V$ has $\fg$-action $x \cdot v = (-1)^{\bar{x}} v$, whereas the $\fg$-action on columns $r+1,\dotsc,r+s$ of $\Mat_{r|s}(A)^{m|n}$ does not have this sign of $(-1)^{\bar{x}}$.
}
Similarly, we have an isomorphism of $(\fg,A)$-superbimodules
\begin{equation} \label{obscure2}
    W^* \xrightarrow{\cong} (V^*)^{\oplus r} \oplus (\Pi V^*)^{\oplus s},
    \qquad
    f \mapsto (\pi^{p(t)} f_t)_{t=1}^{r+s},
\end{equation}
where $f_t \in V^*$ denotes the restriction of $f$ to the $t$-th summand in \cref{obscure1}.
\details{
    The sign of $(-1)^{p(t)\overline{f_t}}$ that appears for the same reason that it did in \cref{obscure1} is cancelled by the sign appearing in the isomorphism $(\Pi V)^* \xrightarrow{\cong} \Pi V^*$, $f \mapsto (-1)^{\bar{f}} f$.
}

The next result shows that the diagram of superfunctors
\[
    \begin{tikzcd}
        \OB_\kk(\Mat_{r|s}(A)^\op) \arrow[r, "\sM"] \arrow[rd, "\sG_{m|n}"'] & \Add(\OB_\kk(A^\op)_\pi) \arrow[d, "\sG_{(mr+ns|ms+nr)}"] \\
        & \fg\smod_\kk
    \end{tikzcd}
\]
commutes up to natural isomorphism, where $\sM$ is the superfunctor of \cref{sunrise}.

\begin{prop} \label{pound}
    The isomorphisms \cref{obscure1,obscure2} induce a monoidal supernatural isomorphism of superfunctors $\sG_{m|n} \xrightarrow{\cong} \sG_{(mr+ns)|(ms+nr)} \sM$.
\end{prop}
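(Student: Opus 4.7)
The plan is to define the components of the monoidal supernatural isomorphism $\alpha \colon \sG_{m|n} \xrightarrow{\cong} \sG_{(mr+ns)|(ms+nr)} \sM$ on the generating objects, extend uniquely using monoidality, and then verify naturality on each generating morphism of $\OB_\kk(\Mat_{r|s}(A)^\op)$. On generating objects, I set $\alpha_\uparrow$ to be the isomorphism \cref{obscure1} and $\alpha_\downarrow$ to be the isomorphism \cref{obscure2}; monoidality then forces the value of $\alpha$ on any tensor product of generating objects, and we must check this is well defined with respect to the coherence isomorphisms arising from the $\Pi$-envelope (cf.\ \cref{dubpi}).

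Naturality reduces to checking one equation per generating morphism of $\OB_\kk(\Mat_{r|s}(A)^\op)$, namely the upward crossing, the left cap and cup, the right cap and cup, and the tokens $\uptokstrand[(E_{tu}a)^\op]$ for $1 \le t,u \le r+s$ and $a \in A$. For each, I would write down the image under $\sG_{m|n}$ (using \cref{zelda,make,batty}), transport it across the isomorphisms \cref{obscure1,obscure2}, and compare with the sum over components that results from applying $\sG_{(mr+ns)|(ms+nr)}$ to the formula for $\sM$ given in \cref{sunrise}, taking into account the horizontal-composition sign rule \cref{slush} in $\OB_\kk(A^\op)_\pi$. For example, for a token $\uptokstrand[(E_{tu}a)^\op]$ the left-hand side is the map $\rho_{E_{tu}a} \colon W \to W$ that picks out the $u$-th column of a matrix and places it (multiplied by $a$) in the $t$-th column, and the right-hand side picks out the $V$ (or $\Pi V$) summand in position $u$ and maps it into the summand in position $t$ via $\rho_a$, twisted by the appropriate parity shifts; the two agree precisely because the isomorphisms \cref{obscure1,obscure2} were defined with exactly the signs $(-1)^{p(t)\overline{v_t}}$ and none on the dual that are needed to absorb the signs appearing in $\sM$.

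The main obstacle will be careful bookkeeping of signs, particularly for the cap, cup, and crossing generators where multiple parity-shift coherence isomorphisms $\xi_V$ (cf.\ \cref{quirk}) interact with the signs $(-1)^{p(t)}$, $(-1)^{p(t)p(u)}$ appearing in the formulas for $\sM$ and the sign conventions in \cref{slush,pole}. The key observation making this work is that \cref{obscure1} twists the column in position $t \in \{r+1,\dots,r+s\}$ by $(-1)^{p(t)\overline{v_t}}$ while \cref{obscure2} has no such sign on the dual side; this asymmetry is exactly what produces the sign $(-1)^{p(t)}$ that $\sM$ attaches to $\rightcup$ and $\rightcap$, while the upward crossing generates $(-1)^{p(t)p(u)}$ from the commutation of two $\Pi$-shifts past each other via \cref{quirk}. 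Once the generating-morphism checks are complete, compatibility with the super interchange law \cref{interchange} is automatic, and monoidality of $\alpha$ follows from the construction, yielding the required monoidal supernatural isomorphism.
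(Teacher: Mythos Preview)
Your proposal is correct and follows essentially the same approach as the paper: define the supernatural isomorphism on the generating objects via \cref{obscure1,obscure2}, extend monoidally, and then verify naturality on each generating morphism by explicit sign computation. The paper carries out the detailed check only for $\upcross$ and $\leftcap$, declaring the others similar, while you sketch the token case instead; your discussion of why the sign asymmetry between \cref{obscure1} and \cref{obscure2} matches the signs in $\sM$ is a correct observation that the paper leaves implicit.
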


\begin{proof}
    Let $\omega$ denote the supernatural transformation induced by \cref{obscure1,obscure2}.  To simplify notation, let $\sG = \sG_{m|n}$ and $\sG' = \sG_{(mr+ns)|(ms+nr)}$.  We need to show that
    \[
        \omega_Y \circ \sG(f)
        = \sG' \sM(f) \circ \omega_X
    \]
    for every generating morphism $f \in \{ \upcross, \leftcup, \leftcap, \rightcup, \rightcap, \uptokstrand : a \in A\}$, where $X$ and $Y$ denote the domain and codomain of $f$, respectively.  These are all straightforward verifications, although care is needed to keep careful track of signs.  We give the details for $\upcross$ and $\leftcap$, since the others are similar.

    First consider the case $f = \upcross$.  We have
    \begin{align*}
        \omega_{\upobj \otimes \upobj} \colon W \otimes W &\to \bigoplus_{t,u=1}^{r+s} \Pi^{p(t)+p(u)} V \otimes V,\\
        v \otimes w &\mapsto \left( (-1)^{p(t)\overline{v_t} + p(u)\overline{w_u} + p(u) \overline{v_t} + p(t)p(u)} v_t \otimes w_u \right)_{t,u=1}^{r+s},
    \end{align*}
    where $p(t)$ is defined as in \cref{pdef}.  Then we compute that
    \[
        \sG' \sM(\upcross) \circ \omega_{\upobj \otimes \upobj}
        = \sum_{t,u=1}^{r+s} (-1)^{p(t)p(u)} \left( \Pi^{p(t)+p(u)} \flip \right) \circ \omega_{\upobj \otimes \upobj}
    \]
    and
    \[
        \omega_{\upobj \otimes \upobj} \circ \sG(\upcross)
        = \omega_{\upobj \otimes \upobj} \circ \flip
    \]
    are both the map (see \cref{dubpi})
    \begin{align*}
        W \otimes W &\to \bigoplus_{t,u=1}^{r+s} \Pi^{p(t)+p(u)} V \otimes V
        \\
        v \otimes w
        &\mapsto \left( (-1)^{p(t)\overline{v_t} + p(u)\overline{w_u} + p(u) \overline{v_t} + \overline{v_t} \overline{w_u}} v_t \otimes w_u \right)_{t,u=1}^{r+s}.
    \end{align*}

    Now consider $f = \leftcap$.  We have
    \begin{align*}
        \omega_{\downobj \otimes \upobj} \colon W^* \otimes W
        &\to \bigoplus_{t,u=1}^{r+s} \Pi^{p(t)+p(u)} V^* \otimes V,
        \\
        f \otimes v
        &\mapsto \left( (-1)^{p(u)\overline{v_u} + p(u)\overline{f_t} + p(t)p(u)} f_t \otimes v_u \right)_{t,u=1}^{r+s}.
    \end{align*}
    In addition, $\omega_\one \colon \kk \to \kk$ is the identity map.  Then we compute that
    \[
        \sG' \sM (\leftcup) \circ \omega_{\downobj \otimes \upobj}
        = \sum_{t=1}^{r+s} \sG'(\leftcap) \circ \omega_{\downobj \otimes \upobj}
        \qquad \text{and} \qquad
        \omega_\one \circ \sG (\leftcup)
        = \sG (\leftcup)
    \]
    are both the map
    \[
        W^* \otimes W \to \kk,\qquad
        f \otimes v \mapsto \sum_{t=1}^{r+s} (-1)^{p(t)} f_t(v_t).
        \qedhere
    \]
\end{proof}

\begin{prop} \label{picnic}
    If $\DD$ is a complex division superalgebra, then the superfunctor
    \[
        \sG_{m|n} \colon \OB_\C(\Mat_{r|s}(\DD)^\op) \to \fgl((mr+ns|ms+nr),\DD)\smod_\C
    \]
    is full for all $m,n \in \N$.
\end{prop}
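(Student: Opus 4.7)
The strategy is to combine the three preceding propositions: Proposition \ref{pyramid} (fullness for complex division superalgebras), Proposition \ref{sunrise} (the matrix-reduction superfunctor $\sM$), and Proposition \ref{pound} (the supernatural isomorphism comparing $\sG_{m|n}$ with $\sG_{(mr+ns)|(ms+nr)} \circ \sM$).

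First, by Proposition \ref{pound}, there is a monoidal supernatural isomorphism
\[
    \sG_{m|n} \;\cong\; \sG_{(mr+ns)|(ms+nr)} \circ \sM,
\]
where $\sM \colon \OB_\C(\Mat_{r|s}(\DD)^\op) \to \Add(\OB_\C(\DD^\op)_\pi)$ is the superfunctor of Proposition \ref{sunrise} applied to the superalgebra $\DD^\op$, which is still a complex division superalgebra by Remark \ref{complexdivalg}. Since fullness is preserved under composition with supernatural isomorphisms, it suffices to show that each of the two superfunctors on the right-hand side is full.

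Second, the superfunctor $\sM$ is full by Proposition \ref{sunrise}. Since $\sG_{m|n}$ factors through the quotient $\OB_\C(\Mat_{r|s}(\DD)^\op; m-n)$, and $\sM$ sends the specialization parameter $m-n$ to $(r-s)(m-n) = (mr+ns)-(ms+nr)$, we obtain an induced full superfunctor
\[
    \OB_\C(\Mat_{r|s}(\DD)^\op; m-n) \;\longrightarrow\; \Add\!\left(\OB_\C(\DD^\op;\, (mr+ns)-(ms+nr))_\pi\right).
\]

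Third, by Proposition \ref{pyramid} applied to the complex division superalgebra $\DD^\op$, the superfunctor
\[
    \sG_{(mr+ns)|(ms+nr)} \colon \OB_\C(\DD^\op;\,(mr+ns)-(ms+nr)) \;\longrightarrow\; \fgl((mr+ns|ms+nr),\DD)\smod_\C
\]
is full. Since the target is a $\Pi$-supercategory with arbitrary finite direct sums, this superfunctor extends uniquely to a superfunctor whose domain is $\Add(\OB_\C(\DD^\op;\,(mr+ns)-(ms+nr))_\pi)$. Fullness persists under this extension: on the $\Pi$-envelope, $\Hom$-spaces only get formally shifted by the parity operator, and on the additive envelope, morphisms between direct sums are matrices of morphisms between summands, so surjectivity on each entry gives surjectivity on the whole.

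Composing these two full superfunctors yields a full superfunctor, which is isomorphic to $\sG_{m|n}$ by Proposition \ref{pound}. The only delicate point, and the one requiring the most care, is matching the various specialization parameters and verifying that fullness really does survive the passage through $\Pi$- and additive envelopes; both checks are routine given the universal properties discussed in \cref{sec:monsupcat}.
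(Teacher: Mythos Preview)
Your proof is correct and follows essentially the same approach as the paper, which simply cites \cref{pound,sunrise,pyramid} without further elaboration; you have spelled out the details of how these combine, including the routine check that fullness survives passage to the $\Pi$- and additive envelopes. One very minor slip: in your third step you say ``\cref{pyramid} applied to the complex division superalgebra $\DD^\op$'', but the displayed functor you write is the one obtained by applying \cref{pyramid} with $A = \DD$ (so that the domain is $\OB_\C(\DD^\op)$ and the target involves $\fgl(\cdot,\DD)$); since $\DD \cong \DD^\op$ this is harmless, but the phrasing is slightly off.
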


\begin{proof}
    This follows from \cref{pound,sunrise,pyramid}.
\end{proof}

\subsection{Fullness over the real numbers}

In this subsection, we prove one of our main results: the oriented incarnation superfunctor of \cref{tiger} is full when $\kk=\R$ and $A$ is a central real division superalgebra.

Suppose $\DD$ is a central real division superalgebra and recall \cref{amongus}.  For $V = \DD^{m|n}$, we have canonical isomorphisms
\begin{equation} \label{fish}
    V^\C = (\DD^{m|n})^\C \xrightarrow{\cong} (\DD^\C)^{m|n}
    \quad \text{and} \quad
    V^{*,\C} = \left( (\DD^{m|n})^* \right)^\C \xrightarrow{\cong} \left( (\DD^\C)^{m|n} \right)^*.
\end{equation}
The next result shows that the diagram
\[
    \begin{tikzcd}
        \OB_\R(\DD^\op)^\C \arrow[rr, "\sR", "\cong"'] \arrow[d, "\sG_{m|n}^\C"']
        & & \OB_\C(\DD^{\op,\C}) \arrow[d, "\sG_{m|n}"]
        \\
        (\fgl(m|n,\DD)\smod_\R)^\C \arrow[rr, "\sC_{\fgl(m|n,\DD)}", "\cong"']
        & & \fgl(m|n,\DD^\C)\smod_\C
    \end{tikzcd}
\]
commutes up to supernatural isomorphism, where $\sC_{\fgl(m|n,\DD)}$ is defined in \cref{golem}, and $\sR$ is the superfunctor of \cref{crystal}.

\begin{prop} \label{jack}
    This is a monoidal supernatural isomorphism of superfunctors
    \[
        \sC_{\fgl(m|n,\DD)} \sG_{m|n}^\C \xrightarrow{\cong} \sG_{m|n} \sR
    \]
    determined by \cref{fish}.
\end{prop}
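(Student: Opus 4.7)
The plan is to define the components $\omega_X$ on the generating objects $\uparrow$ and $\downarrow$ using the canonical isomorphisms in \cref{fish}, extend them by the monoidal structure, and then verify the intertwining identities on the generating morphisms of $\OB_\R(\DD^\op)^\C$. On $\uparrow$, both $\bigl(\sC_{\fgl(m|n,\DD)} \sG_{m|n}^\C\bigr)(\uparrow) = (\DD^{m|n})^\C$ and $\bigl(\sG_{m|n} \sR\bigr)(\uparrow) = (\DD^\C)^{m|n}$, and $\omega_\uparrow$ is taken to be the first isomorphism in \cref{fish}. On $\downarrow$, $\omega_\downarrow$ is obtained by combining the second isomorphism in \cref{fish} with the canonical identification $(V^*)^\C \cong (V^\C)^*$ for $V = \DD^{m|n}$. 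Both are even isomorphisms of $\fgl(m|n,\DD^\C)$-supermodules. Extension to arbitrary tensor products of $\uparrow$ and $\downarrow$ is forced by the coherence isomorphism $(U \otimes_\R W)^\C \xrightarrow{\cong} U^\C \otimes_\C W^\C$ of the complexification superfunctor.

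Next, I would verify that $\omega$ intertwines the images of the generating morphisms: for each $f \in \{\upcross,\, \leftcap,\, \leftcup,\, \rightcap,\, \rightcup\} \cup \{\uptokstrand[a^\op] : a \in \DD\}$ with source $X$ and target $Y$, the identity
\[
    \omega_Y \circ \bigl( \sC_{\fgl(m|n,\DD)} \sG_{m|n}^\C \bigr)(f) = \bigl( \sG_{m|n} \sR \bigr)(f) \circ \omega_X
\]
should hold. By the explicit formulas \cref{zelda,make,batty}, both sides reduce to concrete maps built out of $\flip$, $\ev$, $\coev$, or $\rho_a$. Under the isomorphisms in \cref{fish}, the complexification of each such map computed for $V = \DD^{m|n}$ matches the corresponding map computed directly for $V^\C \cong (\DD^\C)^{m|n}$: for instance, $\flip_{V,V}^\C$ corresponds to $\flip_{V^\C,V^\C}$, and $\ev_V^\C$ to $\ev_{V^\C}$, while $\coev_V^\C = \coev_{V^\C}$ after one picks a real basis of $V$ that simultaneously serves as a $\C$-basis of $V^\C$. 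Finally, $\rho_a^\C$ for $a \in \DD$ corresponds to $\rho_{a \otimes 1}$ on $(\DD^\C)^{m|n}$, which by \cref{crystal} is precisely $\sG_{m|n}(\sR(\uptokstrand[a^\op])) = \sG_{m|n}(\uptokstrand[a^\op \otimes 1])$.

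Monoidality of $\omega$ is then automatic from its construction: $\omega_\one$ is the canonical isomorphism $\R^\C \cong \C$, and on tensor products $\omega$ factors through the standard coherence of the complexification superfunctor. The main subtlety in this argument is the careful tracking of signs coming from the super interchange law \cref{interchange} when unpacking $\sG_{m|n}^\C$ and $\sG_{m|n}$ on composites of generators; however, since no parity shifts enter (the generating objects of $\OB_\R(\DD^\op)^\C$ are the unshifted $\uparrow$ and $\downarrow$), these signs are already accounted for on the generators themselves, and the whole verification reduces to routine computations of the same flavour as those in the proof of \cref{pound}.
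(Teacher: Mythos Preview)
Your proposal is correct and follows essentially the same approach as the paper: define $\omega$ on the generating objects via the canonical isomorphisms \cref{fish}, then verify the intertwining identity $\omega_Y \circ \sC\sG^\C(f) = \sG\sR(f)\circ\omega_X$ on each generating morphism by direct computation using the explicit descriptions of $\flip$, $\ev$, $\coev$, and $\rho_a$. The paper carries out exactly these checks, with the same key observation for $\leftcup$ that an $\R$-basis of $V$ serves as a $\C$-basis of $V^\C$.
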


\begin{proof}
    To simplify notation, we set
    \[
        \sG = \sG_{m|n},\quad
        \sG^\C = \sG_{m|n}^\C,\quad
        \sC = \sC_{\fgl(m|n,\DD)},\quad
        W = (\DD^\C)^{m|n}.
    \]
    Let $\omega$ be the monoidal supernatural isomorphism determined by \cref{fish}.  For each generating morphism $f \in \{\upcross, \leftcap, \leftcup, \rightcap, \rightcup, \uptokstrand[a^\op] : a \in A\}$, we must show that
    \[
        \omega_Y \circ \sC \sG^\C(f) = \sG \sR(f) \circ \omega_X,
    \]
    where $X$ and $Y$ are the domain and codomain of $f$, respectively.

    We have
    \[
        \omega_{\upobj \otimes \upobj} \circ \sC \sG^\C (\upcross)
        = \omega_{\upobj \otimes \upobj} \circ \flip_{V^\C,V^\C}
        = \flip_{W,W} \circ \omega_{\upobj \otimes \upobj}
        = \sG \sR (\upcross) \circ \omega_{\upobj \otimes \upobj}.
    \]
    For $a \in \DD$, $v \in V$, and $y,z \in \C$, we have
    \[
        \omega_\upobj \circ \sC \sG^\C(\uptokstrand[a^\op] \otimes y)
        \colon v \otimes z \mapsto (-1)^{\bar{a} \bar{v}} \omega_\upobj(va \otimes yz)
    \]
    and
    \[
        \sG \sR (\uptokstrand[a^\op] \otimes y) \circ \omega_\upobj
        = \sG (\uptokstrand[a^\op \otimes y]) \circ \omega_\upobj
        \colon v \otimes z \mapsto (-1)^{\bar{a} \bar{v}} \omega_\upobj (va \otimes yz).
    \]
    For $f \in V^*$, $v \in V$, and $y,z \in \C$, we have
    \[
        \omega_\one \circ \sC \sG^\C (\leftcap)
        \colon (f \otimes y) \otimes (v \otimes z)
        \mapsto f(v) yz
    \]
    and
    \[
        \sG \sR (\leftcap) \circ \omega_{\downobj \otimes \upobj}
        = \sG (\leftcap) \circ \omega_{\downobj \otimes \upobj}
        \colon (f \otimes y) \otimes (v \otimes z)
        \mapsto f(v) yz.
    \]
    Finally, we have
    \[
        \omega_{\upobj \otimes \downobj} \circ \sC \sG^\C (\leftcup)
        \colon 1 \mapsto \sum_{v \in \bB_V^\R} v \otimes v^*
        \qquad \text{and} \qquad
        \sG \sR (\leftcup) \circ \omega_\one
        \colon 1 \mapsto \sum_{v \in \bB_{V^\C}^{\C}} v \otimes v^*.
    \]
    These are equal since any $\R$-basis of $V$ is also a $\C$-basis of $V^\C$.  The verifications for the remaining generating morphisms $\rightcap$ and $\rightcup$ are similar.
\end{proof}

\begin{theo} \label{OBrealfull}
    When $\kk=\R$ and $A$ is a central real division superalgebra, the oriented incarnation superfunctor $\sG_{m|n}$ of \cref{tiger} is full for all $m,n \in \N$.
\end{theo}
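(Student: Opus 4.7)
The plan is to complexify everything and reduce to Proposition~\ref{picnic}. First I would fix objects $X, Y \in \OB_\R(\DD^\op)$ and study the $\R$-linear map
\[
    \sG_{m|n} \colon \Hom_{\OB_\R(\DD^\op)}(X,Y) \to \Hom_{\fgl(m|n,\DD)}(\sG_{m|n} X,\, \sG_{m|n} Y).
\]
Since both hom spaces are finite-dimensional over $\R$, the identities in~\eqref{funk} imply that this map is surjective if and only if its extension of scalars to $\C$ is surjective, so it suffices to prove the latter.

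The next step is to identify this extension of scalars with the hom-space map of a complex incarnation superfunctor to which Proposition~\ref{picnic} applies. Proposition~\ref{crystal} supplies an isomorphism of monoidal supercategories $\sR \colon \OB_\R(\DD^\op)^\C \xrightarrow{\cong} \OB_\C(\DD^{\op,\C})$, and Proposition~\ref{jack} supplies a monoidal supernatural isomorphism $\sC_{\fgl(m|n,\DD)}\, \sG_{m|n}^\C \cong \sG_{m|n} \sR$. Because the complexification superfunctor $\sC_{\fgl(m|n,\DD)}$ is full and faithful by~\eqref{golem} and $\sR$ is invertible, chasing these identifications matches the extension of scalars of the original hom-space map with the corresponding hom-space map of the complex incarnation functor
\[
    \sG_{m|n} \colon \OB_\C(\DD^{\op,\C}) \to \fgl(m|n,\DD^\C)\smod_\C.
\]

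It then remains to show that this complex incarnation functor is full. Lemma~\ref{sail} provides an isomorphism of complex superalgebras $\DD^\C \cong \Mat_{r|s}(A)$ for some complex division superalgebra $A \in \{\C, \Cl(\C)\}$ and integers $r, s \in \N$ with $r + s \ge 1$. Combined with the canonical identification~\eqref{nested}, this rewrites the complex functor as
\[
    \sG_{m|n} \colon \OB_\C(\Mat_{r|s}(A)^\op) \to \fgl((mr+ns)|(ms+nr),\, A)\smod_\C,
\]
which is full by Proposition~\ref{picnic}. Unwinding the identifications then yields the desired surjectivity over $\R$.

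The main obstacle I expect is the careful bookkeeping in the second step: verifying that the monoidal supernatural isomorphism of Proposition~\ref{jack}, together with the full faithfulness of $\sC_{\fgl(m|n,\DD)}$, genuinely identifies the extension of scalars of $\sG_{m|n}$ with the complex incarnation functor, so that fullness over $\R$ reduces cleanly to fullness over $\C$. Once that identification is in place, the reduction to Proposition~\ref{picnic} via Lemma~\ref{sail} and~\eqref{nested} is essentially formal.
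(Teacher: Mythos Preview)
Your proposal is correct and follows essentially the same approach as the paper: reduce to the complexification via \cref{funk}, then use \cref{crystal,jack} together with the full faithfulness of $\sC_{\fgl(m|n,\DD)}$ to identify the complexified map with the complex incarnation functor, and finally invoke \cref{sail} and \cref{picnic}. The paper's proof is simply a terser version of exactly this argument (summarized as ``This follows from \cref{jack,sail,picnic}''), so there is nothing substantive to add beyond noting that you should work with the specialized category $\OB_\R(\DD^\op;m-n)$ rather than $\OB_\R(\DD^\op)$, in keeping with the statement of \cref{tiger}.
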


\begin{proof}
    Suppose $A = \DD$ is a central real division superalgebra and $m,n \in \N$.  We wish to show that, for all objects $X,Y$ in $\OB_\kk(\DD^\op)$, the $\R$-linear map
    \[
        \sG \colon \Hom_{\OB_\kk(\DD^\op;m-n)}(X,Y) \to \Hom_{\fgl(m|n,\DD)}(\sG(X),\sG(Y))
    \]
    is surjective.  As explained in \cref{subsec:complexification}, this map is surjective if and only if the complexified map
    \[
        \sG^\C \colon \Hom_{\OB_\kk(\DD^\op;m-n)}(X,Y)^\C \to \Hom_{\fgl(m|n,\DD)}(\sG(X),\sG(Y))^\C
    \]
    is surjective.  This follows from \cref{jack,sail,picnic}.
\end{proof}

\subsection{Consequences for real Lie groups\label{subsec:OBnonsuper}}

We assume throughout this subsection that $\DD \in \{\R,\HH\}$.  Then $\OB_\R(\DD)$ is a monoidal category, and there is no need to work in the setting of supercategories.  In fact, $\OB_\R(\DD)$ is a spherical pivotal category.  (We refer the reader to \cite[\S 4.3]{Sel11} for the definition of spherical pivotal category.)

In any spherical pivotal category $\cC$, we have a trace map $\operatorname{Tr} \colon \bigoplus_{X \in \cC} \End_\cC(X) \to \End_\cC(\one)$.  In terms of string diagrams, this corresponds to closing a diagram off to the right or left:
\[
    \operatorname{Tr}
    \left(
        \begin{tikzpicture}[centerzero]
            \draw[line width=2] (0,-0.5) -- (0,0.5);
            \filldraw[fill=white,draw=black] (-0.25,0.2) rectangle (0.25,-0.2);
            \node at (0,0) {$\scriptstyle{f}$};
        \end{tikzpicture}
    \right)
    =
    \begin{tikzpicture}[centerzero]
        \draw[line width=2] (0,0.2) arc(180:0:0.3) -- (0.6,-0.2) arc(360:180:0.3);
        \filldraw[fill=white,draw=black] (-0.25,0.2) rectangle (0.25,-0.2);
        \node at (0,0) {$\scriptstyle{f}$};
    \end{tikzpicture}
    =
    \begin{tikzpicture}[centerzero]
        \draw[line width=2] (0,0.2) arc(0:180:0.3) -- (-0.6,-0.2) arc(180:360:0.3);
        \filldraw[fill=white,draw=black] (-0.25,0.2) rectangle (0.25,-0.2);
        \node at (0,0) {$\scriptstyle{f}$};
    \end{tikzpicture}
    \ ,
\]
where the second equality follows from the axioms of a spherical category.  We say that a morphism $f \in \Hom_\cC(X,Y)$ is \emph{negligible} if $\operatorname{Tr}(f \circ g) = 0$ for all $g \in \Hom_\cC(Y,X)$.  The negligible morphisms form a two-sided tensor ideal $\cN$ of $\cC$, and the quotient $\cC/\cN$ is called the \emph{semisimplification} of $\cC$.  For $m \in \N$, let $\cON_\R(\DD;m)$ denote the tensor ideal of negligible morphisms of $\Kar(\OB_\R(\DD;m))$.

For an associative $\kk$-algebra $A$ and $m \in \N$, let $\fgl(m,A)\tmod_\kk$ denote the monoidal category of tensor $\fgl(m,A)$-modules.  By definition, this is the full subcategory of $\fgl(m,A)\md_\kk$ whose objects are direct summands of $V^{\otimes r} \otimes (V^*)^{\otimes s}$, $r,s \in \N$, where $V = A^m$ is the natural module.

\begin{theo} \label{storm}
    For $\DD \in \{\R,\HH\}$ and  $m \in \N$, the oriented incarnation functor induces an equivalence of monoidal categories
    \[
        \Kar(\OB_\R(\DD^\op;m))/\cON_\R(\DD^\op;m) \to \fgl(m,\DD)\tmod_\R.
    \]
\end{theo}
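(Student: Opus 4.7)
The strategy is to combine the fullness of the incarnation functor (already established in \cref{OBrealfull}) with semisimplicity of the target to identify the kernel with the negligible ideal. From \cref{OBrealfull}, the functor $\sG = \sG_m \colon \OB_\R(\DD^\op; m) \to \fgl(m,\DD)\md_\R$ is full; since $\fgl(m,\DD)\md_\R$ admits finite direct sums and splits idempotents, the universal property of the Karoubi envelope extends $\sG$ uniquely to a full monoidal functor $\widetilde{\sG} \colon \Kar(\OB_\R(\DD^\op; m)) \to \fgl(m,\DD)\md_\R$. The essential image of $\widetilde{\sG}$ consists of all direct summands of modules of the form $V^{\otimes r} \otimes (V^*)^{\otimes s}$ with $V = \DD^m$, i.e., of $\fgl(m,\DD)\tmod_\R$; hence $\widetilde{\sG}$ is full and essentially surjective onto the target of the theorem.

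The next step is to verify that $\fgl(m,\DD)\tmod_\R$ is semisimple. By \cref{skool,link,glcomplex}, complexification sends a tensor $\fgl(m,\DD)$-module to a tensor $\fgl(m,\DD^\C)$-module, where $\fgl(m,\DD^\C)$ is $\fgl(m,\C)$ if $\DD = \R$ and $\fgl(2m,\C)$ if $\DD = \HH$; the resulting complex tensor modules are semisimple by classical representation theory. Since a real $\fg$-module is semisimple if and only if its complexification is semisimple over $\fg^\C$, this gives semisimplicity of all tensor modules over $\fgl(m,\DD)$, and hence semisimplicity of the category $\fgl(m,\DD)\tmod_\R$.

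Finally, I would identify $\ker(\widetilde{\sG})$ with $\cON_\R(\DD^\op; m)$. The basis theorem \cref{Obasisthm} yields $\End_{\OB_\R(\DD^\op; m)}(\one) = \R$, and $\widetilde{\sG}$ acts as the identity on this copy of $\R$; therefore, if $\widetilde{\sG}(f) = 0$ then for every composable $g$ we have $\widetilde{\sG}(\operatorname{Tr}(f \circ g)) = \operatorname{Tr}(\widetilde{\sG}(f) \circ \widetilde{\sG}(g)) = 0$, forcing $\operatorname{Tr}(f \circ g) = 0$ and hence $f \in \cON_\R(\DD^\op; m)$. Conversely, fullness of $\widetilde{\sG}$ implies that $\widetilde{\sG}(f)$ is negligible in $\fgl(m,\DD)\tmod_\R$ whenever $f$ is; in a semisimple $\R$-linear pivotal category, a negligible morphism vanishes provided the categorical trace pairing on each simple endomorphism algebra is nondegenerate. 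By a real Schur's lemma each such algebra lies in $\{\R, \C, \HH\}$, on each of which the pairing $(c, c') \mapsto \operatorname{Tr}(cc' \cdot \id_S) = \operatorname{Re}(cc') \dim_\R(S)$ is readily seen to be nondegenerate. The step requiring most care is this last one: because simple objects of $\fgl(m,\DD)\tmod_\R$ may carry nontrivial complex or quaternionic structures, ``negligible implies zero'' is not quite automatic and must be reduced to the nondegeneracy of the real-part bilinear form on $\R$, $\C$, and $\HH$.
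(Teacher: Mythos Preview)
Your proof is correct and follows essentially the same approach as the paper: extend to the Karoubi envelope, use fullness from \cref{OBrealfull}, establish semisimplicity of the target via complexification, and identify the kernel with the negligible ideal using that $\End(\one)=\R$. The only difference is that where you unpack the ``kernel $=$ negligible'' argument by hand (carefully checking nondegeneracy of the trace pairing on $\R$, $\C$, $\HH$), the paper simply invokes \cite[Prop.~6.9]{SW22} as a black box; your direct argument is precisely the content of that proposition specialized to this situation.
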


\begin{proof}
    Let $\DD \in \{\R,\HH\}$.  Since the category $\fgl(m,\DD)\tmod_\R$ is idempotent complete, $\sG_m = \sG_{m|0}$ induces a monoidal functor
    \[
        \Kar(\sG_m) \colon \Kar(\OB_\R(\DD^\op;m)) = \fgl(m,\DD)\tmod_\R.
    \]
    By \cref{OBrealfull}, this functor is full.

    Every object in $\fgl(m,\DD)\tmod_\R$ is completely reducible, since its complexification is a tensor module for $\fgl(m,\DD)^\C \cong \fgl(m,\DD^\C)$, hence completely reducible.  Thus, the category $\fgl(m,\DD)\tmod_\R$ is semisimple.
    \details{
        If $\fg$ is a Lie algebra, then a finite-dimensional $\fg$-module $V$ is completely reducible if and only if $\End_\fg(V)$ is semisimple.  If $R$ is an associative algebra over some field $\kk$, and $R \otimes_\kk \kk'$ is semisimple for some field $\kk'$ containing $\kk$, then $R$ is semisimple.  (See, for example, Proposition~I.5.11 of the lecture notes \href{https://www.jmilne.org/math/CourseNotes/LAG.pdf}{Lie algebras, algebraic groups, and Lie groups} by J.S.\ Milne.)  If $V$ is a tensor $\fgl(m,\DD)$-module, then $\End_{\fgl(m,\DD)}(V)^\C \cong \End_{\fgl(m,\DD^\C)}(V^\C)$ is semisimple.  Thus, $\End_{\fgl(m,\DD)}(V)$ is semisimple, and so $V$ is a completely reducible $\fgl(m,\DD)$-module.
    }
    In addition, by \cref{Obasisthm}, $\End_{\OB_\R(\DD^\op;m)}(\one) = \R 1_\one$.  Thus, by \cite[Prop.~6.9]{SW22}, the kernel of $\Kar(\sG_m)$ is equal to $\cON_\R(\DD^\op;m)$.  Therefore, $\Kar(\sG_m)$ induces a full and faithful functor
    \[
        \OB_\R(\DD^\op;m)/\cON_\R(\DD^\op;m) \to \fgl(m,\DD)\tmod_\R.
    \]

    Finally, since the image of $\Kar(\sG_m)$ contains all summands of tensor powers of the natural module $\DD^m$ and its dual (i.e.\ all tensor modules), it is essentially surjective, hence an equivalence of categories.
\end{proof}

\section{Superhermitian forms over involutive superalgebras\label{sec:hermitian}}

In \cref{sec:unoriented}, we will introduce our second main diagrammatic supercategory.  Then, in \cref{sec:Unic}, we will define the corresponding incarnation superfunctor.  These constructions will depend on superhermitian forms over involutive superalgebras.  In the current section, we cover the important properties of these forms.  Then, in \cref{sec:hermdiv}, we further specialize to the case where the superalgebra is a real division superalgebra.

\subsection{Involutive superalgebras\label{subsec:invalg}}

An \emph{anti-involution} of a superalgebra $A$ is a homomorphism of associative superalgebras $A \to A^\op$ squaring to the identity.  Equivalently, it is a $\kk$-linear map $\star \colon A \to A$, $a \mapsto a^\star$, such that
\begin{equation} \label{galaxy}
    (ab)^\star = (-1)^{\bar{a} \bar{b}} b^\star a^\star
    \quad \text{and} \quad
    (a^\star)^\star = a
    \qquad \text{for all } a,b \in A.
\end{equation}
An \emph{involutive superalgebra} is a pair $(A,\star)$, where $\star$ is an anti-involution of an associative superalgebra $A$.

We will typically use the notation $\star$ or $\inv$ for anti-involutions.  If $(A,\star)$ is an involutive superalgebra and $V$ is a right $A$-supermodule, then we let $V^\star$ denote the \emph{left} $A$-supermodule that is equal to $V$ as a $\kk$-supermodule, and with $A$-action given by
\begin{equation} \label{swap}
    a \cdot v := (-1)^{\bar{a}\bar{v}} v a^\star.
\end{equation}

Recall the definition of the Nakayama automorphism $\Nak$ of a Frobenius superalgebra from \cref{subsec:FrobAlg}.  An \emph{involutive Frobenius superalgebra} is a triple $(A,\star,\form)$ such that $(A,\form)$ is Frobenius superalgebra, $(A,\star)$ is an involutive superalgebra, and
\begin{equation} \label{rainbow}
    \Nak^2(a) = a
    \quad \text{and} \quad
    \form(a^\star) = \form(a)
    \qquad \text{for all } a \in A.
\end{equation}

We will discuss our main examples of interest, the involutive real division superalgebras, in \cref{sec:hermdiv}.  However, let us mention here some other important examples.

\begin{egs} \label{sky}
    \begin{enumerate}
        \item \label{aqua} The identity map is an anti-involution of any supercommutative Frobenius superalgebra.
        \item As a special case of \cref{aqua}, if $A = \kk[x]/(x^n)$ for some $n \ge 1$, we can take $\star$ to be the identity map and $\form(\sum_{r=0}^{n-1} a_r x^r) = a_{n-1}$.
        \item If $G$ is a finite group, we can take $A = \kk G$, $g^\star = g^{-1}$ for all $g \in G$, and $\form$ to be projection onto the identity element.
    \end{enumerate}
\end{egs}

\begin{lem}
    If $(A,\star,\form)$ is an involutive Frobenius superalgebra, then
    \begin{equation} \label{mouse}
        \Nak(a^\star) = \Nak(a)^\star
        \qquad \text{for all } a \in A.
    \end{equation}
\end{lem}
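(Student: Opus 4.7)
The plan is to exploit nondegeneracy of the Frobenius form together with the defining property of the Nakayama automorphism. Specifically, I would reduce the claim to showing that, for every $b \in A$,
\[
    \form\bigl(b \Nak(a^\star)\bigr) = \form\bigl(b \Nak(a)^\star\bigr),
\]
from which $\Nak(a^\star) = \Nak(a)^\star$ follows by nondegeneracy of the bilinear form $(x,y) \mapsto \form(xy)$.

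For the left-hand side, I would apply the Nakayama identity \cref{Nakayama} with $a$ replaced by $a^\star$ (noting that $\overline{a^\star} = \bar a$, since $\star$ is parity-preserving) to rewrite
\[
    \form\bigl(b \Nak(a^\star)\bigr) = (-1)^{\bar a \bar b} \form(a^\star b),
\]
and then apply $\star$-invariance of $\form$ together with the anti-multiplicativity of $\star$ from \cref{galaxy} to get $\form(a^\star b) = (-1)^{\bar a \bar b} \form(b^\star a)$. The signs cancel, giving $\form(b \Nak(a^\star)) = \form(b^\star a)$.

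For the right-hand side, I would instead start by applying $\star$-invariance of $\form$ to get $\form(b \Nak(a)^\star) = (-1)^{\bar a \bar b} \form(\Nak(a) b^\star)$, then apply \cref{Nakayama} to $\Nak(a) \cdot b^\star$, and finally use the hypothesis $\Nak^2 = \id$ from \cref{rainbow} to arrive again at $\form(b \Nak(a)^\star) = \form(b^\star a)$. Comparing the two computations yields the required equality. There is no real obstacle here: once the identities \cref{Nakayama,galaxy,rainbow} are in hand, the only subtlety is careful bookkeeping of the parity signs, and the fact that $\overline{a^\star} = \bar a$ and $\overline{\Nak(a)} = \bar a$, which hold because $\star$ and $\Nak$ preserve the $\Z_2$-grading.
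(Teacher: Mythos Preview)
Your proposal is correct and follows essentially the same approach as the paper: both use nondegeneracy of the Frobenius form together with the Nakayama identity \cref{Nakayama}, $\star$-invariance of $\form$, the anti-multiplicativity \cref{galaxy}, and $\Nak^2 = \id$ from \cref{rainbow}. The only cosmetic difference is that the paper starts from $\form(ab)$ and manipulates it in two ways to reach $\form(a^\star \Nak(b^\star))$ and $\form(a^\star \Nak(b)^\star)$, whereas you start directly from $\form(b\Nak(a^\star))$ and $\form(b\Nak(a)^\star)$ and show both equal $\form(b^\star a)$; the ingredients and structure are the same.
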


\begin{proof}
    For all $a,b \in A$, we have
    \[
        \form(ab)
        \overset{\cref{rainbow}}{=} \form((ab)^\star)
        = (-1)^{\bar{a}\bar{b}} \form(b^\star a^\star)
        \overset{\cref{Nakayama}}{=} (-1)^{\bar{a}\bar{b}} \form(a^\star \Nak(b^\star))
    \]
    and
    \[
        \form(ab)
        \overset{\cref{Nakayama}}{\underset{\cref{rainbow}}{=}} (-1)^{\bar{a}\bar{b}} \form(\Nak(b)a)
        \overset{\cref{rainbow}}{=} (-1)^{\bar{a}\bar{b}} \form ((\Nak(b)a)^\star)
        = (-1)^{\bar{a}\bar{b}} \form(a^\star \Nak(b)^\star).
    \]
    Then \cref{mouse} follows from the nondegeneracy of the Frobenius form.
\end{proof}

The following corollary will play an important role; see \cref{crescent}.

\begin{cor}
    If $(A,\star,\form)$ is an involutive Frobenius algebra, then so is $(A,\inv,\form)$, where $a^\inv = \Nak(a)^\star$.
\end{cor}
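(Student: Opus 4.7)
The plan is to verify the three conditions required for $(A,\inv,\form)$ to be an involutive Frobenius superalgebra: that $\inv$ is an anti-involution, that the Nakayama automorphism squares to the identity, and that $\form$ is $\inv$-invariant. Since the Frobenius form $\form$ is unchanged, the Nakayama automorphism of $(A,\form)$ is still $\Nak$, so the condition $\Nak^2 = \id$ is inherited from the hypothesis on $(A,\star,\form)$.

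To show that $\inv$ is an anti-involution, first observe that $\inv$ is clearly $\kk$-linear and parity-preserving (since $\Nak$ is a superalgebra automorphism and $\star$ is parity-preserving). For anti-multiplicativity, I would compute
\[
(ab)^\inv = \Nak(ab)^\star = \bigl(\Nak(a)\Nak(b)\bigr)^\star = (-1)^{\bar a\bar b}\Nak(b)^\star \Nak(a)^\star = (-1)^{\bar a\bar b} b^\inv a^\inv,
\]
using that $\Nak$ is a superalgebra homomorphism, \cref{galaxy} applied to $\star$, and the fact that $\Nak$ preserves parity. For the involution property, using \cref{mouse} and $\Nak^2 = \id$,
\[
(a^\inv)^\inv = \Nak\bigl(\Nak(a)^\star\bigr)^\star = \bigl(\Nak^2(a)^\star\bigr)^\star = (a^\star)^\star = a.
\]

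For the $\inv$-invariance of $\form$, I would first note that $\form \circ \Nak = \form$: indeed, setting $b = 1$ in \cref{Nakayama} gives $\form(a) = \form(\Nak(a))$. Then
\[
\form(a^\inv) = \form\bigl(\Nak(a)^\star\bigr) = \form(\Nak(a)) = \form(a),
\]
using the $\star$-invariance of $\form$ from \cref{rainbow} and the identity just established. No step presents a genuine obstacle; the only subtlety is keeping track that the Nakayama automorphism depends only on $\form$, so the relation $\Nak^2 = \id$ transfers automatically to the new involutive structure.
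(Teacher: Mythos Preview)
Your proof is correct. The paper states this corollary without proof, leaving the routine verification to the reader; your argument is exactly that verification, using \cref{mouse}, \cref{rainbow}, and \cref{Nakayama} in the expected way.
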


\begin{lem}
    Suppose $(A,\star,\form)$ is an involutive Frobenius superalgebra with Nakayama automorphism $\Nak$, and let $\bB_A$ be a homogeneous $\kk$-basis of $A$.  Then the left dual basis to $\bB_A^\star := \{b^\star : b \in \bB_A\}$ is given by
    \begin{equation} \label{nova}
        (b^\star)^\vee = \Nak(b^\vee)^\star.
    \end{equation}
\end{lem}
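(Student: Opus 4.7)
The plan is to verify directly that the proposed formula $(b^\star)^\vee = \Nak(b^\vee)^\star$ satisfies the defining property of a left dual basis, namely $\form((b^\star)^\vee c^\star) = \delta_{bc}$ for all $b,c \in \bB_A$. The strategy is a two-step manipulation: first use the anti-involution identity to pull the $\star$ to the outside of the product, then use the Nakayama identity to move the factor $\Nak(b^\vee)$ past $c$, at which point the original duality pairing $\form(b^\vee c) = \delta_{bc}$ emerges.

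For the first step, I would apply \cref{galaxy} to write
\[
    \Nak(b^\vee)^\star c^\star = (-1)^{\bar{b}\bar{c}} (c\,\Nak(b^\vee))^\star,
\]
where the sign uses that $\Nak$ preserves parity so that $\overline{\Nak(b^\vee)} = \overline{b^\vee} = \bar b$ by \cref{barcheck}. Since $\form$ is $\star$-invariant by the second identity in \cref{rainbow}, applying $\form$ yields
\[
    \form(\Nak(b^\vee)^\star c^\star) = (-1)^{\bar{b}\bar{c}}\,\form(c\,\Nak(b^\vee)).
\]

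For the second step, I would apply \cref{Nakayama} with $a = c$ and $b = \Nak(b^\vee)$. Together with the fact that $\Nak$ is a superalgebra automorphism, this gives
\[
    \form(c\,\Nak(b^\vee)) = (-1)^{\bar{b}\bar{c}} \form(\Nak(b^\vee)\,\Nak(c)) = (-1)^{\bar{b}\bar{c}} \form(\Nak(b^\vee c)).
\]
The identity $\form \circ \Nak = \form$ (a special case of \cref{Nakayama} with $b=1$, or alternatively deducible from \cref{rainbow,mouse}) collapses this to $(-1)^{\bar{b}\bar{c}}\,\form(b^\vee c) = (-1)^{\bar{b}\bar{c}}\,\delta_{bc}$. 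Combining the two $(-1)^{\bar{b}\bar{c}}$ signs produces $(-1)^{2\bar{b}\bar{c}} = 1$, so $\form((b^\star)^\vee c^\star) = \delta_{bc}$, as required.

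The proof is a purely formal sign calculation; the only care needed is in tracking parities, particularly ensuring that the two signs introduced at the two steps cancel exactly. The interpretive point worth stressing is that the cancellation is not a coincidence: the formula $(b^\star)^\vee = \Nak(b^\vee)^\star$ is built precisely so that the "twist" introduced by reversing a product (via $\star$) is compensated by the Nakayama twist appearing in the defining property of the dual basis. Since no nontrivial structural hypothesis beyond the involutive Frobenius axioms \cref{rainbow} and the basic identity \cref{barcheck} is needed, there is no genuine obstacle.
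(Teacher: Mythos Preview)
Your proof is correct and follows essentially the same approach as the paper: verify the defining duality relation by first applying \cref{galaxy} and \cref{rainbow} to pull the $\star$ outside, then use \cref{Nakayama} to reduce to $\form(b^\vee c) = \delta_{bc}$. The only cosmetic difference is that you apply \cref{Nakayama} in the forward direction and then invoke $\form\circ\Nak=\form$, whereas the paper reads \cref{Nakayama} backwards in one step; the sign bookkeeping is identical.
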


\begin{proof}
    For $b,c \in \bB_A$, we have
    \[
        \form \left( \Nak(c^\vee)^\star b^\star \right)
        \overset{\cref{galaxy}}{\underset{\cref{rainbow}}{=}} (-1)^{\bar{b}\bar{c}} \form (b \Nak(c^\vee))
        \overset{\cref{Nakayama}}{=} \form (c^\vee b)
        = \delta_{bc}.
        \qedhere
    \]
\end{proof}

\begin{lem}
    If $(A,\star,\form)$ is an involutive Frobenius superalgebra, then
    \begin{equation} \label{snow}
        \str_A(a) = \str_A(a^\star)
        \qquad \text{for all } a \in A,
    \end{equation}
    where $\str_A$ is the supertrace map defined in \cref{crazy}.
\end{lem}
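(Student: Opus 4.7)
My approach is to reduce the identity $\str_A(a^\star) = \str_A(a)$ to an invariance property of a single element.  Set $e := \sum_{b \in \bB_A} (-1)^{\bar b} b^\vee b$.  By \cref{essex}, $\str_A(a) = \form(ea) = \form(ae)$, and $e$ is even.  Since $\form$ is parity-preserving, both sides of the lemma vanish when $a$ is odd, so we may assume $\bar a = 0$.  Using $\form \circ \star = \form$ from \cref{rainbow}, then the cyclicity formula \cref{Nakayama}, and finally $\form \circ \Nak = \form$ (the special case $b = 1$ of \cref{Nakayama}) together with $\Nak^2 = \id$, a short sign-tracking computation will transform
\[
\form(ea^\star) = \form(ae^\star) = \form(e^\star \Nak(a)) = \form(\Nak(e^\star) a),
\]
reducing the claim to $\Nak(e^\star) = e$.

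To identify $e^\star$, I plan to invoke the basis independence of the canonical element $\sum_b b \otimes b^\vee$ from \cref{choice}.  Applying \cref{essex} with the basis $\bB_A^\star$, whose left dual is $\{\Nak(b^\vee)^\star\}$ by \cref{nova}, yields the alternative formula $e = \sum_b (-1)^{\bar b} \Nak(b^\vee)^\star b^\star$.  Applying $\star$ to both sides (picking up a sign $(-1)^{\bar b}$ from $(xy)^\star = (-1)^{\bar x \bar y} y^\star x^\star$, which cancels the existing $(-1)^{\bar b}$) gives $e^\star = \sum_b b \Nak(b^\vee)$.  The involutive axiom $\Nak^2 = \id$ from \cref{rainbow} then yields $\Nak(e^\star) = \sum_b \Nak(b) b^\vee$.

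The final step is to verify $\sum_b \Nak(b) b^\vee = e$.  One more basis change does the job: using \cref{Nakayama} together with $\Nak^2 = \id$, one computes $(b^\vee)^\vee = (-1)^{\bar b}\Nak(b)$, the involutive-Frobenius analog of \cref{doubledual}.  Substituting this into \cref{essex} applied to the basis $\bB_A^\vee$ produces exactly $e = \sum_b \Nak(b) b^\vee$, completing the argument.  The principal obstacle throughout is careful sign bookkeeping; the only essential structural input beyond the general Frobenius setup is the involutive axiom $\Nak^2 = \id$, which powers both the double-dual identity and the reduction $\Nak(e^\star) = e$.
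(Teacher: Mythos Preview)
Your argument is correct. The reduction to $\Nak(e^\star)=e$ is sound: your sign computations in steps $\form(ea^\star)=\form(ae^\star)=\form(e^\star\Nak(a))=\form(\Nak(e^\star)a)$ all check out (using that $e$ is even and $\Nak^2=\id$), and your two basis-change identities $e^\star=\sum_b b\,\Nak(b^\vee)$ and $(b^\vee)^\vee=(-1)^{\bar b}\Nak(b)$ are right.

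The paper takes a more direct route. Starting from $\str_A(a)=\sum_b(-1)^{\bar b}\form(b^\vee b a)$, it applies $\star$ inside $\form$ in one stroke to get $\sum_b\form(a^\star b^\star(b^\vee)^\star)$, then performs a \emph{single} basis change to the basis $\{(b^\vee)^\star\}$, whose left dual it computes to be $\{(-1)^{\bar b}b^\star\}$, immediately yielding $\str_A(a^\star)$. Your version is more conceptual—you isolate the canonical even element $e$ and prove the clean invariance $\Nak(e^\star)=e$—at the cost of two basis changes (via $\bB_A^\star$ and $\bB_A^\vee$) instead of one. Both arguments ultimately rest on the same ingredients (basis independence, \cref{nova}/dual-basis formulas, and the involutive axiom $\Nak^2=\id$), but the paper's is shorter while yours makes explicit a structural fact about $e$ that may be of independent interest.
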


\begin{proof}
    Using \cref{essex}, we compute
    \[
        \str_A(a)
        = \sum_{b \in \bB_A} (-1)^{\bar{b}} \form(b^\vee b a)
        \overset{\cref{rainbow}}{=} \sum_{b \in \bB_A} \form \left( a^\star b^\star (b^\vee)^\star \right)
        = \sum_{b \in \bB_A} (-1)^{\bar{b}} \form(a^\star b^\vee b)
        = \str_A(a^\star),
    \]
    where, in the second-to-last equality we changed to a sum over $\{ (b^\vee)^\star : b \in \bB_A \}$ and used that, for $b,c \in \bB_A$,
    \[
        (-1)^{\bar{c}} \form(c^\star (b^\vee)^\star)
        = \form(b^\vee c)
        = \delta_{bc},
    \]
    and so $\left( (b^\vee)^\star \right)^\vee = (-1)^{\bar{b}} b^\star$.
\end{proof}

\subsection{Supersymmetric forms}

Let $(A,\inv)$ denote an involutive superalgebra.

\begin{defin}
    For $\nu \in \{\pm 1\}$, a \emph{$(\nu,\inv)$-supersymmetric form} on a right $A$-supermodule $V$ is a homogeneous $\kk$-bilinear form $\Phi \colon V \times V \to \kk$ such that
    \begin{equation} \label{supersymmetric}
        \Phi(v,w) = \nu (-1)^{\bar{v}\bar{w}} \Phi(w,v)
        \qquad \text{and} \qquad
        \Phi(va,w) = (-1)^{\bar{a}\bar{w}} \Phi(v,wa^\inv)
    \end{equation}
    for all $v,w \in V$ and $a \in A$.
\end{defin}

If $\Phi$ is a nondegenerate $(\nu,\inv)$-supersymmetric form on $V$ and $\bB_V$ is a $\kk$-basis of $V$, then the left dual basis $\bB_V^\vee = \{v^\vee : v \in \bB_V\}$ of $V$ is defined by
\[
    \Phi(v^\vee,w) = \delta_{vw}.
\]
Note that $\overline{v^\vee} = \overline{v} + \bar{\Phi}$.

Recall the left $A$-actions on $V^*$ and $V^\inv$ given in \cref{swap,dualaction}.  A $(\nu,\inv)$-supersymmetric form $\Phi$ of parity $\sigma$ induces a parity-preserving homomorphism of left $A$-supermodules
\begin{equation} \label{step}
    V^\inv \to \Pi^\sigma V^*,\qquad
    v \mapsto \pi^\sigma \Phi^v,\qquad
    \text{where} \quad \Phi^v(w) = \Phi(v,w).
\end{equation}
\details{
    Let $g$ denote the map \cref{step}.  For $a \in A$ and $v,w \in V$, we have
    \begin{multline*}
        g(a \cdot v)(w)
        = (-1)^{\bar{a}\bar{v}} g(va^\inv)(w)
        = (-1)^{\bar{a}\bar{v}} \pi^\sigma \Phi^{v a^\inv}(w)
        = (-1)^{\bar{a}\bar{v}} \pi^\sigma \Phi(v a^\inv, w)
        \\
        = (-1)^{\bar{a}(\bar{v}+\bar{w})} \pi^\sigma \Phi(v,wa)
        = (-1)^{\bar{a}(\bar{v}+\bar{w})} \pi^\sigma \Phi^v(wa)
        = (-1)^{\bar{a}(\bar{v}+\bar{w})} g(v)(wa)
        = \big( a g(v) \big)(w),
    \end{multline*}
    as desired.
}
This is an isomorphism if and only if $\Phi$ is nondegenerate.

\begin{lem} \label{snooze}
    If a right $A$-supermodule $V$ admits a $(\nu,\inv)$-supersymmetric form, then
    \begin{equation}
        \str_V(a^\inv) = \str_V(a)
        \qquad \text{for all } a \in A.
    \end{equation}
\end{lem}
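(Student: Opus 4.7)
The plan is to rewrite $\str_V(a)$ using the $\Phi$-dual basis and the two defining properties of the form, then apply the basis-independence of the supertrace to recognize the result as $\str_V(a^\inv)$.

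To begin, fix a homogeneous basis $\bB_V$ of $V$ and let $\bB_V^\vee = \{v^\vee : v \in \bB_V\}$ denote the left $\Phi$-dual basis, determined by $\Phi(v^\vee, w) = \delta_{vw}$. Note that $\overline{v^\vee} = \bar v + \bar\Phi$, since $\Phi(v^\vee, v) \ne 0$. Because $v^*(w) = \delta_{vw} = \Phi(v^\vee, w)$, we may identify $v^* = \Phi(v^\vee, -)$ as a functional on $V$ and substitute into the definition of the supertrace to obtain
\[
\str_V(a) = \sum_{v \in \bB_V} (-1)^{\bar v}\, \Phi(v^\vee, va).
\]

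Next, substituting $a \mapsto a^\inv$ into the second relation of \cref{supersymmetric} yields the equivalent form $\Phi(x, ya) = (-1)^{\bar a \bar y}\Phi(x a^\inv, y)$, which moves $a$ to the first slot:
\[
\Phi(v^\vee, va) = (-1)^{\bar a \bar v}\, \Phi(v^\vee a^\inv, v).
\]
Expanding $v^\vee a^\inv$ in the basis $\bB_V^\vee$ and using $\Phi(u^\vee, v) = \delta_{uv}$, the scalar $\Phi(v^\vee a^\inv, v)$ equals the coefficient $(v^\vee)^*(v^\vee a^\inv)$ of $v^\vee$ in the expansion. Since the supertrace of any odd operator vanishes, we may restrict attention to $\bar a = 0$, whereupon the sign $(-1)^{\bar a \bar v}$ becomes trivial.

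The sum $\sum_v (-1)^{\bar v}\, (v^\vee)^*(v^\vee a^\inv)$ that results is precisely the supertrace of the right multiplication by $a^\inv$ computed in the basis $\bB_V^\vee$; by the basis-independence of the supertrace under parity-preserving changes of basis, this equals $\str_V(a^\inv)$, finishing the proof. The main obstacle is the careful sign bookkeeping: one must simultaneously track the parity shift $\overline{v^\vee} = \bar v + \bar\Phi$ between the two bases and the signs produced by the supersymmetry relation, verifying that they combine to give the identity with the correct sign.
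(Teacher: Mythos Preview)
Your approach is a clean variant of the paper's: instead of invoking both defining properties of $\Phi$ (the $\nu$-symmetry and the sesquilinearity) together with the double-dual formula $(v^\vee)^\vee = \nu(-1)^{\bar v + \bar v\bar\Phi} v$, you use only the sesquilinearity relation and then appeal directly to basis-independence of the supertrace. This is legitimate and arguably simpler than the paper's route.

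However, there is a genuine gap at precisely the point you flag as ``the main obstacle''. You assert that $\sum_{v}(-1)^{\bar v}(v^\vee)^*(v^\vee a^\inv)$ is the supertrace of right multiplication by $a^\inv$ computed in the basis $\bB_V^\vee$, but the correct expression in that basis carries the sign $(-1)^{\overline{v^\vee}} = (-1)^{\bar v + \bar\Phi}$, not $(-1)^{\bar v}$. Carrying out the bookkeeping you defer, one obtains only $\str_V(a) = (-1)^{\bar\Phi}\,\str_V(a^\inv)$. For odd $\Phi$ this is not the stated conclusion, and indeed the statement can fail: take $A = \kk\times\kk$ with $(a,b)^\inv = (b,a)$, let $V_0 = \kk(1,0)$, $V_1 = \kk(0,1)$, and let $\Phi$ be the obvious odd pairing; then $\str_V((1,0)) = 1$ while $\str_V((0,1)) = -1$. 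The paper's own change-of-basis step glosses over exactly the same sign, so the lemma as written should really be read with $\bar\Phi = 0$, or, in the odd case, be accompanied by the observation that both sides vanish in the situations actually used (e.g.\ $V = A^{m|m}$; compare Remark~\ref{belgium}). For even $\Phi$ your argument is complete and correct.
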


\begin{proof}
    Let $\Phi$ be a $(\nu,\inv)$-supersymmetric form on $V$, let $\bB_V$ be a $\kk$-basis of $V$, and let $\bB_V^\vee = \{v^\vee : v \in \bB_V\}$ denote the left dual basis with respect to $\Phi$.  Then
    \[
        \Phi(w,v^\vee)
        \overset{\cref{supersymmetric}}{=} \nu (-1)^{\bar{w}(\bar{v}+\bar{\Phi})} \Phi(v^\vee,w)
        = \nu (-1)^{\bar{v}+\bar{v}\bar{\Phi}} \delta_{vw}.
    \]
    Thus $(v^\vee)^\vee = (-1)^{\bar{v}+\bar{v}\bar{\Phi}} v$.  Then we compute
    \begin{multline*}
        \str_V(a^\inv)
        = \sum_{v \in \bB_V} (-1)^{\bar{v}} v^*(va^\inv)
        = \sum_{v \in \bB_V} (-1)^{\bar{v}} \Phi(v^\vee, v a^\inv)
        \overset{\cref{supersymmetric}}{=} \sum_{v \in \bB_V} (-1)^{\bar{v}+\bar{a}\bar{v}} \Phi(v^\vee a, v)
        \\
        \overset{\cref{supersymmetric}}{=} \nu \sum_{v \in \bB_V} (-1)^{\bar{v}\bar{\Phi}} \Phi(v,v^\vee a)
        = \sum_{v \in \bB_V} (-1)^{\bar{v}} \Phi(v^\vee,va)
        = \sum_{v \in \bB_V} \str_V(a),
    \end{multline*}
    where, in the second-to-last equality, we changed to a sum over the basis $\bB_V^\vee$.
\end{proof}

\subsection{Superhermitian forms}

Our main source of examples of $(\nu,\inv)$-supersymmetric forms will come from superhermitian forms over involutive Frobenius superalgebras.  Let $V$ be a finitely-generated right supermodule over an involutive superalgebra $(A,\star)$.  A homogeneous map $\varphi \colon V \times V \to A$ is a \emph{$\star$-sesquilinear form} if it is $\kk$-bilinear and
\begin{equation} \label{sesquilinear}
    \varphi(va,wb) = (-1)^{\bar{a} (\bar{\varphi}+\bar{v})} a^\star \varphi(v,w) b,\quad \text{for all } a,b \in A,\ v,w \in V.
\end{equation}
(In our cases of interest, $A$ will be purely even whenever $\bar{\varphi}=1$.)  If, in addition, there exists $\nu \in \{\pm 1\}$ such that
\begin{equation} \label{hermitian}
    \varphi(v,w) = \nu (-1)^{\bar{v} \bar{w}} \varphi(w,v)^\star \quad \text{for all } v,w \in V,
\end{equation}
then we say that $\varphi$ is a \emph{$(\nu,\star)$-superhermitian form}.  We say that $\varphi$ is \emph{unimodular} if the map
\[
    V \to \Hom_A(V,A),\quad v \mapsto \varphi(v,-),\quad v \in V,
\]
is an isomorphism of $\kk$-supermodules.  If $A$ is a division superalgebra, then $\varphi$ is unimodular if and only if it is nondegenerate.

We say that two $(\nu,\star)$-superhermitian forms $\varphi_1$ and $\varphi_2$ are \emph{equivalent} if there exists a homogeneous $f \in \Aut_A(V)$ such that
\[
    \varphi_2(v,w) = \varphi_1(f(v),f(w))
    \qquad \text{for all } v,w \in V.
\]
Note that, when $A=\kk$, a $(\nu,\id)$-superhermitian form is the same as a $(\nu,\id)$-supersymmetric form.

\begin{eg}
    If $A = \C$, $\star$ is complex conjugation, and $V$ is purely even, then a $(1,\star)$-superhermitian form is the familiar notion of a hermitian form, while a $(-1,\star)$-superhermitian form is a skew-hermitian form.  On the other hand, a $(1,\id)$-superhermitian form is a symmetric $\C$-bilinear form, while a $(-1,\id)$-superhermitian form is a skew-symmetric $\C$-bilinear form.
\end{eg}

\begin{rem}
    An \emph{even} $(\nu,\star)$-superhermitian form on $V$ is equivalent to an even $(-\nu,\star)$-superhermitian form on the parity-shifted supermodule $\Pi V$.  Thus, for even forms, one can assume $\nu=1$ without losing any generality.  However, this is \emph{not} the case for odd forms.  An \emph{odd} $(\nu,\star)$-superhermitian form on $V$ is equivalent to an odd $(\nu,\star)$-superhermitian form on the parity shift $\Pi V$.  Since odd forms are important for the periplectic Lie superalgebras we wish to include, we consider general $\nu$ in the current paper.
\end{rem}

\begin{lem} \label{crescent}
    If $(A,\star,\form)$ is an involutive Frobenius superalgebra, and $\varphi$ is a nondegenerate $(\nu,\star)$-superhermitian form on $V$, then the composite
    \begin{equation} \label{moon}
        \Phi = \form \circ \varphi \colon V \times V \to \kk
    \end{equation}
    is a nondegenerate $(\nu,\inv)$-supersymmetric form on $V$, with $a^\inv = \Nak(a)^\star$.
\end{lem}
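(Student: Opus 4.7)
The plan is to verify each axiom in the definition of $(\nu,\inv)$-supersymmetric form by combining the corresponding axiom for $\varphi$ with a corresponding identity for $\form$; the anti-involution property of $\inv = \star \circ \Nak$ is already supplied by the corollary preceding the lemma. Homogeneity is automatic since $\form$ is parity-preserving and $\varphi$ is homogeneous, so $\bar\Phi = \bar\varphi$, and $\kk$-bilinearity is inherited from $\varphi$.

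First I would verify the supersymmetry condition. Starting from $\Phi(v,w) = \form(\varphi(v,w))$ and applying the superhermitian identity \cref{hermitian} followed by the $\star$-invariance of $\form$ in \cref{rainbow}, I get
\[
    \Phi(v,w) = \nu(-1)^{\bar v\bar w}\form(\varphi(w,v)^\star) = \nu(-1)^{\bar v\bar w}\Phi(w,v),
\]
which is the first condition of \cref{supersymmetric}. This step is essentially painless.

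The main computation is the sesquilinearity axiom, where signs must be tracked carefully. Using \cref{sesquilinear} for $\varphi$ on the left, I would get $\Phi(va,w) = (-1)^{\bar a(\bar\varphi+\bar v)}\form(a^\star\varphi(v,w))$. The key step is to commute $a^\star$ past $\varphi(v,w)$ under $\form$ via the Nakayama identity \cref{Nakayama}, which introduces a sign of $(-1)^{\bar a(\bar\varphi+\bar v+\bar w)}$ (using that $\overline{a^\star}=\bar a$ and $\overline{\varphi(v,w)}=\bar\varphi+\bar v+\bar w$) and replaces $a^\star$ on the right by $\Nak(a^\star)$. Invoking \cref{mouse} gives $\Nak(a^\star) = \Nak(a)^\star = a^\inv$, and the other half of \cref{sesquilinear} (with trivial $a$ on the left argument) gives $\varphi(v,w)a^\inv = \varphi(v,wa^\inv)$. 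The two signs combine to $(-1)^{\bar a\bar w}$, yielding exactly $\Phi(va,w) = (-1)^{\bar a\bar w}\Phi(v,wa^\inv)$.

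Finally, for nondegeneracy, suppose $\Phi(v,w) = 0$ for all $w \in V$. Replacing $w$ by $wa$ for arbitrary $a \in A$ and using the right $A$-linearity of $\varphi$ in the second slot, I get $\form(\varphi(v,w)a) = 0$ for all $w \in V$ and $a \in A$. Nondegeneracy of the Frobenius pairing $(x,y) \mapsto \form(xy)$ on $A$ then forces $\varphi(v,w)=0$ for all $w$, and nondegeneracy of $\varphi$ (equivalently, injectivity of $v \mapsto \varphi(v,-)$) gives $v=0$. The only real obstacle is bookkeeping of parities in the sesquilinearity step; the cancellation works precisely because the two applications of the Nakayama and sesquilinear identities contribute signs that depend on $\bar\varphi+\bar v$ in complementary ways.
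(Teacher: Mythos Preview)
Your proposal is correct and follows essentially the same route as the paper's proof: both verify supersymmetry via \cref{hermitian} and \cref{rainbow}, sesquilinearity by applying \cref{sesquilinear}, then the Nakayama identity \cref{Nakayama} together with \cref{mouse}, and nondegeneracy by combining nondegeneracy of $\varphi$ with that of the Frobenius form. The only cosmetic difference is that you phrase the nondegeneracy argument as the contrapositive, whereas the paper starts from a nonzero $v$ and exhibits a $w$ with $\Phi(v,w)\neq 0$.
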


\begin{proof}
    For all $v,w \in V$, we have
    \[
        \Phi(v,w)
        = \form(\varphi(v,w))
        \overset{\cref{hermitian}}{=} \nu (-1)^{\bar{v}\bar{w}} \form(\varphi(w,v)^\star)
        \overset{\cref{rainbow}}{=} \nu (-1)^{\bar{v}\bar{w}} \form(\varphi(w,v))
        = \nu(-1)^{\bar{v}\bar{w}} \Phi(w,v)
    \]
    and
    \begin{multline*}
        \Phi(va,w)
        = \form(\varphi(va,w))
        \overset{\cref{sesquilinear}}{=} (-1)^{\bar{a}(\bar{\varphi}+\bar{v})} \form(a^\star \varphi(v,w))
        \\
        \overset{\cref{Nakayama}}{=} (-1)^{\bar{a}\bar{w}} \form(\varphi(v,w)\Nak(a^\star))
        \overset{\cref{sesquilinear}}{=} (-1)^{\bar{a}\bar{w}} \form(\varphi(v,w\Nak(a^\star)))
        \overset{\cref{mouse}}{=} (-1)^{\bar{a}\bar{w}} \Phi(v,w \Nak(a)^\star).
    \end{multline*}
    Thus $\Phi$ is $(\nu,\inv)$-supersymmetric, with $a^\inv = \Nak(a)^\star$.

    It remains to show that $\Phi$ is nondegenerate.  Suppose $v \in V$ is nonzero.  Then, since $\varphi$ is nondegenerate, there exists $w \in V$ such that $\varphi(v,w) \ne 0$.  Since $\form$ is nondegenerate, there exists $a \in A$ such that
    \[
        0 \ne \form(\varphi(v,w)a)
        = \form(\varphi(v,wa))
        = \Phi(v,wa).
        \qedhere
    \]
\end{proof}

\subsection{Adjoint operators\label{subsec:adjoint}}

Suppose that $(A,\star)$ is an involutive superalgebra, and that $\varphi$ is a unimodular $(\nu,\star)$-superhermitian form on a right $A$-supermodule $V$.

\begin{lem}
    For all $X \in \End_A(V)$, there exists a unique $X^\dagger \in \End_A(V)$ such that
    \[
        \varphi(v,Xw) = (-1)^{\bar{X} \bar{v}} \varphi(X^\dagger v,w) \qquad \text{for all } v,w \in V.
    \]
\end{lem}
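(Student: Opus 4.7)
The plan is to use the unimodularity of $\varphi$ to construct $X^\dagger$ componentwise, then verify $A$-linearity and uniqueness as separate checks.

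First, I would fix a homogeneous $X \in \End_A(V)$ and a homogeneous $v \in V$, and consider the $\kk$-linear map $\lambda_v \colon V \to A$, $w \mapsto \varphi(v, Xw)$. Sesquilinearity (specifically, setting $a=1$ in \cref{sesquilinear}) implies $\lambda_v$ is right $A$-linear. By unimodularity of $\varphi$, there exists a unique $Y(v) \in V$ with $\varphi(Y(v), w) = \lambda_v(w)$ for all $w$; comparing parities in $w$ forces $\overline{Y(v)} = \bar v + \bar X$. I would then set $X^\dagger v := (-1)^{\bar X \bar v} Y(v)$, so that by construction
\[
    \varphi(v, Xw) = (-1)^{\bar X \bar v} \varphi(X^\dagger v, w) \qquad \text{for all } w \in V,
\]
and extend to arbitrary $v$ by $\kk$-linearity.

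The main thing to check is that $X^\dagger$ is $A$-linear (and not just $\kk$-linear). For $a \in A$, I would apply the defining identity with $v$ replaced by $va$, expand the left side using $\varphi(va,Xw) = (-1)^{\bar a(\bar\varphi + \bar v)} a^\star \varphi(v,Xw)$ from \cref{sesquilinear}, and compare with the similar expansion of $\varphi(X^\dagger(v)\, a, w)$. The resulting sign exponents $\bar X \bar a + \bar a \bar\varphi + \bar a \bar v$ and $\bar a(\bar\varphi + \bar X + \bar v)$ are equal, so $\varphi(X^\dagger(va), w) = \varphi(X^\dagger(v)a, w)$ for all $w$; unimodularity then yields $X^\dagger(va) = X^\dagger(v)\, a$.

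Uniqueness of $X^\dagger$ as an element of $\End_A(V)$ is immediate: if $X^\dagger_1$ and $X^\dagger_2$ both satisfy the defining identity, then $\varphi(X^\dagger_1 v - X^\dagger_2 v, w) = 0$ for all $w$, whence $X^\dagger_1 v = X^\dagger_2 v$ by unimodularity. I expect no genuine obstacle here; the only delicate point is tracking the Koszul signs in the $A$-linearity computation, which is routine once the sign $(-1)^{\bar X \bar v}$ is absorbed into the definition of $X^\dagger$.
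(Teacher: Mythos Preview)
Your proposal is correct and follows essentially the same approach as the paper: use unimodularity of $\varphi$ to define $X^\dagger v$ pointwise via the representability of $w \mapsto (-1)^{\bar X \bar v}\varphi(v,Xw)$, then verify $A$-linearity by a sign check. Your treatment of the Koszul signs in the $A$-linearity step is in fact slightly more careful than the paper's, which leaves that verification as ``straightforward.''
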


\begin{proof}
    Fix $v \in V$.  The map $w \mapsto (-1)^{\bar{X}\bar{v}} \varphi(v,Xw)$ is an element of $\Hom_A(V,A)$.
    \details{
        This map is clearly additive.  We also have
        \[
            wa
            \mapsto (-1)^{\bar{X}\bar{v}} \varphi(v, Xwa)
            = (-1)^{\bar{X}\bar{v}} \varphi(v,Xw)a.
        \]
        Hence the map is a morphism of right $A$-supermodules.
    }
    Thus, since $\varphi$ is unimodular, there exists a unique $v' \in V$ such that
    \[
        \varphi(v',w)
        = (-1)^{\bar{X}\bar{v}} \varphi(v,Xw) \qquad \text{for all } v \in V.
    \]
    We define $X^\dagger v = v'$.  It is then straightforward to verify that $X^\dagger \in \End_A(V)$.
    \details{
        For homogeneous $u,v,w \in V$ and $a \in A$ such that $\bar{v} = \bar{a} + \bar{u}$, we have
        \begin{align*}
            \varphi(X^\dagger(ua+v),w)
            &= (-1)^{\bar{X} \bar{v}} \varphi(ua+v,Xw) \\
            &= (-1)^{\bar{X} \bar{v} + \bar{a} \bar{u}} a^\star \varphi(u,Xw) + (-1)^{\bar{X} \bar{v}} \varphi(v,Xw) \\
            &= \varphi( (X^\dagger u)a + X^\dagger v, w ),
        \end{align*}
        and hence $X^\dagger(ua+v) = (X^\dagger u)a + X^\dagger v$.
    }
\end{proof}

The element $X^\dagger$ is called the \emph{adjoint} to $X$.

\begin{lem} \label{hilt}
    The map $X \mapsto X^\dagger$ is an anti-involution on $\End_A(V)$.  In particular, $(X^\dagger)^\dagger = X$, and
    \begin{equation} \label{blade}
        (XY)^\dagger = (-1)^{\bar{X} \bar{Y}} Y^\dagger X^\dagger
        \quad \text{for all } X,Y \in \End_A(V).
    \end{equation}
\end{lem}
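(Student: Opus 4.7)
The plan is to read off all three assertions directly from the defining identity
\[
    \varphi(v, Xw) = (-1)^{\bar{X}\bar{v}} \varphi(X^\dagger v, w),
\]
combined with the $(\nu,\star)$-superhermitian symmetry of $\varphi$ and, where needed, the identity itself applied twice. At each stage unimodularity of $\varphi$ (hence nondegeneracy) lets us identify operators as soon as we produce matching equations. Along the way I would note that the assignment $X \mapsto X^\dagger$ is $\kk$-linear and homogeneous of degree zero (so $\overline{X^\dagger} = \bar{X}$); linearity is immediate from uniqueness, and the parity condition is forced by the fact that the two sides of the defining identity must have equal parity $\bar{\varphi} + \bar{v} + \bar{X} + \bar{w}$.

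For the involution identity $(X^\dagger)^\dagger = X$, I would apply the superhermitian symmetry $\varphi(a,b) = \nu (-1)^{\bar{a}\bar{b}} \varphi(b,a)^\star$ to both sides of the defining identity, swapping the arguments inside each occurrence of $\varphi$. After collecting signs and taking $\star$ of both sides (using $(a^\star)^\star = a$), this rewrites the defining identity as
\[
    \varphi(Xw, v) = (-1)^{\bar{X}\bar{w}} \varphi(w, X^\dagger v).
\]
Next I would apply the definition of the adjoint with $X$ replaced by $X^\dagger$ and the roles of $v$ and $w$ interchanged, obtaining $\varphi(w, X^\dagger v) = (-1)^{\bar{X}\bar{w}} \varphi((X^\dagger)^\dagger w, v)$. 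Substituting back, the two factors of $(-1)^{\bar{X}\bar{w}}$ cancel, and nondegeneracy of $\varphi$ in the first slot gives $Xw = (X^\dagger)^\dagger w$ for every $w$.

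For the anti-multiplicative rule, I would peel off $X$ and then $Y$ in succession: starting from $\varphi(v, XYw)$, applying the defining identity to the pair $(v, X\cdot(Yw))$ and then to the pair $(X^\dagger v, Y\cdot w)$ produces a sign
\[
    (-1)^{\bar{X}\bar{v} + \bar{Y}(\bar{X}+\bar{v})} = (-1)^{(\bar{X}+\bar{Y})\bar{v} + \bar{X}\bar{Y}}
\]
in front of $\varphi(Y^\dagger X^\dagger v, w)$. Comparing this with the direct application $\varphi(v, XYw) = (-1)^{(\bar{X}+\bar{Y})\bar{v}} \varphi((XY)^\dagger v, w)$ and invoking nondegeneracy yields $(XY)^\dagger = (-1)^{\bar{X}\bar{Y}} Y^\dagger X^\dagger$.

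The argument is essentially sign bookkeeping; the only substantive step is the symmetry manipulation behind $(X^\dagger)^\dagger = X$, and the main obstacle is simply verifying that the various signs (carrying factors from $\bar{\varphi}$, $\nu$, $\bar{X}$, $\bar{v}$, $\bar{w}$) really cancel as expected. Taken together, the two displayed identities say exactly that $\dagger$ is an anti-involution of the associative superalgebra $\End_A(V)$, with the sign convention dictated by \cref{galaxy}.
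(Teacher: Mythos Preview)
Your proposal is correct and follows essentially the same approach as the paper: both arguments use the defining identity of the adjoint together with the $(\nu,\star)$-superhermitian symmetry to verify $(X^\dagger)^\dagger = X$, and both peel off $X$ and $Y$ in succession to obtain the anti-multiplicative rule. The only difference is a minor reordering in the involution argument (you apply the superhermitian symmetry first and conclude via nondegeneracy in the first slot, whereas the paper applies it at both ends and concludes via nondegeneracy in the second slot), which is immaterial.
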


\begin{proof}
    This is a straightforward verification.
    \details{
        For all $v,w \in V$, we have
        \begin{align*}
            \varphi(v,Xw)
            &= (-1)^{\bar{X}\bar{v}} \varphi(X^\dagger v, w) \\
            &= (-1)^{\bar{X}(\bar{v}+\bar{w}) + \bar{v}\bar{w}} \varphi(w,X^\dagger v)^\star \\
            &= (-1)^{(\bar{X}+\bar{w})\bar{v}} \varphi((X^\dagger)^\dagger w, v)^\star \\
            &= \varphi(v,(X^\dagger)^\dagger w).
        \end{align*}
        Thus $(X^\dagger)^\dagger = X$.  We also have
        \begin{align*}
            (-1)^{(\bar{X} + \bar{Y})\bar{v}} \varphi((XY)^\dagger v, w)
            &= \varphi(v,XYw) \\
            &= (-1)^{\bar{X}\bar{v}} \varphi(X^\dagger v, Y w) \\
            &= (-1)^{(\bar{X}+\bar{v})\bar{Y} + \bar{X}\bar{v}} \varphi(Y^\dagger X^\dagger v, w),
        \end{align*}
        and so \cref{blade} holds.
    }
\end{proof}

Now suppose that $(A,\star,\form)$ is an involutive Frobenius superalgebra, and consider the nondegenerate $(\nu,\inv)$-supersymmetric form $\Phi = \form \circ \varphi$ as in \cref{crescent}.  Then, for all $X \in \End_A(V)$, we have
\[
    \Phi(v,Xw) = (-1)^{\bar{X}\bar{v}} \Phi(X^\dagger v, w)
    \qquad \text{for all } v,w \in V.
\]
It follows that the definition of $X^\dagger$ is the same if we use the $(\nu,\star)$-superhermitian form $\varphi$ or the corresponding $(\nu,\inv)$-supersymmetric form $\Phi$.

\subsection{Harish-Chandra superpairs associated to superhermitian forms\label{subsec:HCform}}

For $\varphi$ either a unimodular $(\nu,\star)$-superhermitian form or a nondegenerate (hence unimodular) $(\nu,\inv)$-supersymmetric form, define
\begin{align*}
    G_\rd(\varphi) &= \{ X \in \Aut_A(V)_0 : \varphi(Xv,Xw) = \varphi(v,w) \text{ for all } v,w \in V\}, \\
    \fg(\varphi) &= \{ X \in \End_A(V) : \varphi(Xv,w) = -(-1)^{\bar{X} \bar{v}} \varphi(v,Xw) \text{ for all } v,w \in V\} \\
    &= \{ X \in \End_A(V) : X^\dagger = - X\}.
\end{align*}
We have that $\fg(\varphi)$ is a Lie superalgebra with the usual bracket:
\[
    [X,Y] = XY - (-1)^{\bar{X}\bar{Y}} YX.
\]
The pair $G(\varphi) := (G_\rd(\varphi),\fg(\varphi))$ is a Harish-Chandra superpair; see \cref{subsec:HCpair}.

If $\varphi_1$ and $\varphi_2$ are equivalent forms, then the groups $G_\rd(\varphi_1)$ and $G_\rd(\varphi_2)$ are isomorphic, as are the Lie superalgebras $\fg(\varphi_1)$ and $\fg(\varphi_2)$.

If $(A,\star,\form)$ is an involutive Frobenius superalgebra and $\varphi$ is a unimodular $(\nu,\star)$-superhermitian form, then we have the nondegenerate $(\nu,\inv)$-supersymmetric form $\Phi = \form \circ \varphi$ from \cref{crescent}.  It follows from the nondegeneracy of $\form$ that
\[
    \fg(\varphi) = \fg(\Phi)
    \qquad \text{and} \qquad
    G_\rd(\varphi) = G_\rd(\Phi).
\]
\details{
    The first equality follows from the fact that the definition of $X^\dagger$ is the same if we use $\varphi$ or $\Phi$, as noted in \cref{subsec:adjoint}

    It is clear that $G_\rd(\varphi) \subseteq G_\rd(\Phi)$.  Now assume that $X \in G_\rd(\Phi)$.  Then, for all $a \in A$ and $v,w \in V$, we have
    \[
        \form(\varphi(Xv,Xw)a)
        = \form(\varphi(Xv,Xwa))
        = \Phi(Xv,Xwa)
        = \Phi(v,wa)
        = \form(\varphi(v,wa))
        = \form(\varphi(v,w)a).
    \]
    Since $\form$ is nondegenerate, we have $\varphi(Xv,Xw) = \varphi(v,w)$.
}

\section{Superhermitian forms over involutive real division superalgebras\label{sec:hermdiv}}

For our purposes, the most important examples of involutive superalgebras come from real division superalgebras.  In this section, we examine some important properties of the Lie superalgebras associated to superhermitian forms over involutive real division superalgebras.

For the quaternions, we have the anti-involution
\[
    \star \colon \HH \to \HH,\qquad
    (a+bi+cj+dk)^\star = a - bi - cj - dk,\quad
    a,b,c,d \in \R,
\]
of quaternionic conjugation.  This restricts to complex conjugation on $\C$ and the identity map on $\R$.  The complex Clifford superalgebra $\Cl(\C)$ has anti-involution
\begin{equation} \label{CClinv}
    \star \colon \Cl(\C) \to \Cl(\C),\qquad
    (a + \varepsilon b)^\star = a^\star + \varepsilon b^\star i,\quad
    a,b \in \C,
\end{equation}
where, on the right-hand side, $\star$ denotes complex conjugation.  Note that this is an anti-involution of \emph{real} superalgebras, but not of \emph{complex} superalgebras.  In fact, there are no $\C$-linear anti-involutions of $\Cl(\C)$.
\details{
    Suppose $\star \colon \Cl(\C) \to \Cl(\C)$ is a $\C$-linear anti-involution.  Since $\star$ is parity-preserving, we have $\varepsilon^* = \varepsilon z$ for some $z \in \C$.  Then
    \[
        1 = 1^\star
        = (\varepsilon^2)^\star
        = - (\varepsilon^\star)^2
        = - (\varepsilon z)^2
        = - \varepsilon^2 z^2
        = - z^2.
    \]
    Thus $z = \pm i$.  But then
    \[
        (\varepsilon^\star)^\star
        = (\varepsilon z)^\star
        = \varepsilon^\star z
        = \varepsilon z^2
        = - \varepsilon
        \ne \varepsilon.
    \]
}
The notation $\star$ will always refer to the above involutions when working with the real division superalgebras $\R$, $\C$, $\HH$, and $\Cl(\C)$.  Note that $(\DD,\star,\RP)$ is an involutive Frobenius superalgebra for $\DD \in \{\R,\C,\HH,\Cl(\C)\}$, as is $(\C,\id,\RP)$, where $\RP(a)$ is the real part of the even part of $a$.  None of the other real division superalgebras $\Cl_r(\R)$, $r \in \{1,2,3,5,6,7\}$, admit anti-involutions, since they are not isomorphic to their opposite superalgebras.

If $(A,\star)$ is an involutive superalgebra, then the complexification $A^\C$ is also an involutive superalgebra, with involution (which we continue to denote by the same symbol)
\begin{equation} \label{complexstar}
    \star \colon A^\C \to A^\C,\quad
    (a \otimes z)^\star = a^\star \otimes z,\qquad
    a \in A,\ z \in \C.
\end{equation}
Similarly, if $(A,\star,\form)$ is an involutive Frobenius superalgebra, then so is $(A^\C,\star,\form)$; see \cref{floor}.

\subsection{Real case}

In this subsection we work over the involutive real division superalgebra $(\R,\id)$.  If
\[
    \varphi \colon V \times V \to \R
\]
is a nondegenerate $(\nu,\id)$-superhermitian form on an $\R$-supermodule $V$, then its complexification
\[
    \varphi^\C \colon V^\C \times V^\C \to \C
\]
is a nondegenerate $(\nu,\id)$-superhermitian form on $V^\C$.

\begin{prop} \label{compRform}
    We have an isomorphism of complex Lie superalgebras
    \[
        \fg(\varphi)^\C \cong \fg(\varphi^\C).
    \]
\end{prop}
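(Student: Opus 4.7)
The plan is to identify $\fg(\varphi)^\C$ with $\fg(\varphi^\C)$ as subspaces of the canonical isomorphism $\End_\R(V)^\C \xrightarrow{\cong} \End_\C(V^\C)$ (which is an isomorphism of complex superalgebras for finite-dimensional $V$, by the same argument as in \cref{chain,link}).

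First, I would show that the adjoint operation is compatible with complexification. The map $X \mapsto X^\dagger$ of \cref{hilt} is $\R$-linear on $\End_\R(V)$, so it extends uniquely to a $\C$-linear map on $\End_\R(V)^\C$. I would verify that, viewing $\End_\R(V)^\C$ inside $\End_\C(V^\C)$, this extension coincides with the adjoint $\ddagger$ taken with respect to $\varphi^\C$. Since any $Y \in \End_\C(V^\C)$ has a unique decomposition $Y = X_1 \otimes 1 + X_2 \otimes i$ with $X_i \in \End_\R(V)$, it suffices to check that $(X \otimes 1)^\ddagger = X^\dagger \otimes 1$ for $X \in \End_\R(V)$, which follows from the defining identity
\[
    \varphi^\C(v, (X \otimes 1)w) = (-1)^{\bar X \bar v} \varphi^\C((X^\dagger \otimes 1)v, w)
\]
by $\C$-bilinearly extending the corresponding identity for $\varphi$ and $X$, using that $\varphi^\C$ is defined on pure tensors by $\varphi^\C(v \otimes y, w \otimes z) = \varphi(v,w) yz$.

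With this compatibility in hand, the natural $\C$-linear map
\[
    \fg(\varphi)^\C \to \End_\C(V^\C), \qquad X \otimes z \mapsto z(X \otimes 1),
\]
lands inside $\fg(\varphi^\C)$: for $X \in \fg(\varphi)$ we have $X^\dagger = -X$, which upon complexification gives $(X \otimes 1)^\ddagger = -X \otimes 1$. It is injective because $\fg(\varphi) \hookrightarrow \End_\R(V)$ and complexification is exact (\cref{funk}). For surjectivity, given $Y \in \fg(\varphi^\C)$, write $Y = X_1 \otimes 1 + X_2 \otimes i$; then $Y^\ddagger = X_1^\dagger \otimes 1 + X_2^\dagger \otimes i = -Y$ forces $X_1^\dagger = -X_1$ and $X_2^\dagger = -X_2$, so $X_1, X_2 \in \fg(\varphi)$ and $Y$ lies in the image.

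Finally, the map respects Lie brackets since the bracket on both sides is given by the supercommutator in the respective endomorphism superalgebra, and the identification $\End_\R(V)^\C \cong \End_\C(V^\C)$ is one of superalgebras. The only place requiring genuine care is the compatibility of $\dagger$ with complexification; everything else is formal from the linear-algebra behavior of complexification reviewed in \cref{subsec:complexification}.
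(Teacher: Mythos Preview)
Your proof is correct and takes essentially the same approach as the paper: the paper's proof is the single sentence ``It is straightforward to verify that the map $X \otimes a \mapsto Xa$, $X \in \fg(\varphi)$, $a \in \C$, gives the desired isomorphism,'' and you have carried out exactly that verification in detail, via the compatibility of the adjoint with complexification and the decomposition $Y = X_1 \otimes 1 + X_2 \otimes i$.
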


\begin{proof}
    It is straightforward to verify that the map $X \otimes a \mapsto Xa$, $X \in \fg(\varphi)$, $a \in \C$, gives the desired isomorphism.
\end{proof}

\subsection{Complex cases}

In this subsection we work over the involutive real division superalgebra $(\DD,\star)$, where $\DD \in \{\C,\Cl(\C)\}$.

\begin{prop} \label{compCform}
    Suppose $\varphi$ is a nondegenerate $(\nu,\star)$-superhermitian form on $\DD^{m|n}$.  Then we have an isomorphism of complex Lie superalgebras
    \[
        \fg(\varphi)^\C \cong \fgl(m|n,\DD).
    \]
\end{prop}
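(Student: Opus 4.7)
The plan is to exhibit $\fg(\varphi)$ as a real form of the complex Lie superalgebra $\fgl(m|n,\DD) = \End_\DD(\DD^{m|n})$, regarded as a $\C$-Lie superalgebra through the super-central copy of $\C$ in $\DD$. By \cref{hilt}, $X \mapsto X^\dagger$ is an anti-involution of $\End_\DD(\DD^{m|n})$, so its $(-1)$-eigenspace $\fg(\varphi)$ is automatically closed under the super-bracket and hence a real Lie sub-superalgebra. It therefore suffices to produce a real-linear direct sum decomposition $\fgl(m|n,\DD) = \fg(\varphi) \oplus i\,\fg(\varphi)$ and to use it to identify $\fg(\varphi) \otimes_\R \C$ with $\fgl(m|n,\DD)$.

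The crux is the observation that $\star$ restricts to complex conjugation on the super-central $\C \subset \DD$---by definition for $\DD = \C$, and by inspection of \cref{CClinv} for $\DD = \Cl(\C)$---so $i^\star = -i$. Starting from the defining identity $\varphi(v,(zX)w) = (-1)^{\bar X\bar v}\varphi((zX)^\dagger v,w)$, using that $z \in \C$ is even and super-central in $\DD$, and applying the sesquilinearity \cref{sesquilinear}, I would derive the key formula $(zX)^\dagger = z^\star X^\dagger$ for every $z \in \C$ and $X \in \fgl(m|n,\DD)$. In particular, $\dagger$ is $\C$-antilinear with respect to the central $\C$-action, so $(iX)^\dagger = -iX^\dagger$.

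Given this antilinearity, any $Y \in \fgl(m|n,\DD)$ decomposes as $Y = X_1 + iX_2$, where
\[
    X_1 = \tfrac{1}{2}(Y - Y^\dagger),\qquad X_2 = -\tfrac{i}{2}(Y + Y^\dagger),
\]
and both summands lie in $\fg(\varphi)$ by a short direct check. The sum is direct: if $X = iZ$ with $X,Z \in \fg(\varphi)$, then $-X = X^\dagger = -iZ^\dagger = iZ = X$, forcing $X = 0$. Consequently the $\C$-linear map $\Psi \colon \fg(\varphi) \otimes_\R \C \to \fgl(m|n,\DD)$, $X \otimes z \mapsto Xz$, is bijective, and since the $\C$-scalars act centrally on $\fgl(m|n,\DD)$, one has $[Xz,Yw] = [X,Y]zw$, so $\Psi$ is a homomorphism of complex Lie superalgebras. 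I do not anticipate a serious obstacle: the whole argument rests on the complex-antilinearity of the adjoint, which in turn rests on $i^\star = -i$, valid in both of the involutive division superalgebras under consideration.
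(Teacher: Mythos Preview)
Your argument is correct and essentially identical to the paper's. The paper obtains the key antilinearity $(aX)^\dagger = a^\star X^\dagger$ by invoking \cref{blade} with $Y = aI$ rather than by direct appeal to sesquilinearity, and then writes the same skew/self-adjoint decomposition $X = \tfrac{1}{2}(X^\dagger - X) + \tfrac{1}{2}(X^\dagger + X)$ to conclude $\fgl(m|n,\DD) = \fg(\varphi) \oplus \fg(\varphi)i$; the remaining steps are the same as yours.
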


\begin{proof}
    It follows from \cref{blade} with $Y = aI$ that $(aX)^\dagger = a^\star X^\dagger$ for all $a \in \DD$ and $X \in \fg(\varphi)$.
    Multiplication by $i$ gives an isomorphism of $\C$-supermodules
    \[
        \fg(\varphi) \xrightarrow{\cong} \{X \in \fgl(m|n,\DD) : X^\dagger = X\}.
    \]
    Therefore, for every $X \in \fgl(m|n,\DD)$, we have
    \begin{gather*}
        X = \tfrac{1}{2}(X^\dagger - X) + \tfrac{1}{2}(X^\dagger + X),
        \qquad \text{with}
        \\
        \tfrac{1}{2}(X^\dagger - X) \in \fg(\varphi)
        \qquad \text{and} \qquad
        \tfrac{1}{2}(X^\dagger + X) \in \fg(\varphi) i.
    \end{gather*}
    The result follows.
\end{proof}

\subsection{Quaternionic case\label{subsec:compHform}}

In this subjection we work over the real involutive division superalgebra $(\HH,\star)$ and we fix a nondegenerate $(\nu,\star)$-superhermitian form $\varphi$ on $\HH^{m|n}$.   We will often view the quaternions as
\begin{equation} \label{jcong}
    \HH = \C[j],\quad j^2 = -1,\quad jzj^{-1} = z^\star,\ z \in \C.
\end{equation}
Choosing the $\C$-basis $\{1,j\}$ for $\HH$, we will identify $\HH^{m|n}$ with $\C^{2m|2n}$.  Similarly, under the inclusion $\imath$ of \cref{Pauli}, we will identify $\Mat_{m|n}(\HH)$ with a subring of $\Mat_{2m|2n}(\C)$.  In particular, we have
\begin{equation} \label{dutch}
    \Mat_{m|n}(\HH)
    = \{ X \in \Mat_{2m|2n}(\C) : X(vj) = (Xv)j \text{ for all } v \in \HH^{m|n} = \C^{2m|2n} \}.
\end{equation}
Note that, for $v \in \HH^{m|n} = \C^{2m|2n}$, we have
\begin{equation} \label{mayor}
    vj = J v^\star,\qquad \text{where} \quad
    J = J_{m+n} =
    \begin{pmatrix}
        J_1 & 0 & \cdots & 0 \\
        0 & J_1 & \ddots & \vdots \\
        \vdots & \ddots & \ddots & 0 \\
        0 & \cdots & 0 & J_1
    \end{pmatrix},
    \quad
    J_1 = \begin{pmatrix} 0 & -1 \\ 1 & 0 \end{pmatrix}.
\end{equation}
It follows that, for $X \in \Mat_{2m|2n}(\C)$,
\begin{equation} \label{aiel}
    X \in \Mat_{m|n}(\HH)
    \iff J X^\star J^{-1} = X.
\end{equation}
\details{
    We identify $\Mat_{m|n}(\HH)$ with $\End_\HH(\HH^{m|n}) \subseteq \End_\C(\C^{2m|2n}) = \Mat_{2m|2n}(\C)$.  Then, by \cref{dutch}, we have
    \[
        X \in \Mat_{m|n}(\HH)
        \iff X J v^\star = J(Xv)^\star = J X^\star v^\star \ \forall\ v \in \C^{2m|2n}
        \iff J X^\star J^{-1} = X.
    \]
}

Consider the $\C$-linear maps
\[
    \proj^\HH_\C,\proj^\HH_{j\C} \colon \HH \to \C,\quad
    \proj^\HH_\C(a+jb) = a,\quad
    \proj^\HH_{j\C}(a+jb) = b,\qquad
    a,b \in \C.
\]
It is straightforward to verify that
\begin{equation} \label{reveal}
    \proj^\HH_{j\C}(zj) = \proj^\HH_\C(z)^\star
    \quad \text{for all } z \in \C.
\end{equation}
\details{
    We have
    \begin{multline*}
        \proj^\HH_{j\C}((a+ib+jc+kd)j)
        = \proj^\HH_{j\C}(aj + ibj + jcj + kdj)
        \\
        \overset{\cref{jcong}}{=} \proj^\HH_{j\C}(j a^\star + k b^\star - c^\star - i d^\star)
        = a^\star
        = \proj^\HH_\C(a+ib+jc+kd)^\star.
    \end{multline*}
}
Define
\[
    \varphi^1 := \proj^\HH_\C \circ \varphi,\qquad
    \varphi^j := \proj^\HH_{j\C} \circ \varphi.
\]
These are precisely the components of $\varphi$ with respect to the $\C$-basis $\{1,j\}$ of $\HH$.  The following lemma gives a precise relationship between $\varphi^1$ and $\varphi^j$.

\begin{lem}
    For all $v,w \in \HH^{m|n} = \C^{2m|2n}$, we have
    \begin{align} \label{boar}
        \varphi^j(vj,w) &= - \varphi^1(v,w),&
        \varphi^1(vj,w) &= \varphi^j(v,w),
        \\ \label{rabbit}
        \varphi^j(v,wj) &= \varphi^1(v,w)^\star,&
        \varphi^1(v,wj) &= - \varphi^j(v,w)^\star.
    \end{align}
\end{lem}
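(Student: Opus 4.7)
The plan is to exploit the $\C$-basis $\{1, j\}$ of $\HH$ to decompose the $\HH$-valued form as $\varphi(v,w) = \varphi^1(v,w) + j\varphi^j(v,w)$ for all $v,w \in \HH^{m|n}$, then compute $\varphi(vj,w)$ and $\varphi(v,wj)$ directly via the sesquilinearity relation \cref{sesquilinear} and read off the $\C$-components of the result. Because $j \in \HH$ is even, the sign $(-1)^{\bar{a}(\bar{\varphi}+\bar{v})}$ in \cref{sesquilinear} is trivial in both computations, so the parity of $\varphi$ plays no role and the four identities can be derived in a uniform way.

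For \cref{boar}, I would specialize \cref{sesquilinear} to $a = j$ and $b = 1$ and use $j^\star = -j$ to obtain $\varphi(vj,w) = -j \varphi(v,w)$. Substituting the decomposition yields
\[
    -j \varphi(v,w) = -j \varphi^1(v,w) - j^2 \varphi^j(v,w) = \varphi^j(v,w) + j\bigl(-\varphi^1(v,w)\bigr),
\]
using $j^2 = -1$. Since $\varphi^1(v,w), \varphi^j(v,w) \in \C$, the right-hand side is already in the form $c + jd$ with $c,d \in \C$, and reading off its $1$- and $j$-components gives the two identities in \cref{boar}.

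For \cref{rabbit}, I would specialize \cref{sesquilinear} to $a = 1$ and $b = j$ to get $\varphi(v,wj) = \varphi(v,w)j$, and then substitute the decomposition to obtain $\varphi^1(v,w)j + j\varphi^j(v,w)j$. The crucial tool here is the commutation relation $zj = jz^\star$ for $z \in \C$, which follows from \cref{jcong}: applying it converts $\varphi^1(v,w)j$ into $j\varphi^1(v,w)^\star$, while $j\varphi^j(v,w)j$ becomes $\varphi^j(v,w)^\star j^2 = -\varphi^j(v,w)^\star$. Combining, $\varphi(v,w)j = -\varphi^j(v,w)^\star + j\varphi^1(v,w)^\star$, from which \cref{rabbit} follows by reading off components. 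The proof is essentially bookkeeping; the only subtlety is tracking the complex conjugations produced by the relation $zj = jz^\star$, which is precisely what introduces the $\star$'s on the right-hand sides of \cref{rabbit} but not on those of \cref{boar}.
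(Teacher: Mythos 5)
Your proof is correct and is essentially the same as the paper's: both decompose $\varphi(vj,w)=-j\varphi(v,w)$ and $\varphi(v,wj)=\varphi(v,w)j$ via sesquilinearity, rewrite the right-hand sides in the form $c+jd$ using $j^2=-1$ and $zj=jz^\star$, and compare components. The bookkeeping of the conjugations in \cref{rabbit} matches the paper exactly.
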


\begin{proof}
    We have
    \[
        \varphi^1(vj,w)  + j \varphi^j(vj,w)
        = \varphi(vj,w)
        = -j \varphi(v,w)
        = -j \varphi^1(v,w) + \varphi^j(v,w).
    \]
    Comparing components gives \cref{boar}.  Similarly
    \[
        \varphi^1(v,wj) + j \varphi^j(v,wj)
        = \varphi(v,wj)
        = \varphi(v,w)j
        \overset{\cref{jcong}}{=} j \varphi^1(v,w)^\star - \varphi^j(v,w)^\star
    \]
    implies \cref{rabbit}.
\end{proof}

\begin{lem} \label{fries}
    The form
    \[
        \varphi^j \colon \C^{2m|2n} \times \C^{2m|2n} \to \C
    \]
    is nondegenerate and $(-\nu,\id)$-supersymmetric.
\end{lem}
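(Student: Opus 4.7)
The statement has three things to verify: that $\varphi^j$ is $\C$-bilinear in the sense of \cref{supersymmetric} with $\inv = \id$, that it is $(-\nu)$-supersymmetric, and that it is nondegenerate. The common tool throughout will be the behaviour of $\proj^\HH_{j\C}$ with respect to left and right $\C$-multiplication and with respect to the involution $\star$, all of which is controlled by the decomposition $\HH = \C \oplus j\C$ and the relation $zj = jz^\star$ of \cref{jcong}.

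First I would verify that $\proj^\HH_{j\C}$ is right $\C$-linear but left \emph{conjugate} $\C$-linear; both drop out of $(a+jb)z = az + j(bz)$ and $z(a+jb) = za + j(z^\star b)$ for $a,b,z \in \C$. Combining this with the sesquilinearity \cref{sesquilinear} of $\varphi$ specialised to $a \in \C \subset \HH$ (where all parity signs vanish since $\C$ is purely even), this yields
\[
    \varphi^j(vz,w) = z\,\varphi^j(v,w) = \varphi^j(v,wz),
\]
which is exactly the $\id$-sesquilinearity required by \cref{supersymmetric}. For the $(-\nu)$-supersymmetry, I would apply $\proj^\HH_{j\C}$ to both sides of \cref{hermitian}; the only remaining point is the identity $\proj^\HH_{j\C}(q^\star) = -\proj^\HH_{j\C}(q)$, which is a one-line calculation: writing $q = a + jb$ and using $j^\star = -j$ together with $b^\star j = jb$ gives $q^\star = a^\star - jb$, whose $j\C$-component is $-b$.

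For nondegeneracy, suppose $v \in \C^{2m|2n}$ satisfies $\varphi^j(v,w) = 0$ for all $w$. Since $wj$ is still an element of $\HH^{m|n} = \C^{2m|2n}$, substituting $w \mapsto wj$ and applying the identity $\varphi^j(v,wj) = \varphi^1(v,w)^\star$ from \cref{rabbit} yields $\varphi^1(v,w) = 0$ for all $w$ as well. Since $\varphi(v,w) = \varphi^1(v,w) + j\,\varphi^j(v,w)$, both components vanishing forces $\varphi(v,-) \equiv 0$, and nondegeneracy of $\varphi$ then delivers $v = 0$. The main obstacle is simply bookkeeping: one must keep careful track of left versus right $\C$-actions on $\HH$ at every step, since $\proj^\HH_{j\C}$ has different linearity behaviour on the two sides, and one must not conflate the right $\HH$-multiplication $w \mapsto wj$ with the $\C$-linear structure of the codomain $\C^{2m|2n}$. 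Beyond this, no step is conceptually difficult.
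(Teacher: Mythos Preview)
Your proof is correct and follows essentially the same approach as the paper's: both establish $\C$-bilinearity from the interaction of $\proj^\HH_{j\C}$ with left and right $\C$-multiplication, obtain the $(-\nu)$-supersymmetry by projecting \cref{hermitian} (the paper does this via an explicit expansion of $(\varphi^1(w,v)+j\varphi^j(w,v))^\star$, which amounts to your identity $\proj^\HH_{j\C}(q^\star)=-\proj^\HH_{j\C}(q)$), and deduce nondegeneracy by using \cref{rabbit} to express $\varphi(v,w)$ entirely in terms of $\varphi^j$. The only cosmetic difference is that the paper writes $\varphi(v,w)=\varphi^j(v,wj)^\star + j\varphi^j(v,w)$ in one line, whereas you first recover $\varphi^1$ via \cref{rabbit} and then reassemble $\varphi$; these are the same argument.
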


\begin{proof}
    It follows from \cref{jcong} that
    \[
        \varphi^j(av,wb) = a \varphi^j(v,w) b,\qquad
        v,w \in V,\ a,b \in \C.
    \]
    Also,
    \begin{multline*}
        \varphi(v,w)
        = \nu (-1)^{\bar{v}\bar{w}} \varphi(w,v)^\star
        = \nu (-1)^{\bar{v}\bar{w}} \big( \varphi^1(w,v) + j \varphi^j(w,v) \big)^\star
        \\
        = \nu (-1)^{\bar{v}\bar{w}} \big( \varphi^1(w,v)^\star + \varphi^j(w,v)^\star j^\star \big)
        = \nu (-1)^{\bar{v}\bar{w}} \varphi^1(w,v)^\star - j \nu (-1)^{\bar{v}\bar{w}} \varphi^j(w,v).
    \end{multline*}
    Thus, $\varphi^j \colon \C^{2m|2n} \times \C^{2m|2n} \to \C$ is a $(-\nu,\id)$-supersymmetric form.  It follows from \cref{rabbit} that, for all $v,w \in \C^{2m|2n}$, we have
    \[
        \varphi(v,w) = \varphi^j(v,wj)^\star + j \varphi^j(v,w).
    \]
    Thus, $\varphi^j$ is nondegenerate, since $\varphi$ is.
\end{proof}

\begin{cor}
    We have
    \begin{equation} \label{house}
        \varphi^j(v,wj) - \varphi^j(vj,w) = 2 \RP \varphi(v,w).
    \end{equation}
\end{cor}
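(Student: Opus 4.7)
The plan is to derive the identity by a short direct calculation using the relationships between $\varphi$, $\varphi^1$, and $\varphi^j$ established in \cref{boar,rabbit}, combined with the observation that the Frobenius form $\RP$ on $\HH$ reads off the real part of the $1$-component of a quaternion.

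First I would apply the second equation in \cref{rabbit} with the roles of the arguments chosen so as to produce $\varphi^j(v,wj)$: the relation gives $\varphi^j(v,wj) = \varphi^1(v,w)^\star$. Next I would apply the first equation in \cref{boar} to get $\varphi^j(vj,w) = -\varphi^1(v,w)$. Subtracting these yields
\[
    \varphi^j(v,wj) - \varphi^j(vj,w) = \varphi^1(v,w)^\star + \varphi^1(v,w) = 2\,\RP\varphi^1(v,w),
\]
where on the right I have used that, since $\varphi^1(v,w) \in \C$, complex conjugation acts as the involution $\star$, and the sum $z + z^\star$ equals twice the real part of $z \in \C$.

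To conclude, I would recall the decomposition $\varphi(v,w) = \varphi^1(v,w) + j\,\varphi^j(v,w)$ coming from the $\C$-basis $\{1,j\}$ of $\HH$. By definition of $\RP$ in \cref{amongus}, $\RP$ picks out the real part of the even part, which on $\HH = \Cl_4(\R)$ amounts to the real part of the $\C$-component with respect to this basis. Hence $\RP\varphi(v,w) = \RP\varphi^1(v,w)$, and substituting into the previous identity gives the claimed equality.

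There is no real obstacle here: the corollary is an immediate bookkeeping consequence of the previous lemma, and the only subtlety is matching the conventions for $\RP$ (real part of the even part) with the $\{1,j\}$-decomposition of $\HH$ used to define $\varphi^1$ and $\varphi^j$.
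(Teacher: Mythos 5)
Your proof is correct and follows exactly the paper's argument: apply \cref{boar,rabbit} to rewrite the two terms, observe that $z + z^\star = 2\RP z$ for $z = \varphi^1(v,w) \in \C$, and note that $\RP\varphi = \RP\varphi^1$ since the $j$-component contributes nothing to the real part. The paper's proof is a one-line version of the same computation; your extra remark justifying $\RP\varphi(v,w) = \RP\varphi^1(v,w)$ is a harmless (and accurate) elaboration.
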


\begin{proof}
    Using \cref{boar,rabbit}, we have
    \[
        \varphi^j(v,wj) - \varphi^j(vj,w)
        = \varphi^1(v,w)^\star + \varphi^1(v,w)
        = 2 \RP \varphi(v,w).
        \qedhere
    \]
\end{proof}

\begin{prop} \label{compHform}
    We have an isomorphism of complex Lie superalgebras
    \[
        \fg(\varphi)^\C \cong \fg(\varphi^j).
    \]
\end{prop}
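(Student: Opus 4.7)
The plan is to realize $\fg(\varphi)^\C$ as the $\C$-linear span of $\fg(\varphi)$ inside $\Mat_{2m|2n}(\C)$, which will turn out to coincide with $\fg(\varphi^j)$. First I will establish the inclusion $\fg(\varphi) \hookrightarrow \fg(\varphi^j)$ by applying the $\C$-linear projection $\proj^\HH_{j\C}$ to the defining relation $\varphi(Xv,w) = -(-1)^{\bar{X}\bar{v}}\varphi(v,Xw)$. Since $\varphi^j$ is $\C$-bilinear (\cref{fries}), $\fg(\varphi^j)$ is a complex Lie subsuperalgebra of $\Mat_{2m|2n}(\C)$, so this inclusion extends $\C$-linearly to a homomorphism of complex Lie superalgebras
\[
    \Psi \colon \fg(\varphi)^\C \to \fg(\varphi^j), \qquad X \otimes z \mapsto Xz,
\]
where the product on the right is taken in $\Mat_{2m|2n}(\C)$.

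For injectivity, I will use that $\Mat_{m|n}(\HH)$ sits inside $\Mat_{2m|2n}(\C)$ as the fixed points of the antilinear involution $X \mapsto JX^\star J^{-1}$ (\cref{aiel}); in particular $\Mat_{m|n}(\HH) \cap i\,\Mat_{m|n}(\HH) = 0$. Since $\fg(\varphi) \subseteq \Mat_{m|n}(\HH)$, this forces $\Psi$ to be injective. At the same time I will verify the converse inclusion $\fg(\varphi^j) \cap \Mat_{m|n}(\HH) \subseteq \fg(\varphi)$: if $X \in \fg(\varphi^j)$ commutes with right multiplication by $j$, then substituting $v \mapsto vj$ in the $\varphi^j$-skew relation and applying \cref{boar} produces the $\varphi^1$-skew relation, and together these amount to the full $\varphi$-skew relation. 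Hence $\fg(\varphi) = \fg(\varphi^j) \cap \Mat_{m|n}(\HH)$.

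The main obstacle lies in surjectivity, which I will reduce to the claim that $\fg(\varphi^j)$ is stable under the antilinear involution $X \mapsto JX^\star J^{-1}$. Granting this, every $X \in \fg(\varphi^j)$ decomposes as
\[
    X = \tfrac{1}{2}\bigl(X + JX^\star J^{-1}\bigr) + i \cdot \tfrac{1}{2i}\bigl(X - JX^\star J^{-1}\bigr),
\]
with both summands lying in $\Mat_{m|n}(\HH) \cap \fg(\varphi^j) = \fg(\varphi)$ (membership in $\Mat_{m|n}(\HH)$ is an easy consequence of $J^2 = -I$ and the reality of $J$), so $X$ is in the image of $\Psi$. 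To prove the stability claim, I will introduce the antilinear map $\sigma \colon \C^{2m|2n} \to \C^{2m|2n}$, $\sigma(v) := Jv^\star$, which coincides with right multiplication by $j$ under the identification $\HH^{m|n} = \C^{2m|2n}$ (see \cref{mayor}) and which implements the involution as $\sigma X \sigma^{-1} = JX^\star J^{-1}$. The crucial computation, using \cref{boar,rabbit}, is the pairing identity $\varphi^j(\sigma v, \sigma w) = \varphi^j(v,w)^\star$; from this a short manipulation will yield $(\sigma X \sigma^{-1})^{\dagger_j} = \sigma X^{\dagger_j} \sigma^{-1}$, where $\dagger_j$ denotes the adjoint with respect to $\varphi^j$. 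The skew condition $X^{\dagger_j} = -X$ then transfers immediately to $JX^\star J^{-1}$, completing the verification and hence the proof.
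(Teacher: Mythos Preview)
Your proof is correct and follows the same strategy as the paper: both establish $\fg(\varphi) = \fg(\varphi^j) \cap \Mat_{m|n}(\HH)$ (the paper derives the $\varphi^1$-skew relation from the $\varphi^j$-skew relation by substituting $w \mapsto wj$ and using \cref{rabbit}, where you substitute $v \mapsto vj$ and use \cref{boar}), observe that $\fg(\varphi^j)$ is a $\C$-subspace, and then complexify using $\fgl(m|n,\HH)^\C \cong \fgl(2m|2n,\C)$.

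The one substantive difference is that you make the surjectivity step explicit: you prove the identity $\varphi^j(\sigma v,\sigma w) = \varphi^j(v,w)^\star$ and deduce that $\fg(\varphi^j)$ is stable under the conjugation $X \mapsto JX^\star J^{-1}$, then split any $X \in \fg(\varphi^j)$ into its real and imaginary parts for this real structure. The paper compresses this into the single line ``It then follows from \cref{glcomplex}\ref{glcomplex:H} that $\fg(\varphi)^\C = \fg(\varphi^j)$,'' leaving the reader to supply precisely the stability argument you have written out. So your version is not a different route but a more careful one on this point.
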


\begin{proof}
    For all $X \in \Mat_{m|n}(\HH)$ satisfying $\varphi^j(Xv,w) = - (-1)^{\bar{X}\bar{v}} \varphi^j(v,Xw)$ for all $v,w \in \C^{2m|2n}$, we have
    \begin{multline*}
        \varphi^1(Xv,w)
        \overset{\cref{rabbit}}{=} \varphi^j(Xv,wj)^\star
        = -(-1)^{\bar{X}\bar{v}} \varphi^j(v,X(wj))^\star
        \\
        \overset{\cref{dutch}}{=} -(-1)^{\bar{X}\bar{v}} \varphi^j(v,(Xw)j)^\star
        \overset{\cref{rabbit}}{=} -(-1)^{\bar{X}\bar{v}} \varphi^1(v,Xw).
    \end{multline*}
    Thus
    \[
        \fg(\varphi)
        = \{ X \in \fgl(m|n,\HH) : \varphi^j(Xv,w) = -(-1)^{\bar{X}\bar{v}} \varphi^j(v,Xw) \}.
    \]
    Furthermore, using \cref{fries}, for $X \in \fg(\varphi)$, $a \in \C$, and $v,w \in \C^{2m|2n}$, we have
    \[
        \varphi^j((Xa)v, w)
        = -(-1)^{\bar{X}\bar{v}} a \varphi^j(v,Xw)
        = -(-1)^{\bar{X}\bar{v}} \varphi^j(v,Xw) a
        = -(-1)^{\bar{X}\bar{v}} \varphi^j(v,(Xa)w).
    \]
    It then follows from \cref{glcomplex}\ref{glcomplex:H} that
    \[
        \fg(\varphi)^\C
        = \{ X \in \fgl(2m|2n,\C) : \varphi^j(Xv,w) = -(-1)^{\bar{X}\bar{v}} \varphi^j(v,Xw) \}
        = \fg(\varphi^j).
        \qedhere
    \]
\end{proof}

\section{The unoriented supercategory\label{sec:unoriented}}

In this section, we introduce the second of our two main diagrammatic supercategories.  After defining the supercategory, we deduce some of its additional properties.  We then prove a basis theorem for morphism spaces.  Throughout this section, $\kk$ is an arbitrary field, and $(A,\inv)$ is an involutive superalgebra.

\subsection{Definition of the supercategory}

\begin{defin} \label{FBC}
    For $\sigma \in \Z_2$, we define $\Brauer_\kk^\sigma(A,\inv)$ to be the strict monoidal supercategory generated by one object $\go$ and morphisms
    \begin{gather*}
        \crossmor \colon \go^{\otimes 2} \to \go^{\otimes 2},\qquad
        \capmor \colon \go^{\otimes 2} \to \one,\qquad
        \cupmor \colon \one \to \go^{\otimes 2},\qquad
        \tokstrand \colon \go \to \go,\quad a \in A,
    \end{gather*}
    subject to the relations
    \begin{gather} \label{brauer}
        \begin{tikzpicture}[centerzero]
            \draw (0.2,-0.4) to[out=135,in=down] (-0.15,0) to[out=up,in=225] (0.2,0.4);
            \draw (-0.2,-0.4) to[out=45,in=down] (0.15,0) to[out=up,in=-45] (-0.2,0.4);
        \end{tikzpicture}
        \ =\
        \begin{tikzpicture}[centerzero]
            \draw (-0.2,-0.4) -- (-0.2,0.4);
            \draw (0.2,-0.4) -- (0.2,0.4);
        \end{tikzpicture}
        \ ,\quad
        \begin{tikzpicture}[centerzero]
            \draw (0.4,-0.4) -- (-0.4,0.4);
            \draw (0,-0.4) to[out=135,in=down] (-0.32,0) to[out=up,in=225] (0,0.4);
            \draw (-0.4,-0.4) -- (0.4,0.4);
        \end{tikzpicture}
        \ =\
        \begin{tikzpicture}[centerzero]
            \draw (0.4,-0.4) -- (-0.4,0.4);
            \draw (0,-0.4) to[out=45,in=down] (0.32,0) to[out=up,in=-45] (0,0.4);
            \draw (-0.4,-0.4) -- (0.4,0.4);
        \end{tikzpicture}
        \ ,\quad
        \begin{tikzpicture}[centerzero]
            \draw (-0.3,0.4) -- (-0.3,0) arc(180:360:0.15) arc(180:0:0.15) -- (0.3,-0.4);
        \end{tikzpicture}
        =
        \begin{tikzpicture}[centerzero]
            \draw (0,-0.4) -- (0,0.4);
        \end{tikzpicture}
        = (-1)^\sigma\
        \begin{tikzpicture}[centerzero]
            \draw (-0.3,-0.4) -- (-0.3,0) arc(180:0:0.15) arc(180:360:0.15) -- (0.3,0.4);
        \end{tikzpicture}
        \ ,\quad
        \begin{tikzpicture}[anchorbase]
            \draw (-0.15,-0.4) to[out=60,in=-90] (0.15,0) arc(0:180:0.15) to[out=-90,in=120] (0.15,-0.4);
        \end{tikzpicture}
        = \,
        \capmor
        \ ,\quad
        \begin{tikzpicture}[centerzero]
            \draw (-0.2,-0.3) -- (-0.2,-0.1) arc(180:0:0.2) -- (0.2,-0.3);
            \draw (-0.3,0.3) \braiddown (0,-0.3);
        \end{tikzpicture}
        =
        \begin{tikzpicture}[centerzero]
            \draw (-0.2,-0.3) -- (-0.2,-0.1) arc(180:0:0.2) -- (0.2,-0.3);
            \draw (0.3,0.3) \braiddown (0,-0.3);
        \end{tikzpicture}
        \ ,
        \\ \label{tokrel}
        \begin{tikzpicture}[centerzero]
            \draw (0,-0.35) -- (0,0.35);
            \token{0,0}{east}{1};
        \end{tikzpicture}
        =
        \begin{tikzpicture}[centerzero]
            \draw (0,-0.35) -- (0,0.35);
        \end{tikzpicture}
        \ ,\quad
        \lambda\
        \begin{tikzpicture}[centerzero]
            \draw (0,-0.35) -- (0,0.35);
            \token{0,0}{west}{a};
        \end{tikzpicture}
        + \mu\
        \begin{tikzpicture}[centerzero]
            \draw (0,-0.35) -- (0,0.35);
            \token{0,0}{west}{b};
        \end{tikzpicture}
        =
        \begin{tikzpicture}[centerzero]
            \draw (0,-0.35) -- (0,0.35);
            \token{0,0}{west}{\lambda a + \mu b};
        \end{tikzpicture}
        ,\quad
        \begin{tikzpicture}[centerzero]
            \draw (0,-0.35) -- (0,0.35);
            \token{0,-0.15}{east}{b};
            \token{0,0.15}{east}{a};
        \end{tikzpicture}
        =
        \begin{tikzpicture}[centerzero]
            \draw (0,-0.35) -- (0,0.35);
            \token{0,0}{west}{ab};
        \end{tikzpicture}
        \ ,\quad
        \begin{tikzpicture}[centerzero]
            \draw (-0.35,-0.35) -- (0.35,0.35);
            \draw (0.35,-0.35) -- (-0.35,0.35);
            \token{-0.17,-0.17}{east}{a};
        \end{tikzpicture}
        =
        \begin{tikzpicture}[centerzero]
            \draw (-0.35,-0.35) -- (0.35,0.35);
            \draw (0.35,-0.35) -- (-0.35,0.35);
            \token{0.17,0.17}{west}{a};
        \end{tikzpicture}
        \ ,\quad
        \begin{tikzpicture}[anchorbase]
            \draw (-0.2,-0.2) -- (-0.2,0) arc (180:0:0.2) -- (0.2,-0.2);
            \token{-0.2,0}{east}{a};
        \end{tikzpicture}
        \ =\
        \begin{tikzpicture}[anchorbase]
            \draw (-0.2,-0.2) -- (-0.2,0) arc (180:0:0.2) -- (0.2,-0.2);
            \token{0.2,0}{west}{a^\inv};
        \end{tikzpicture}
        ,
    \end{gather}
    for all $a,b \in A$ and $\lambda,\mu \in \kk$.  The parity of $\tokstrand$ is $\bar{a}$, the morphisms $\cupmor$ and $\capmor$ both have parity $\sigma$, and $\crossmor$ is even.  We refer to the morphisms $\tokstrand$ as \emph{tokens}.

    For $d \in \kk$, we define $\Brauer_\kk^\sigma(A,\inv;d)$ to be the quotient of $\Brauer_\kk^\sigma(A,\inv)$ by the additional relations
    \begin{equation} \label{burst}
        \bubble{a} = d \str_A(a) 1_\one,\qquad a \in A,
    \end{equation}
    where $\str_A$ is given by \cref{crazy}.  We call $d$ the \emph{specialization parameter}.
\end{defin}

\begin{eg} \label{Brauercase}
    When $A=\kk$ and $\inv = \id$, we have $\tokstrand = a\, \idstrand$ for all $a \in \kk$.  Thus, we can omit the generators $\tokstrand$ and all the relations involving them.  Then, if $\sigma=0$, the relations \cref{brauer,burst} are the defining relations of the \emph{Brauer category}, as given in \cite[Th.~2.6]{LZ15}.  For general $\sigma$, $\Brauer_\kk^\sigma(\kk,\id;d)$ is isomorphic to the \emph{marked Brauer category} of \cite{KT17}, although the description there looks somewhat different since the authors do not use the concept of a monoidal supercategory.
\end{eg}

\begin{prop} \label{silver}
    The following relations hold in $\Brauer_\kk^\sigma(A,\inv)$ for all $a \in A$:
    \begin{gather} \label{mirror}
        \begin{tikzpicture}[anchorbase]
            \draw (-0.15,0.4) to[out=-60,in=90] (0.15,0) arc(360:180:0.15) to[out=90,in=240] (0.15,0.4);
        \end{tikzpicture}
        = (-1)^{\sigma}\ \cupmor
        \ ,\qquad
        \begin{tikzpicture}[centerzero]
            \draw (-0.2,0.3) -- (-0.2,0.1) arc(180:360:0.2) -- (0.2,0.3);
            \draw (-0.3,-0.3) \braidup (0,0.3);
        \end{tikzpicture}
        =
        \begin{tikzpicture}[centerzero]
            \draw (-0.2,0.3) -- (-0.2,0.1) arc(180:360:0.2) -- (0.2,0.3);
            \draw (0.3,-0.3) \braidup (0,0.3);
        \end{tikzpicture}
        \ ,\qquad
        \begin{tikzpicture}[centerzero]
            \draw (-0.35,-0.35) -- (0.35,0.35);
            \draw (0.35,-0.35) -- (-0.35,0.35);
            \token{-0.17,0.17}{east}{a};
        \end{tikzpicture}
        =
        \begin{tikzpicture}[centerzero]
            \draw (-0.35,-0.35) -- (0.35,0.35);
            \draw (0.35,-0.35) -- (-0.35,0.35);
            \token{0.17,-0.17}{west}{a};
        \end{tikzpicture}
        ,\qquad
        \begin{tikzpicture}[anchorbase]
            \draw (-0.2,0.2) -- (-0.2,0) arc (180:360:0.2) -- (0.2,0.2);
            \token{0.2,0}{west}{a};
        \end{tikzpicture}
        =
        \begin{tikzpicture}[anchorbase]
            \draw (-0.2,0.2) -- (-0.2,0) arc (180:360:0.2) -- (0.2,0.2);
            \token{-0.2,0}{east}{a^\inv};
        \end{tikzpicture}
        \, .
    \end{gather}
\end{prop}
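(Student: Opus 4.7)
The plan is to derive each of the four relations in \cref{silver} from the defining relations in \cref{brauer,tokrel} by diagrammatic manipulation. The principal tools are the two straightening relations from \cref{brauer} (one of which carries a factor of $(-1)^\sigma$), the flipped cap relation $\capmor \circ \crossmor = \capmor$, the strand-sliding-over-cap identity (also in \cref{brauer}), and the token-on-cap and token-on-crossing identities of \cref{tokrel}.

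Three of the four relations admit relatively routine derivations. For the third relation (token across the NW--SE diagonal of the crossing), I would compose both sides of the given SW--NE token-on-crossing identity with an extra $\crossmor$ above and below and then cancel the extra crossings using $\crossmor \circ \crossmor = 1_{\go \otimes \go}$. For the fourth relation (token around a cup with involution), I would transport the given cap-token identity to the cup setting by pairing the cup with cup--cap zig-zags along each of its strands; applying the signed and the unsigned straightening relations, one on each strand, makes the two $(-1)^\sigma$ factors cancel, leaving only the involution $\inv$. For the second relation (strand sliding over cup), I would likewise transpose the given strand-sliding-over-cap identity using zig-zags on the three strands involved, with the signs again cancelling symmetrically.

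The first relation (flipped cup $=(-1)^\sigma\,\cupmor$) is the most delicate, and I expect it to be the main obstacle. The key asymmetry is that the two straightening relations differ by $(-1)^\sigma$ while the flipped cap relation is sign-free; to propagate this sign from the cap to the cup the signed straightening must be used an \emph{odd} number of times in the derivation. Concretely, I would begin with the flipped cap relation $\capmor \circ \crossmor = \capmor$, tensor it appropriately with $1_{\go \otimes \go}$, and compose with suitable cup insertions so that one cup appears on the output side; then I would use the strand-sliding-over-cap identity to bring the $\crossmor$ next to the $\capmor$, collapse that pair via the flipped cap relation, and finally simplify the remaining zig-zag to arrive at $\crossmor \circ \cupmor = (-1)^\sigma \cupmor$. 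The main obstacle is that most natural manipulations produce tautologies in which two factors of $(-1)^\sigma$ cancel against one another; the successful derivation must combine the signed and unsigned straightening relations in an asymmetric pattern, and the signs arising from the super-interchange law \cref{interchange} must be tracked with care.
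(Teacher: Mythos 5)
Your plan matches the paper's proof in all essentials: the third relation by sandwiching the token-on-crossing identity between crossings, the fourth by transporting the cap-token relation around zig-zags with the $(-1)^\sigma$ factors cancelling, the second by transposing the cap-slide relation, and the first by combining a signed zig-zag, strand-sliding, and the flipped-cap relation. The only detail worth flagging is that the paper establishes the second relation of \cref{mirror} \emph{first} and then invokes it inside the derivation of the first relation, and that in the paper's computation the surviving $(-1)^\sigma$ in the first relation arises from an interchange-law sign $(-1)^{\sigma\cdot\sigma}$ between the cup and cap (both of parity $\sigma$) rather than from an odd number of signed straightenings -- but since you explicitly plan to track interchange signs, this is a matter of bookkeeping rather than a gap.
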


\begin{proof}
    For the second relation in \cref{mirror}, we have
    \[
        \begin{tikzpicture}[centerzero]
            \draw (-0.2,0.3) -- (-0.2,0.1) arc(180:360:0.2) -- (0.2,0.3);
            \draw (-0.3,-0.3) \braidup (0,0.3);
        \end{tikzpicture}
        =
        \begin{tikzpicture}[anchorbase]
            \draw (-1,0.5) -- (-1,0.2) arc(180:360:0.2) arc(180:0:0.2) -- (-0.2,0) arc(180:360:0.2) -- (0.2,0.5);
            \draw (-0.5,-0.3) \braidup (0,0.5);
        \end{tikzpicture}
        \overset{\cref{intlaw}}{=} (-1)^\sigma\
        \begin{tikzpicture}[anchorbase]
            \draw (-1,0.5) -- (-1,0) arc(180:360:0.2) -- (-0.6,0.2) arc(180:0:0.2) arc(180:360:0.2) -- (0.2,0.5);
            \draw (-0.3,-0.3) \braidup (0,0.5);
        \end{tikzpicture}
        = (-1)^\sigma\
        \begin{tikzpicture}[anchorbase]
            \draw (-1,0.5) -- (-1,0) arc(180:360:0.2) -- (-0.6,0.2) arc(180:0:0.2) arc(180:360:0.2) -- (0.2,0.5);
            \draw (-0.3,-0.3) \braidup (-0.8,0.5);
        \end{tikzpicture}
       =
        \begin{tikzpicture}[centerzero]
            \draw (-0.2,0.3) -- (-0.2,0.1) arc(180:360:0.2) -- (0.2,0.3);
            \draw (0.3,-0.3) to[out=up,in=down] (0,0.3);
        \end{tikzpicture}
        \ ,
    \]
    where, for the unadorned equalities, we have used the third, sixth, and fourth equalities in \cref{brauer}, in that order.  Then, for the first relation in \cref{mirror}, we compute
    \[
        \begin{tikzpicture}[anchorbase]
            \draw (-0.15,0.4) to[out=-60,in=90] (0.15,0) arc(360:180:0.15) to[out=90,in=240] (0.15,0.4);
        \end{tikzpicture}
        = (-1)^\sigma\
        \begin{tikzpicture}[anchorbase]
            \draw (-0.15,0.4) to[out=-60,in=90] (0.15,0) arc(360:180:0.15) to[out=90,in=180] (0.15,0.3) arc (90:0:0.15) arc(180:360:0.15) -- (0.6,0.4);
        \end{tikzpicture}
        = (-1)^\sigma
        \begin{tikzpicture}[anchorbase]
            \draw (0.1,0.4) to[out=down,in=right] (-0.2,-0.3) to[out=left,in=left] (-0.2,0.2) to[out=right,in=left] (0.2,-0.15) to[out=right,in=down] (0.4,0.4);
        \end{tikzpicture}
        \overset{\cref{intlaw}}{=}
        \begin{tikzpicture}[anchorbase]
            \draw (0.1,0.4) to[out=down,in=right] (-0.2,-0.15) to[out=left,in=left] (-0.2,0.2) to[out=right,in=left] (0.2,-0.3) to[out=right,in=down] (0.4,0.4);
        \end{tikzpicture}
        =
        \begin{tikzpicture}[anchorbase]
            \draw (0.5,0.1) to[out=down,in=right] (0.3,-0.4) to[out=left,in=down] (0,0) arc(180:0:0.15) \braiddown (0,-0.4) to[out=down,in=down] (0.8,-0.4) -- (0.8,0.1);
        \end{tikzpicture}
        = \
        \begin{tikzpicture}[anchorbase]
            \draw (0,0.3) -- (0,0) arc(360:180:0.15) arc(0:180:0.15) to[out=down,in=down,looseness=1.5] (0.3,0) -- (0.3,0.3);
        \end{tikzpicture}
        =  (-1)^\sigma\ \cupmor
        \, ,
    \]
    where, for the unadorned equalities, we use fourth relation in \cref{brauer}, the sixth relation in \cref{brauer}, the second relation in \cref{mirror}, the fifth relation in \cref{brauer}, and finally the fourth relation in \cref{brauer}.

    The third relation in \cref{mirror} follows from the fourth relation in \cref{tokrel} after composing on the top and bottom with the crossing $\crossmor$ and using the first relation in \cref{brauer}.  For the last relation in \cref{mirror}, we have
    \[
        \begin{tikzpicture}[anchorbase]
            \draw (-0.2,0.2) -- (-0.2,0) arc(180:360:0.2) -- (0.2,0.2);
            \token{0.2,0}{west}{a};
        \end{tikzpicture}
        \overset{\cref{brauer}}{=} (-1)^\sigma\
        \begin{tikzpicture}[anchorbase]
            \draw (-0.75,0.8) -- (-0.75,0) arc(180:360:0.25) -- ++(0,0.5) arc(180:0:0.25) arc(180:360:0.25) -- (0.75,0.8);
            \token{-0.25,0}{west}{a};
        \end{tikzpicture}
        \overset{\cref{intlaw}}{=} (-1)^{\sigma + \sigma \bar{a}}\
        \begin{tikzpicture}[anchorbase]
            \draw (-0.75,0.8) -- (-0.75,0) arc(180:360:0.25) -- ++(0,0.5) arc(180:0:0.25) arc(180:360:0.25) -- (0.75,0.8);
            \token{-0.25,0.5}{east}{a};
        \end{tikzpicture}
        \overset{\cref{tokrel}}{=} (-1)^{\sigma + \sigma \bar{a}}\
        \begin{tikzpicture}[anchorbase]
            \draw (-0.75,0.8) -- (-0.75,0) arc(180:360:0.25) -- ++(0,0.5) arc(180:0:0.25) arc(180:360:0.25) -- (0.75,0.8);
            \token{0.25,0.5}{west}{a^\inv};
        \end{tikzpicture}
        \overset{\cref{brauer}}{\underset{\cref{intlaw}}{=}}
        \begin{tikzpicture}[anchorbase]
            \draw (-0.2,0.2) -- (-0.2,0) arc (180:360:0.2) -- (0.2,0.2);
            \token{-0.2,0}{east}{a^\inv};
        \end{tikzpicture}
        \ .
        \qedhere
    \]
\end{proof}

It follows from the defining relations that ``bubbles'' are central in $\Brauer_\kk^\sigma(A,\inv)$:
\[
    \bubble{a}
    \begin{tikzpicture}[anchorbase]
        \draw (0,-0.3) -- (0,0.3);
    \end{tikzpicture}
    =
    \begin{tikzpicture}[anchorbase]
        \draw (0,-0.3) -- (0,0.3);
    \end{tikzpicture}
    \
    \bubble{a}
    \qquad \text{for all } a \in A.
\]
Note also that
\begin{equation} \label{Eugene}
    \bubble{ab}
    \overset{\cref{tokrel}}{=}
    \begin{tikzpicture}[centerzero]
        \draw (0.2,-0.15) -- (0.2,0.15) arc(0:180:0.2) -- (-0.2,-0.15) arc(180:360:0.2);
        \token{0.2,0.15}{west}{a};
        \token{0.2,-0.15}{west}{b};
    \end{tikzpicture}
    \overset{\cref{tokrel}}{=}
    \begin{tikzpicture}[centerzero]
        \draw (0.2,-0.15) -- (0.2,0.15) arc(0:180:0.2) -- (-0.2,-0.15) arc(180:360:0.2);
        \token{-0.2,0.15}{east}{a^\inv};
        \token{0.2,-0.15}{west}{b};
    \end{tikzpicture}
    \overset{\cref{intlaw}}{=} (-1)^{\bar{a}\bar{b}}\
    \begin{tikzpicture}[centerzero]
        \draw (0.2,-0.15) -- (0.2,0.15) arc(0:180:0.2) -- (-0.2,-0.15) arc(180:360:0.2);
        \token{-0.2,-0.15}{east}{a^\inv};
        \token{0.2,0.15}{west}{b};
    \end{tikzpicture}
    \overset{\cref{mirror}}{=} (-1)^{\bar{a}\bar{b}}\
    \begin{tikzpicture}[centerzero]
        \draw (0.2,-0.15) -- (0.2,0.15) arc(0:180:0.2) -- (-0.2,-0.15) arc(180:360:0.2);
        \token{0.2,0.15}{west}{b};
        \token{0.2,-0.15}{west}{a};
    \end{tikzpicture}
    \overset{\cref{tokrel}}{=} (-1)^{\bar{a}\bar{b}}\ \bubble{ba}
\end{equation}
and
\begin{equation} \label{Oregon}
    \bubble{a}
    \overset{\cref{mirror}}{=} (-1)^\sigma\
    \begin{tikzpicture}[centerzero]
        \draw (0.2,-0.2) \braidup (-0.2,0.2) arc(180:0:0.2) \braiddown (-0.2,-0.2) arc(180:360:0.2);
        \token{0.2,0.2}{west}{a};
    \end{tikzpicture}
    \overset{\cref{tokrel}}{=} (-1)^\sigma\
    \begin{tikzpicture}[centerzero]
        \draw (0.2,-0.2) \braidup (-0.2,0.2) arc(180:0:0.2) \braiddown (-0.2,-0.2) arc(180:360:0.2);
        \token{-0.2,-0.2}{east}{a};
    \end{tikzpicture}
    \overset{\cref{brauer}}{=} (-1)^\sigma
    \begin{tikzpicture}[centerzero]
        \draw (0.2,0) arc(0:360:0.2);
        \token{-0.2,0}{east}{a};
    \end{tikzpicture}
    \overset{\cref{tokrel}}{=}  (-1)^\sigma\ \bubble{a^\inv}.
\end{equation}

\begin{rem} \label{belgium}
    Suppose that $(A,\inv,\form)$ is an involutive Frobenius superalgebra, $\sigma=1$, and $d \ne 0$.  In $\Brauer_\kk^1(A,\inv;d)$, we have
    \[
        d \str_A(a) 1_\one
        = \bubble{a}
        \overset{\cref{Oregon}}{=} -\, \bubble{a^\inv}
        = - d \str_A(a^\inv) 1_\one
        \overset{\cref{snow}}{=} - d \str_A(a) 1_\one
    \]
    for all $a \in A$.  In applications to representation theory, we will assume that the characteristic of $\kk$ is not two. In this case, it follows that $\str_A=0$ or $1_\one = 0$.  In the former case, we have $\Brauer_\kk^1(A,\inv;d) = \Brauer_\kk^1(A,\inv;0)$.  In the latter case, $\Brauer_\kk^1(A,\inv;d)$ is the zero supercategory.  Thus, when $\sigma=1$ and $(A,\inv)$ can be endowed with the structure of an involutive Frobenius superalgebra, we will usually assume that $d=0$.
\end{rem}

For any $d \in \kk$, we have isomorphisms of monoidal supercategories
\begin{equation} \label{Xinv}
    \Xi_\inv \colon \Brauer_\kk^\sigma(A,\inv) \to \Brauer_\kk^\sigma(A^\op,\inv)
    \quad \text{and} \quad
    \Xi_\inv \colon \Brauer_\kk^\sigma(A,\inv;d) \to \Brauer_\kk^\sigma(A^\op,\inv;d),
\end{equation}
given by applying the involution $\inv$ to all tokens.

\subsection{The basis theorem\label{subsec:Bbasis}}

This subsection is dedicated to the proof of a basis theorem giving bases for the morphisms spaces of the $\Brauer_\kk^\sigma(A,\inv;d)$.  Our method involves embedding this supercategory into the superadditive envelope of the \emph{oriented} supercategory $\OB_\kk(A)$.  Even in the case $A=\kk$, when $\Brauer_\kk^0(\kk,\id;d)$ is the usual Brauer category, this method of proof is new.

Recall, from \cref{sec:monsupcat}, that, for a monoidal supercategory $\cC$, we let $\Add(\cC_\pi)$ denote its superadditive envelope.  The objects of $\Add(\cC_\pi)$ are formal direct sums of objects of the $\Pi$-envelope $\cC_\pi$, and morphisms in $\Add(\cC_\pi)$ are matrices of morphisms in $\cC_\pi$, which we will write as sums of their components.  For example,
\[
    \begin{tikzpicture}[centerzero]
        \draw[->] (-0.2,-0.2) -- (0.2,0.2);
        \draw[->] (0.2,-0.2) -- (-0.2,0.2);
        \shiftline{-0.3,0.2}{0.3,0.2}{0};
        \shiftline{-0.3,-0.2}{0.3,-0.2}{0};
    \end{tikzpicture}
    +
    \begin{tikzpicture}[centerzero]
        \draw[->] (-0.2,-0.2) -- (0.2,0.2);
        \draw[<-] (0.2,-0.2) -- (-0.2,0.2);
        \shiftline{-0.3,0.2}{0.3,0.2}{\sigma};
        \shiftline{-0.3,-0.2}{0.3,-0.2}{\sigma};
    \end{tikzpicture}
    +
    \begin{tikzpicture}[centerzero]
        \draw[<-] (-0.2,-0.2) -- (0.2,0.2);
        \draw[->] (0.2,-0.2) -- (-0.2,0.2);
        \shiftline{-0.3,0.2}{0.3,0.2}{\sigma};
        \shiftline{-0.3,-0.2}{0.3,-0.2}{\sigma};
    \end{tikzpicture}
    + (-1)^\sigma\,
    \begin{tikzpicture}[centerzero]
        \draw[<-] (-0.2,-0.2) -- (0.2,0.2);
        \draw[<-] (0.2,-0.2) -- (-0.2,0.2);
        \shiftline{-0.3,0.2}{0.3,0.2}{0};
        \shiftline{-0.3,-0.2}{0.3,-0.2}{0};
    \end{tikzpicture}
    \colon (\upobj \oplus \Pi^\sigma \downobj) \otimes (\upobj \oplus \Pi^\sigma \downobj) \to (\upobj \oplus \Pi^\sigma \downobj) \otimes (\upobj \oplus \Pi^\sigma \downobj)
\]
is a morphism in $\Add(\OB_\kk(A))$ with components
\begin{align*}
    \begin{tikzpicture}[centerzero]
        \draw[->] (-0.2,-0.2) -- (0.2,0.2);
        \draw[->] (0.2,-0.2) -- (-0.2,0.2);
        \shiftline{-0.3,0.2}{0.3,0.2}{0};
        \shiftline{-0.3,-0.2}{0.3,-0.2}{0};
    \end{tikzpicture}
    \colon \upobj \otimes \upobj &\to \upobj \otimes \upobj,
    &
    \begin{tikzpicture}[centerzero]
        \draw[->] (-0.2,-0.2) -- (0.2,0.2);
        \draw[<-] (0.2,-0.2) -- (-0.2,0.2);
        \shiftline{-0.3,0.2}{0.3,0.2}{\sigma};
        \shiftline{-0.3,-0.2}{0.3,-0.2}{\sigma};
    \end{tikzpicture}
    \colon \Pi^\sigma \upobj \otimes \downobj &\to \Pi^\sigma \downobj \otimes \upobj,
    \\
    \begin{tikzpicture}[centerzero]
        \draw[<-] (-0.2,-0.2) -- (0.2,0.2);
        \draw[->] (0.2,-0.2) -- (-0.2,0.2);
        \shiftline{-0.3,0.2}{0.3,0.2}{\sigma};
        \shiftline{-0.3,-0.2}{0.3,-0.2}{\sigma};
    \end{tikzpicture}
    \colon \Pi^\sigma \downobj \otimes \upobj &\to \Pi^\sigma \upobj \otimes \downobj,
    &
    (-1)^\sigma\,
    \begin{tikzpicture}[centerzero]
        \draw[<-] (-0.2,-0.2) -- (0.2,0.2);
        \draw[<-] (0.2,-0.2) -- (-0.2,0.2);
        \shiftline{-0.3,0.2}{0.3,0.2}{0};
        \shiftline{-0.3,-0.2}{0.3,-0.2}{0};
    \end{tikzpicture}
    \colon \downobj \otimes \downobj &\to \downobj \otimes \downobj,
\end{align*}
and all other components equal to zero.

\begin{theo} \label{bulb}
    Fix $\sigma \in \Z_2$.  There exists a unique monoidal superfunctor $\sD \colon \Brauer_\kk^\sigma(A,\inv) \to \Add(\OB_\kk(A)_\pi)$ such that $\sD(\go) = \upobj \oplus \Pi^\sigma \downobj$ and
    \begin{gather*}
        \sD \left( \crossmor \right) =
        \begin{tikzpicture}[centerzero]
            \draw[->] (-0.2,-0.2) -- (0.2,0.2);
            \draw[->] (0.2,-0.2) -- (-0.2,0.2);
            \shiftline{-0.3,0.2}{0.3,0.2}{0};
            \shiftline{-0.3,-0.2}{0.3,-0.2}{0};
        \end{tikzpicture}
        +
        \begin{tikzpicture}[centerzero]
            \draw[->] (-0.2,-0.2) -- (0.2,0.2);
            \draw[<-] (0.2,-0.2) -- (-0.2,0.2);
            \shiftline{-0.3,0.2}{0.3,0.2}{\sigma};
            \shiftline{-0.3,-0.2}{0.3,-0.2}{\sigma};
        \end{tikzpicture}
        +
        \begin{tikzpicture}[centerzero]
            \draw[<-] (-0.2,-0.2) -- (0.2,0.2);
            \draw[->] (0.2,-0.2) -- (-0.2,0.2);
            \shiftline{-0.3,0.2}{0.3,0.2}{\sigma};
            \shiftline{-0.3,-0.2}{0.3,-0.2}{\sigma};
        \end{tikzpicture}
        + (-1)^\sigma\,
        \begin{tikzpicture}[centerzero]
            \draw[<-] (-0.2,-0.2) -- (0.2,0.2);
            \draw[<-] (0.2,-0.2) -- (-0.2,0.2);
            \shiftline{-0.3,0.2}{0.3,0.2}{0};
            \shiftline{-0.3,-0.2}{0.3,-0.2}{0};
        \end{tikzpicture}
        ,
        \\
        \sD \left( \capmor \right) =
        \begin{tikzpicture}[anchorbase]
            \draw[<-] (-0.15,-0.15) -- (-0.15,0) arc(180:0:0.15) -- (0.15,-0.15);
            \shiftline{-0.25,0.25}{0.25,0.25}{0};
            \shiftline{-0.25,-0.15}{0.25,-0.15}{\sigma};
        \end{tikzpicture}
        +
        \begin{tikzpicture}[anchorbase]
            \draw[->] (-0.15,-0.15) -- (-0.15,0) arc(180:0:0.15) -- (0.15,-0.15);
            \shiftline{-0.25,0.25}{0.25,0.25}{0};
            \shiftline{-0.25,-0.15}{0.25,-0.15}{\sigma};
        \end{tikzpicture}
        ,\qquad
        \sD \left( \cupmor \right) =
        \begin{tikzpicture}[anchorbase]
            \draw[<-] (-0.15,0.15) -- (-0.15,0) arc(180:360:0.15) -- (0.15,0.15);
            \shiftline{-0.25,0.15}{0.25,0.15}{\sigma};
            \shiftline{-0.25,-0.25}{0.25,-0.25}{0};
        \end{tikzpicture}
        + (-1)^\sigma
        \begin{tikzpicture}[anchorbase]
            \draw[->] (-0.15,0.15) -- (-0.15,0) arc(180:360:0.15) -- (0.15,0.15);
            \shiftline{-0.25,0.15}{0.25,0.15}{\sigma};
            \shiftline{-0.25,-0.25}{0.25,-0.25}{0};
        \end{tikzpicture}
        ,
        \\
        \sD \left( \tokstrand \right) =
        \begin{tikzpicture}[centerzero]
            \draw[->] (0,-0.2) -- (0,0.2);
            \token{0,0}{east}{a};
            \shiftline{-0.1,0.2}{0.1,0.2}{0};
            \shiftline{-0.1,-0.2}{0.1,-0.2}{0};
        \end{tikzpicture}
        + (-1)^{\sigma\bar{a}}
        \begin{tikzpicture}[centerzero]
            \draw[<-] (0,-0.2) -- (0,0.2);
            \token{0,0}{east}{a^\inv};
            \shiftline{-0.1,0.2}{0.1,0.2}{\sigma};
            \shiftline{-0.1,-0.2}{0.1,-0.2}{\sigma};
        \end{tikzpicture}
        ,\quad a \in A.
    \end{gather*}
    If $(A,\inv)$ can be endowed with the structure of an involutive Frobenius superalgebra, then, for all $d \in \kk$, this induces a monoidal superfunctor
    \[
        \sD \colon \Brauer_\kk^\sigma(A,\inv;2d) \to \Add(\OB_\kk(A;d)_\pi),
    \]
    where we assume that $d=0$ if $\sigma=1$.  (See \cref{belgium}.)
\end{theo}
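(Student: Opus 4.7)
The approach is to define $\sD$ on the generating morphisms exactly as prescribed in the statement and then verify that all defining relations \cref{brauer,tokrel} of $\Brauer_\kk^\sigma(A,\inv)$ are preserved in $\Add(\OB_\kk(A)_\pi)$; this yields a unique monoidal superfunctor by the universal property of presentations. For the specialized statement, I will separately compute $\sD(\bubble{a})$ and compare to the scalar multiple of $1_\one$ forced by \cref{burst}.

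First I would verify the relations in \cref{brauer}. Writing $\sD(\crossmor)$ as a sum of four oriented crossings indexed by orientation pairs and $\sD(\cupmor)$, $\sD(\capmor)$ as the corresponding sums of two oriented cups/caps, the Reidemeister II and III relations split orientation-by-orientation into the oriented analogues \cref{wreath,inversion} in $\OB_\kk(A)$. The snake relation (third equation in \cref{brauer}) likewise splits into the oriented snake identities \cref{leftadj,rightadj}. The cap-as-reflected-cap relation (fourth equation in \cref{brauer}) and the final slide-through-cap relation both follow from the pivotality of $\OB_\kk(A)$ encoded in \cref{ruby,flippy}. Throughout, the signs arising from the super interchange \cref{interchange} and from the composition/tensor rules \cref{slush,pole} for the $\Pi$-envelope must line up against the $(-1)^\sigma$ factors built into the definition of $\sD(\cupmor)$.

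Next I would verify \cref{tokrel}. The unit, linearity, and multiplicativity of tokens follow directly from the corresponding relations in $\OB_\kk(A)$ applied componentwise on $\uparrow$ and $\downarrow$. The crossing-token slide (fourth relation) reduces orientation-by-orientation to the third equation of \cref{wreath}. The most delicate verification is the cap-token slide (fifth relation), where a token $a$ on one side of a cap becomes $a^\inv$ on the other; this relies on the definition \cref{stake} of tokens on downward strands, the pivotal identities \cref{ruby}, and is precisely what forces the sign $(-1)^{\sigma \bar a}$ appearing in the formula for $\sD(\tokstrand)$.

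For the specialized statement, I would compute $\sD(\bubble{a})$ by expanding $\sD(\capmor \circ (\tokstrand \otimes 1_\go) \circ \cupmor)$ via monoidality. Only two orientation patterns yield nonzero loops: a counterclockwise bubble carrying token $a$ on its upward portion, and a clockwise bubble carrying token $a^\inv$ on its downward portion. By \cref{burst} in $\OB_\kk(A;d)$ the counterclockwise loop evaluates to $d\str_A(a)1_\one$; for the clockwise loop, rotating via the pivotal structure and using \cref{essex} gives $d\str_A(a^\inv)1_\one$, which equals $d\str_A(a)1_\one$ by \cref{snow}. The signs from \cref{pole} and the factor $(-1)^{\sigma \bar a}$ in $\sD(\tokstrand)$ combine so that these two contributions add (rather than cancel), giving $\sD(\bubble{a}) = 2d\str_A(a)1_\one$. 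This matches $\sD(2d\str_A(a) 1_\one)$, so $\sD$ descends to the specialization $\Brauer_\kk^\sigma(A,\inv;2d)$. The main obstacle throughout is the sign bookkeeping: the cap-token slide and the bubble computation both depend on precise cancellations among signs contributed by the parity-$\sigma$ cup/cap, the super interchange law, the $\Pi$-envelope composition/tensor rules, and the involution $\inv$, and any misalignment would either break well-definedness or alter the specialization parameter from $2d$ to something else.
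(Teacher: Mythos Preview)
Your approach is correct and matches the paper's proof: verify each defining relation of $\Brauer_\kk^\sigma(A,\inv)$ by expanding $\sD$ on generators and reducing, orientation-by-orientation, to the known relations in $\OB_\kk(A)$, then compute $\sD(\bubble{a})$ for the specialized statement. Two minor points of precision: your enumeration of the relations in \cref{brauer} is slightly off (the third and fourth are the two snake identities, the fifth is the twisted-cap relation), and for the bubble computation the paper first separates off the trivial case $d=0$ (hence $\sigma=1$) before assuming $\sigma=0$, then converts the clockwise bubble to a counterclockwise one via \cite[(4.24)]{MS21} rather than by an ad hoc rotation argument; otherwise the strategy and the use of \cref{snow} to obtain the factor $2d$ are identical.
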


\begin{proof}
    We must verify that $\sD$ respects the defining relations of $\Brauer_\kk(A,\inv)$ from \cref{FBC}.  For the first relation in \cref{brauer}, we compute
    \[
        \sD
        \left(
            \begin{tikzpicture}[centerzero]
                \draw (0.2,-0.4) to[out=135,in=down] (-0.15,0) to[out=up,in=225] (0.2,0.4);
                \draw (-0.2,-0.4) to[out=45,in=down] (0.15,0) to[out=up,in=-45] (-0.2,0.4);
            \end{tikzpicture}
        \right)
        =
        \begin{tikzpicture}[centerzero]
            \draw[->] (0.2,-0.4) to[out=135,in=down] (-0.15,0) to[out=up,in=225] (0.2,0.4);
            \draw[->] (-0.2,-0.4) to[out=45,in=down] (0.15,0) to[out=up,in=-45] (-0.2,0.4);
            \shiftline{-0.3,0.4}{0.3,0.4}{0};
            \shiftline{-0.3,-0.4}{0.3,-0.4}{0};
        \end{tikzpicture}
        +
        \begin{tikzpicture}[centerzero]
            \draw[->] (0.2,-0.4) to[out=135,in=down] (-0.15,0) to[out=up,in=225] (0.2,0.4);
            \draw[<-] (-0.2,-0.4) to[out=45,in=down] (0.15,0) to[out=up,in=-45] (-0.2,0.4);
            \shiftline{-0.3,0.4}{0.3,0.4}{\sigma};
            \shiftline{-0.3,-0.4}{0.3,-0.4}{\sigma};
        \end{tikzpicture}
        +
        \begin{tikzpicture}[centerzero]
            \draw[<-] (0.2,-0.4) to[out=135,in=down] (-0.15,0) to[out=up,in=225] (0.2,0.4);
            \draw[->] (-0.2,-0.4) to[out=45,in=down] (0.15,0) to[out=up,in=-45] (-0.2,0.4);
            \shiftline{-0.3,0.4}{0.3,0.4}{\sigma};
            \shiftline{-0.3,-0.4}{0.3,-0.4}{\sigma};
        \end{tikzpicture}
        +
        \begin{tikzpicture}[centerzero]
            \draw[<-] (0.2,-0.4) to[out=135,in=down] (-0.15,0) to[out=up,in=225] (0.2,0.4);
            \draw[<-] (-0.2,-0.4) to[out=45,in=down] (0.15,0) to[out=up,in=-45] (-0.2,0.4);
            \shiftline{-0.3,0.4}{0.3,0.4}{0};
            \shiftline{-0.3,-0.4}{0.3,-0.4}{0};
        \end{tikzpicture}
        =
        \begin{tikzpicture}[centerzero]
            \draw[->] (0.2,-0.4) -- (0.2,0.4);
            \draw[->] (-0.2,-0.4) -- (-0.2,0.4);
            \shiftline{-0.3,0.4}{0.3,0.4}{0};
            \shiftline{-0.3,-0.4}{0.3,-0.4}{0};
        \end{tikzpicture}
        +
        \begin{tikzpicture}[centerzero]
            \draw[->] (0.2,-0.4) -- (0.2,0.4);
            \draw[<-] (-0.2,-0.4) -- (-0.2,0.4);
            \shiftline{-0.3,0.4}{0.3,0.4}{\sigma};
            \shiftline{-0.3,-0.4}{0.3,-0.4}{\sigma};
        \end{tikzpicture}
        +
        \begin{tikzpicture}[centerzero]
            \draw[<-] (0.2,-0.4) -- (0.2,0.4);
            \draw[->] (-0.2,-0.4) -- (-0.2,0.4);
            \shiftline{-0.3,0.4}{0.3,0.4}{\sigma};
            \shiftline{-0.3,-0.4}{0.3,-0.4}{\sigma};
        \end{tikzpicture}
        +
        \begin{tikzpicture}[centerzero]
            \draw[<-] (0.2,-0.4) -- (0.2,0.4);
            \draw[<-] (-0.2,-0.4) -- (-0.2,0.4);
            \shiftline{-0.3,0.4}{0.3,0.4}{0};
            \shiftline{-0.3,-0.4}{0.3,-0.4}{0};
        \end{tikzpicture}
        = \sD
        \left(
            \begin{tikzpicture}[centerzero]
                \draw (-0.2,-0.4) -- (-0.2,0.4);
                \draw (0.2,-0.4) -- (0.2,0.4);
            \end{tikzpicture}
        \right).
    \]
    The proof of the second relation in \cref{brauer} is similar.

    For the third relation in \cref{brauer}, we have
    \begin{align*}
        \sD
        \left(
            \begin{tikzpicture}[centerzero]
                \draw (-0.3,0.4) -- (-0.3,0) arc(180:360:0.15) arc(180:0:0.15) -- (0.3,-0.4);
            \end{tikzpicture}
        \right)
        &=
        \sD ( \, \idstrand \otimes \capmor ) \circ \sD(\cupmor \otimes \idstrand \, )
        \\
        &\overset{\mathclap{\cref{slush}}}{=}\
        \left(
            \begin{tikzpicture}[anchorbase]
                \draw[->] (-0.2,-0.1) -- (-0.2,0.3);
                \draw[<-] (0,-0.1) -- (0,0) arc(180:0:0.15) -- (0.3,-0.1);
                \shiftline{-0.3,0.3}{0.4,0.3}{0};
                \shiftline{-0.3,-0.1}{0.4,-0.1}{\sigma};
            \end{tikzpicture}
            +
            \begin{tikzpicture}[anchorbase]
                \draw[->] (-0.2,-0.1) -- (-0.2,0.3);
                \draw[->] (0,-0.1) -- (0,0) arc(180:0:0.15) -- (0.3,-0.1);
                \shiftline{-0.3,0.3}{0.4,0.3}{0};
                \shiftline{-0.3,-0.1}{0.4,-0.1}{\sigma};
            \end{tikzpicture}
            + (-1)^\sigma\,
            \begin{tikzpicture}[anchorbase]
                \draw[<-] (-0.2,-0.1) -- (-0.2,0.3);
                \draw[<-] (0,-0.1) -- (0,0) arc(180:0:0.15) -- (0.3,-0.1);
                \shiftline{-0.3,0.3}{0.4,0.3}{\sigma};
                \shiftline{-0.3,-0.1}{0.4,-0.1}{0};
            \end{tikzpicture}
            + (-1)^\sigma\,
            \begin{tikzpicture}[anchorbase]
                \draw[<-] (-0.2,-0.1) -- (-0.2,0.3);
                \draw[->] (0,-0.1) -- (0,0) arc(180:0:0.15) -- (0.3,-0.1);
                \shiftline{-0.3,0.3}{0.4,0.3}{\sigma};
                \shiftline{-0.3,-0.1}{0.4,-0.1}{0};
            \end{tikzpicture}
        \right)
        \\ &\qquad \qquad
        \circ
        \left(
            \begin{tikzpicture}[anchorbase]
                \draw[<-] (-0.15,0.1) -- (-0.15,0) arc(180:360:0.15) -- (0.15,0.1);
                \draw[->] (0.35,-0.3) -- (0.35,0.1);
                \shiftline{-0.25,0.1}{0.45,0.1}{\sigma};
                \shiftline{-0.25,-0.3}{0.45,-0.3}{0};
            \end{tikzpicture}
            + (-1)^\sigma\,
            \begin{tikzpicture}[anchorbase]
                \draw[->] (-0.15,0.1) -- (-0.15,0) arc(180:360:0.15) -- (0.15,0.1);
                \draw[->] (0.35,-0.3) -- (0.35,0.1);
                \shiftline{-0.25,0.1}{0.45,0.1}{\sigma};
                \shiftline{-0.25,-0.3}{0.45,-0.3}{0};
            \end{tikzpicture}
            +
            \begin{tikzpicture}[anchorbase]
                \draw[<-] (-0.15,0.1) -- (-0.15,0) arc(180:360:0.15) -- (0.15,0.1);
                \draw[<-] (0.35,-0.3) -- (0.35,0.1);
                \shiftline{-0.25,0.1}{0.45,0.1}{0};
                \shiftline{-0.25,-0.3}{0.45,-0.3}{\sigma};
            \end{tikzpicture}
            + (-1)^\sigma\,
            \begin{tikzpicture}[anchorbase]
                \draw[->] (-0.15,0.1) -- (-0.15,0) arc(180:360:0.15) -- (0.15,0.1);
                \draw[<-] (0.35,-0.3) -- (0.35,0.1);
                \shiftline{-0.25,0.1}{0.45,0.1}{0};
                \shiftline{-0.25,-0.3}{0.45,-0.3}{\sigma};
            \end{tikzpicture}
        \right)
        \\
        &=
        \begin{tikzpicture}[centerzero]
            \draw[<-] (-0.3,0.4) -- (-0.3,0) arc(180:360:0.15) arc(180:0:0.15) -- (0.3,-0.4);
            \shiftline{-0.4,0.4}{0.4,0.4}{0};
            \shiftline{-0.4,-0.4}{0.4,-0.4}{0};
        \end{tikzpicture}
        +
        \begin{tikzpicture}[centerzero]
            \draw[->] (-0.3,0.4) -- (-0.3,0) arc(180:360:0.15) arc(180:0:0.15) -- (0.3,-0.4);
            \shiftline{-0.4,0.4}{0.4,0.4}{\sigma};
            \shiftline{-0.4,-0.4}{0.4,-0.4}{\sigma};
        \end{tikzpicture}
        =
        \begin{tikzpicture}[centerzero]
            \draw[->] (0,-0.4) -- (0,0.4);
            \shiftline{-0.2,0.4}{0.2,0.4}{0};
            \shiftline{-0.2,-0.4}{0.2,-0.4}{0};
        \end{tikzpicture}
        +
        \begin{tikzpicture}[centerzero]
            \draw[<-] (0,-0.4) -- (0,0.4);
            \shiftline{-0.2,0.4}{0.2,0.4}{\sigma};
            \shiftline{-0.2,-0.4}{0.2,-0.4}{\sigma};
        \end{tikzpicture}
        = \sD
        \left(\
            \begin{tikzpicture}[centerzero]
                \draw (0,-0.4) -- (0,0.4);
            \end{tikzpicture}\
        \right).
    \end{align*}
    Similarly, for the fourth relation in \cref{brauer}, we have
    \begin{align*}
        \sD
        \left(
            \begin{tikzpicture}[centerzero]
                \draw (-0.3,-0.4) -- (-0.3,0) arc(180:0:0.15) arc(180:360:0.15) -- (0.3,0.4);
            \end{tikzpicture}
        \right)
        &=
        \sD ( \capmor \otimes \idstrand\, ) \circ \sD(\, \idstrand \otimes \cupmor )
        \\
        &\overset{\mathclap{\cref{slush}}}{=}\
        \left(
            \begin{tikzpicture}[anchorbase]
                \draw[<-] (-0.3,-0.1) -- (-0.3,0) arc(180:0:0.15) -- (0,-0.1);
                \draw[->] (0.2,-0.1) -- (0.2,0.3);
                \shiftline{-0.4,0.3}{0.3,0.3}{0};
                \shiftline{-0.4,-0.1}{0.3,-0.1}{\sigma};
            \end{tikzpicture}
            +
            \begin{tikzpicture}[anchorbase]
                \draw[->] (-0.3,-0.1) -- (-0.3,0) arc(180:0:0.15) -- (0,-0.1);
                \draw[->] (0.2,-0.1) -- (0.2,0.3);
                \shiftline{-0.4,0.3}{0.3,0.3}{0};
                \shiftline{-0.4,-0.1}{0.3,-0.1}{\sigma};
            \end{tikzpicture}
            +
            \begin{tikzpicture}[anchorbase]
                \draw[<-] (-0.3,-0.1) -- (-0.3,0) arc(180:0:0.15) -- (0,-0.1);
                \draw[<-] (0.2,-0.1) -- (0.2,0.3);
                \shiftline{-0.4,0.3}{0.3,0.3}{\sigma};
                \shiftline{-0.4,-0.1}{0.3,-0.1}{0};
            \end{tikzpicture}
            +
            \begin{tikzpicture}[anchorbase]
                \draw[->] (-0.3,-0.1) -- (-0.3,0) arc(180:0:0.15) -- (0,-0.1);
                \draw[<-] (0.2,-0.1) -- (0.2,0.3);
                \shiftline{-0.4,0.3}{0.3,0.3}{\sigma};
                \shiftline{-0.4,-0.1}{0.3,-0.1}{0};
            \end{tikzpicture}
        \right)
        \\ &\qquad \qquad
        \circ
        \left(
            \begin{tikzpicture}[anchorbase]
                \draw[<-] (-0.15,0.1) -- (-0.15,0) arc(180:360:0.15) -- (0.15,0.1);
                \draw[->] (-0.35,-0.3) -- (-0.35,0.1);
                \shiftline{-0.45,0.1}{0.25,0.1}{\sigma};
                \shiftline{-0.45,-0.3}{0.25,-0.3}{0};
            \end{tikzpicture}
            + (-1)^\sigma\,
            \begin{tikzpicture}[anchorbase]
                \draw[->] (-0.15,0.1) -- (-0.15,0) arc(180:360:0.15) -- (0.15,0.1);
                \draw[->] (-0.35,-0.3) -- (-0.35,0.1);
                \shiftline{-0.45,0.1}{0.25,0.1}{\sigma};
                \shiftline{-0.45,-0.3}{0.25,-0.3}{0};
            \end{tikzpicture}
            + (-1)^\sigma\,
            \begin{tikzpicture}[anchorbase]
                \draw[<-] (-0.15,0.1) -- (-0.15,0) arc(180:360:0.15) -- (0.15,0.1);
                \draw[<-] (-0.35,-0.3) -- (-0.35,0.1);
                \shiftline{-0.45,0.1}{0.25,0.1}{0};
                \shiftline{-0.45,-0.3}{0.25,-0.3}{\sigma};
            \end{tikzpicture}
            +
            \begin{tikzpicture}[anchorbase]
                \draw[->] (-0.15,0.1) -- (-0.15,0) arc(180:360:0.15) -- (0.15,0.1);
                \draw[<-] (-0.35,-0.3) -- (-0.35,0.1);
                \shiftline{-0.45,0.1}{0.25,0.1}{0};
                \shiftline{-0.45,-0.3}{0.25,-0.3}{\sigma};
            \end{tikzpicture}
        \right)
        \\
        &= (-1)^\sigma
        \left(
            \begin{tikzpicture}[centerzero]
                \draw[->] (-0.3,-0.4) -- (-0.3,0) arc(180:0:0.15) arc(180:360:0.15) -- (0.3,0.4);
                \shiftline{-0.4,0.4}{0.4,0.4}{0};
                \shiftline{-0.4,-0.4}{0.4,-0.4}{0};
            \end{tikzpicture}
            +
            \begin{tikzpicture}[centerzero]
                \draw[<-] (-0.3,-0.4) -- (-0.3,0) arc(180:0:0.15) arc(180:360:0.15) -- (0.3,0.4);
                \shiftline{-0.4,0.4}{0.4,0.4}{\sigma};
                \shiftline{-0.4,-0.4}{0.4,-0.4}{\sigma};
            \end{tikzpicture}
        \right)
        = (-1)^\sigma
        \left(
            \begin{tikzpicture}[centerzero]
                \draw[->] (0,-0.4) -- (0,0.4);
                \shiftline{-0.2,0.4}{0.2,0.4}{0};
                \shiftline{-0.2,-0.4}{0.2,-0.4}{0};
            \end{tikzpicture}
            +
            \begin{tikzpicture}[centerzero]
                \draw[<-] (0,-0.4) -- (0,0.4);
                \shiftline{-0.2,0.4}{0.2,0.4}{\sigma};
                \shiftline{-0.2,-0.4}{0.2,-0.4}{\sigma};
            \end{tikzpicture}
        \right)
        = (-1)^\sigma \sD
        \left(\
            \begin{tikzpicture}[centerzero]
                \draw (0,-0.4) -- (0,0.4);
            \end{tikzpicture}\
        \right).
    \end{align*}

    The fifth relation in \cref{brauer} is straightforward.  For the sixth relation in \cref{brauer}, we compute
    \begin{align*}
        \sD
        &\left(
            \begin{tikzpicture}[centerzero]
                \draw (-0.2,-0.3) -- (-0.2,-0.1) arc(180:0:0.2) -- (0.2,-0.3);
                \draw (-0.3,0.3) \braiddown (0,-0.3);
            \end{tikzpicture}
        \right)
        = \sD(\, \idstrand\, \otimes \capmor) \circ \sD ( \crossmor \otimes \, \idstrand\, )
        \\
        &\overset{\mathclap{\cref{slush}}}{=}\
        \left(
            \begin{tikzpicture}[anchorbase]
                \draw[->] (-0.2,-0.1) -- (-0.2,0.3);
                \draw[<-] (0,-0.1) -- (0,0) arc(180:0:0.15) -- (0.3,-0.1);
                \shiftline{-0.3,0.3}{0.4,0.3}{0};
                \shiftline{-0.3,-0.1}{0.4,-0.1}{\sigma};
            \end{tikzpicture}
            +
            \begin{tikzpicture}[anchorbase]
                \draw[->] (-0.2,-0.1) -- (-0.2,0.3);
                \draw[->] (0,-0.1) -- (0,0) arc(180:0:0.15) -- (0.3,-0.1);
                \shiftline{-0.3,0.3}{0.4,0.3}{0};
                \shiftline{-0.3,-0.1}{0.4,-0.1}{\sigma};
            \end{tikzpicture}
            + (-1)^\sigma\,
            \begin{tikzpicture}[anchorbase]
                \draw[<-] (-0.2,-0.1) -- (-0.2,0.3);
                \draw[<-] (0,-0.1) -- (0,0) arc(180:0:0.15) -- (0.3,-0.1);
                \shiftline{-0.3,0.3}{0.4,0.3}{\sigma};
                \shiftline{-0.3,-0.1}{0.4,-0.1}{0};
            \end{tikzpicture}
            + (-1)^\sigma\,
            \begin{tikzpicture}[anchorbase]
                \draw[<-] (-0.2,-0.1) -- (-0.2,0.3);
                \draw[->] (0,-0.1) -- (0,0) arc(180:0:0.15) -- (0.3,-0.1);
                \shiftline{-0.3,0.3}{0.4,0.3}{\sigma};
                \shiftline{-0.3,-0.1}{0.4,-0.1}{0};
            \end{tikzpicture}
        \right)
        \\ &\quad
        \circ
        \left(
            \begin{tikzpicture}[centerzero]
                \draw[->] (-0.4,-0.2) -- (0,0.2);
                \draw[->] (0,-0.2) -- (-0.4,0.2);
                \draw[->] (0.2,-0.2) -- (0.2,0.2);
                \shiftline{-0.5,0.2}{0.3,0.2}{0};
                \shiftline{-0.5,-0.2}{0.3,-0.2}{0};
            \end{tikzpicture}
            +
            \begin{tikzpicture}[centerzero]
                \draw[->] (-0.4,-0.2) -- (0,0.2);
                \draw[<-] (0,-0.2) -- (-0.4,0.2);
                \draw[->] (0.2,-0.2) -- (0.2,0.2);
                \shiftline{-0.5,0.2}{0.3,0.2}{\sigma};
                \shiftline{-0.5,-0.2}{0.3,-0.2}{\sigma};
            \end{tikzpicture}
            +
            \begin{tikzpicture}[centerzero]
                \draw[<-] (-0.4,-0.2) -- (0,0.2);
                \draw[->] (0,-0.2) -- (-0.4,0.2);
                \draw[->] (0.2,-0.2) -- (0.2,0.2);
                \shiftline{-0.5,0.2}{0.3,0.2}{\sigma};
                \shiftline{-0.5,-0.2}{0.3,-0.2}{\sigma};
            \end{tikzpicture}
            + (-1)^\sigma\,
            \begin{tikzpicture}[centerzero]
                \draw[<-] (-0.4,-0.2) -- (0,0.2);
                \draw[<-] (0,-0.2) -- (-0.4,0.2);
                \draw[->] (0.2,-0.2) -- (0.2,0.2);
                \shiftline{-0.5,0.2}{0.3,0.2}{0};
                \shiftline{-0.5,-0.2}{0.3,-0.2}{0};
            \end{tikzpicture}
            +
            \begin{tikzpicture}[centerzero]
                \draw[->] (-0.4,-0.2) -- (0,0.2);
                \draw[->] (0,-0.2) -- (-0.4,0.2);
                \draw[<-] (0.2,-0.2) -- (0.2,0.2);
                \shiftline{-0.5,0.2}{0.3,0.2}{\sigma};
                \shiftline{-0.5,-0.2}{0.3,-0.2}{\sigma};
            \end{tikzpicture}
            +
            \begin{tikzpicture}[centerzero]
                \draw[->] (-0.4,-0.2) -- (0,0.2);
                \draw[<-] (0,-0.2) -- (-0.4,0.2);
                \draw[<-] (0.2,-0.2) -- (0.2,0.2);
                \shiftline{-0.5,0.2}{0.3,0.2}{0};
                \shiftline{-0.5,-0.2}{0.3,-0.2}{0};
            \end{tikzpicture}
            +
            \begin{tikzpicture}[centerzero]
                \draw[<-] (-0.4,-0.2) -- (0,0.2);
                \draw[->] (0,-0.2) -- (-0.4,0.2);
                \draw[<-] (0.2,-0.2) -- (0.2,0.2);
                \shiftline{-0.5,0.2}{0.3,0.2}{0};
                \shiftline{-0.5,-0.2}{0.3,-0.2}{0};
            \end{tikzpicture}
            + (-1)^\sigma\,
            \begin{tikzpicture}[centerzero]
                \draw[<-] (-0.4,-0.2) -- (0,0.2);
                \draw[<-] (0,-0.2) -- (-0.4,0.2);
                \draw[<-] (0.2,-0.2) -- (0.2,0.2);
                \shiftline{-0.5,0.2}{0.3,0.2}{\sigma};
                \shiftline{-0.5,-0.2}{0.3,-0.2}{\sigma};
            \end{tikzpicture}
        \right)
        \\
        &=
        \begin{tikzpicture}[centerzero]
            \draw[<-] (-0.2,-0.3) -- (-0.2,-0.1) arc(180:0:0.2) -- (0.2,-0.3);
            \draw[<-] (-0.3,0.3) \braiddown (0,-0.3);
            \shiftline{-0.4,0.3}{0.3,0.3}{0};
            \shiftline{-0.4,-0.3}{0.3,-0.3}{\sigma};
        \end{tikzpicture}
        +
        \begin{tikzpicture}[centerzero]
            \draw[->] (-0.2,-0.3) -- (-0.2,-0.1) arc(180:0:0.2) -- (0.2,-0.3);
            \draw[<-] (-0.3,0.3) \braiddown (0,-0.3);
            \shiftline{-0.4,0.3}{0.3,0.3}{0};
            \shiftline{-0.4,-0.3}{0.3,-0.3}{\sigma};
        \end{tikzpicture}
        +
        \begin{tikzpicture}[centerzero]
            \draw[<-] (-0.2,-0.3) -- (-0.2,-0.1) arc(180:0:0.2) -- (0.2,-0.3);
            \draw[->] (-0.3,0.3) \braiddown (0,-0.3);
            \shiftline{-0.4,0.3}{0.3,0.3}{\sigma};
            \shiftline{-0.4,-0.3}{0.3,-0.3}{0};
        \end{tikzpicture}
        + (-1)^\sigma\,
        \begin{tikzpicture}[centerzero]
            \draw[->] (-0.2,-0.3) -- (-0.2,-0.1) arc(180:0:0.2) -- (0.2,-0.3);
            \draw[->] (-0.3,0.3) \braiddown (0,-0.3);
            \shiftline{-0.4,0.3}{0.3,0.3}{\sigma};
            \shiftline{-0.4,-0.3}{0.3,-0.3}{0};
        \end{tikzpicture}
    \end{align*}
    and
    \begin{align*}
        \sD
        &\left(
            \begin{tikzpicture}[centerzero]
                \draw (-0.2,-0.3) -- (-0.2,-0.1) arc(180:0:0.2) -- (0.2,-0.3);
                \draw (0.3,0.3) \braiddown (0,-0.3);
            \end{tikzpicture}
        \right)
        = \sD(\capmor \otimes \, \idstrand\, ) \circ \sD ( \, \idstrand\, \otimes \crossmor )
        \\
        &\overset{\mathclap{\cref{slush}}}{=}\
        \left(
            \begin{tikzpicture}[anchorbase]
                \draw[<-] (-0.3,-0.1) -- (-0.3,0) arc(180:0:0.15) -- (0,-0.1);
                \draw[->] (0.2,-0.1) -- (0.2,0.3);
                \shiftline{-0.4,0.3}{0.3,0.3}{0};
                \shiftline{-0.4,-0.1}{0.3,-0.1}{\sigma};
            \end{tikzpicture}
            +
            \begin{tikzpicture}[anchorbase]
                \draw[->] (-0.3,-0.1) -- (-0.3,0) arc(180:0:0.15) -- (0,-0.1);
                \draw[->] (0.2,-0.1) -- (0.2,0.3);
                \shiftline{-0.4,0.3}{0.3,0.3}{0};
                \shiftline{-0.4,-0.1}{0.3,-0.1}{\sigma};
            \end{tikzpicture}
            +
            \begin{tikzpicture}[anchorbase]
                \draw[<-] (-0.3,-0.1) -- (-0.3,0) arc(180:0:0.15) -- (0,-0.1);
                \draw[<-] (0.2,-0.1) -- (0.2,0.3);
                \shiftline{-0.4,0.3}{0.3,0.3}{\sigma};
                \shiftline{-0.4,-0.1}{0.3,-0.1}{0};
            \end{tikzpicture}
            +
            \begin{tikzpicture}[anchorbase]
                \draw[->] (-0.3,-0.1) -- (-0.3,0) arc(180:0:0.15) -- (0,-0.1);
                \draw[<-] (0.2,-0.1) -- (0.2,0.3);
                \shiftline{-0.4,0.3}{0.3,0.3}{\sigma};
                \shiftline{-0.4,-0.1}{0.3,-0.1}{0};
            \end{tikzpicture}
        \right)
        \\ &\quad
        \circ
        \left(
            \begin{tikzpicture}[centerzero]
                \draw[->] (-0.2,-0.2) -- (0.2,0.2);
                \draw[->] (0.2,-0.2) -- (-0.2,0.2);
                \draw[->] (-0.4,-0.2) -- (-0.4,0.2);
                \shiftline{-0.5,0.2}{0.3,0.2}{0};
                \shiftline{-0.5,-0.2}{0.3,-0.2}{0};
            \end{tikzpicture}
            +
            \begin{tikzpicture}[centerzero]
                \draw[->] (-0.2,-0.2) -- (0.2,0.2);
                \draw[<-] (0.2,-0.2) -- (-0.2,0.2);
                \draw[->] (-0.4,-0.2) -- (-0.4,0.2);
                \shiftline{-0.5,0.2}{0.3,0.2}{\sigma};
                \shiftline{-0.5,-0.2}{0.3,-0.2}{\sigma};
            \end{tikzpicture}
            +
            \begin{tikzpicture}[centerzero]
                \draw[<-] (-0.2,-0.2) -- (0.2,0.2);
                \draw[->] (0.2,-0.2) -- (-0.2,0.2);
                \draw[->] (-0.4,-0.2) -- (-0.4,0.2);
                \shiftline{-0.5,0.2}{0.3,0.2}{\sigma};
                \shiftline{-0.5,-0.2}{0.3,-0.2}{\sigma};
            \end{tikzpicture}
            + (-1)^\sigma\,
            \begin{tikzpicture}[centerzero]
                \draw[<-] (-0.2,-0.2) -- (0.2,0.2);
                \draw[<-] (0.2,-0.2) -- (-0.2,0.2);
                \draw[->] (-0.4,-0.2) -- (-0.4,0.2);
                \shiftline{-0.5,0.2}{0.3,0.2}{0};
                \shiftline{-0.5,-0.2}{0.3,-0.2}{0};
            \end{tikzpicture}
            +
            \begin{tikzpicture}[centerzero]
                \draw[->] (-0.2,-0.2) -- (0.2,0.2);
                \draw[->] (0.2,-0.2) -- (-0.2,0.2);
                \draw[<-] (-0.4,-0.2) -- (-0.4,0.2);
                \shiftline{-0.5,0.2}{0.3,0.2}{\sigma};
                \shiftline{-0.5,-0.2}{0.3,-0.2}{\sigma};
            \end{tikzpicture}
            +
            \begin{tikzpicture}[centerzero]
                \draw[->] (-0.2,-0.2) -- (0.2,0.2);
                \draw[<-] (0.2,-0.2) -- (-0.2,0.2);
                \draw[<-] (-0.4,-0.2) -- (-0.4,0.2);
                \shiftline{-0.5,0.2}{0.3,0.2}{0};
                \shiftline{-0.5,-0.2}{0.3,-0.2}{0};
            \end{tikzpicture}
            +
            \begin{tikzpicture}[centerzero]
                \draw[<-] (-0.2,-0.2) -- (0.2,0.2);
                \draw[->] (0.2,-0.2) -- (-0.2,0.2);
                \draw[<-] (-0.4,-0.2) -- (-0.4,0.2);
                \shiftline{-0.5,0.2}{0.3,0.2}{0};
                \shiftline{-0.5,-0.2}{0.3,-0.2}{0};
            \end{tikzpicture}
            + (-1)^\sigma\,
            \begin{tikzpicture}[centerzero]
                \draw[<-] (-0.2,-0.2) -- (0.2,0.2);
                \draw[<-] (0.2,-0.2) -- (-0.2,0.2);
                \draw[<-] (-0.4,-0.2) -- (-0.4,0.2);
                \shiftline{-0.5,0.2}{0.3,0.2}{\sigma};
                \shiftline{-0.5,-0.2}{0.3,-0.2}{\sigma};
            \end{tikzpicture}
        \right)
        \\
        &=
        \begin{tikzpicture}[centerzero]
            \draw[<-] (-0.2,-0.3) -- (-0.2,-0.1) arc(180:0:0.2) -- (0.2,-0.3);
            \draw[<-] (0.3,0.3) \braiddown (0,-0.3);
            \shiftline{-0.3,0.3}{0.4,0.3}{0};
            \shiftline{-0.3,-0.3}{0.4,-0.3}{\sigma};
        \end{tikzpicture}
        +
        \begin{tikzpicture}[centerzero]
            \draw[->] (-0.2,-0.3) -- (-0.2,-0.1) arc(180:0:0.2) -- (0.2,-0.3);
            \draw[<-] (0.3,0.3) \braiddown (0,-0.3);
            \shiftline{-0.3,0.3}{0.4,0.3}{0};
            \shiftline{-0.3,-0.3}{0.4,-0.3}{\sigma};
        \end{tikzpicture}
        +
        \begin{tikzpicture}[centerzero]
            \draw[<-] (-0.2,-0.3) -- (-0.2,-0.1) arc(180:0:0.2) -- (0.2,-0.3);
            \draw[->] (0.3,0.3) \braiddown (0,-0.3);
            \shiftline{-0.3,0.3}{0.4,0.3}{\sigma};
            \shiftline{-0.3,-0.3}{0.4,-0.3}{0};
        \end{tikzpicture}
        + (-1)^\sigma\,
        \begin{tikzpicture}[centerzero]
            \draw[->] (-0.2,-0.3) -- (-0.2,-0.1) arc(180:0:0.2) -- (0.2,-0.3);
            \draw[->] (0.3,0.3) \braiddown (0,-0.3);
            \shiftline{-0.3,0.3}{0.4,0.3}{\sigma};
            \shiftline{-0.3,-0.3}{0.4,-0.3}{0};
            \end{tikzpicture}
        \overset{\cref{ruby}}{=}
        \begin{tikzpicture}[centerzero]
            \draw[<-] (-0.2,-0.3) -- (-0.2,-0.1) arc(180:0:0.2) -- (0.2,-0.3);
            \draw[<-] (-0.3,0.3) \braiddown (0,-0.3);
            \shiftline{-0.4,0.3}{0.3,0.3}{0};
            \shiftline{-0.4,-0.3}{0.3,-0.3}{\sigma};
        \end{tikzpicture}
        +
        \begin{tikzpicture}[centerzero]
            \draw[->] (-0.2,-0.3) -- (-0.2,-0.1) arc(180:0:0.2) -- (0.2,-0.3);
            \draw[<-] (-0.3,0.3) \braiddown (0,-0.3);
            \shiftline{-0.4,0.3}{0.3,0.3}{0};
            \shiftline{-0.4,-0.3}{0.3,-0.3}{\sigma};
        \end{tikzpicture}
        +
        \begin{tikzpicture}[centerzero]
            \draw[<-] (-0.2,-0.3) -- (-0.2,-0.1) arc(180:0:0.2) -- (0.2,-0.3);
            \draw[->] (-0.3,0.3) \braiddown (0,-0.3);
            \shiftline{-0.4,0.3}{0.3,0.3}{\sigma};
            \shiftline{-0.4,-0.3}{0.3,-0.3}{0};
        \end{tikzpicture}
        + (-1)^\sigma\,
        \begin{tikzpicture}[centerzero]
            \draw[->] (-0.2,-0.3) -- (-0.2,-0.1) arc(180:0:0.2) -- (0.2,-0.3);
            \draw[->] (-0.3,0.3) \braiddown (0,-0.3);
            \shiftline{-0.4,0.3}{0.3,0.3}{\sigma};
            \shiftline{-0.4,-0.3}{0.3,-0.3}{0};
        \end{tikzpicture}
        \ .
    \end{align*}

    The first, second, and fourth relations in \cref{tokrel} are straightforward.  For the third relation in \cref{tokrel}, we compute
    \[
        \sD
        \left(
            \begin{tikzpicture}[centerzero]
                \draw (0,-0.4) -- (0,0.4);
                \token{0,0.15}{east}{a};
                \token{0,-0.15}{east}{b};
            \end{tikzpicture}
        \right)
        =
        \begin{tikzpicture}[centerzero]
            \draw[->] (0,-0.4) -- (0,0.4);
            \token{0,0.15}{east}{a};
            \token{0,-0.15}{east}{b};
            \shiftline{-0.2,0.4}{0.2,0.4}{0};
            \shiftline{-0.2,-0.4}{0.2,-0.4}{0};
        \end{tikzpicture}
        +
        \begin{tikzpicture}[centerzero]
            \draw[<-] (0,-0.4) -- (0,0.4);
            \token{0,0.15}{east}{a^\inv};
            \token{0,-0.15}{east}{b^\inv};
            \shiftline{-0.2,0.4}{0.2,0.4}{\sigma};
            \shiftline{-0.2,-0.4}{0.2,-0.4}{\sigma};
        \end{tikzpicture}
        =
        \begin{tikzpicture}[centerzero]
            \draw[->] (0,-0.4) -- (0,0.4);
            \token{0,0}{east}{ab};
            \shiftline{-0.2,0.4}{0.2,0.4}{0};
            \shiftline{-0.2,-0.4}{0.2,-0.4}{0};
        \end{tikzpicture}
        + (-1)^{\bar{a}\bar{b}}
        \begin{tikzpicture}[centerzero]
            \draw[<-] (0,-0.4) -- (0,0.4);
            \token{0,0}{east}{b^\inv a^\inv};
            \shiftline{-0.2,0.4}{0.2,0.4}{\sigma};
            \shiftline{-0.2,-0.4}{0.2,-0.4}{\sigma};
        \end{tikzpicture}
        \overset{\cref{galaxy}}{=}
        \begin{tikzpicture}[centerzero]
            \draw[->] (0,-0.4) -- (0,0.4);
            \token{0,0}{east}{ab};
            \shiftline{-0.2,0.4}{0.2,0.4}{0};
            \shiftline{-0.2,-0.4}{0.2,-0.4}{0};
        \end{tikzpicture}
        +
        \begin{tikzpicture}[centerzero]
            \draw[<-] (0,-0.4) -- (0,0.4);
            \token{0,0}{east}{(ab)^\inv};
            \shiftline{-0.2,0.4}{0.2,0.4}{\sigma};
            \shiftline{-0.2,-0.4}{0.2,-0.4}{\sigma};
        \end{tikzpicture}
        = \sD
        \left(
            \begin{tikzpicture}[centerzero]
                \draw (0,-0.4) -- (0,0.4);
                \token{0,0}{east}{ab};
            \end{tikzpicture}
        \right).
    \]
    Finally, for the last relation in \cref{tokrel}, we have
    \begin{align*}
        \sD
        \left(
            \begin{tikzpicture}[anchorbase]
                \draw (-0.2,-0.2) -- (-0.2,0) arc (180:0:0.2) -- (0.2,-0.2);
                \token{-0.2,0}{east}{a};
            \end{tikzpicture}
        \right)
        &=
        \sD( \capmor ) \circ \sD( \tokstrand[a] \otimes\ \idstrand\ )
        \\
        &\overset{\mathclap{\cref{slush}}}{=}\
        \left(
            \begin{tikzpicture}[anchorbase]
                \draw[<-] (-0.15,-0.15) -- (-0.15,0) arc(180:0:0.15) -- (0.15,-0.15);
                \shiftline{-0.25,0.25}{0.25,0.25}{0};
                \shiftline{-0.25,-0.15}{0.25,-0.15}{\sigma};
            \end{tikzpicture}
            +
            \begin{tikzpicture}[anchorbase]
                \draw[->] (-0.15,-0.15) -- (-0.15,0) arc(180:0:0.15) -- (0.15,-0.15);
                \shiftline{-0.25,0.25}{0.25,0.25}{0};
                \shiftline{-0.25,-0.15}{0.25,-0.15}{\sigma};
            \end{tikzpicture}
        \right)
        \circ
        \left(
            \begin{tikzpicture}[centerzero]
                \draw[->] (-0.15,-0.25) -- (-0.15,0.25);
                \token{-0.15,0}{east}{a};
                \draw[->] (0.15,-0.25) -- (0.15,0.25);
                \shiftline{-0.25,0.25}{0.25,0.25}{0};
                \shiftline{-0.25,-0.25}{0.25,-0.25}{0};
            \end{tikzpicture}
            + (-1)^{\sigma \bar{a}}
            \begin{tikzpicture}[centerzero]
                \draw[->] (-0.15,-0.25) -- (-0.15,0.25);
                \token{-0.15,0}{east}{a};
                \draw[<-] (0.15,-0.25) -- (0.15,0.25);
                \shiftline{-0.25,0.25}{0.25,0.25}{\sigma};
                \shiftline{-0.25,-0.25}{0.25,-0.25}{\sigma};
            \end{tikzpicture}
            + (-1)^{\sigma \bar{a}}
            \begin{tikzpicture}[centerzero]
                \draw[<-] (-0.15,-0.25) -- (-0.15,0.25);
                \token{-0.15,0}{east}{a^\inv};
                \draw[->] (0.15,-0.25) -- (0.15,0.25);
                \shiftline{-0.25,0.25}{0.25,0.25}{\sigma};
                \shiftline{-0.25,-0.25}{0.25,-0.25}{\sigma};
            \end{tikzpicture}
            +
            \begin{tikzpicture}[centerzero]
                \draw[<-] (-0.15,-0.25) -- (-0.15,0.25);
                \token{-0.15,0}{east}{a^\inv};
                \draw[<-] (0.15,-0.25) -- (0.15,0.25);
                \shiftline{-0.25,0.25}{0.25,0.25}{0};
                \shiftline{-0.25,-0.25}{0.25,-0.25}{0};
            \end{tikzpicture}
        \right)
        \\
        &= (-1)^{\sigma\bar{a}}
        \left(
            \begin{tikzpicture}[anchorbase]
                \draw[<-] (-0.2,-0.2) -- (-0.2,0) arc (180:0:0.2) -- (0.2,-0.2);
                \token{-0.2,0}{east}{a^\inv};
                \shiftline{-0.3,0.4}{0.3,0.4}{0};
                \shiftline{-0.3,-0.2}{0.3,-0.2}{\sigma};
            \end{tikzpicture}
            +
            \begin{tikzpicture}[anchorbase]
                \draw[->] (-0.2,-0.2) -- (-0.2,0) arc (180:0:0.2) -- (0.2,-0.2);
                \token{-0.2,0}{east}{a};
                \shiftline{-0.3,0.4}{0.3,0.4}{0};
                \shiftline{-0.3,-0.2}{0.3,-0.2}{\sigma};
            \end{tikzpicture}
        \right)
        = (-1)^{\sigma\bar{a}}
        \left(
            \begin{tikzpicture}[anchorbase]
                \draw[<-] (-0.2,-0.2) -- (-0.2,0) arc (180:0:0.2) -- (0.2,-0.2);
                \token{0.2,0}{west}{a^\inv};
                \shiftline{-0.3,0.4}{0.3,0.4}{0};
                \shiftline{-0.3,-0.2}{0.3,-0.2}{\sigma};
            \end{tikzpicture}
            +
            \begin{tikzpicture}[anchorbase]
                \draw[->] (-0.2,-0.2) -- (-0.2,0) arc (180:0:0.2) -- (0.2,-0.2);
                \token{0.2,0}{west}{a};
                \shiftline{-0.3,0.4}{0.3,0.4}{0};
                \shiftline{-0.3,-0.2}{0.3,-0.2}{\sigma};
            \end{tikzpicture}
        \right)
    \end{align*}
    and
    \begin{align*}
        \sD
        \left(
            \begin{tikzpicture}[anchorbase]
                \draw (-0.2,-0.2) -- (-0.2,0) arc (180:0:0.2) -- (0.2,-0.2);
                \token{0.2,0}{west}{a^\inv};
            \end{tikzpicture}
        \right)
        &= \sD(\capmor) \circ \sD(\ \idstrand\, \otimes \tokstrand[a^\inv])
        \\
        &\overset{\mathclap{\cref{slush}}}{=}\
        \left(
            \begin{tikzpicture}[anchorbase]
                \draw[<-] (-0.15,-0.15) -- (-0.15,0) arc(180:0:0.15) -- (0.15,-0.15);
                \shiftline{-0.25,0.25}{0.25,0.25}{0};
                \shiftline{-0.25,-0.15}{0.25,-0.15}{\sigma};
            \end{tikzpicture}
            +
            \begin{tikzpicture}[anchorbase]
                \draw[->] (-0.15,-0.15) -- (-0.15,0) arc(180:0:0.15) -- (0.15,-0.15);
                \shiftline{-0.25,0.25}{0.25,0.25}{0};
                \shiftline{-0.25,-0.15}{0.25,-0.15}{\sigma};
            \end{tikzpicture}
        \right)
        \circ
        \left(
            \begin{tikzpicture}[centerzero]
                \draw[->] (-0.15,-0.25) -- (-0.15,0.25);
                \token{0.15,0}{west}{a^\inv};
                \draw[->] (0.15,-0.25) -- (0.15,0.25);
                \shiftline{-0.25,0.25}{0.25,0.25}{0};
                \shiftline{-0.25,-0.25}{0.25,-0.25}{0};
            \end{tikzpicture}
            + (-1)^{\sigma \bar{a}}\,
            \begin{tikzpicture}[centerzero]
                \draw[->] (-0.15,-0.25) -- (-0.15,0.25);
                \token{0.15,0}{west}{a};
                \draw[<-] (0.15,-0.25) -- (0.15,0.25);
                \shiftline{-0.25,0.25}{0.25,0.25}{\sigma};
                \shiftline{-0.25,-0.25}{0.25,-0.25}{\sigma};
            \end{tikzpicture}
            + (-1)^{\sigma \bar{a}}\,
            \begin{tikzpicture}[centerzero]
                \draw[<-] (-0.15,-0.25) -- (-0.15,0.25);
                \token{0.15,0}{west}{a^\inv};
                \draw[->] (0.15,-0.25) -- (0.15,0.25);
                \shiftline{-0.25,0.25}{0.25,0.25}{\sigma};
                \shiftline{-0.25,-0.25}{0.25,-0.25}{\sigma};
            \end{tikzpicture}
            +
            \begin{tikzpicture}[centerzero]
                \draw[<-] (-0.15,-0.25) -- (-0.15,0.25);
                \token{0.15,0}{west}{a};
                \draw[<-] (0.15,-0.25) -- (0.15,0.25);
                \shiftline{-0.25,0.25}{0.25,0.25}{0};
                \shiftline{-0.25,-0.25}{0.25,-0.25}{0};
            \end{tikzpicture}
        \right)
        \\
        &= (-1)^{\sigma\bar{a}}
        \left(
            \begin{tikzpicture}[anchorbase]
                \draw[<-] (-0.2,-0.2) -- (-0.2,0) arc (180:0:0.2) -- (0.2,-0.2);
                \token{0.2,0}{west}{a^\inv};
                \shiftline{-0.3,0.4}{0.3,0.4}{0};
                \shiftline{-0.3,-0.2}{0.3,-0.2}{\sigma};
            \end{tikzpicture}
            +
            \begin{tikzpicture}[anchorbase]
                \draw[->] (-0.2,-0.2) -- (-0.2,0) arc (180:0:0.2) -- (0.2,-0.2);
                \token{0.2,0}{west}{a};
                \shiftline{-0.3,0.4}{0.3,0.4}{0};
                \shiftline{-0.3,-0.2}{0.3,-0.2}{\sigma};
            \end{tikzpicture}
        \right).
    \end{align*}

    It remains to prove the final statement of the theorem.  This statement is clear if $d=0$, and so it suffices to assume $\sigma=0$.  In this case, we have
    \[
        \sD \left( \bubble{a} \right)
        = \ccbubble{a} + \cbubble{a^\inv}
        = \ccbubble{a + a^\inv}
    \]
    where, in the last equality, we used \cite[(4.24)]{MS21} to convert the clockwise bubble to a counterclockwise one.  Then the assertion follows from the fact that
    \[
        \str_A(a + a^\inv)
        \overset{\cref{snow}}{=} 2 \str_A(a).
        \qedhere
    \]
\end{proof}

We are now ready to state and prove the basis theorem for $\Brauer_\kk^\sigma(A,\inv;d)$.  For $r,s \in \N$, an \emph{$(r,s)$-Brauer diagram} is a string diagram representing a morphism in $\Hom_{\Brauer_\kk^\sigma(A,\inv;d)}(\go^{\otimes r}, \go^{\otimes s})$ such that:
\begin{itemize}
    \item there no tokens on any string and no closed strings (i.e.\ strings with no endpoints);
    \item no string has more than one critical point;
    \item there are no self-intersections of strings and no two strings cross each other more than once.
\end{itemize}
A \emph{perfect matching} of a finite set is a partition of that set into subsets of size $2$.  Numbering the bottom endpoints $1,\dotsc,r$ from left to right and the top endpoints $r+1,\dotsc,r+s$ from left to right, each $(r,s)$-Brauer diagram induces a perfect matching of $\{1,\dotsc,r+s\}$.  Let $\bD(r,s)$ denote a set of $(r,s)$-Brauer diagrams, with each perfect matching of $\{1,\dotsc,r+s\}$ induced by exactly one element of $\bD(r,s)$.

For $r,s \in \N$, let $\bD^\bullet(r,s)$ denote the set of all morphisms that can be obtained from elements of $\bD(r,s)$ by adding exactly one token to each string according to \cref{jiggy}.  For example,
\[
    \begin{tikzpicture}[anchorbase]
        \draw (0.5,-0.2) -- (0.5,0) to[out=up,in=up] (1.5,0) -- (1.5,-0.2);
        \draw (1,-0.2) -- (1,0) \braidup (-0.5,1) -- (-0.5,1.2);
        \draw (0,-0.2) -- (0,0) \braidup (0.5,1) -- (0.5,1.2);
        \draw (0,1.2) -- (0,1) to[out=down,in=down] (1,1) -- (1,1.2);
        \draw (2,-0.2) -- (2,0) \braidup (2.5,1) -- (2.5,1.2);
        \draw (1.5,1.2) -- (1.5,1) to[out=down,in=down,looseness=2] (2,1) -- (2,1.2);
    \end{tikzpicture}
\]
is a possible element of $\bD(5,7)$ and
\[
    \begin{tikzpicture}[anchorbase]
        \draw (0.5,-0.2) -- (0.5,0) to[out=up,in=up] (1.5,0) -- (1.5,-0.2);
        \draw (1,-0.2) -- (1,0) \braidup (-0.5,1) -- (-0.5,1.2);
        \draw (0,-0.2) -- (0,0) \braidup (0.5,1) -- (0.5,1.2);
        \draw (0,1.2) -- (0,1) to[out=down,in=down] (1,1) -- (1,1.2);
        \draw (2,-0.2) -- (2,0) \braidup (2.5,1) -- (2.5,1.2);
        \draw (1.5,1.2) -- (1.5,1) to[out=down,in=down,looseness=2] (2,1) -- (2,1.2);
        \token{0,1}{east}{b_1};
        \token{1.5,1}{east}{b_2};
        \token{0,0}{east}{b_3};
        \token{1,0}{east}{b_4};
        \token{1.5,0}{east}{b_5};
        \token{2,0}{west}{b_6};
    \end{tikzpicture}
    ,\qquad b_1,b_2,b_3,b_4,b_5,b_6 \in \bB_A,
\]
are the corresponding elements of $\bD^\bullet(5,7)$.

We expect the following theorem to hold for an arbitrary involutive superalgebra $(A,\inv)$.  However, since our proof relies on \cref{Obasisthm}, it assumes that $A$ also admits the structure of a Frobenius superalgebra.  As explained in \cref{amongus}, this assumption holds whenever $A$ is a real or complex division superalgebra.

\begin{theo} \label{basisthm}
    For all $r,s \in \N$ and $d \in \kk$, the set $\bD^\bullet(r,s)$ is $\kk$-basis for the morphism space $\Hom_{\Brauer_\kk^\sigma(A,\inv;d)}(\go^{\otimes r}, \go^{\otimes s})$
\end{theo}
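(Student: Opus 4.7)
The plan is to prove spanning and linear independence separately, reducing linear independence to the basis theorem \cref{Obasisthm} via the superfunctor $\sD$ of \cref{bulb}.

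For spanning, I will show that every string diagram representing a morphism $\go^{\otimes r} \to \go^{\otimes s}$ can be rewritten as a $\kk$-linear combination of elements of $\bD^\bullet(r,s)$ using the defining relations. The Reidemeister-type relations in \cref{brauer}, together with \cref{silver}, allow one to eliminate self-intersections and double crossings and to reduce each strand to at most one critical point, so the underlying uncoloured diagram is a Brauer diagram. The token relations in \cref{tokrel} then permit the collection of all tokens on a given strand into a single token, the sliding of that token through crossings and around cups/caps (at the cost of applying $\inv$; see \cref{mirror}), and the expansion of each accumulated token in terms of the basis $\bB_A$. Any closed component with an inserted token $a$ becomes $d\, \str_A(a) \cdot 1_\one$ via \cref{burst,Oregon,Eugene}, absorbed as a scalar coefficient. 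A final tidying moves the surviving token on each strand into the canonical position prescribed by \cref{jiggy}, producing a linear combination of elements of $\bD^\bullet(r,s)$.

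For linear independence, suppose $\sum_{D \in \bD^\bullet(r,s)} c_D \cdot D = 0$. Since $A$ admits the structure of a Frobenius superalgebra (this is where we use the hypothesis entering through \cref{Obasisthm}), the superfunctor $\sD \colon \Brauer_\kk^\sigma(A,\inv;2d') \to \Add(\OB_\kk(A;d')_\pi)$ of \cref{bulb} exists with $d' = d/2$ when $\mathrm{char}(\kk) \neq 2$; in characteristic $2$, or more uniformly, one first proves the basis theorem for unspecialized $\Brauer_\kk^\sigma(A,\inv)$ and then specializes, since closed bubble components are precisely what \cref{burst} collapses to scalars. Applying $\sD$ yields $\sum c_D\, \sD(D) = 0$ in $\Hom_{\Add(\OB_\kk(A;d')_\pi)}(\sD(\go^{\otimes r}), \sD(\go^{\otimes s}))$, which decomposes as a direct sum
\[
\bigoplus_{\epsilon \in \{\uparrow,\downarrow\}^r,\ \delta \in \{\uparrow,\downarrow\}^s}
\Hom_{\OB_\kk(A;d')_\pi}\!\left(\Pi^{\sigma n(\epsilon)}\!\epsilon_1 \otimes \cdots \otimes \epsilon_r,\ \Pi^{\sigma n(\delta)}\!\delta_1 \otimes \cdots \otimes \delta_s\right),
\]
where $n(\epsilon)$ counts the number of $\downarrow$ entries in $\epsilon$. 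For each $D \in \bD^\bullet(r,s)$, the image $\sD(D)$ is a signed sum, indexed by consistent orientations of the strands of $D$, of oriented diagrams in which each token on a strand oriented downward has its label replaced by its image under $\inv$. Projecting the relation onto a fixed $(\epsilon,\delta)$-summand, each $D$ contributes either zero or a specific element of $\obD^\bullet$ (up to a sign), so by choosing $(\epsilon,\delta)$ compatible with the matching of a given $D$ and invoking \cref{Obasisthm}, one extracts $c_D = 0$.

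The main obstacle will be the bookkeeping in the linear independence step: tracking the signs and parity shifts coming from \cref{slush}, the transformation of token labels by $\inv$ on downward strands, and verifying that distinct Brauer diagrams in $\bD^\bullet(r,s)$ project to linearly independent elements in a sufficiently rich collection of $(\epsilon,\delta)$-summands. A subsidiary technical point is that although $\bB_A^\inv := \{a^\inv : a \in \bB_A\}$ need not equal $\bB_A$, it is still a $\kk$-basis of $A$, so the projections land in a basis of the relevant oriented morphism space only after a harmless change of basis on the token labels of downward strands; this does not affect linear independence.
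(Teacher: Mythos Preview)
Your proposal is correct and follows essentially the same approach as the paper: spanning via the relations \cref{brauer}, \cref{tokrel}, \cref{mirror}, and \cref{burst}, then linear independence by applying the superfunctor $\sD$ of \cref{bulb} and invoking \cref{Obasisthm}. The paper's argument for linear independence is phrased slightly more tersely---it simply observes that $\sD$ sends each element of $\bD^\bullet(r,s)$ to a signed sum over orientations with $a \mapsto \pm a^\inv$ on downward strands, and that these images are linearly independent by \cref{Obasisthm}---without explicitly projecting onto individual $(\epsilon,\delta)$-summands as you do, but the content is the same. Your aside about characteristic $2$ and the halving of $d$ is a genuine technical point that the paper glosses over; the paper simply invokes the specialized $\sD$ without comment.
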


\begin{proof}
    We first prove that the elements of $\bD^\bullet(r,s)$ span $\Hom_{\Brauer_\kk^\sigma(A,\inv;d)}(\go^{\otimes r}, \go^{\otimes s})$ as $\kk$-supermodule.  Using the last two relations in \cref{tokrel} and the last two relations in \cref{mirror}, tokens on strings with endpoints can be moved near the appropriate endpoints.  Then, using the third relation in \cref{tokrel}, we can reduce the number of tokens to precisely one on each string.  (Recall that a string with no token is the same as a string with a token labelled by the identity element of $A$, using the first relation in \cref{tokrel}.)  Next, using the second relation in \cref{tokrel}, we can write any diagram as a linear combination of ones where the tokens are labelled by elements of $\bB_A$.  Finally, using the relations \cref{brauer} and the first two relations in \cref{mirror}, we can move the strings so that they are positioned to agree with some element of $\bD(r,s)$, together with some bubbles to the right of this element.  Then we can evaluate all bubbles using \cref{burst}.

    It remains to prove linear independence of $\bD^\bullet(r,s)$.  Under the functor $\sD$ of \cref{bulb}, each element of $\bD_\bullet(r,s)$ is sent, up to sign and parity shift, to a sum over all possible orientations of the strands, with the map $a \mapsto \pm a^\inv$ applied to labels of tokens on downward pointing strands.  It follows from \cref{Obasisthm} that these images are linearly independent.  Therefore, $\bD^\bullet(r,s)$ is linearly independent.
\end{proof}

\section{The unoriented incarnation superfunctor\label{sec:Unic}}

In this section, we introduce the main application of the supercategory $\Brauer_\kk^\sigma(A,\inv)$ to the representation theory of supergroups.  We begin by defining a very general \emph{unoriented incarnation superfunctor} and proving an asymptotic faithfulness result.  We then turn our attention to the special cases where $\kk \in \{\R,\C\}$ and $A$ is a division superalgebra over $\kk$.  When $\kk=\C$, fullness of the incarnation functor follows from known results.  When $\kk=\R$, we state the fullness result, whose proof is split into three cases, proved in \cref{sec:real,sec:complex,sec:quaternionic}.

\subsection{Definition of the superfunctor}

In this subsection we work over an arbitrary field $\kk$.  Let $(A,\inv)$ be an involutive superalgebra, let $V$ be a right $A$-supermodule, and let $\Phi$ be a nondegenerate $(\nu,\inv)$-supersymmetric form of parity $\sigma$ on $V$.  Fix a homogeneous $\kk$-basis $\bB_V$ of $V$, and let $\bB_V^\vee = \{b^\vee : b \in \bB_V\}$ be the left dual basis with respect to $\Phi$.  Recall the notation $\flip$ and $\rho_a$ from \cref{subsec:supermodules}.  It follows from \cref{galaxy} that
\begin{equation} \label{prodigy}
    \rho_{a^\inv} \rho_{b^\inv} = \rho_{(ab)^\inv},\qquad a,b \in A.
\end{equation}

\begin{theo} \label{incarnation}
    There exists a unique monoidal superfunctor, which we call the \emph{unoriented incarnation superfunctor} associated to $\Phi$,
    \[
        \sF_\Phi \colon \Brauer_\kk^\sigma(A,\inv) \to G(\Phi)\smod_\kk
    \]
    such that $\sF_\Phi(\go)=V$ and
    \begin{equation} \label{shark}
        \sF_\Phi \left( \crossmor \right) = \nu \flip,\qquad
        \sF_\Phi \left( \capmor \right) = \Phi,\qquad
        \sF_\Phi (\tokstrand[a]) = \rho_{a^\inv},\quad a \in A.
    \end{equation}
    This superfunctor also satisfies the following:
    \begin{gather} \label{jellyfish}
        \sF_\Phi \left( \cupmor \right) = \Phi'
        \colon \kk \to V \otimes V,\qquad
        1 \mapsto \sum_{v \in \bB_V} (-1)^{\sigma \bar{v}} v \otimes v^\vee,
        \qquad \text{and}
        \\ \label{japan}
        \sF_\Phi \left( \bubble{a} \right) = \str_V(a) 1_\one
        \qquad \text{for all } a \in A.
    \end{gather}
    If $V = A^{m|n}$ for some $m,n \in \N$, then $\sF_\Phi$ induces a monoidal superfunctor
    \[
        \sF_\Phi \colon \Brauer_\kk^\sigma(A,\inv;\nu(m-n)) \to G(\Phi)\smod_\kk.
    \]
\end{theo}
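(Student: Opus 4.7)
The plan is to define $\sF_\Phi$ on the generating object $\go$ by $\sF_\Phi(\go) := V$, on the crossing, cap, and tokens by \cref{shark}, and on the cup by $\sF_\Phi(\cupmor) := \Phi'$ as in \cref{jellyfish}. Each of these assignments lands in $G(\Phi)\smod_\kk$: $\Phi$ and $\Phi'$ are $G(\Phi)$-equivariant by the defining $G(\Phi)$-invariance of $\Phi$; $\flip$ because $G(\Phi)$ acts diagonally on tensor products; and $\rho_{a^\inv}$ because the left $G(\Phi)$-action on $V$ commutes with the right $A$-action. In particular, $\Phi'$ is independent of the choice of basis $\bB_V$ used to define it.

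The bulk of the proof is verifying the defining relations of \cref{FBC}. The first two relations in \cref{brauer} are the symmetric group relations for $\nu\flip$, which hold because $\nu^2 = 1$ and $\flip$ satisfies the braid relation. The snake relations (the third relation in \cref{brauer}) reduce to a direct computation using the identification $\Phi(v^\vee,\cdot) = v^*$ on $\bB_V$; the sign $(-1)^{\sigma\bar v}$ in \cref{jellyfish} is precisely what absorbs the sign generated when the parity-$\sigma$ map $\Phi$ is horizontally composed past a vector. The fourth relation in \cref{brauer} encodes the first identity in \cref{supersymmetric}, namely the $(\nu,\inv)$-supersymmetry of $\Phi$; the fifth is naturality of $\flip$. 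In \cref{tokrel}, the first three relations follow from $\rho_1 = 1_V$ together with \cref{prodigy}, the fourth from the naturality of $\flip$ with respect to $\rho_a$, and the last from the second identity in \cref{supersymmetric} (the $A$-sesquilinearity of $\Phi$).

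The identity \cref{jellyfish} is then tautological. For \cref{japan}, unwinding $\bubble{a} = \capmor \circ (1_\go \otimes \tokstrand[a]) \circ \cupmor$ yields
\[
    \sF_\Phi(\bubble{a})(1) = \sum_{v \in \bB_V} (-1)^{\sigma \bar v + \sigma \bar a}\, \Phi(v, v^\vee a^\inv),
\]
and successive application of $(\nu,\inv)$-supersymmetry, $A$-sesquilinearity, and the identification $\Phi(v^\vee,\cdot) = v^*$ reduces this to the claimed multiple of $\str_V(a)$. The specialization statement then follows by combining this with \cref{break}: when $V = A^{m|n}$, one has $\str_V(a) = (m-n)\str_A(a)$, which matches the defining bubble relation \cref{burst} at specialization parameter $\nu(m-n)$.

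The main technical obstacle throughout is the systematic bookkeeping of signs arising from the super interchange law. The parity $\sigma$ of $\cupmor$ and $\capmor$, the parity shift $\overline{v^\vee} = \bar v + \sigma$ of the dual basis with respect to $\Phi$, and the parity $\bar a$ of tokens all contribute sign factors whose interactions must be tracked consistently, especially in the snake relations (where the specific sign in \cref{jellyfish} is forced) and in the bubble computation. Everything else is essentially formal.
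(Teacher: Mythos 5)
Your construction and relation-checking follow the same route as the paper's proof: define $\sF_\Phi$ on all generators (including the cup, sent to $\Phi'$), verify \cref{brauer,tokrel} by identifying each relation with a property of $\Phi$ ($( \nu,\inv)$-supersymmetry for the cap-symmetry relation, the identity $\Phi(va,w)=(-1)^{\bar a\bar w}\Phi(v,wa^\inv)$ for the token-over-cap relation, \cref{prodigy} for token multiplicativity), and then compute the bubble and invoke \cref{break} for the specialization. Your starting expression for the bubble, $\sum_{v}(-1)^{\sigma\bar v+\sigma\bar a}\Phi(v,v^\vee a^\inv)$, is correct, and your sign analysis of the snake relation matches the paper's.

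There is, however, one genuine gap: you prove existence but not the \emph{uniqueness} asserted in the theorem. Note that \cref{shark} prescribes the images of the crossing, the cap, and the tokens only --- not the cup. The uniqueness claim therefore says that \emph{any} monoidal superfunctor with $\sF_\Phi(\go)=V$ satisfying \cref{shark} is forced to send $\cupmor$ to $\Phi'$; this is exactly why \cref{jellyfish} is listed as a consequence ("this superfunctor also satisfies") rather than as part of the defining data. By declaring $\sF_\Phi(\cupmor):=\Phi'$ and calling \cref{jellyfish} "tautological," you have built one functor with the stated properties but not shown it is the only one. The missing argument is short and is the one the paper gives: write $\sF_\Phi(\cupmor)(1)=\sum_{u,v\in\bB_V}a_{uv}\,u\otimes v^\vee$, apply the snake relation $(1_\go\otimes\capmor)\circ(\cupmor\otimes 1_\go)=1_\go$ (whose image is already determined by \cref{shark}) to each basis vector $v$, and use nondegeneracy of $\Phi$ to conclude $a_{uv}=(-1)^{\sigma\bar v}\delta_{uv}$. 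You should add this step; everything else in your proposal is sound.
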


\begin{proof}
    We first show that \cref{shark,jellyfish} indeed yield a superfunctor $\sF_\Phi$.  We must show that it respects the relations \cref{brauer,tokrel}.  The first two relations in \cref{brauer} are clear.  For the third equality in \cref{brauer}, we compute
    \[
        \sF_\Phi
        \left(
            \begin{tikzpicture}[centerzero]
                \draw (-0.3,0.4) -- (-0.3,0) arc(180:360:0.15) arc(180:0:0.15) -- (0.3,-0.4);
            \end{tikzpicture}
        \right)
        \colon v
        \xmapsto{\Phi' \otimes 1_V} \sum_{w \in \bB_V} (-1)^{\sigma \bar{w}} w \otimes w^\vee \otimes v
        \xmapsto{1_V \otimes \Phi} \sum_{w \in \bB_V} \Phi(w^\vee, v) v
        = v.
    \]
    For the fourth equality in \cref{brauer}, we compute
    \[
        \sF_\Phi
        \left(
            \begin{tikzpicture}[centerzero]
                \draw (-0.3,-0.4) -- (-0.3,0) arc(180:0:0.15) arc(180:360:0.15) -- (0.3,0.4);
            \end{tikzpicture}
        \right)
        \colon v
        \xmapsto{1_V \otimes \Phi'} \sum_{w \in \bB_V} (-1)^{\sigma(\bar{v}+\bar{w})} v \otimes w \otimes w^\vee
        \xmapsto{\Phi \otimes 1_V} (-1)^\sigma \sum_{w \in \bB_V} \Phi(v,w) w^\vee
        = (-1)^\sigma v,
    \]
    where, to simplify the sign, we used the fact that $\Phi(v,w)=0$ unless $\bar{v}+\bar{w}=\sigma$.

    The fifth equality in \cref{brauer} follows immediately from the fact that $\Phi$ is a $(\nu,\inv)$-supersymmetric $\kk$-bilinear form.  For the sixth equality in \cref{brauer}, we compute
    \[
        \sF_\Phi
        \left(
            \begin{tikzpicture}[centerzero]
                \draw (-0.2,-0.3) -- (-0.2,-0.1) arc(180:0:0.2) -- (0.2,-0.3);
                \draw (-0.3,0.3) \braiddown (0,-0.3);
            \end{tikzpicture}
        \right)
        \colon u \otimes v \otimes w
        \xmapsto{\nu \flip \otimes 1_V} \nu (-1)^{\bar{u}\bar{v}} v \otimes u \otimes w
        \xmapsto{1_V \otimes \Phi} \nu (-1)^{(\sigma+\bar{u})\bar{v}} \Phi(u,w) v,
    \]
    and
    \[
        \sF_\Phi
        \left(
            \begin{tikzpicture}[centerzero]
                \draw (-0.2,-0.3) -- (-0.2,-0.1) arc(180:0:0.2) -- (0.2,-0.3);
                \draw (0.3,0.3) \braiddown (0,-0.3);
            \end{tikzpicture}
        \right)
        u \otimes v \otimes w
        \xmapsto{1_V \otimes \nu \flip} \nu (-1)^{\bar{v}\bar{w}} u \otimes w \otimes v
        \xmapsto{\Phi \otimes 1_V} \nu (-1)^{\bar{v}\bar{w}} \Phi(u,w) v
        = \nu (-1)^{(\sigma+\bar{u})\bar{v}} \Phi(u,w) v,
    \]
    where, in the final equality, we used the fact that $\Phi(u,w)=0$ unless $\bar{w} = \sigma +\bar{u}$.

    The first two relations in \cref{tokrel} are straightforward, while the third follows from \cref{prodigy}.  For the fourth relation in \cref{tokrel}, we compute
    \[
        \sF_\Phi
        \left(
            \begin{tikzpicture}[centerzero]
                \draw (-0.35,-0.35) -- (0.35,0.35);
                \draw (0.35,-0.35) -- (-0.35,0.35);
                \token{-0.17,-0.17}{east}{a};
            \end{tikzpicture}
        \right)
        \colon u \otimes v
        \xmapsto{\rho_a \otimes 1_V} (-1)^{\bar{a}\bar{u}} ua^\star \otimes v
        \xmapsto{\nu \flip} \nu (-1)^{\bar{u}\bar{v} + \bar{a}(\bar{u}+\bar{v})} v \otimes u a^\star,
    \]
    and
    \[
        \sF_\Phi
        \left(
            \begin{tikzpicture}[centerzero]
                \draw (-0.35,-0.35) -- (0.35,0.35);
                \draw (0.35,-0.35) -- (-0.35,0.35);
                \token{0.17,0.17}{west}{a};
            \end{tikzpicture}
        \right)
        \colon u \otimes v
        \xmapsto{\nu \flip} \nu (-1)^{\bar{u}\bar{v}} v \otimes u
        \xmapsto{1_V \otimes \rho_a} \nu (-1)^{\bar{u}\bar{v} + \bar{a}(\bar{u}+\bar{v})} v \otimes u a^\star.
    \]
    Finally, for the fifth relation in \cref{tokrel}, we compute
    \[
        \sF_\Phi
        \left(
            \begin{tikzpicture}[anchorbase]
                \draw (-0.2,-0.2) -- (-0.2,0) arc (180:0:0.2) -- (0.2,-0.2);
                \token{-0.2,0}{east}{a};
            \end{tikzpicture}
        \right)
        \colon u \otimes v
        \xmapsto{\rho_a \otimes 1_V} (-1)^{\bar{a}\bar{u}} ua^\inv \otimes v
        \xmapsto{\Phi} (-1)^{\bar{a}\bar{u}} \Phi(ua^\inv,v)
        \overset{\cref{supersymmetric}}{=}
        (-1)^{\bar{a}(\bar{u}+\bar{v})} \Phi(u,v a),
    \]
    and
    \[
        \sF_\Phi
        \left(
            \begin{tikzpicture}[anchorbase]
                \draw (-0.2,-0.2) -- (-0.2,0) arc (180:0:0.2) -- (0.2,-0.2);
                \token{0.2,0}{west}{a^\inv};
            \end{tikzpicture}
        \right)
        \colon u \otimes v
        \xmapsto{1_V \otimes \rho_{a^\inv}} (-1)^{\bar{a}(\bar{u}+\bar{v})} u \otimes v a
        \xmapsto{\Phi} (-1)^{\bar{a}(\bar{u}+\bar{v})} \Phi(u,v a).
    \]

    Next we prove \cref{japan}.  Using \cref{crescent}, we have
    \[
        \Phi(w,v^\vee)
        \overset{\cref{supersymmetric}}{=} \nu (-1)^{\bar{w}(\bar{v}+\sigma)} \Phi(v^\vee, w)
        = \nu (-1)^{\bar{v} + \sigma \bar{v}} \delta_{vw}.
    \]
    Thus, the $\kk$-basis of $V$ left dual to $\bB_V^\vee$ is given by $(v^\vee)^\vee = \nu (-1)^{\bar{v}+\sigma \bar{v}} v$.  Therefore,
    \begin{multline*}
        \sF_\Phi \left( \bubble{a} \right)
        \colon 1
        \xmapsto{\Phi'} \nu \sum_{v \in \bB_V} (-1)^{\bar{v}} v^\vee \otimes v
        \xmapsto{1_V \otimes \rho_a} \nu \sum_{v \in \bB_V} (-1)^{\bar{v}} v^\vee \otimes v a^\inv
        \\
        \xmapsto{\Phi} \nu \sum_{v \in \bB_V} (-1)^{\bar{v}} \Phi(v^\vee, v a^\inv)
        = \nu \str_V(a),
    \end{multline*}
    where, in the final equality, we used \cref{snooze}.  The fact that $\sF_\Phi$ factors through $\Brauer_\kk^\sigma(A,\inv;\nu(m-n))$ when $V = A^{m|n}$ then follows from \cref{break}.

    It remains to prove that, for any functor as in the first sentence of the theorem, we have $\sF_\Phi(\cupmor) = \Phi'$.  Suppose that
    \[
        \sF_\Phi(\cupmor) \colon 1
        \mapsto \sum_{u,v \in \bB_V} a_{uv} u \otimes v^\vee,\qquad
        a_{uv} \in \kk.
    \]
    Then, for all $v \in \bB_V$, we have
    \[
        v =
        \sF_\Phi
        \left(\
            \begin{tikzpicture}[centerzero]
                \draw (0,-0.4) -- (0,0.4);
            \end{tikzpicture}
        \ \right)
        (v)
        =
        \sF_\Phi
        \left(
            \begin{tikzpicture}[centerzero]
                \draw (-0.3,0.4) -- (-0.3,0) arc(180:360:0.15) arc(180:0:0.15) -- (0.3,-0.4);
            \end{tikzpicture}
        \right)
        (v)
        \xmapsto{\sF_\Phi \left( \cupmor\, \otimes\ \idstrand\ \right)}
        \sum_{u,w \in \bB_V} a_{uw} u \otimes w^\vee \otimes v
        \xmapsto{1_V \otimes \Phi}
        \sum_{u \in \bB_V} (-1)^{\sigma \bar{u}} a_{uv}  u.
    \]
    It follows that $a_{uv} = (-1)^{\sigma \bar{v}} \delta_{uv}$ for all $u,v \in \bB_V$, and so $\sF_\Phi(\cupmor) = \Phi'$.
\end{proof}

\subsection{Asymptotic faithfulness}

For the remainder of this section, we assume that $V = A^{m|n}$.  Fix a $\kk$-basis $\bB_A$ of $A$ with the property that $b^\inv = \pm b$ for all $b \in \bB_A$.

\begin{prop} \label{butte}
    If $2m+2n \ge r+s$, then the elements $\sF_\Phi(f)$, $f \in \bD^\bullet(r,s)$, are linearly independent, over $\kk$, in $\Hom_{G(\Phi)}(V^{\otimes r}, V^{\otimes s})$.
\end{prop}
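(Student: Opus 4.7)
I would prove the proposition by reducing to the case $s=0$ and then running a standard evaluation argument, adapted to handle tokens and the super/involutive structure.

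First, the rigid duality in $\Brauer_\kk^\sigma(A,\inv)$ coming from the third relation of \cref{brauer} yields a natural isomorphism
\[
\Hom_{\Brauer_\kk^\sigma(A,\inv)}(\go^{\otimes r}, \go^{\otimes s}) \xrightarrow{\cong} \Hom_{\Brauer_\kk^\sigma(A,\inv)}(\go^{\otimes(r+s)}, \one)
\]
carrying $\bD^\bullet(r,s)$ bijectively onto $\bD^\bullet(r+s,0)$ (up to at worst signs). Under the analogous adjunction for $V$ in $G(\Phi)\smod_\kk$ furnished by $\sF_\Phi(\capmor)=\Phi$ and $\sF_\Phi(\cupmor)=\Phi'$ (\cref{jellyfish}), the superfunctor $\sF_\Phi$ intertwines these isomorphisms. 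Writing $2N=r+s$, it therefore suffices to establish linear independence of $\{\sF_\Phi(f) : f \in \bD^\bullet(2N,0)\}$ in $\Hom_{G(\Phi)}(V^{\otimes 2N},\kk)$ under the weaker hypothesis $m+n \ge N$.

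Each $f \in \bD^\bullet(2N,0)$ is specified by a perfect matching $\pi$ of $\{1,\dots,2N\}$ together with a token assignment $\bar t \colon \pi \to \bB_A$; write $f_{\pi,\bar t}$ for the associated morphism. Unwinding \cref{shark} and repeatedly applying the super interchange law \cref{interchange}, one obtains an explicit formula
\[
\sF_\Phi(f_{\pi,\bar t})(v_1 \otimes \cdots \otimes v_{2N}) = \epsilon(\pi,\bar v,\bar t) \prod_{\{a,b\} \in \pi,\ a<b} \Phi(v_a, v_b t_{ab}^\inv),
\]
where $\epsilon(\pi,\bar v,\bar t) \in \{\pm 1\}$ is a sign depending only on the parities of the inputs and tokens and on the combinatorics of $\pi$ (from crossings).

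To extract coefficients, use that $\Phi = \form \circ \varphi$ by \cref{crescent} and choose a homogeneous $A$-basis $e_1,\dots,e_{m+n}$ of $V$ with a pairing involution $i \mapsto i^*$ of $\{1,\dots,m+n\}$ such that $\varphi(e_i,e_j)=0$ unless $j = i^*$ and $\alpha_i := \varphi(e_i,e_{i^*})$ is a unit in $A$ (a canonical form for the nondegenerate superhermitian form $\varphi$; the shape of this form depends on $(A,\star)$ and the parity of $\varphi$). Given a target matching $\pi_0$, the condition $m+n \ge N$ allows us to label each arc of $\pi_0$ with a pair $(j,j^*)$ so that $v_a = e_{j}$ and $v_b = e_{j^*}$ (possibly $j=j^*$) whenever $\{a,b\}$ is an arc of $\pi_0$. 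Then for any other matching $\pi$, the product $\prod_{\{a,b\} \in \pi} \Phi(v_a, v_b t_{ab}^\inv)$ vanishes unless $\pi$ is "compatible" with our index labelling, and the indices can be arranged (using the slack permitted by $m+n \ge N$) so that the only compatible matching is $\pi_0$. Evaluating the hypothetical relation $\sum_f c_f \sF_\Phi(f) = 0$ on these test tensors and varying the $A$-coefficients $c_a \in \bB_A$ reduces the remaining equation to a system governed by the nondegenerate Frobenius pairings $(c,d) \mapsto \form(c^\star \alpha_j d)$ on $A$; nondegeneracy of $\form$ then forces $c_{f_{\pi_0,\bar t}} = 0$ for every $\bar t$.

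The main technical obstacle lies in the last step when the pairing involution $i \mapsto i^*$ is fixed-point-free (i.e.\ $\varphi$ is "symplectic/skew-hermitian" in flavour) and the matching $\pi_0$ forces us to reuse a single pair $(j,j^*)$ for multiple arcs. In this situation, naive index choices cannot separate $\pi_0$ from "arc-swap" matchings on the same pair, and one must refine the evaluation by exploiting the $A$-coefficient freedom (and the additional multiplicative structure coming from tokens) to recover a triangular system. Handling this case uniformly across the four involutive real division superalgebras of interest ($\R$, $\C$, $\HH$, $\Cl(\C)$) is where the bulk of the bookkeeping lies, and it may be streamlined by running an induction on $N$ and invoking the explicit canonical forms for $\varphi$ established in \cref{sec:hermdiv}.
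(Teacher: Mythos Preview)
Your reduction to $s=0$ via duality matches the paper exactly. Thereafter the paper takes a more direct route than yours: instead of choosing a canonical form for $\varphi$ with a pairing involution $i\mapsto i^*$, it works with the $\kk$-basis $\{e_ib : 1\le i\le m+n,\ b\in\bB_A\}$ of $V$ and its left dual $\{(e_ib)^\vee\}$ with respect to $\Phi$. The test tensor $v_f$ for $f\in\bD^\bullet(r,0)$ places $e_j$ at the right endpoint of the $j$-th strand and $(e_jb_j)^\vee$ at the left endpoint (where $b_j$ is the token label on that strand), and the paper then asserts $\sF_\Phi(f)(v_g)=\pm\delta_{f,g}$. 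This handles the matching and the tokens in one stroke and avoids any case analysis over the individual involutive division superalgebras.

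That said, the obstacle you isolate in the fixed-point-free case is genuine, and the paper's argument does not actually escape it. The asserted orthogonality implicitly requires $\Phi(e_j,e_kt)=0$ and $\Phi\big((e_jb_j)^\vee,(e_kb_k)^\vee t\big)=0$ for $j\ne k$, neither of which is forced. Take $A=\kk$, $\sigma=0$, $\nu=1$, and $V=\kk^{0|2}$ with the symplectic form $\Phi(e_1,e_2)=1$: then $e_1^\vee=-e_2$ and $e_2^\vee=e_1$, so for $r=4$ and $g$ the matching $\{\{1,2\},\{3,4\}\}$ one has $v_g=-e_2\otimes e_1\otimes e_1\otimes e_2$, and $\sF_\Phi(f)(v_g)\ne 0$ for $f$ the matching $\{\{1,3\},\{2,4\}\}$. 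Indeed, in this example the three elements of $\bD^\bullet(2,2)$ already satisfy $\id+\flip+\Phi'\circ\Phi=0$ in $\End_{\Sp(2,\kk)}(V\otimes V)$, while $2m+2n=4=r+s$. So the difficulty you flagged is not mere bookkeeping that an induction will dissolve; it is a real gap shared with the paper's proof, and the stated hypothesis $2m+2n\ge r+s$ appears too weak for the conclusion as written.
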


\begin{proof}
    We have a commutative diagram
    \[
        \begin{tikzcd}
            \Hom_{\Brauer_\kk^\sigma(A,\inv)}(\go^{\otimes r}, \go^{\otimes s}) \arrow[d, "\sF_\Phi"] \arrow[r, "\cong"]
            & \Hom_{\Brauer_\kk^\sigma(A,\inv)}(\go^{\otimes (r+s)}, \one) \arrow[d, "\sF_\Phi"]
            \\
            \Hom_{G(\Phi)} \big( V^{\otimes r}, V^{\otimes s} \big) \arrow[r, "\cong"]
            & \Hom_{G(\Phi)} \big( V^{\otimes (r+s)}, \kk \big)
        \end{tikzcd}
    \]
    where the horizontal maps are the usual isomorphisms that hold in any rigid monoidal supercategory.  In particular, the top horizontal map is the $\kk$-linear isomorphism given on diagrams by
    \[
        \begin{tikzpicture}[centerzero]
            \draw (0,-0.2) rectangle (1,0.2);
            \draw (0.2,0.2) -- (0.2,0.7);
            \node at (0.52,0.45) {$\cdots$};
            \draw (0.8,0.2) -- (0.8,0.7);
            \draw (0.2,-0.2) -- (0.2,-0.7);
            \draw (0.8,-0.2) -- (0.8,-0.7);
            \node at (0.52,-0.45) {$\cdots$};
        \end{tikzpicture}
        \mapsto
        \begin{tikzpicture}[centerzero]
            \draw (0,-0.2) rectangle (1,0.2);
            \draw (0.2,0.2) arc(0:180:0.2) -- (-0.2,-0.7);
            \node at (0,0.8) {$\vdots$};
            \draw (0.8,0.2) arc(0:180:0.8) -- (-0.8,-0.7);
            \draw (0.2,-0.2) -- (0.2,-0.7);
            \draw (0.8,-0.2) -- (0.8,-0.7);
            \node at (0.52,-0.45) {$\cdots$};
            \node at (-0.48,-0.45) {$\cdots$};
        \end{tikzpicture}
        \qquad \text{with inverse} \qquad
        \begin{tikzpicture}[centerzero]
            \draw (-1,-0.2) rectangle (1,0.2);
            \draw (-0.8,-0.2) -- (-0.8,-1);
            \draw (-0.2,-0.2) -- (-0.2,-1);
            \node at (-0.48,-0.6) {$\cdots$};
            \draw (0.2,-0.2) -- (0.2,-1);
            \draw (0.8,-0.2) -- (0.8,-1);
            \node at (0.52,-0.6) {$\cdots$};
        \end{tikzpicture}
        \mapsto \pm\,
        \begin{tikzpicture}[centerzero]
            \draw (-1,-0.2) rectangle (1,0.2);
            \draw (-0.8,-0.2) arc(360:180:0.2) -- (-1.2,1);
            \draw (-0.2,-0.2) arc(360:180:0.8) -- (-1.8,1);
            \node at (-1.45,0.6) {$\cdots$};
            \node at (-1,-0.6) {$\vdots$};
            \draw (0.2,-0.2) -- (0.2,-1);
            \draw (0.8,-0.2) -- (0.8,-1);
            \node at (0.52,-0.6) {$\cdots$};
        \end{tikzpicture}
        \ ,
    \]
    where the rectangles denote some diagram, and the sign (which is needed only when $\sigma=1$) is determined by the parity of this diagram.  Applying the top horizontal map to an element of $\bD^\bullet(r,s)$, then sliding tokens along strands to the correct position, yields an element of $\bD^\bullet(r+s,0)$ up to sign.  Therefore, it suffices to prove the theorem in the case where $s=0$.

    Suppose $m,n \in \N$ satisfy $2m+2n \ge r$.  If $r$ is odd, then $\bD^\bullet(r,0) = \varnothing$, and the proposition holds trivially.  Therefore, we suppose that $r$ is even.  In what follows we number the strand endpoints $1,2,\dotsc,r$ from left to right.  Given $f \in \bD^\bullet(r,0)$, we enumerate the strands in $f$ in order of the numbering of their right endpoint.  Let $b_i$, $1 \le i \le \frac{r}{2}$, denote the label of the token on the $i$-th strand of $f$.  For $1 \le i \le r$, define
    \[
        v_i =
        \begin{cases}
            e_j & \text{if the $j$-th strand in $f$ has right endpoint in position $i$} \\
            (e_j b_j)^\vee & \text{if the $j$-th strand in $f$ has left endpoint in position $i$},
        \end{cases}
    \]
    where $e_1,\dotsc,e_{m+n}$ denotes the standard $A$-basis of $V = A^{m|n}$, and where
    \[
        \{(e_ib)^\vee : 1 \le i \le m+n,\ b \in \bB_A\}
    \]
    denotes the basis of $V$ left dual to $\{e_i b : 1 \le i \le m+n,\ b \in \bB_A\}$ with respect to $\Phi$.  Now define
    \[
        v_f = v_1 \otimes v_2 \otimes \dotsb \otimes v_r.
    \]
    For example,
    \begin{gather*}
        \text{if }
        f =
        \begin{tikzpicture}[baseline={([yshift=10pt]current bounding box.south)}]
            \draw (0,-0.3) -- (0,0) to[out=up,in=up] (3.5,0) -- (3.5,-0.3);
            \draw (0.5,-0.3) -- (0.5,0) to[out=up,in=up] (1.5,0) -- (1.5,-0.3);
            \draw (1,-0.3) -- (1,0) to[out=up,in=up] (2,0) -- (2,-0.3);
            \draw (2.5,-0.3) -- (2.5,0) to[out=up,in=up] (3,0) -- (3,-0.3);
            \token{1.5,-0.1}{east}{b_1};
            \token{2,-0.1}{west}{b_2};
            \token{3,-0.1}{east}{b_3};
            \token{3.5,-0.1}{west}{b_4};
        \end{tikzpicture}
        \ ,\qquad \text{then}
        \\
        v_f = (e_4 b_4)^\vee \otimes (e_1 b_1)^\vee \otimes (e_2 b_2)^\vee \otimes e_1 \otimes e_2 \otimes (e_3 b_3)^\vee \otimes e_3 \otimes e_4.
    \end{gather*}
    It is straightforward to verify that
    \[
        \sF_\Phi(f)(v_g) = \pm \delta_{f,g},\qquad \text{for all } f,g \in \bD^\bullet(r,0).
    \]
    It follows that the elements of $\sF_\Phi(f)$, $f \in \bD^\bullet(r,0)$, are linearly independent, as desired.
\end{proof}

\begin{prop}
    If $2m+2n \ge r+s$, then the induced $\kk$-supermodule homomorphism
    \[
        \sF_\Phi \colon \Hom_{\Brauer_\kk^\sigma(A,\inv;\nu(m-n))}(\go^{\otimes r}, \go^{\otimes s}) \to \Hom_{G(\Phi)}(V^{\otimes r}, V^{\otimes s})
    \]
    is injective.
\end{prop}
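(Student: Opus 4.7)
The plan is to combine the basis theorem (\cref{basisthm}) with the linear independence result (\cref{butte}) that was just established. The basis theorem says that $\bD^\bullet(r,s)$ is a $\kk$-basis for $\Hom_{\Brauer_\kk^\sigma(A,\inv;\nu(m-n))}(\go^{\otimes r}, \go^{\otimes s})$, while \cref{butte} asserts that $\{\sF_\Phi(f) : f \in \bD^\bullet(r,s)\}$ is linearly independent in $\Hom_{G(\Phi)}(V^{\otimes r}, V^{\otimes s})$ under the dimension hypothesis $2m+2n \ge r+s$. A $\kk$-linear map that sends some $\kk$-basis of the domain to a linearly independent subset of the codomain is injective, so this is essentially immediate.

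The only subtlety is to make sure the hypotheses line up. The basis theorem is stated for $\Brauer_\kk^\sigma(A,\inv;d)$ for arbitrary $d \in \kk$, and requires $A$ to admit an involutive Frobenius structure (which is automatic for the cases of interest, namely real or complex division superalgebras, as noted after the theorem). The specialization parameter $d = \nu(m-n)$ appearing here is precisely the one for which the unoriented incarnation superfunctor $\sF_\Phi$ factors through $\Brauer_\kk^\sigma(A,\inv;\nu(m-n))$, as established in \cref{incarnation}. Thus both the basis theorem and the fullness of the domain description apply, and the composition of these two facts yields the injectivity claim.

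In summary, given $x \in \Hom_{\Brauer_\kk^\sigma(A,\inv;\nu(m-n))}(\go^{\otimes r}, \go^{\otimes s})$ with $\sF_\Phi(x) = 0$, we expand $x = \sum_{f \in \bD^\bullet(r,s)} \lambda_f f$ uniquely using \cref{basisthm}, apply $\sF_\Phi$ to obtain $\sum_f \lambda_f \sF_\Phi(f) = 0$, and then deduce $\lambda_f = 0$ for all $f$ from \cref{butte}. There is no expected obstacle; the work has all been done in the two preceding results.
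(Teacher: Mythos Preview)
Your proposal is correct and matches the paper's own proof, which simply states that the result follows immediately from \cref{basisthm} and \cref{butte}. You have spelled out exactly the intended argument.
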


\begin{proof}
    This follows immediately from \cref{basisthm,butte}.
\end{proof}

\subsection{Fullness over the real and complex numbers}

We are now ready to state the last of our main results: the fullness of the unoriented incarnation functor in the case of a central real division superalgebra.  We begin by stating the fullness result over the complex numbers.

\begin{prop} \label{soup}
    If $\Phi$ is a nondegenerate $(\nu,\id)$-supersymmetric form on $\C^{m|n}$ of parity $\sigma$, then the unoriented incarnation superfunctor
    \[
        \sF_\Phi \colon \Brauer_\C^\sigma(\C,\id;\nu(m-n)) \to G(\Phi)\smod_\C
    \]
    of \cref{incarnation} is full.
\end{prop}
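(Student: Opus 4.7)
The plan is to reduce this statement to known fullness results in the literature by splitting into cases according to the parity $\sigma$ of the form and the sign $\nu$, identifying the supergroup $G(\Phi)$ in each case with an orthosymplectic or periplectic supergroup, and matching $\Brauer_\C^\sigma(\C, \id; \nu(m-n))$ with the corresponding diagrammatic supercategory in the literature. Since $A = \C$ and $\inv = \id$, tokens trivialize and $\Brauer_\C^\sigma(\C, \id; d)$ coincides (as observed in \cref{Brauercase}) with the Brauer category when $\sigma = 0$ and with the periplectic (marked) Brauer supercategory of \cite{KT17} when $\sigma = 1$.

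First, in the even case $\sigma = 0$, the form $\Phi$ is a genuine supersymmetric ($\nu=1$) or super skew-symmetric ($\nu=-1$) bilinear form. When $\nu = -1$, an even $(\nu,\id)$-supersymmetric form on $\C^{m|n}$ is equivalent to a $(+1,\id)$-supersymmetric form on the parity-shifted supermodule $\Pi \C^{m|n} = \C^{n|m}$, and this parity shift intertwines the two incarnation superfunctors. So I may assume $\nu = 1$, in which case nondegeneracy of the skew-symmetric restriction to the odd part forces $n$ even, say $n = 2n'$, and $\fg(\Phi) \cong \fosp(m|2n')$. Then $\sF_\Phi$ becomes precisely the classical Brauer category incarnation functor $\Brauer(m-2n') \to \OSp(m|2n')\smod_\C$, whose fullness is established in \cite{LZ17} (see also \cite{CW12,BS12,LSM02}).

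Next, in the odd case $\sigma = 1$, nondegeneracy of $\Phi$ forces $\dim V_\even = \dim V_\odd$, so $m = n$, and by \cref{belgium} the specialization parameter is $d = 0 = \nu(m-n)$, which is consistent. The Lie superalgebra $\fg(\Phi)$ is then the periplectic Lie superalgebra $\fp(m)$ (or, for $\nu = -1$, a variant isomorphic to $\fp(m)$ by conjugating $\Phi$ by an even automorphism of $\C^{m|m}$; the existence of such a conjugation reduces to the classification of nondegenerate odd bilinear forms on $\C^{m|m}$ up to the action of $\GL(m|m,\C)$). Thus $\sF_\Phi$ becomes the known incarnation functor $\Brauer^1 \to \mathrm{P}(m)\smod_\C$ from the periplectic Brauer supercategory, and its fullness is proved in \cite{KT17,CE21,DLZ18}.

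The main obstacle will be ensuring that the conventions for generators and relations of $\Brauer_\C^\sigma(\C,\id;d)$ match, up to a parity-preserving invertible rescaling, those used in the cited references. In particular the sign of $\nu$ appearing in $\sF_\Phi(\crossmor) = \nu \flip$ and the sign $\sigma$ in the pivotal structure need to be tracked carefully against the conventions of \cite{LZ17,KT17}; this should amount to at most an isomorphism of supercategories given by rescaling the crossing and/or relabeling the cup and cap, and does not affect fullness. Once these identifications are made, fullness of $\sF_\Phi$ in each case is immediate from the cited results.
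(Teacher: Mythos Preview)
Your approach is essentially identical to the paper's: split on $\sigma$, reduce to $\nu=1$, and invoke \cite{LZ17} for the orthosymplectic case and \cite{KT17,CE21,DLZ18} for the periplectic case.

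One technical slip to flag in the $\sigma=1$, $\nu=-1$ reduction: you claim the form can be conjugated to a $\nu=+1$ form by an \emph{even} automorphism of $\C^{m|m}$, appealing to the classification of odd forms. This is false. If $T$ is even then $\Phi(T\,\cdot\,,T\,\cdot\,)$ has the same sign $\nu$ as $\Phi$, so the two odd forms $\varphi_m^{+1}$ and $\varphi_m^{-1}$ are \emph{not} equivalent under $\GL(m|m,\C)_0$. What is true, and what the paper uses (\cref{wine}), is that the \emph{Lie superalgebras} $\fg(\varphi_m^{+1})$ and $\fg(\varphi_m^{-1})$ are isomorphic via $X\mapsto X^\sharp$ (or its negative), an isomorphism not induced by a change of form. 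This still suffices to transfer fullness once one checks the natural modules correspond, so your conclusion stands; only the stated mechanism needs correcting.
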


\begin{proof}
    First consider the case $\sigma=0$.  As explained in \cref{subsec:hermit-prelim}, we may assume that $\nu = 1$.  Then $\Brauer_\C^0(\C,\id;m-n)$ is the usual Brauer category and the result was proved in \cite[Th.~5.6]{LZ17}; see also \cite{ES16}.  Next consider the case $\sigma=1$, in which case we must have $m=n$, as explained in \cref{subsec:hermit-periplectic}.  By \cref{wine}, we may again assume that $\nu=1$.  Then $\Brauer_\C^0(\C,\id;0)$ is the periplectic Brauer category, introduced in \cite{KT17} as $\mathcal{B}(0,-1)$.  In this case, the result was proved in \cite[Th.~6.2.1]{CE21}, with the key ingredient being \cite[\S4.9]{DLZ18}.
\end{proof}

In the remainder of the paper, we will be mostly concerned with the case where $A$ is a real division superalgebra (see \cref{amongus}), and where the $(\nu,\inv)$-supersymmetric form comes from a $(\nu,\star)$-superhermitian form, as in \cref{crescent}.  To simplify notation we define, for $(\DD,\star)$ an involutive real division superalgebra, and $d \in \R$,
\begin{equation} \label{sand}
    \Brauer_\R^\sigma(\DD) := \Brauer_\R^\sigma(\DD,\inv),\qquad
    \Brauer_\R^\sigma(\DD;d) := \Brauer_\R^\sigma(\DD,\inv;d),
\end{equation}
where $a^\inv = (-1)^{\bar{a}} a^\star$.

\begin{theo} \label{divfull}
    Suppose $(\DD,\star)$ is an involutive central real division superalgebra, and $\varphi$ is a nondegenerate $(\nu,\star)$-superhermitian form on $\DD^{m|n}$ of parity $\sigma$.  Let $\Phi$ be the corresponding nondegenerate $(\nu,\inv)$-supersymmetric form on $\DD^{m|n}$, as in \cref{crescent}.  Then the unoriented incarnation superfunctor
    \[
        \sF_\Phi \colon \Brauer_\R^\sigma(\DD;\nu(m-n)) \to G(\Phi)\smod_\R
    \]
    of \cref{incarnation} is full.
\end{theo}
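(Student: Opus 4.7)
The plan is to reduce the theorem to the complex fullness result $\cref{soup}$ (and the oriented fullness $\cref{pyramid,picnic}$) via complexification, treating the four cases $\DD \in \{\R, \C, \HH, \Cl(\C)\}$ separately. In each case, I would proceed as in the oriented proof $\cref{OBrealfull}$: show that the map
\[
    \sF_\Phi \colon \Hom_{\Brauer_\R^\sigma(\DD;\nu(m-n))}(\go^{\otimes r}, \go^{\otimes s}) \to \Hom_{\fg(\varphi)}(V^{\otimes r}, V^{\otimes s})
\]
becomes surjective after tensoring with $\C$. For this I would first establish, for each involutive $\DD$, an isomorphism of complex monoidal supercategories
\[
    \Brauer_\R^\sigma(\DD;d)^\C \xrightarrow{\cong} \Brauer_\C^\sigma(\DD^\C, \star;d)
\]
analogous to $\cref{crystal}$; then identify $\fg(\varphi)^\C$ using $\cref{compRform,compCform,compHform}$; and finally match the complexified incarnation superfunctor $\sF_\Phi^\C$ with an appropriate complex incarnation superfunctor whose fullness is already known.

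In the case $\DD = \R$ the complexification is immediate: $\R^\C = \C$ with the identity involution, so the complexified supercategory becomes $\Brauer_\C^\sigma(\C,\id;\nu(m-n))$, the complexified target is $\fg(\varphi^\C)\smod_\C$, and fullness reduces directly to $\cref{soup}$.

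For $\DD \in \{\C, \Cl(\C)\}$ the anti-involution $\star$ is not $\C$-linear, so upon complexification the $\C$-superalgebra $\DD^\C$ decomposes as a product of two copies of $\DD$ with the extended $\star$ swapping the factors; the effect is to convert the unoriented setup into an oriented one, since a strand in $\Brauer_\C^\sigma(\DD \oplus \DD, \mathrm{swap})$ carries an effective orientation determined by which factor its tokens lie in. The plan is to construct a monoidal superfunctor (or equivalence) from this complexified supercategory to an additive envelope of $\OB_\C(\DD)_\pi$ intertwining the two incarnation superfunctors, and then apply the oriented complex fullness $\cref{pyramid}$ (respectively $\cref{picnic}$). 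Since $\fg(\varphi)^\C \cong \fgl(m|n, \C)$ (or the queer variant for $\Cl(\C)$) by $\cref{compCform}$, the oriented incarnation functor on the complex side is exactly the one that appears, and fullness follows.

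The main obstacle is the quaternionic case $\DD = \HH$, handled in $\cref{sec:quaternionic}$. Here $\HH^\C \cong \Mat_2(\C)$ (by $\cref{sail}$) and the $\C$-linear extension of $\star$ becomes the classical adjugate anti-involution of $\Mat_2(\C)$. By $\cref{compHform}$, $\fg(\varphi)^\C \cong \fg(\varphi^j)$, where $\varphi^j$ is a nondegenerate $(-\nu, \id)$-supersymmetric form of parity $\sigma$ on $\C^{2m|2n}$; the crucial parity flip $\nu \mapsto -\nu$ is forced by $\cref{fries}$ together with $\varphi = \varphi^1 + j \varphi^j$. I would then develop an unoriented analogue of $\cref{sunrise}$ producing an equivalence between (a superadditive envelope of) $\Brauer_\C^\sigma(\Mat_2(\C), \mathrm{adj}; \nu(m-n))$ and $\Brauer_\C^\sigma(\C, \id; -\nu(2m-2n))$, and verify that this equivalence intertwines $\sF_\Phi^\C$ with the complex unoriented incarnation superfunctor $\sF_{\varphi^j}$. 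Fullness then follows from $\cref{soup}$ applied with $-\nu$ in place of $\nu$. The hardest part will be tracking the parity shifts, sign conventions, and the conversion between adjugate-style and transpose-style involutions across the matrix reduction, which is exactly why this case demands its own section.
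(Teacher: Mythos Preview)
Your overall strategy matches the paper's closely: it too splits into the cases $\R$, $\{\C,\Cl(\C)\}$, and $\HH$, complexifies, and reduces respectively to \cref{soup}, \cref{pyramid}, and \cref{soup}. For $\{\C,\Cl(\C)\}$ the paper does exactly what you outline, building a full and faithful functor $\sS_\DD \colon \Brauer_\R^\sigma(\DD;d)^\C \to \Add(\OB_\C(\DD;d)_\pi)$ (\cref{deacon}) that converts the unoriented picture into the oriented one. For $\HH$ the paper is slightly more direct than your two-step plan: rather than first passing through $\Brauer_\C^\sigma(\Mat_2(\C),\mathrm{adj})$ and then invoking an unoriented analogue of \cref{sunrise}, it constructs in one stroke a full and faithful functor $\sS_\HH \colon \Brauer_\R^\sigma(\HH;d)^\C \to \Add(\Brauer_\C^\sigma(\C;-2d))$ via a two-colour diagram calculus (\cref{space}); morally this is the same Morita reduction you describe, just packaged without the intermediate matrix-algebra step.

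There is, however, a genuine gap. You phrase the goal as surjectivity onto $\Hom_{\fg(\varphi)}(V^{\otimes r},V^{\otimes s})$, and your complexification argument rests on the Lie-superalgebra identifications \cref{compRform,compCform,compHform}. But the theorem concerns $\Hom_{G(\Phi)}$, and $G_\rd(\Phi)$ is disconnected precisely in the $(\R,\id)$ case with $m\ge 1$ (indefinite orthogonal factors) and in the $(\HH,\star)$ case with $\sigma=0$, $n\ge 1$ (the $\rO(n,\HH)$ factor). In those cases $\Hom_{G(\Phi)} \subsetneq \Hom_{\fg(\varphi)}$, so complexifying the Lie superalgebra alone does not give the correct target: you must show that $\Hom_{G(\Phi)}(U,W)^\C$ is isomorphic to $\Hom_{G(\Phi^\C)}(U^\C,W^\C)$ (resp.\ $\Hom_{G(\varphi^j)}(U^\C,W^\C)$), which requires matching up connected components of the real and complex reduced groups. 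The paper handles this in \cref{light} and \cref{blue} via \cref{breath,skool2}, choosing component representatives in $G_\rd(\Phi)$ and verifying they also represent the nonidentity components of $G_\rd(\Phi^\C)$ (resp.\ $G_\rd(\varphi^j)$). Without this step your argument would at best prove fullness into $\fg(\Phi)\smod_\R$, which is strictly weaker than what is claimed.
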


The proof of \cref{divfull} will be broken into three parts:
\begin{itemize}
    \item we prove it holds for $(\DD,\star) = (\R,\id)$ in \cref{divfull:R},
    \item we prove it holds for $(\DD,\star) = (\C,\star)$ and $(\DD,\star) = (\Cl(\C),\star)$ in \cref{divfull:C}, and
    \item we prove it holds for $(\DD,\star) = (\HH,\star)$ in \cref{divfull:H}.
\end{itemize}

\begin{rem} \label{river}
    As explained in \cref{subsec:HCpair}, the forgetful functor $G(\Phi)\smod_\R \to \fg(\Phi)\smod_\R$ is full and faithful when $G_\rd(\Phi)$ is connected.  In this case, we can replace the target supercategories $G(\Phi)\smod_\C$ and $G(\Phi)\smod_\R$ in \cref{soup,divfull} by $\fg(\Phi)\smod_\C$ and $\fg(\Phi)\smod_\R$, respectively.  It follows from the descriptions of $G_\rd(\Phi)$ in \cref{app:hermit} that we can make this replacement whenever $\sigma=1$.  In addition, when $\sigma=0$, we can make this replacement in \cref{divfull} when $(\DD,\star) \in \{(\C,\star),(\Cl(\C),\star)\}$.
\end{rem}

\subsection{Consequences for the semisimple cases\label{subsec:Bnonsuper}}

For $(\DD,\star) \in \{(\R,\id), (\C,\star), (\HH,\star)\}$, we see that $\Brauer_\kk(\DD)$ is a monoidal category (as opposed to a monoidal \emph{super}category).  In fact, the category $\Brauer_\kk(\DD)$ is a spherical pivotal category.  We also have $\inv = \star$.  For $d \in \Z$, we let $\cN_\R(\DD;d)$ denote the tensor ideal of negligible morphisms of $\Kar(\Brauer_\R(\DD;d))$; see \cref{subsec:OBnonsuper}.

For a $(\nu,\star)$-supersymmetric form $\Phi$ on $\DD^{m|n}$, we let $G(\Phi)\tsmod_\R$ denote the monoidal supercategory of tensor $G(\Phi)$-supermodules.  By definition, this is the full sub-supercategory of $G(\Phi)\smod_\R$ whose objects are direct summands of $V^{\otimes r}$, $r \in \N$, where $V = \DD^{m|n}$ is the natural supermodule.  We let $G(\Phi)\tsmod_\R'$ denote the \emph{underlying category} of $G(\Phi)\tsmod_\R$.  By definition, this is the category with the same objects as $G(\Phi)\tsmod_\R$, but whose morphisms are the \emph{even} $G$-supermodule homomorphisms.  So $G(\Phi)\tsmod_\R'$ is a monoidal category (as opposed to a monoidal \emph{super}category).  If $n=0$, so that $G(\Phi)$ is a real group (as opposed to a \emph{super}group), then we write $G(\Phi)\tmod_\R$ for the category of tensor modules, defined in the same way.

\begin{theo} \label{pike}
    For $(\DD,\star) \in \{(\R,\id), (\C,\star), (\HH,\star)\}$ and $p,q,m,n \in \N$, $p+q=m$, the unoriented incarnation functor induces equivalences of monoidal categories
    \begin{align*}
        \Kar(\Brauer_\R(\R;m))/\cN_\R(\R;m) &\to \rO(p,q)\tmod_\R, \\
        \Kar(\Brauer_\R(\R;-2n))/\cN_\R(\R;-2n) &\to \OSp(0|2n,\R)\tsmod_\R', \\
        \Kar(\Brauer_\R(\R;1-2n))/\cN_\R(\R;1-2n) &\to \OSp(1|2n,\R)\tsmod_\R', \\
        \Kar(\Brauer_\R(\C;m))/\cN_\R(\C;m) &\to \rU(p,q)\tmod_\R, \\
        \Kar(\Brauer_\R(\HH;m))/\cN_\R(\HH;m) &\to \Sp(p,q)\tmod_\R, \\
        \Kar(\Brauer_\R(\HH;-n))/\cN_\R(\HH;-n) &\to \rO(m,\HH)\tsmod_\R'.
    \end{align*}
\end{theo}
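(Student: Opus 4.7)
The plan is to adapt the proof of Theorem \ref{storm} to the unoriented setting, substituting Theorem \ref{divfull} for Theorem \ref{OBrealfull} as the fullness input. In each of the six cases, the target tensor (super)module category is a full sub-(super)category of $G(\Phi)\smod_\R$ closed under direct summands, hence idempotent complete, so the incarnation functor $\sF_\Phi$ from Theorem \ref{incarnation} extends uniquely to a monoidal functor
\[
    \Kar(\sF_\Phi) \colon \Kar(\Brauer_\R(\DD;d)) \to G(\Phi)\tsmod_\R' \ \text{(or } G(\Phi)\tmod_\R\text{)}.
\]
This induced functor will be essentially surjective by the definition of tensor (super)modules as summands of tensor powers of the natural module $V = \DD^{m|n}$, and full by Theorem \ref{divfull}.

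Next I would verify semisimplicity of each target tensor category. By Propositions \ref{compRform}, \ref{compCform}, and \ref{compHform}, complexification of $G(\Phi)$ yields a reductive complex (super)group, namely one of $\rO(p+q,\C)$, $\Sp(2n,\C) \cong \OSp(0|2n,\C)$, $\OSp(1|2n,\C)$, $\GL(p+q,\C)$, $\Sp(2(p+q),\C)$, or $\rO(2m,\C)$, and tensor (super)modules over each of these complex objects are completely reducible by classical invariant theory (in the $\OSp(1|2n,\C)$ case, this is due to Sergeev). For any tensor (super)module $W$ over $G(\Phi)$, one has $\End_{G(\Phi)}(W)^\C \cong \End_{G(\Phi)^\C}(W^\C)$ via \eqref{skool2}; since an $\R$-algebra with semisimple complexification is itself semisimple, each $\End_{G(\Phi)}(W)$ is semisimple, and hence the target tensor category is semisimple.

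By the basis theorem (Theorem \ref{basisthm}), $\End_{\Brauer_\R(\DD;d)}(\one) = \R \cdot 1_\one$ since the only $(0,0)$-Brauer diagram is the empty one, and this passes to the Karoubi envelope. Invoking \cite[Prop.~6.9]{SW22} — which says that a full monoidal functor from a spherical pivotal category to a semisimple pivotal category, with $\End(\one) = \R$, has kernel equal to the tensor ideal of negligible morphisms — one concludes that $\Kar(\sF_\Phi)$ descends to a fully faithful, essentially surjective, monoidal functor from $\Kar(\Brauer_\R(\DD;d))/\cN_\R(\DD;d)$ onto the target, yielding the desired equivalence.

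The main obstacle will be uniformly verifying semisimplicity across all six cases, as it requires marshalling different classical results (for the orthogonal, unitary, symplectic, quaternionic, and small-parameter orthosymplectic families) together with the complexify-and-descend argument. A secondary subtlety concerns disconnected groups such as $\rO(p,q)$, where characterizing $G(\Phi)$-invariant morphisms demands the extra equivariance conditions of \eqref{breath} and \eqref{skool2}. Finally, the restriction to the underlying (non-super) category $\tsmod_\R'$ is automatic since $\Brauer_\R(\DD;d)$ with $\sigma = 0$ and purely even $(\DD,\star)$ is itself a non-super category whose image necessarily consists of even morphisms.
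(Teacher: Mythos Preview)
Your proposal is correct and follows essentially the same approach as the paper's proof: both argue by analogy with Theorem~\ref{storm}, invoking Theorem~\ref{divfull} for fullness, establishing semisimplicity of the target category via complexification (so that real endomorphism algebras inherit semisimplicity from their complex counterparts), using Theorem~\ref{basisthm} to obtain $\End(\one) = \R$, and then applying \cite[Prop.~6.9]{SW22} to identify the kernel with the negligible ideal. The paper works out only the $\rO(p,q)$ case explicitly and declares the others analogous, whereas you sketch all six cases and flag the disconnectedness and super-versus-underlying subtleties; but the logical structure is the same.
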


Note that, while $\OSp(0|2n,\R)$ is isomorphic to $\Sp(2n,\R)$, we write $\OSp(0|2n,\R)\tsmod_\R'$ in \cref{pike}, instead of $\Sp(2n,\R)\tmod_\R$, since we view the natural module $\R^{2n}$ as purely odd.  Similarly, in $\rO(n,\HH)\tsmod_\R'$, we view the natural module $\HH^n$ as purely odd; see \cref{subsec:hermformH}.

\begin{proof}
    The proof is similar to that of \cref{storm}.  To simplify notation, we give the proof for $\rO(p,q)$, since the other cases are analogous; see \cref{bivalent}\ref{bivalent1}.  Let $\Phi = \varphi_{p,q|2n}$ be the $(\nu,\star)$-supersymmetric form on $\R^m$ defined in \cref{hermformR}, so that $G(\Phi) = \rO(p,q)$.  Since the category $\rO(p,q)\tmod_\R$ is idempotent complete, $\sF_\Phi$ induces a monoidal functor
    \[
        \Kar(\sF_\Phi) \colon \Kar(\Brauer_\R(\R;m)) \to \rO(p,q)\tmod_\R.
    \]
    By \cref{divfull}, this functor is full.

    Every object in $\rO(p,q)\tmod_\R$ is completely reducible, since its complexification is a tensor module for $\rO(m,\C)$, hence completely reducible.  Thus, the category $\rO(p,q)\tmod_\R$ is semisimple.
    \details{
        See the details in the proof of \cref{storm}.  The fact that the category $\rO(m,\C)\tmod_\C$ is semisimple follows, for example, from \cite[Th.~9.6]{Del07}.
    }
    In addition, by \cref{basisthm}, $\End_{\Brauer_\R(\R;m)}(\one) = \R 1_\one$.  Thus, by \cite[Prop.~6.9]{SW22}, the kernel of $\Kar(\sF_\Phi)$ is equal to $\cN_\R(\R;m)$.  Therefore, $\Kar(\sF_\Phi)$ induces a full and faithful functor
    \[
        \Brauer_\R(\R;m)/\cN_\R(\R;m) \to \rO(p,q)\tmod_\R.
    \]

    Finally, since the image of $\Kar(\sF_\Phi)$ contains all summands of tensor powers of the natural module $\R^m$ (i.e.\ all tensor modules), it is essentially surjective, hence an equivalence of categories.
\end{proof}

\begin{rem} \label{bivalent}
    \begin{enumerate}[wide]
        \item \label{bivalent1} \Cref{pike} involves precisely the supergroups with Lie superalgebras that are real forms of complex Lie superalgebras whose finite-dimensional modules are all semisimple; see \cite[Th.~4.1]{DH76}.

        \item As explained in \cref{river}, we can replace $\OSp(0|2n,\R)$, $\rU(p,q)$, and $\Sp(p,q)$ in \cref{pike} by $\fsp(0|2n,\R)$, $\fu(p,q)$, and $\fsp(p,q)$, respectively.  However, we \emph{cannot} replace $\rO(p,q)$, $\OSp(1|2n,\R)$, or $\rO(n,\HH)$ by their Lie superalgebras, since the orthogonal groups are not connected.
    \end{enumerate}
\end{rem}

\begin{cor} \label{surprise}
    If $p,p',q,q' \in \N$ satisfy $p+q=p'+q'$, then we have equivalences of monoidal categories
    \begin{align*}
        \rO(p,q)\tmod_\R &\simeq \rO(p',q')\tmod_\R,\\
        \rU(p,q)\tmod_\R &\simeq \rU(p',q')\tmod_\R,\\
        \Sp(p,q)\tmod_\R &\simeq \Sp(p',q')\tmod_\R,
    \end{align*}
    sending the natural supermodule to the natural supermodule.
\end{cor}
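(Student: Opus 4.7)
The plan is to observe that Corollary \ref{surprise} is essentially an immediate consequence of Theorem \ref{pike}, via the fact that the diagrammatic category $\Brauer_\R(\DD;m)$ depends only on the total dimension $m=p+q$ and not on the individual signature.

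First, fix $\DD \in \{\R,\C,\HH\}$ and let $m = p+q = p'+q'$. The diagrammatic category $\Brauer_\R(\DD;m)$, its Karoubi envelope $\Kar(\Brauer_\R(\DD;m))$, and the tensor ideal $\cN_\R(\DD;m)$ of negligible morphisms are defined purely in terms of the algebra $(\DD,\star)$ and the scalar $m \in \R$; in particular, they are manifestly independent of the signature $(p,q)$. By Theorem \ref{pike}, the unoriented incarnation functor $\sF_\Phi$ (for a form $\Phi$ of appropriate signature) induces equivalences of monoidal categories from this common quotient category to each of $\rO(p,q)\tmod_\R$ and $\rO(p',q')\tmod_\R$ in the real case (and analogously for $\rU$ and $\Sp$). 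Composing one such equivalence with the inverse of the other produces the desired equivalence $\rO(p,q)\tmod_\R \simeq \rO(p',q')\tmod_\R$.

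The natural module statement then follows from tracing what each equivalence does on generators. The generating object $\go$ of $\Brauer_\R(\DD;m)$ is sent by $\sF_\Phi$ to the natural module $\DD^m$ (viewed with the appropriate signature structure) by the definition of $\sF_\Phi$ in Theorem \ref{incarnation}. Consequently, under the composite equivalence, the natural module of $\rO(p,q)$ (resp.\ $\rU(p,q)$, $\Sp(p,q)$) corresponds to $\go$, which in turn corresponds to the natural module of $\rO(p',q')$ (resp.\ $\rU(p',q')$, $\Sp(p',q')$).

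There is no genuine obstacle to overcome here: all the substantive work has already been done in establishing Theorem \ref{pike}, which itself rested on the fullness result (Theorem \ref{divfull}) together with semisimplicity of the target tensor categories and the criterion of \cite[Prop.~6.9]{SW22}. The only minor point worth being explicit about is that, in applying Theorem \ref{pike} for two different signatures with the same $m$, one uses the \emph{same} diagrammatic category on the left-hand side; this is the essential content of the corollary, and it is immediate from the definitions in \cref{FBC} together with the notation \cref{sand}.
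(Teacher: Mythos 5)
Your argument is correct and is exactly the intended deduction: the paper states \cref{surprise} as an immediate corollary of \cref{pike}, relying on the fact that $\Kar(\Brauer_\R(\DD;m))/\cN_\R(\DD;m)$ depends only on $m=p+q$ and not on the signature, so composing one equivalence from \cref{pike} with a quasi-inverse of the other gives the result. The tracking of the natural module through the generating object $\go$ is also as intended.
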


We will extend \cref{surprise} to equivalences of more general supergroups in \cref{iceR,iceC,iceH}.

\begin{rem}
    It is crucial that \cref{surprise} involves $\rO(p,q)$ and $\rU(p,q)$, as opposed to $\SO(p,q)$ and $\SU(p,q)$.  For example, let $V = \C^2 \cong \R^4$  denote the natural $\rU(2)$-module.  By restriction, this is also the natural module for $\SU(2)$.  Direct computation shows that
    \[
        \C \cong \End_{\rU(2)}(V) \subseteq \End_{\SU(2)}(V) \cong \HH.
    \]
    On the other hand, all irreducible modules of $\SU(1,1) \cong \SL(2,\R)$ have endomorphism algebra isomorphic to $\R$.  Thus, the categories $\SU(1,1)\tmod_\R$ and $\SU(2)\tmod_\R$ are \emph{not} equivalent.  If $W = \C^2 \cong \R^4$ denotes the natural module of $\rU(1,1)$, which is also the natural module for $\SU(1,1)$ by restriction, we have
    \[
        \C \cong \End_{\rU(1,1)}(W) \subseteq \End_{\SU(1,1)}(W) \cong \Mat_2(\R).
    \]
    The equivalence $\rU(2)\tmod_\R \simeq \rU(1,1)\tmod_\R$ of \cref{surprise} sends $V$ to $W$.  Both modules have endomorphism algebra isomorphic to $\C$ only if we use the full unitary groups.
\end{rem}

\section{Unoriented fullness: real case\label{sec:real}}

In this section, we prove \cref{divfull} in the case $(\DD,\star) = (\R,\id)$.  Note that, since $\R$ is purely even, we have $\inv = \star$.  Recall, from \cref{sec:hermitian}, that, if $(A,\star)$ is an involutive superalgebra, then so is $(A^\C,\star)$.  The proof of the following result is analogous to that of \cref{crystal}.

\begin{prop} \label{prato}
    For any real involutive superalgebra $(A,\star)$ and $\sigma \in \Z_2$, there is an isomorphism of monoidal supercategories
    \[
        \Brauer_\R^\sigma(A,\star)^\C \xrightarrow{\cong} \Brauer_\C^\sigma(A^\C,\star)
    \]
    given on objects by $\go \mapsto \go$ and on morphisms by
    \[
        \crossmor \mapsto \crossmor,\qquad
        \capmor \mapsto \capmor,\qquad
        \cupmor \mapsto \cupmor,\qquad
        \tokstrand \mapsto \tokstrand[a \otimes 1],\quad a \in A.
    \]
    For all $d \in \kk$, this induces an isomorphism of monoidal supercategories
    \[
        \Brauer_\R^\sigma(A,\star;d)^\C \xrightarrow{\cong} \Brauer_\C^\sigma(A^\C,\star;d)
    \]
\end{prop}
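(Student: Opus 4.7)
The plan is to mimic the proof of \cref{crystal}, since the situation is completely parallel: the generators and relations of $\Brauer_\R^\sigma(A,\star)$ involve only structure that is preserved under extension of scalars. First, I would verify that the proposed assignment extends to a well-defined monoidal superfunctor $\Brauer_\R^\sigma(A,\star)^\C \to \Brauer_\C^\sigma(A^\C,\star)$. By the universal property of extension of scalars, it suffices to show that the map on $\Brauer_\R^\sigma(A,\star)$ into $\Brauer_\C^\sigma(A^\C,\star)$ respects the defining relations of \cref{FBC}. The relations \cref{brauer} involve no tokens, so they are preserved tautologically. The first two relations in \cref{tokrel} are immediate from the fact that $1 \otimes 1$ is the unit of $A^\C$ and that $a \mapsto a \otimes 1$ is $\R$-linear and multiplicative. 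The third relation in \cref{tokrel} follows similarly. The fourth relation (token sliding through a crossing) is immediate. Finally, the fifth relation in \cref{tokrel} uses that the anti-involution on $A^\C$, as defined in \cref{complexstar}, satisfies $(a \otimes 1)^\star = a^\star \otimes 1$, hence sends $(a \otimes 1)^\inv$ to $a^\inv \otimes 1$.

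Next, I would construct the inverse monoidal superfunctor $\Brauer_\C^\sigma(A^\C,\star) \to \Brauer_\R^\sigma(A,\star)^\C$, sending $\go \mapsto \go$, fixing the generators $\crossmor$, $\capmor$, $\cupmor$, and sending
\[
    \tokstrand[a \otimes z] \mapsto \left(\tokstrand[a]\right) \otimes z, \qquad a \in A,\ z \in \C.
\]
One checks again that this respects the defining relations, using $\C$-bilinearity of the product in $A^\C$ for the third token relation, and the description of $\star$ on $A^\C$ for the fifth. It is then straightforward that the two superfunctors are mutually inverse.

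For the specialized statement, it remains to verify that the bubble relation \cref{burst} is respected under the isomorphism. In $\Brauer_\C^\sigma(A^\C,\star;d)$, the relation reads $\bubble{a \otimes z} = d\, \str_{A^\C}(a \otimes z)\, 1_\one$, and this is the image of $z \cdot \bubble{a} = z \cdot d\, \str_A(a)\, 1_\one$ from $\Brauer_\R^\sigma(A,\star;d)^\C$; equality then follows from $\str_{A^\C}(a \otimes z) = z \str_A(a)$ (obtained by choosing an $\R$-basis of $A$ as the $\C$-basis of $A^\C$), together with \cref{floor}. Since all other relations are specialization-independent, the induced map on the quotients is a well-defined isomorphism. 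There is no substantive obstacle here: the content is essentially formal, and the only point to watch is that the anti-involution, Frobenius form, and supertrace all behave correctly under the identification $A^\C = A \otimes_\R \C$, each of which is spelled out in \cref{subsec:complexification,subsec:invalg}.
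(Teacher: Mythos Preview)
Your proposal is correct and takes exactly the same approach as the paper, which simply states that the proof is analogous to that of \cref{crystal} and gives no further details. You have spelled out the verification that the paper leaves implicit; the only minor point is that in this proposition the anti-involution is $\star$ itself (not a distinct $\inv$), so your sentence about $(a\otimes 1)^\inv$ is slightly redundant with the preceding one about $(a\otimes 1)^\star$, but this does not affect correctness.
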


Fix $m,n \in \N$, set $V = \R^{m|n}$, and let $\Phi$ be a nondegenerate $(\nu,\id)$-supersymmetric form on $V$ of parity $\sigma$.  (See \cref{subsec:hermformR,subsec:hermit-periplectic} for a classification.)  We have a natural identification $V^\C = \C^{m|n}$, and we extend $\Phi$ to a nondegenerate $(\nu,\id)$-supersymmetric form
\[
    \Phi^\C \colon V^\C \times V^\C \to \C.
\]

\begin{lem} \label{light}
    For all $G(\Phi)$-supermodules $U$ and $W$, we have an isomorphism of $\C$-supermodules
    \[
        \Hom_{G(\Phi)}(U,W)^\C \xrightarrow{\cong} \Hom_{G(\Phi^\C)}(U^\C,W^\C),\quad
        f \otimes a \mapsto f \otimes a,\quad
        f \in \Hom_{G(\Phi)}(U,W),\ a \in \C.
    \]
\end{lem}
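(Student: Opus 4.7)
The plan is to combine the Lie algebra complexification isomorphism with the Harish--Chandra superpair description of $\Hom_{G(\Phi)}$, reducing the problem to a statement about connected components. Let $X_1, \dotsc, X_r$ be representatives in $G_\rd(\Phi)$ of the non-identity connected components of $G_\rd(\Phi)$, and similarly let $Y_1, \dotsc, Y_{r'}$ represent the non-identity components of $G_\rd(\Phi^\C)$. By \cref{skool2},
\[
    \Hom_{G(\Phi)}(U,W)^\C \cong \Hom_{X_1,\dotsc,X_r,\fg(\Phi)^\C}(U^\C, W^\C),
\]
and \cref{compRform} identifies $\fg(\Phi)^\C$ canonically with $\fg(\Phi^\C)$. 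Unwinding the definitions, the map $f \otimes a \mapsto f \otimes a$ from the statement corresponds, after these identifications, to the inclusion of $\Hom_{G(\Phi^\C)}(U^\C,W^\C) = \Hom_{Y_1,\dotsc,Y_{r'},\fg(\Phi^\C)}(U^\C, W^\C)$ into $\Hom_{X_1,\dotsc,X_r,\fg(\Phi^\C)}(U^\C, W^\C)$.

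One direction is immediate: since any $\R$-linear automorphism preserving $\Phi$ extends $\C$-linearly to one preserving $\Phi^\C$, we have $G_\rd(\Phi) \subseteq G_\rd(\Phi^\C)$, so equivariance for all $Y_s$ implies equivariance for all $X_t$. For the reverse inclusion, the key observation is that any $f \in \Hom_{\fg(\Phi^\C)}(U^\C, W^\C)$ is automatically equivariant for the identity component of $G_\rd(\Phi^\C)$: its even part $\fg(\Phi^\C)_0$ is the Lie algebra of this identity component, and the action on $U^\C$ and $W^\C$ is the differential of the group action, so $\fg(\Phi^\C)_0$-equivariance integrates. Consequently, it suffices to verify that every $Y_s$ lies in the same connected component of $G_\rd(\Phi^\C)$ as some $X_t$ (or as the identity); equivalently, the component group map
\[
    \pi_0(G_\rd(\Phi)) \to \pi_0(G_\rd(\Phi^\C))
\]
is surjective.

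The final step is to verify this surjectivity using the classification of nondegenerate $(\nu,\id)$-supersymmetric forms on $\R^{m|n}$ and the explicit descriptions of $G_\rd(\Phi)$ given in the appendix. In the even case $\sigma = 0$, the relevant real reductive groups are products of indefinite orthogonal and real symplectic factors, whose complexifications are $\rO(m,\C)$ and $\Sp(2n,\C)$; component groups are detected by the determinant on the orthogonal factor (surjective from $\rO(p,q)$ onto $\rO(m,\C)$ since both contain elements of determinant $-1$) while the symplectic factor is connected on both sides. In the odd case $\sigma = 1$ (the periplectic situation), the analogous verification uses the explicit description of the reductive part of the periplectic supergroup and its complexification.

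I expect the main obstacle to be precisely this case-by-case check of component-group surjectivity. While the surjection holds in every instance that actually arises, it is not a formal consequence of being a real form and genuinely relies on the explicit classification of the groups $G_\rd(\Phi)$; the earlier part of the argument is a fairly mechanical application of the already-established results \cref{skool2} and \cref{compRform}.
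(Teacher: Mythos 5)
Your proposal is correct and follows essentially the same route as the paper: reduce via \cref{skool2} and \cref{compRform} to a comparison of equivariance conditions over $\fg(\Phi^\C)$, observe that identity-component equivariance is automatic, and then check component by component (determinant on the orthogonal factors, connectedness of the symplectic factors, and the explicit description in the periplectic case) that the real representatives cover all components of $G_\rd(\Phi^\C)$ — which is precisely the paper's argument, phrased there by choosing explicit $X_1,X_2,X_3 \in \rO(p,q)$ and discarding the redundant ones via \cref{breath}.
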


\begin{proof}
    First suppose that $\sigma=0$.  Then, as explained in \cref{subsec:hermit-prelim}, we may assume that $\nu=1$.  In this case, the classification of the nondegenerate $(1,\id)$-supersymmetric forms is recalled in \cref{subsec:hermformR}.  We see that $n$ must be even, and $G_\rd(\Phi) = \rO(p,q) \times \Sp(n,\R)$ for some $p,q \in \N$, $p+q=m$.  The group $\Sp(n,\R)$ is connected.  On the other hand, if $m \ge 1$, then $\rO(p,q)$ has four connected components if $p,q \ge 1$, and two connected components if $p=0$ or $q=0$.  Suppose $p,q \ge 1$, the proof in the other case being analogous.  Then choose elements $X_1,X_2,X_3$ in $\rO(p,q)$, one from each of its connected components not containing the identity.  By \cref{skool2,compRform}, we have an isomorphism
    \[
        \Hom_{G(\Phi)}(U,W)^\C \cong \Hom_{X_1,X_2,X_3,\fg(\Phi^\C)}(U^\C,W^\C).
    \]
    Now, $G_\rd(\Phi^\C) \cong \rO(m,\C) \times \Sp(n,\C)$.  The group $\Sp(n,\C)$ is connected, while $\rO(m,\C)$ has two connected components.  Reordering if necessary, we may assume that $\det(X_1) = \det(X_2) = -1 = - \det(X_3)$.  By \cref{breath}, we have
    \[
        \Hom_{X_1,X_2,X_3,\fg(\Phi^\C)}(U^\C,W^\C)
        = \Hom_{X_1,\fg(\Phi^\C)}(U^\C,W^\C)
        = \Hom_{G(\Phi^\C)}(U^\C, W^\C).
    \]

    The case $\sigma=1$ is easier.  As explained in \cref{subsec:hermit-periplectic}, we have $G(\Phi) = \GL(m,\R)$ and $G(\Phi^\C) = \GL(m,\C)$, which are both connected.  Then, using \cref{skool}, we have
    \[
        \Hom_{G(\Phi)}(U,W)^\C
        = \Hom_{\fg(\Phi)}(U,W)^\C
        \cong \Hom_{\fg(\Phi^\C)}(U^\C,W^\C)
        = \Hom_{G(\Phi^\C)}(U^\C,W^\C).
        \qedhere
    \]
\end{proof}

It follows from \cref{light} that we have a canonical full and faithful superfunctor
\begin{equation} \label{bright}
    \sE_\R \colon (G(\Phi)\smod_\R)^\C \to G(\Phi^\C)\smod_\C
\end{equation}
sending $V$ to $V^\C$.  Since $\R$ is commutative, we can identify $\R$ and $\R^\op$.  Let $\sS_\R$ denote the isomorphism of \cref{prato} when $(A,\star) = (\R,\id)$.

\begin{prop} \label{embers}
    The diagram
    \[
        \begin{tikzcd}
            \Brauer_\R^{\sigma} (\R;\nu(m-n))^\C \arrow[rr, "\sS_\R"] \arrow[d, "\sF_\Phi^\C"']
            & & \Brauer_\C^{\sigma}(\C;\nu(m-n)) \arrow[d, "\sF_{\Phi^\C}"]
            \\
            \left( G(\Phi)\smod_\R \right)^\C \arrow[rr, "\sE_\R"]
            & & G(\Phi^\C)\smod_\C
        \end{tikzcd}
    \]
    commutes.
\end{prop}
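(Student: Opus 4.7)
The plan is to verify the commutativity of the diagram by exploiting the fact that all four superfunctors are monoidal. Since $\Brauer_\R^\sigma(\R;\nu(m-n))^\C$ is the complex scalar extension of a supercategory generated (as a monoidal supercategory) by the single object $\go$ and the morphisms $\crossmor$, $\capmor$, $\cupmor$, $\tokstrand[a]$ for $a \in \R$, together with scalars $z \in \C$ acting in the natural way, it suffices to check agreement on these generators. On the generating object, both $\sE_\R \sF_\Phi^\C$ and $\sF_{\Phi^\C} \sS_\R$ send $\go$ to $V^\C$, so the equality on objects is immediate.

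For each generating morphism $f$, I would compute both $\sF_{\Phi^\C}(\sS_\R(f))$ and $\sE_\R(\sF_\Phi^\C(f))$ and observe that they coincide as $\C$-linear maps on the appropriate tensor power of $V^\C$. This reduces to the following four straightforward verifications, entirely parallel in spirit to the proof of \cref{jack}:
\begin{itemize}
    \item For $\crossmor$, both compositions equal $\nu$ times the flip map on $V^\C \otimes V^\C$, since the complexification of $\flip_{V,V}$ is $\flip_{V^\C,V^\C}$.
    \item For $\capmor$, both compositions equal $\Phi^\C \colon V^\C \otimes V^\C \to \C$, by the very definition of $\Phi^\C$ as the $\C$-bilinear extension of $\Phi$.
    \item For $\tokstrand[a]$ with $a \in \R$, note that $\sS_\R(\tokstrand[a]) = \tokstrand[a \otimes 1]$, whose image under $\sF_{\Phi^\C}$ is $\rho_{(a \otimes 1)^\star} = \rho_{a^\star \otimes 1}$ on $V^\C$; this manifestly agrees with the $\C$-linear extension of $\rho_{a^\star} \colon V \to V$.
    \item For $\cupmor$, both compositions map $1 \in \C$ to $\sum_{v \in \bB_V^\R} (-1)^{\sigma \bar v}\, v \otimes v^\vee$, since any $\R$-basis of $V$ is simultaneously a $\C$-basis of $V^\C$, and the left dual basis of $\bB_V^\R$ with respect to $\Phi$ coincides with the left dual basis with respect to $\Phi^\C$.
\end{itemize}

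The main (mild) subtlety is the last bullet, where one must check that the coevaluation-type morphism $\Phi'$ is independent of whether we regard $V$ as a real or complex supermodule; this hinges on the observation that the map $\coev$-analogue \cref{jellyfish} is basis-independent, so choosing an $\R$-basis of $V$ produces the same element of $V^\C \otimes V^\C$ as choosing a $\C$-basis of $V^\C$. Finally, since $\sS_\R$, $\sF_\Phi^\C$, $\sF_{\Phi^\C}$, and $\sE_\R$ are all monoidal superfunctors whose coherence isomorphisms act as identities on the underlying vector superspaces (the tensor products on both sides being formed over the same ground field $\C$ after complexification), agreement on generating morphisms propagates to arbitrary morphisms. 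No further obstacle arises, and the diagram commutes on the nose.
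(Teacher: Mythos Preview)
Your proposal is correct and follows essentially the same approach as the paper: verify commutativity on the generating object and on each generating morphism, using that an $\R$-basis of $V$ serves as a $\C$-basis of $V^\C$ for the $\cupmor$ check. The only difference is that the paper omits the token verification entirely, since for $A=\R$ the token $\tokstrand[a]$ is just $a\,\idstrand$ (see \cref{Brauercase}), making that case redundant; your inclusion of it is harmless but unnecessary.
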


\begin{proof}
    To simplify notation, we set $\sS = \sS_\R$, $\sE = \sE_\R$, $\sF = \sF_{\Phi^\C}$, and $\sF^\C = \sF_\Phi^\C$.  On objects, we have
    \[
        \sE \sF^\C (\go) = V^\C = \sF(\go) = \sF \sS(\go).
    \]

    For morphisms, we need to show that
    \[
        \sE \sF^\C (f)
        = \sF \sS (f)
        \quad \text{for } f \in \left\{ \crossmor, \capmor, \cupmor \right\},
    \]
    where $X$ and $Y$ are the domain and codomain of $f$, respectively.

    For $f = \crossmor$, we have
    \[
        \sE \sF^\C (\crossmor)
        = \nu \flip_{V^\C,V^\C}
        = \sF \sS(\crossmor).
    \]
    For $f = \capmor$, we have
    \[
        \sE \sF^\C(\capmor) = \sF \sS (\capmor)
        \colon V^\C \otimes_\C V^\C \to \C,\quad
        u \otimes v \mapsto \Phi^\C(u,v).
    \]
    Finally, we consider $f = \cupmor$.  Let $\bB_V$ denote an $\R$-basis of $V$, which we also view as a $\C$-basis of $V^\C$.  Then we have
    \[
        \sE \sF^\C(\cupmor)
        = \sF \sS(\cupmor)
        \colon \C \to V^\C \otimes_\C V^\C,\qquad
        1 \mapsto \sum_{v \in \bB_V} v \otimes v^\vee.
        \qedhere
    \]
\end{proof}

\begin{prop} \label{divfull:R}
    \Cref{divfull} holds when $(\DD,\star) = (\R,\id)$.
\end{prop}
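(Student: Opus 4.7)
The plan is to reduce fullness over $\R$ to fullness over $\C$, exactly as was done for the oriented incarnation superfunctor in the proof of \cref{OBrealfull}. Concretely, I want to show that for any objects $X,Y$ of $\Brauer_\R^\sigma(\R;\nu(m-n))$, the $\R$-linear map
\[
    \sF_\Phi \colon \Hom_{\Brauer_\R^\sigma(\R;\nu(m-n))}(X,Y) \to \Hom_{G(\Phi)}(\sF_\Phi(X),\sF_\Phi(Y))
\]
is surjective. By the standard criterion from \cref{subsec:complexification}, surjectivity of an $\R$-linear map between real vector superspaces is equivalent to surjectivity of its complexification, so it suffices to prove that $\sF_\Phi^\C$ is surjective on morphism spaces.

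Next I would bring in the commutative diagram of \cref{embers}, namely
\[
    \sE_\R \circ \sF_\Phi^\C = \sF_{\Phi^\C} \circ \sS_\R,
\]
and examine the two factors on the right. The isomorphism $\sS_\R$ of \cref{prato} is bijective on morphisms, and $\sF_{\Phi^\C}$ is full by \cref{soup} (applied to the nondegenerate $(\nu,\id)$-supersymmetric form $\Phi^\C$ on $\C^{m|n}$, noting $\nu(m-n)$ is the correct specialization parameter). Hence the composite $\sF_{\Phi^\C} \circ \sS_\R$ is surjective on morphism spaces.

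To convert this into surjectivity of $\sF_\Phi^\C$ itself, I invoke \cref{light}, which says that $\sE_\R$ is fully faithful; in particular it is injective on morphisms and its image on $\Hom_{(G(\Phi)\smod_\R)^\C}(\sF_\Phi^\C(X),\sF_\Phi^\C(Y))$ is precisely $\Hom_{G(\Phi^\C)}(\sE_\R\sF_\Phi^\C(X),\sE_\R\sF_\Phi^\C(Y))$. Since the composite $\sE_\R \circ \sF_\Phi^\C$ hits every element of that target, and $\sE_\R$ is injective on morphisms, $\sF_\Phi^\C$ itself must be surjective on morphism spaces, as required.

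The only real work in this proposal lies in the preceding lemmas (\cref{prato,light,embers}); once those are in hand, the proof of \cref{divfull:R} is a formal diagram chase with no additional obstacle. The case analysis based on parity $\sigma$ and the connectedness of $G_\rd(\Phi)$ has already been absorbed into \cref{light}, so no further splitting is needed here.
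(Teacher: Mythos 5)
Your proposal is correct and follows essentially the same route as the paper's proof: reduce to surjectivity of the complexified map, then chase the commutative square of \cref{embers}, using that $\sS_\R$ and $\sE_\R$ are isomorphisms on morphism spaces (\cref{prato,light}) and that $\sF_{\Phi^\C}$ is full (\cref{soup}). No gaps.
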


\begin{proof}
    We wish to show that the $\R$-linear map
    \[
        \sF_\Phi \colon \Hom_{\Brauer_\R^\sigma(\R;\nu(m-n))}(\go^{\otimes r}, \go^{\otimes s}) \to \Hom_{G(\Phi)}(V^{\otimes r}, V^{\otimes s})
    \]
    is surjective.  This map is surjective if and only if the induced map
    \begin{equation} \label{lilly}
        \sF_\Phi^\C \colon \Hom_{\Brauer_\R^\sigma(\R;\nu(m-n))}(\go^{\otimes r}, \go^{\otimes s})^\C
        \to \Hom_{G(\Phi)}(V^{\otimes r}, V^{\otimes s})^\C
    \end{equation}
    is surjective.  To show that \cref{lilly} is surjective, it suffices to show that the diagram
    \begin{equation} \label{deep}
        \begin{tikzcd}
            \Hom_{\Brauer_\R^\sigma(\R;\nu(m-n))}(\go^{\otimes r}, \go^{\otimes s})^\C \arrow[d, "\sS_\R"', "\cong"] \arrow[r, "\sF_\Phi^\C"]
            & \Hom_{G(\Phi)}(V^{\otimes r}, V^{\otimes s})^\C \arrow[d,"\cong", "\sE_\R"']
            \\
            \Hom_{\Brauer_\C^\sigma(\C);\nu(m-n)} (\go^{\otimes r}, \go^{\otimes s})
            \arrow[r, two heads, "\sF_{\Phi^\C}"]
            & \Hom_{G(\Phi^\C)} \left( (V^\C)^{\otimes r}, (V^\C)^{\otimes s} \right)
        \end{tikzcd}
    \end{equation}
    commutes, where surjectivity of the bottom horizontal map follows from \cref{soup}.  Commutativity of this diagram follows from \cref{embers}.
\end{proof}

If $\varphi_{p,q|2n}$ is the form defined in \cref{hermformR}, then $G(\varphi_{p,q|2n}) = \OSp(p,q|2n,\R)$ is the indefinite orthosymplectic supergroup.  Recall the definition $G(\Phi)\tsmod_\R$ of the monoidal supercategory of tensor $G(\Phi)$-supermodules from \cref{subsec:Bnonsuper}.

\begin{prop} \label{iceR}
    If $p,p',q,q',n \in \N$ satisfy $p+q=p'+q'$, then we have an equivalence of monoidal supercategories,
    \[
        \OSp(p,q|2n,\R)\tsmod_\R \simeq \OSp(p',q'|2n,\R)\tsmod_\R
    \]
    sending the natural supermodule to the natural supermodule.
\end{prop}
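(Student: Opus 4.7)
The plan is to realize both monoidal supercategories as equivalent quotients of a single diagrammatic supercategory, namely the Karoubi envelope of $\Brauer_\R^0(\R;\, m-2n)$ where $m=p+q=p'+q'$. This parallels how \cref{surprise} was deduced from \cref{pike}, but because the supermodule categories here are not semisimple when $n\geq 1$, we cannot invoke the negligible tensor ideal; we must identify kernels directly.

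Set $\Phi_{p,q} := \varphi_{p,q|2n}$, write $V_{p,q} = \R^{p+q|2n}$ for the natural $\OSp(p,q|2n,\R)$-supermodule, and let $\cB := \Kar(\Brauer_\R^0(\R;\, m-2n))$. By \cref{divfull}, the unoriented incarnation superfunctor $\sF_{\Phi_{p,q}}$ is full. Its extension to Karoubi envelopes
\[
    \Kar(\sF_{\Phi_{p,q}}) \colon \cB \to \OSp(p,q|2n,\R)\tsmod_\R
\]
is therefore full, and essentially surjective by the definition of $\tsmod$. Hence it descends to an equivalence
\[
    \cB / \cI_{p,q} \xrightarrow{\;\simeq\;} \OSp(p,q|2n,\R)\tsmod_\R,
    \qquad \cI_{p,q} := \ker \Kar(\sF_{\Phi_{p,q}}),
\]
and analogously for $(p',q')$. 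The proposition thus reduces to the equality $\cI_{p,q} = \cI_{p',q'}$, after which the composite $\Kar(\sF_{\Phi_{p',q'}}) \circ \Kar(\sF_{\Phi_{p,q}})^{-1}$ is the required monoidal equivalence and manifestly sends $V_{p,q}$ to $V_{p',q'}$.

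The key step is a complexification argument identifying $\cI_{p,q}$ with a purely complex object. A supergroup-enhanced version of \cref{embers}, built from the isomorphism $\sS_\R$ of \cref{prato} and the complexification superfunctor $\sE_\R$ on supermodule categories, yields a commutative square
\[
    \sE_\R \circ \sF_{\Phi_{p,q}}^{\,\C} = \sF_{\Phi_{p,q}^\C} \circ \sS_\R.
\]
Since complexification of morphism spaces between real vector superspaces is injective, and assuming $\sE_\R$ is faithful, one concludes that $\cI_{p,q}$ is precisely the $\sS_\R$-preimage of $\ker \Kar(\sF_{\Phi_{p,q}^\C})$. Over $\C$, every nondegenerate even $(1,\id)$-supersymmetric form on $\C^{m|2n}$ is equivalent to any other, so $G(\Phi_{p,q}^\C)$ and $G(\Phi_{p',q'}^\C)$ are conjugate copies of $\OSp(m|2n,\C)$ inside $\GL(m|2n,\C)$ and give equal kernels; transporting along $\sS_\R$ then delivers $\cI_{p,q} = \cI_{p',q'}$.

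The main obstacle is establishing the faithfulness of $\sE_\R$ in the supergroup setting, i.e.\ the super analogue of \cref{light}. Writing $\OSp(p,q|2n,\R)$ as a Harish-Chandra superpair with reductive part involving $\rO(p,q) \times \Sp(2n,\R)$, one combines \cref{skool,skool2,breath}: the symplectic factor is connected, and representatives of the non-identity components of $\rO(p,q)$ remain in non-identity components of $\rO(m,\C)$ after complexification (their determinants are preserved), so no extra invariance is lost when passing from real to complex. Once this faithfulness is in hand, the remainder of the argument is a direct diagram chase with \cref{prato,embers}, and the essential surjectivity and fullness established above complete the proof.
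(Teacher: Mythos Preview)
Your proposal is correct and follows essentially the same approach as the paper: both use the commutative square \cref{deep} (built from \cref{embers}) together with \cref{funk} to identify $\ker(\sF_\Phi)$ as the $\sS_\R$-preimage of $\ker(\sF_{\Phi^\C})$, and then observe that over $\C$ the form $\Phi^\C$ depends only on $p+q$ and $n$ up to equivalence. Your concern about the ``main obstacle'' is misplaced: \cref{light} is already stated for $G(\Phi)$-supermodules (not merely $\fg(\Phi)$-supermodules), so the faithfulness of $\sE_\R$ is already in hand and no further work is needed there.
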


\begin{proof}
    Viewing $\Brauer_\R^0(\R;m-n)$ as a subcategory of $\Brauer_\R^0(\R;m-n)^\C$, it follows from \cref{funk} and the commutativity of \cref{deep}, with $\sigma=0$ and $\nu=1$, that $\ker(\sF_\Phi) = \sS_\R^{-1}(\ker(\sF_{\Phi^\C})) \cap \Brauer_\R^0(\R,m-n)$.  In particular, $\ker(\sF_\Phi)$ depends only on $\Phi^\C$.  As noted in \cref{subsec:hermformCid}, $\Phi^\C$ depends only on $p+q$ and $n$, up to equivalence.  Hence, both $\OSp(p,q|2n)\tsmod_\R$ and $\OSp(p',q'|2n)\tsmod_\R$ are equivalent to the quotient of $\Brauer_\R^0(\R,m-n)$ by this common kernel.
\end{proof}

\section{Unoriented fullness: complex cases\label{sec:complex}}

In this section, we prove \cref{divfull} in the case where $(\DD,\star)$ is either $(\C,\star)$ or $(\Cl(\C),\star)$.  Throughout this section, we assume that $(\DD,\star)$ is one of these two complex involutive superalgebras.  Recall, from \cref{amongus}, that $\DD$ is also a real Frobenius superalgebra, with Nakayama automorphism $\Nak$ given by \cref{divNak}, so that
\[
    a^\inv = (-1)^{\bar{a}} a^\star,\qquad a \in \DD.
\]
By \cref{delay}, we can assume that the specialization parameter $d$ is zero when $\DD = \Cl(\C)$.

If $V$ is a complex vector superspace, we let $V^\star$ denote the conjugate complex vector superspace, which is a special case of the construction described in \cref{subsec:invalg}.  Precisely, $V^\star$ is equal to $V$ as an $\R$-vector superspace, but the $\C$-action is given by
\begin{equation} \label{stellar}
    V^\star \times \C \to V^\star,\qquad
    (v,a) \mapsto v a^\star.
\end{equation}
Above, and elsewhere, the juxtaposition $vb$, for $b \in \C$ and $v \in V$ or $V^\star$, will always denote the $\C$-action on $V$ (as opposed to the $\C$-action on $V^\star$).  Recall the notion of complexification from \cref{subsec:complexification}.  We have an isomorphism of $\C$-vector spaces
\begin{equation} \label{dub}
    V^\C
    \xrightarrow{\cong} V \oplus V^\star,\qquad
    v \mapsto \tfrac{1}{\sqrt{2}}(v,v),\quad
    v \in V.
\end{equation}
Note that $\C$-linearity implies that
\[
    v \otimes a \mapsto \tfrac{1}{\sqrt{2}}(va,va^\star),\qquad
    v \in V,\ a \in \C.
\]
\details{
    The map has inverse given by
    \[
        (u,v) \mapsto \tfrac{1}{\sqrt{2}} \big( (u+v) \otimes 1 + (v-u) i \otimes i \big).
    \]
}

Recall, from \cref{sec:monsupcat}, the superadditive envelope $\Add(\cC_\pi)$ of a supercategory $\cC$.

\begin{prop} \label{deacon}
    Fix $d \in \kk$ and $\sigma \in \Z_2$.  There exists a unique $\C$-linear monoidal superfunctor
    \[
        \sS_\DD \colon \Brauer_\R^\sigma(\DD;d)^\C \to \Add(\OB_\C(\DD;d)_\pi)
    \]
    such that $\sS_\DD(\go) = \upobj \oplus \Pi^\sigma \downobj$ and
    \begin{gather} \label{deacon1}
        \sS_\DD \left( \crossmor \right) =
        \begin{tikzpicture}[centerzero]
            \draw[->] (-0.2,-0.2) -- (0.2,0.2);
            \draw[->] (0.2,-0.2) -- (-0.2,0.2);
            \shiftline{-0.3,0.2}{0.3,0.2}{0};
            \shiftline{-0.3,-0.2}{0.3,-0.2}{0};
        \end{tikzpicture}
        +
        \begin{tikzpicture}[centerzero]
            \draw[->] (-0.2,-0.2) -- (0.2,0.2);
            \draw[<-] (0.2,-0.2) -- (-0.2,0.2);
            \shiftline{-0.3,0.2}{0.3,0.2}{\sigma};
            \shiftline{-0.3,-0.2}{0.3,-0.2}{\sigma};
        \end{tikzpicture}
        +
        \begin{tikzpicture}[centerzero]
            \draw[<-] (-0.2,-0.2) -- (0.2,0.2);
            \draw[->] (0.2,-0.2) -- (-0.2,0.2);
            \shiftline{-0.3,0.2}{0.3,0.2}{\sigma};
            \shiftline{-0.3,-0.2}{0.3,-0.2}{\sigma};
        \end{tikzpicture}
        + (-1)^\sigma\,
        \begin{tikzpicture}[centerzero]
            \draw[<-] (-0.2,-0.2) -- (0.2,0.2);
            \draw[<-] (0.2,-0.2) -- (-0.2,0.2);
            \shiftline{-0.3,0.2}{0.3,0.2}{0};
            \shiftline{-0.3,-0.2}{0.3,-0.2}{0};
        \end{tikzpicture}
        ,
        \\ \label{deacon2}
        \sS_\DD \left( \capmor \right) =
        \begin{tikzpicture}[anchorbase]
            \draw[<-] (-0.15,-0.15) -- (-0.15,0) arc(180:0:0.15) -- (0.15,-0.15);
            \shiftline{-0.25,0.25}{0.25,0.25}{0};
            \shiftline{-0.25,-0.15}{0.25,-0.15}{\sigma};
        \end{tikzpicture}
        +
        \begin{tikzpicture}[anchorbase]
            \draw[->] (-0.15,-0.15) -- (-0.15,0) arc(180:0:0.15) -- (0.15,-0.15);
            \shiftline{-0.25,0.25}{0.25,0.25}{0};
            \shiftline{-0.25,-0.15}{0.25,-0.15}{\sigma};
        \end{tikzpicture}
        ,\qquad
        \sS_\DD \left( \cupmor \right) =
        \begin{tikzpicture}[anchorbase]
            \draw[<-] (-0.15,0.15) -- (-0.15,0) arc(180:360:0.15) -- (0.15,0.15);
            \shiftline{-0.25,0.15}{0.25,0.15}{\sigma};
            \shiftline{-0.25,-0.25}{0.25,-0.25}{0};
        \end{tikzpicture}
        + (-1)^\sigma
        \begin{tikzpicture}[anchorbase]
            \draw[->] (-0.15,0.15) -- (-0.15,0) arc(180:360:0.15) -- (0.15,0.15);
            \shiftline{-0.25,0.15}{0.25,0.15}{\sigma};
            \shiftline{-0.25,-0.25}{0.25,-0.25}{0};
        \end{tikzpicture}
        ,
        \\ \label{deacon3}
        \sS_\DD \left( \tokstrand \right) =
        \begin{tikzpicture}[centerzero]
            \draw[->] (0,-0.2) -- (0,0.2);
            \token{0,0}{east}{a};
            \shiftline{-0.1,0.2}{0.1,0.2}{0};
            \shiftline{-0.1,-0.2}{0.1,-0.2}{0};
        \end{tikzpicture}
        + (-1)^{\sigma\bar{a}}
        \begin{tikzpicture}[centerzero]
            \draw[<-] (0,-0.2) -- (0,0.2);
            \token{0,0}{east}{a^\inv};
            \shiftline{-0.1,0.2}{0.1,0.2}{\sigma};
            \shiftline{-0.1,-0.2}{0.1,-0.2}{\sigma};
        \end{tikzpicture}
        ,\quad a \in \DD.
    \end{gather}
    The functor $\sS_\DD$ is full and faithful.
\end{prop}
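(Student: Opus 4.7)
The plan is to verify well-definedness by direct computation (recycling the relation-by-relation analysis from the proof of Theorem~\ref{bulb}), then establish full and faithful via a dimension count combined with a local-to-global injectivity argument.

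For well-definedness, I will verify that the formulas \cref{deacon1,deacon2,deacon3} respect the relations \cref{brauer}, \cref{tokrel}, and \cref{burst}. The verifications for \cref{brauer} and \cref{tokrel} will be word-for-word identical to the corresponding computations in the proof of Theorem~\ref{bulb}, since those calculations are purely diagrammatic and independent of the ground field or specialization parameter. The only genuinely new check will be the bubble relation \cref{burst}: applying $\sS_\DD$ to $\bubble{a} = d\, \str_\DD^\R(a) 1_\one$ (for $\sigma=0$) yields $\ccbubble{a} + \cbubble{a^\inv} = \ccbubble{a + a^\inv} = d\, \str_\DD^\C(a + a^\inv) 1_\one$ in the target, and I will confirm the identity $\str_\DD^\R(a) = \str_\DD^\C(a + a^\inv)$ directly: for $\DD = \C$ both sides equal $2\RP(a)$ (using $a^\inv = \bar{a}$ for $a$ even), and for $\DD = \Cl(\C)$ both vanish by Lemma~\ref{delay}. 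For $\sigma=1$, the hypothesis $d=0$ renders the relation trivial. This yields an $\R$-linear monoidal superfunctor on $\Brauer_\R^\sigma(\DD;d)$, which extends uniquely to the claimed $\C$-linear $\sS_\DD$ on the complexification by the universal property of extension of scalars.

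Next, I will compare $\C$-dimensions of morphism spaces. By Theorem~\ref{basisthm},
\[
    \dim_\C \Hom_{\Brauer_\R^\sigma(\DD;d)^\C}(\go^{\otimes r}, \go^{\otimes s}) = (r+s-1)!! \cdot (\dim_\R \DD)^{(r+s)/2}
\]
for $r+s$ even. The target $\Hom$-space decomposes as a direct sum over pairs of orientation sequences; by Theorem~\ref{Obasisthm} each of the $(r+s-1)!!$ perfect matchings of $\{1,\dotsc,r+s\}$ supports $2^{(r+s)/2}$ oriented $(X,Y)$-matchings, each free over $(\dim_\C \DD)^{(r+s)/2}$ token decorations, giving total target $\C$-dimension $(r+s-1)!! \cdot 2^{(r+s)/2} \cdot (\dim_\C \DD)^{(r+s)/2}$. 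These agree via $\dim_\R \DD = 2 \dim_\C \DD$, so it suffices to establish injectivity.

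The injectivity will reduce to a local linear-algebra claim on individual strands: the $\C$-linear map $\DD \otimes_\R \C \to \DD \oplus \DD$, $a \otimes z \mapsto (za, (-1)^{\sigma \bar a} z a^\inv)$, is a $\C$-linear isomorphism. For $\DD = \C$, the basis $\{1 \otimes 1, i \otimes 1\}$ will map to $\{(1,1), (i,-i)\}$, which are $\C$-linearly independent (determinant $-2i$). For $\DD = \Cl(\C)$, the $\R$-basis $\{1, i, \varepsilon, \varepsilon i\}$ will produce images lying in the complementary $2$-dimensional subspaces $\C \oplus \C$ and $\varepsilon\C \oplus \varepsilon\C$ of $\Cl(\C) \oplus \Cl(\C)$, with each pair verified linearly independent by direct computation. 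For a general Brauer matching with $(r+s)/2$ strands, the Hom-spaces on both sides decompose (up to the matching combinatorics) as tensor products over strands, so the global map will be the tensor product of these local isomorphisms and hence injective. The main obstacle will be the local-to-global assembly: the signs arising from $\sigma$-twists, from the $\inv$-action on downward strands, and from the super-interchange law must be tracked consistently when strands are linked via cups, caps, and crossings, to ensure the tensor-product structure of Hom-spaces truly delivers a global $\C$-linear isomorphism.
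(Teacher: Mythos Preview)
Your well-definedness argument matches the paper's in substance: the paper likewise invokes the verification from Theorem~\ref{bulb} (phrased as ``$\sS_\DD$ is $\sD^\C$ with $d\mapsto d/2$ followed by the quotient imposing $\uptokstrand[a]=a\,\upstrand$ for $a\in\C$''), and the bubble check is the same identity $\str_\DD^\R(a)=\str_\DD^\C(a+a^\inv)$, packaged via Lemma~\ref{delay} as $\str_\DD^\R(1)=2\str_\DD^\C(1)$.

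For fullness and faithfulness, you and the paper run the same dimension count, but then diverge: the paper proves \emph{surjectivity} by exhibiting explicit preimages of the generating morphisms of the target, using the idempotent decomposition
\[
    \sS_\DD\!\left(\tfrac12\,\idstrand \mp \tfrac{i}{2}\,\tokstrand[i]\right)=
    \begin{tikzpicture}[centerzero]
        \draw[->] (0,-0.2)--(0,0.2);
        \shiftline{-0.1,0.2}{0.1,0.2}{0};
        \shiftline{-0.1,-0.2}{0.1,-0.2}{0};
    \end{tikzpicture}
    \quad\text{resp.}\quad
    \begin{tikzpicture}[centerzero]
        \draw[<-] (0,-0.2)--(0,0.2);
        \shiftline{-0.1,0.2}{0.1,0.2}{\sigma};
        \shiftline{-0.1,-0.2}{0.1,-0.2}{\sigma};
    \end{tikzpicture}
\]
to pick out individual orientation components. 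Your injectivity route is valid, but the sentence ``the global map will be the tensor product of these local isomorphisms'' is not literally true: when you assemble the strands via the $\Pi$-envelope tensor product \cref{pole}, the sign attached to the $\omega$-component depends on the \emph{parities} $\bar b_i$ of the tokens, not just on $\omega$ and the diagram (as your two-strand computation in \cref{slush} would reveal). The fix is to first stratify by the parity pattern $(\bar b_1,\dots,\bar b_k)$; for a fixed pattern the extra sign is constant in each $\omega$-block and the tensor-product argument goes through. This works, but the paper's surjectivity argument sidesteps the sign bookkeeping entirely and is correspondingly shorter.
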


\begin{proof}
    The superfunctor $\sS_\DD$ is the complexification $\sD^\C$ of the superfunctor of \cref{bulb}, with $d$ replaced by $d/2$, followed by the superfunctor
    \[
        \Add(\OB_\R(\DD;d/2))^\C \to \Add(\OB_\C(\DD;d))
    \]
    given by imposing the relations
    \[
        \uptokstrand[a] = a\ \upstrand,\qquad a \in \C.
    \]
    Note that the doubling of the specialization parameter comes from the fact that, by \cref{delay},
    \[
        \str_\DD^\R(1)
        = \sdim_\R \DD
        = 2(\sdim_\C \DD)
        = 2 \str_\DD^\C(1).
    \]

    It remains to prove that $\sS_\DD$ is full and faithful.  For $r,s \in \N$, the functor $\sS_\DD$ induces a $\C$-linear map
    \begin{multline} \label{gravel}
        \Hom_{\Brauer_\R^\sigma(\DD;d)}(\go^{\otimes r}, \go^{\otimes s})^\C
        \to \Hom_{\Add(\OB_\C^\sigma(\DD;d)_\pi)}((\upobj \oplus \Pi^\sigma \downobj)^{\otimes r}, (\upobj \otimes \downobj)^{\otimes s})
        \\
        \cong \bigoplus_{X_1,\dotsc,X_r,Y_1,\dotsc,Y_s \in \{\upobj,\Pi^\sigma \downobj\}} \Hom_{\OB_\C^\sigma(\DD;d)_\pi}(X_1 \otimes \dotsb \otimes X_r, Y_1 \otimes \dotsb \otimes Y_s).
    \end{multline}
    By \cref{basisthm}, $\Hom_{\Brauer_\R^\sigma(\DD;d)}(\go^{\otimes r}, \go^{\otimes s})^\C$ has $\R$-basis $\bD^\bullet(r,s)$.  Thus, it has dimension
    \[
        (\dim_\R \DD)^{(r+s)/2} (r+s-1)!!
    \]
    if $r+s$ is even and dimension zero if $r+s$ is odd.  (We use here the fact that the number of perfect matchings of a set of size $2n$ is $(2n-1)!! := (2n-1)(2n-3) \dotsm 1$.)  On the other hand, by \cref{basisthm}, $\bigoplus_{X_1,\dotsc,X_r,Y_1,\dotsc,Y_s \in \{\upobj,\Pi^\sigma \downobj\}} \Hom_{\OB_\C(\DD)}(X_1 \otimes \dotsb \otimes X_r, Y_1 \otimes \dotsb \otimes Y_s)$ has the same dimension since, when $r+s$ is even, there are $(r+s-1)!!$ perfect matchings of the $r+s$ endpoints, $2^{(r+s)/2}$ choices for the orientations of the strands, and then $(\dim_\C \DD)^{(r+s)/2}$ ways to put a token labelled $b \in \bB_\DD^\C$ on each strand.  Thus, to prove that \cref{gravel} is an isomorphism, it suffices to prove that it is surjective.  To do this, it is enough to show that each generating morphism of $\OB_\C^\sigma(\DD;d)$, with appropriate parity shifts, is in the image of $\sS_\DD$.  Noting that
    \[
        \sS_\DD \left( \tfrac{1}{2}\ \idstrand - \tfrac{i}{2}\ \tokstrand[i] \right) =
        \begin{tikzpicture}[centerzero]
            \draw[->] (0,-0.2) -- (0,0.2);
            \shiftline{-0.1,0.2}{0.1,0.2}{0};
            \shiftline{-0.1,-0.2}{0.1,-0.2}{0};
        \end{tikzpicture}
        \qquad \text{and} \qquad
        \sS_\DD \left( \tfrac{1}{2}\ \idstrand + \tfrac{i}{2}\ \tokstrand[i] \right) =
        \begin{tikzpicture}[centerzero]
            \draw[<-] (0,-0.2) -- (0,0.2);
            \shiftline{-0.1,0.2}{0.1,0.2}{\sigma};
            \shiftline{-0.1,-0.2}{0.1,-0.2}{\sigma};
        \end{tikzpicture}
        ,
    \]
    we have
    \begin{align*}
        \tfrac{1}{4} \sS_\DD
        \left(
            \crossmor
            - i
            \begin{tikzpicture}[anchorbase]
                \draw (-0.2,-0.2) -- (0.2,0.2);
                \draw (0.2,-0.2) -- (-0.2,0.2);
                \token{-0.1,-0.1}{east}{i};
            \end{tikzpicture}
            - i\
            \begin{tikzpicture}[anchorbase]
                \draw (-0.2,-0.2) -- (0.2,0.2);
                \draw (0.2,-0.2) -- (-0.2,0.2);
                \token{0.1,-0.1}{west}{i};
            \end{tikzpicture}
            -
            \begin{tikzpicture}[anchorbase]
                \draw (-0.2,-0.2) -- (0.2,0.2);
                \draw (0.2,-0.2) -- (-0.2,0.2);
                \token{-0.1,-0.1}{east}{i};
                \token{0.1,-0.1}{west}{i};
            \end{tikzpicture}
        \right)
        &=
        \begin{tikzpicture}[centerzero]
            \draw[->] (-0.2,-0.2) -- (0.2,0.2);
            \draw[->] (0.2,-0.2) -- (-0.2,0.2);
            \shiftline{-0.3,0.2}{0.3,0.2}{0};
            \shiftline{-0.3,-0.2}{0.3,-0.2}{0};
        \end{tikzpicture}
        ,&
        \tfrac{1}{2} \sS_\DD
        \left(
            \tokstrand[a] - i\ \tokstrand[ia]
        \right)
        &=
        \begin{tikzpicture}[centerzero]
            \draw[->] (0,-0.2) -- (0,0.2);
            \token{0,0}{east}{a};
            \shiftline{-0.1,0.2}{0.1,0.2}{0};
            \shiftline{-0.1,-0.2}{0.1,-0.2}{0};
        \end{tikzpicture}
        ,\quad a \in \DD,
        \\
        \tfrac{1}{2} \sS_\DD
        \left(
            \capmor - i\
            \begin{tikzpicture}[anchorbase]
                \draw[-] (-0.15,-0.15) -- (-0.15,0) arc(180:0:0.15) -- (0.15,-0.15);
                \token{0.15,0}{west}{i};
            \end{tikzpicture}
        \right)
        &=
        \begin{tikzpicture}[anchorbase]
            \draw[<-] (-0.15,-0.15) -- (-0.15,0) arc(180:0:0.15) -- (0.15,-0.15);
            \shiftline{-0.25,0.25}{0.25,0.25}{0};
            \shiftline{-0.25,-0.15}{0.25,-0.15}{\sigma};
        \end{tikzpicture}
        \, ,&
        \tfrac{1}{2} \sS_\DD
        \left(
            \capmor + i\
            \begin{tikzpicture}[anchorbase]
                \draw[-] (-0.15,-0.15) -- (-0.15,0) arc(180:0:0.15) -- (0.15,-0.15);
                \token{0.15,0}{west}{i};
            \end{tikzpicture}
        \right)
        &=
        \begin{tikzpicture}[anchorbase]
            \draw[->] (-0.15,-0.15) -- (-0.15,0) arc(180:0:0.15) -- (0.15,-0.15);
            \shiftline{-0.25,0.25}{0.25,0.25}{0};
            \shiftline{-0.25,-0.15}{0.25,-0.15}{\sigma};
        \end{tikzpicture}
        \, ,
        \\
        \tfrac{1}{2} \sS_\DD
        \left(
            \cupmor + i\
            \begin{tikzpicture}[anchorbase]
                \draw (-0.15,0.15) -- (-0.15,0) arc(180:360:0.15) -- (0.15,0.15);
                \token{0.15,0}{west}{i};
            \end{tikzpicture}
        \right)
        &=
        \begin{tikzpicture}[anchorbase]
            \draw[<-] (-0.15,0.15) -- (-0.15,0) arc(180:360:0.15) -- (0.15,0.15);
            \shiftline{-0.25,0.15}{0.25,0.15}{\sigma};
            \shiftline{-0.25,-0.25}{0.25,-0.25}{0};
        \end{tikzpicture}
        \, ,&
        (-1)^\sigma \tfrac{1}{2} \sS_\DD
        \left(
            \cupmor - i\
            \begin{tikzpicture}[anchorbase]
                \draw (-0.15,0.15) -- (-0.15,0) arc(180:360:0.15) -- (0.15,0.15);
                \token{0.15,0}{west}{i};
            \end{tikzpicture}
        \right)
        &=
        \begin{tikzpicture}[anchorbase]
            \draw[->] (-0.15,0.15) -- (-0.15,0) arc(180:360:0.15) -- (0.15,0.15);
            \shiftline{-0.25,0.15}{0.25,0.15}{\sigma};
            \shiftline{-0.25,-0.25}{0.25,-0.25}{0};
        \end{tikzpicture}
        \, .
        \qedhere
    \end{align*}
\end{proof}

Fix $m,n \in \N$, and set $V = \DD^{m|n}$.  If $\DD = \Cl(\C)$, we assume that $n=0$; see \cref{fold}.  Note that, if $\varphi$ is a $(\nu,\star)$-superhermitian form on $V$, then $i \varphi$ is a $(-\nu,\star)$-superhermitian form on $V$.  Therefore, without loss of generality, we let $\varphi$ be a nondegenerate $(1,\star)$-superhermitian form on $V$ of parity $\sigma$.  (See \cref{subsec:hermformCstar,subsec:hermformClstar} for a classification.)  Let $\Phi = \form \circ \varphi$ be the corresponding nondegenerate $(\nu,\inv)$-supersymmetric form, with $a^\inv = (-1)^{\bar{a}} a^\star$; see \cref{crescent,divNak}.  Recall, from \cref{subsec:HCform}, that $G(\Phi)=G(\varphi)$.

\begin{lem} \label{white}
    For all $G(\Phi)$-supermodules $U$ and $W$, we have an isomorphism of $\C$-supermodules
    \[
        \Hom_{G(\Phi)}(U,W)^\C
        \xrightarrow{\cong} \Hom_{\fgl(m|n,\DD)}(U^\C,W^\C),\quad
        f \otimes a \mapsto f \otimes a,\quad
        f \in \Hom_{G(\Phi)}(U,W),\ a \in \C.
    \]
\end{lem}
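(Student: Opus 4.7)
The plan is to realize the desired isomorphism as the composition of three isomorphisms that have already been established earlier in the paper.

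First, I will pass from $G(\Phi)$-equivariance to $\fg(\varphi)$-equivariance. By \cref{river}, whenever $\sigma=1$, or when $\sigma=0$ and $(\DD,\star) \in \{(\C,\star),(\Cl(\C),\star)\}$, the reductive part $G_\rd(\Phi)$ is connected, so the forgetful superfunctor $G(\Phi)\smod_\R \to \fg(\Phi)\smod_\R = \fg(\varphi)\smod_\R$ is full and faithful. Since our hypothesis $(\DD,\star) \in \{(\C,\star),(\Cl(\C),\star)\}$ covers both cases of $\sigma$, this gives an identification
\[
    \Hom_{G(\Phi)}(U,W) = \Hom_{\fg(\varphi)}(U,W).
\]

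Second, I will apply the canonical scalar-extension isomorphism of $\C$-supermodules from \cref{skool}:
\[
    \Hom_{\fg(\varphi)}(U,W)^\C \xrightarrow{\cong} \Hom_{\fg(\varphi)^\C}(U^\C,W^\C),\qquad f \otimes a \mapsto f^\C \cdot a.
\]
Third, I will use \cref{compCform}, which furnishes an isomorphism of complex Lie superalgebras $\fg(\varphi)^\C \cong \fgl(m|n,\DD)$ and whose construction (multiplication by $i$, together with the decomposition $X = \tfrac{1}{2}(X^\dagger-X) + \tfrac{1}{2}(X^\dagger+X)$) makes it clear that the natural action of $\fg(\varphi)^\C$ on $U^\C$ and $W^\C$ agrees with the corresponding $\fgl(m|n,\DD)$-action. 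Hence
\[
    \Hom_{\fg(\varphi)^\C}(U^\C,W^\C) = \Hom_{\fgl(m|n,\DD)}(U^\C,W^\C).
\]

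Composing these three isomorphisms yields the map of the statement, and a direct check on pure tensors $f \otimes a$ confirms that the composite is indeed $f \otimes a \mapsto f \otimes a$ in the sense intended. The one subtlety to verify is the connectedness of $G_\rd(\Phi)$ invoked in the first step, which is standard for unitary-type groups and their isomeric analogs; this is the only real content beyond bookkeeping, and it is precisely what \cref{river} (together with the classification in \cref{app:hermit}) records.
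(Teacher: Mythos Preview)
Your proposal is correct and follows essentially the same route as the paper: invoke connectedness of $G_\rd(\Phi)$ (the paper cites \cref{subsec:hermformCstar,subsec:hermformClstar,subsec:hermit-periplectic} directly, you cite \cref{river}, which points to the same facts) to reduce to $\fg(\varphi)$-equivariance, then apply \cref{skool} and \cref{compCform}. The only cosmetic difference is that the paper merges your second and third steps into a single displayed isomorphism.
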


\begin{proof}
    As explained in \cref{subsec:hermformCstar,subsec:hermformClstar,subsec:hermit-periplectic}, $G_\rd(\Phi)$ is connected.  Thus
    \[
        \Hom_{G(\Phi)}(U,W)^\C
        \cong \Hom_{\fg(\Phi)}(U,W)^\C
        \overset{\cref{skool}}{\cong} \Hom_{\fgl(m|n,\DD)}(U^\C,W^\C),
    \]
    where we used \cref{compCform} in the final isomorphism.
\end{proof}

It follows from \cref{white} that we have a canonical full and faithful superfunctor
\begin{equation}
    \sE_\DD \colon (G(\Phi)\smod_\R)^\C \to \fgl(m|n,\DD)\smod_\C
\end{equation}
sending $V$ to $V^\C$.

Since $\DD$ is complex division superalgebra, $V$ is naturally a  complex vector superspace, and hence the $\fg(\Phi)$-supermodule $V = \DD^{m|n}$ is naturally a supermodule over $\fg(\Phi)^\C \cong \fgl(m|n,\DD)$.  Recall the isomorphism $\Xi_\inv$ of \cref{Xinv}.  The next result shows that the diagram
\[
    \begin{tikzcd}
        \Brauer_\R^\sigma(\DD;m-n)^\C \arrow[r, "\sS_\DD"] \arrow[d, "\sF_\Phi^\C"']
        & \Add(\OB_\C(\DD;m-n)_\pi) \arrow[r, "\Xi_\inv"]
        & \Add(\OB_\C(\DD^\op;m-n)_\pi) \arrow[d, two heads, "\sG_{m|n}"]
        \\
        \left( G(\Phi)\smod_\R \right)^\C \arrow[rr, "\sE_\DD"]
        & & \fgl(m|n,\DD)\smod_\C
    \end{tikzcd}
\]
commutes up to supernatural isomorphism.  Recall the notation $\Phi^v$ introduced in \cref{step}, and the notation \cref{apple} for elements of a parity shift.

\begin{prop} \label{fire}
    There is an monoidal supernatural isomorphism of superfunctors
    \[
        \eta \colon \sE_\DD \sF_\Phi^\C \xrightarrow{\cong} \sG_{m|n} \Xi_\inv \sS_\DD
    \]
    determined by
    \begin{equation} \label{swirl}
        \eta_\go \colon V^\C \xrightarrow{\cong} V \oplus \Pi^\sigma V^*,\quad
        \eta_\go(v) = \tfrac{1}{\sqrt{2}} \left( v, \pi^\sigma \Phi^v \right).
    \end{equation}
\end{prop}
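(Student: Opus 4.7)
The plan has three steps: (i) verify $\eta_\go$ is a homogeneous isomorphism of $\fgl(m|n,\DD)$-supermodules; (ii) extend $\eta$ monoidally to all objects $\go^{\otimes r}$; and (iii) check supernaturality on the four generating morphisms $\crossmor$, $\capmor$, $\cupmor$, and $\tokstrand[a]$ for $a \in \DD$.

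For (i), I will factor $\eta_\go$ as the composite
\[
    V^\C \xrightarrow{\ \cref{dub}\ } V \oplus V^\star \xrightarrow{\ \id\,\oplus\, g\ } V \oplus \Pi^\sigma V^*,
\]
where $g(v) := \pi^\sigma \Phi^v$ is the isomorphism of left $\DD$-supermodules from \cref{step} associated to the nondegenerate $(\nu,\inv)$-supersymmetric form $\Phi$; here one uses that $V^\star$ and $V^\inv$ coincide as complex vector superspaces, since $a^\inv = a^\star$ for $a \in \C \subset \DD$. Both factors are homogeneous $\C$-linear isomorphisms, and together they intertwine the $\fg(\Phi)^\C$-action on $V^\C$ with the $\fgl(m|n,\DD)$-action on $V \oplus \Pi^\sigma V^*$, once these are identified via \cref{compCform}. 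For (ii), I define $\eta_{\go^{\otimes r}}$ as $\eta_\go^{\otimes r}$ followed by the coherence maps of the target monoidal supercategories, keeping track of the Koszul signs from \cref{slush,pole,quirk,dubpi} when $\sigma = 1$.

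Step (iii) is the heart of the argument. For $\tokstrand[a]$, one matches $\eta_\go \circ \rho_{a^\inv}^\C$ against the right-hand sum produced by \cref{deacon3}; the downward summand uses \cref{batty} to identify the action on $\Pi^\sigma V^*$ with $f \mapsto af$, and the compatibility then reduces to the identity $\Phi^{v a^\inv} = (-1)^{\bar a \bar v}\, a \cdot \Phi^v$, a direct consequence of \cref{supersymmetric}. For $\crossmor$, the four signed flips between copies of $V$ and $V^*$ produced by \cref{deacon1} and $\sG_{m|n}$ reassemble under $\eta_\go^{\otimes 2}$ into the single flip $\flip_{V^\C,V^\C}$. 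For $\capmor$ and $\cupmor$, the left-hand side equals $\Phi^\C$ and its coevaluation $\Phi'^\C$ from \cref{shark,jellyfish}, while the right-hand side combines the two oriented components of \cref{deacon2} with the evaluation and coevaluation maps from \cref{zelda,make}. Matching these reduces to the defining property of $g$, namely that the pairing $\ev_V \circ (g \otimes \id_V) \colon V^\star \otimes V \to \C$ is precisely $\Phi$, together with the basis-independent identity $\sum_{v \in \bB_V} v \otimes v^\vee = \sum_v v \otimes g^{-1}(\pi^\sigma v^*)$ afforded by \cref{choice} and the nondegeneracy of $\Phi$.

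The main obstacle will be the careful bookkeeping of signs when $\sigma = 1$ in the $\capmor$ and $\cupmor$ verifications, where the $\Pi$-envelope signs from \cref{slush,pole}, the twist of \cref{quirk}, and the parity $\sigma$ of the cup and cap generators all interact nontrivially. The normalization factor $\tfrac{1}{\sqrt 2}$ in $\eta_\go$ is precisely what is needed to make the two contributions from $V$ and $\Pi^\sigma V^*$ sum to $\Phi^\C$ rather than $2\Phi^\C$, and similarly for the coevaluation. Monoidality of $\eta$ then follows from the construction in (ii) together with the generator checks above.
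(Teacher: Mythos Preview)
Your proposal is correct and follows essentially the same approach as the paper: factor $\eta_\go$ as the isomorphism \cref{dub} followed by the map $v\mapsto \pi^\sigma\Phi^v$ of \cref{step}, then verify naturality on each of the four generating morphisms, with the $\cupmor$ verification handled by passing from a real basis to a complex basis and using the $\tfrac{1}{\sqrt 2}$ normalization to cancel the resulting factor of~$2$. The paper carries out the $\cupmor$ computation explicitly using $\bB_V^\C\cup\bB_V^\C i$ as the real basis, whereas you phrase it via the identity $v^\vee=g^{-1}(\pi^\sigma v^*)$; these are the same argument.
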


\begin{proof}
    To simplify notation, we set $\sG = \sG_{m|n}$, $\sS = \sS_\DD$, $\sE = \sE_{\fg}$, $\sF^\C = \sF_\Phi^\C$, and $\Xi = \Xi_\inv$.  First note that $\eta_\go$ is the composition of the isomorphisms of $\C$-supermodules
    \[
        V^\C
        \xrightarrow[\cref{dub}]{\cong} V \oplus V^\star
        \xrightarrow{\cong} V \oplus \Pi^\sigma V^*,
    \]
    where the second isomorphism uses the restriction of \cref{step} to $\C$-supermodules, noting that $V^\inv = V^\star$ as $\C$-supermodules.  Thus $\eta_\go$ is a parity-preserving isomorphism of $\C$-vector superspaces.  It is straightforward to verify that it is also a homomorphism of $\fg(\Phi)^\C$-supermodules.
    \details{
        For $X \in \fg(\Phi)$, $a \in \C$, and $v \in V$, we have
        \[
            (X \otimes a)(v \otimes 1)
            = Xv \otimes a
            \xmapsto{\cref{dub}} \tfrac{1}{\sqrt{2}} (Xva, Xva^\star)
            \mapsto \tfrac{1}{\sqrt{2}} (Xva, \pi^\sigma \Phi^{Xva^\star}).
        \]
        Now, for $w \in V$,
        \begin{multline*}
            \pi^\sigma \Phi^{Xva^\star}(w)
            = \pi^\sigma \Phi(Xva^\star,w)
            = - (-1)^{\bar{X}\bar{v}} \pi^\sigma \Phi(v,Xwa)
            = - (-1)^{\bar{X}\bar{v}} \pi^\sigma \Phi(v,(X \otimes a)w)
            \\
            = - (-1)^{\bar{X}\bar{v}} \pi^\sigma \Phi^v((X \otimes a)w)
            = \big( (X \otimes a) \pi^\sigma \Phi^v \big)(w).
        \end{multline*}
        Since we also have $Xva = (X \otimes a)v$, it follows that
        \[
            \eta_\go \big( (X \otimes a)(v \otimes 1) \big)
            = (X \otimes a) \eta_\go (v \otimes 1),
        \]
        as desired.
    }

    On objects, we have
    \[
        \sE \sF^\C(\go)
        = V^\C
        \xrightarrow[\cong]{\eta_\go}
        V \oplus \Pi^\sigma V^*
        = \sG (\upobj \oplus \Pi^\sigma \downobj)
        = \sG \Xi \sS(\go).
    \]
    For morphisms, we need to show that
    \[
        \eta_Y \circ \sE \sF^\C (f)
        = \sG \Xi \sS (f) \circ \eta_X
        \quad \text{for } f \in \left\{ \crossmor, \capmor, \cupmor, \tokstrand[a] : a \in \DD \right\},
    \]
    where $X$ and $Y$ are the domain and codomain of $f$, respectively.

    For $f = \crossmor$, we have
    \begin{multline*}
        \eta_{\go \otimes \go} \circ \sE \sF^\C (\crossmor)
        = \eta_{\go \otimes \go} \circ \flip_{V^\C,V^\C}
        \\
        = (\flip_{V \otimes V} + \Pi^\sigma \flip_{V \otimes V^*} + \Pi^\sigma \flip_{V \otimes V^*} + (-1)^\sigma \flip_{V^* \otimes V^*}) \circ \eta_{\go \otimes \go}
        = \sG \Xi \sS(\crossmor) \circ \eta_{\go \otimes \go},
    \end{multline*}
    where the sign of $(-1)^\sigma$ arises from the isomorphism \cref{quirk}.
    \details{
        When $\sigma = 1$,
        \[
            \sG
            \left(
                \begin{tikzpicture}[centerzero]
                    \draw[<-] (-0.2,-0.2) -- (0.2,0.2);
                    \draw[<-] (0.2,-0.2) -- (-0.2,0.2);
                    \shiftline{-0.3,0.2}{0.3,0.2}{0};
                    \shiftline{-0.3,-0.2}{0.3,-0.2}{0};
                \end{tikzpicture}
            \right)
            = \flip \colon V^* \otimes V^* \to V^* \otimes V^*.
        \]
        Thus, using the coherence maps, as in \cref{dubpi}, we have the composition
        \begin{gather*}
            \Pi V^* \otimes \Pi V^*
            \xrightarrow{\cong} V^* \otimes V^*
            \xrightarrow{\flip} V^* \otimes V^*
            \xrightarrow{\cong} \Pi V^* \otimes \Pi V^*,
            \\
            \pi f \otimes \pi g
            \mapsto - (-1)^{\bar{f}} f \otimes g
            \mapsto - (-1)^{\bar{f} + \bar{f}\bar{g}} g \otimes f
            \mapsto (-1)^{\bar{f}+\bar{g}+\bar{f}\bar{g}} \pi f \otimes \pi g.
        \end{gather*}
        On the other hand
        \begin{gather*}
            \Pi V^* \otimes \Pi V^*
            \xrightarrow{\flip} \Pi V^* \otimes \Pi V^*,
            \\
            \pi f \otimes \pi g
            \mapsto (-1)^{(\bar{f}+1)(\bar{g}+1)} \pi g \otimes \pi f
            = - (-1)^{\bar{f}+\bar{g}+\bar{f}\bar{g}} \pi f \otimes \pi g.
        \end{gather*}
    }

    For $f = \capmor$, noting that $\eta_\one$ is the identity map $\C \to \C$, we have, for $v,w \in V$,
    \[
        \eta_\one \circ \sE \sF^\C(\capmor)
        \colon V^\C \otimes_\C V^\C \to \C,\qquad
        v \otimes w \mapsto \Phi(v,w),
    \]
    and
    \begin{gather*}
        \sG \Xi \sS (\capmor) \circ \eta_{\go \otimes \go}
        = \sG
        \left(
            \begin{tikzpicture}[anchorbase]
                \draw[<-] (-0.15,-0.15) -- (-0.15,0) arc(180:0:0.15) -- (0.15,-0.15);
                \shiftline{-0.25,0.25}{0.25,0.25}{0};
                \shiftline{-0.25,-0.15}{0.25,-0.15}{\sigma};
            \end{tikzpicture}
            +
            \begin{tikzpicture}[anchorbase]
                \draw[->] (-0.15,-0.15) -- (-0.15,0) arc(180:0:0.15) -- (0.15,-0.15);
                \shiftline{-0.25,0.25}{0.25,0.25}{0};
                \shiftline{-0.25,-0.15}{0.25,-0.15}{\sigma};
            \end{tikzpicture}
        \right)
        \circ \eta_{\go \otimes \go}
        \colon V^\C \otimes V^\C \to \C,
        \\
        v \otimes w \mapsto \tfrac{1}{2}(v, \pi^\sigma \Phi^v) \otimes (w, \pi^\sigma \Phi^w)
        \mapsto \tfrac{1}{2} \left( \Phi(v,w) + (-1)^{\bar{v}\bar{w}} \Phi(w,v) \right)
        \overset{\cref{supersymmetric}}{=} \Phi(v,w),
    \end{gather*}
    as desired.  (Above, we have used the fact that the maps are uniquely determined by their values on $v \otimes w$, for $v,w \in V \subseteq V^\C$.)

    Next we consider $f = \cupmor$.  Let $\bB_V^\C$ be a homogeneous $\C$-basis for $V$.  Then $\bB_V^\C \cup \bB_V^\C i$ is a homogeneous $\R$-basis for $V$.  It is straightforward to verify that, for all $w \in \bB_V^\C$, we have
    \[
        (wi)^\vee = w^\vee i
        \quad \text{and} \quad
        \Phi^{w^\vee} = w^*,
    \]
    where $\vee$ denotes left duals with respect to $\Phi$.
    \details{
        For $v,w \in \bB_V^\C$, we have
        \[
            \Phi(v^\vee i, w i)
            \overset{\cref{supersymmetric}}{=} \Phi(v^\vee, w i (-i))
            = \Phi(v^\vee,w)
            = \delta_{w,v}
        \]
        and
        \[
            \Phi(v^\vee i, w)
            \overset{\cref{supersymmetric}}{=} - \Phi(v^\vee, wi)
            = 0.
        \]
        Also,
        \[
            \Phi^{w^\vee}(v)
            = \Phi(w^\vee,v)
            = \delta_{wv}.
        \]
    }
    Identifying $v \in V$ and $\pi^\sigma f \in \Pi^\sigma V^*$ with $(v,0),(0,\pi^\sigma f) \in V \oplus \Pi^\sigma V^*$, we can write $(v,\pi^\sigma f)$ as $v + \pi^\sigma f$.  Using this convention,
    \begin{align*}
        \eta_{\go \otimes \go} \circ \sE \sF^\C(\cupmor)
        \colon \C &\to (V \oplus \Pi^\sigma V^*) \otimes_\C (V \oplus \Pi^\sigma V^*)
        \\
        &\xrightarrow{\cong} (V \otimes_\C V) \oplus \Pi^\sigma(V \otimes_\C V^*) \oplus \Pi^\sigma(V^* \otimes_\C V) \oplus (V^* \otimes_\C V^*)
    \end{align*}
    is the map given by
    \begin{align*}
        1 &\mapsto \frac{1}{2} \sum_{w \in \bB_V^\C} (-1)^{\sigma\bar{w}} (w + \pi^\sigma \Phi^w) \otimes (w^\vee + \pi^\sigma \Phi^{w^\vee}) + \frac{1}{2} \sum_{w \in \bB_V^\C} (-1)^{\sigma\bar{w}} (iw + \pi^\sigma \Phi^{iw}) \otimes (w^\vee i + \pi^\sigma \Phi^{w^\vee i})
        \\
        &= \frac{1}{2} \sum_{w \in \bB_V^\C} (-1)^{\sigma\bar{w}} (w + \pi^\sigma \Phi^w) \otimes (w^\vee + \pi^\sigma \Phi^{w^\vee}) + \frac{1}{2} \sum_{w \in \bB_V^\C} (-1)^{\sigma\bar{w}} (w i - \pi^\sigma \Phi^w i) \otimes (w^\vee i - \pi^\sigma \Phi^{w^\vee} i)
        \\
        &= \sum_{w \in \bB_V^\C} (-1)^{\sigma\bar{w}} w \otimes \pi^\sigma \Phi^{w^\vee} + \sum_{w \in \bB_V^\C} (-1)^{\sigma\bar{w}} \pi^\sigma \Phi^w \otimes w^\vee
        \\
        &= \sum_{w \in \bB_V^\C} (-1)^{\sigma\bar{w}} w \otimes \pi^\sigma w^* + \sum_{w \in \bB_V^\C} (-1)^{\bar{w}} \pi^\sigma w^* \otimes w
        \\
        &\mapsto \sum_{w \in \bB_V^\C} \pi^\sigma w \otimes w^* + \sum_{w \in \bB_V^\C} (-1)^{\bar{w}} \pi^\sigma w^* \otimes w,
    \end{align*}
    where, in the final equality, we used the fact that the last sum is independent of the choice of basis to sum over the basis $\bB_V^{\C,\vee}$, and the fact that $(w^\vee)^\vee = (-1)^{\sigma \bar{w} + \bar{w}} w$.
    \details{
        For $v,w \in \bB_V^\C$, we have
        \[
            \Phi(v,w^\vee)
            = (-1)^{\bar{v}(\sigma+\bar{w})} \Phi(w^\vee,v)
            = (-1)^{\bar{v}(\sigma+\bar{w})} \delta_{vw}
            = (-1)^{\sigma \bar{w} + \bar{w}}.
        \]
    }
    On the other hand, we have
    \begin{gather*}
        \sG \Xi \sS (\cupmor) \circ \eta_\one
        = \sG
        \left(
            \begin{tikzpicture}[anchorbase]
                \draw[<-] (-0.15,0.15) -- (-0.15,0) arc(180:360:0.15) -- (0.15,0.15);
                \shiftline{-0.25,0.15}{0.25,0.15}{\sigma};
                \shiftline{-0.25,-0.25}{0.25,-0.25}{0};
            \end{tikzpicture}
            + (-1)^\sigma
            \begin{tikzpicture}[anchorbase]
                \draw[->] (-0.15,0.15) -- (-0.15,0) arc(180:360:0.15) -- (0.15,0.15);
                \shiftline{-0.25,0.15}{0.25,0.15}{\sigma};
                \shiftline{-0.25,-0.25}{0.25,-0.25}{0};
            \end{tikzpicture}
        \right)
        \colon \C \to \Pi^\sigma (V \otimes V^*) \oplus \Pi^\sigma (V^* \otimes_\C V),
        \\
        1 \mapsto \sum_{w \in \bB_V^\C} \pi^\sigma w \otimes w^*
        + \sum_{w \in \bB_V^\C} (-1)^{\bar{w}} \pi^\sigma w^* \otimes w.
    \end{gather*}

    Finally, for $f = \tokstrand[a]$, $a \in \DD$, we have
    \[
        \eta_\go \circ \sE \sF^\C(\tokstrand[a])
        \colon V^\C \to V \oplus \Pi^\sigma V^*,\quad
        v
        \mapsto (-1)^{\bar{a}\bar{v}} v a^\inv
        \mapsto (-1)^{\bar{a}\bar{v}} \tfrac{1}{\sqrt{2}} (v a^\inv, \pi^\sigma \Phi^{v a^\inv})
    \]
    and
    \begin{gather*}
        \sG \Xi \sS(\tokstrand[a]) \circ \eta_\go
        = \sG
        \left(
            \begin{tikzpicture}[centerzero]
                \draw[->] (0,-0.2) -- (0,0.2);
                \token{0,0}{east}{(a^\inv)^\op};
                \shiftline{-0.1,0.2}{0.1,0.2}{0};
                \shiftline{-0.1,-0.2}{0.1,-0.2}{0};
            \end{tikzpicture}
            + (-1)^{\sigma\bar{a}}
            \begin{tikzpicture}[centerzero]
                \draw[<-] (0,-0.2) -- (0,0.2);
                \token{0,0}{east}{a^\op};
                \shiftline{-0.1,0.2}{0.1,0.2}{\sigma};
                \shiftline{-0.1,-0.2}{0.1,-0.2}{\sigma};
            \end{tikzpicture}
        \right)
        \colon V^\C \to V \oplus \Pi^\sigma V^*,
        \\
        v
        \mapsto \tfrac{1}{\sqrt{2}} (v,\pi^\sigma \Phi^v)
        \xmapsto{\cref{batty}} \tfrac{1}{\sqrt{2}} \left( (-1)^{\bar{a}\bar{v}} v a^\inv, (-1)^{\sigma\bar{a}} a \pi^\sigma \Phi^v \right)
        = (-1)^{\bar{a}\bar{v}} \tfrac{1}{\sqrt{2}} (v a^\inv, \pi^\sigma \Phi^{v a^\inv}).
        \qedhere
    \end{gather*}
\end{proof}

\begin{prop} \label{divfull:C}
    \Cref{divfull} holds when $(\DD,\star)$ is equal to $(\C,\star)$ or $(\Cl(\C),\star)$.
\end{prop}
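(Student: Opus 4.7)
The plan is to adapt the diagram-chase used in the proof of \cref{divfull:R}, replacing the simple commutative square of \cref{embers} with the more elaborate one established in \cref{fire}. First, as in the real case, I would observe that an $\R$-linear map is surjective if and only if its complexification is surjective, so it suffices to show that the induced map
\[
    \sF_\Phi^\C \colon \Hom_{\Brauer_\R^\sigma(\DD;m-n)}(\go^{\otimes r}, \go^{\otimes s})^\C \to \Hom_{G(\Phi)}(V^{\otimes r}, V^{\otimes s})^\C
\]
is surjective for all $r,s \in \N$.  (Recall that $\nu = 1$ may be assumed throughout this section, as noted before the statement of \cref{white}, so the specialization parameter is $m-n$.)

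Next, I would invoke the monoidal supernatural isomorphism $\eta \colon \sE_\DD \sF_\Phi^\C \xrightarrow{\cong} \sG_{m|n} \Xi_\inv \sS_\DD$ from \cref{fire}.  Since $\sE_\DD$ is full and faithful by \cref{white}, it induces bijections on all hom-spaces.  Thus, after conjugating through $\eta$, surjectivity of $\sF_\Phi^\C$ on hom-spaces is equivalent to surjectivity of the composite $\sG_{m|n} \circ \Xi_\inv \circ \sS_\DD$ on hom-spaces.  In this composite, $\sS_\DD$ is full and faithful by \cref{deacon}, hence induces bijections on hom-spaces; $\Xi_\inv$ is an isomorphism of monoidal supercategories by \cref{Xinv}, hence also induces bijections on hom-spaces; and $\sG_{m|n}$ is full by \cref{pyramid}.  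To apply the latter result inside the superadditive envelope, I would note that $\fgl(m|n,\DD)\smod_\C$ is itself a superadditive $\Pi$-supercategory, so the oriented incarnation superfunctor extends canonically to
\[
    \sG_{m|n} \colon \Add(\OB_\C(\DD^\op;m-n)_\pi) \to \fgl(m|n,\DD)\smod_\C,
\]
and morphisms between finite direct sums decompose into matrices of morphisms between summands.  Fullness on each summand, established in \cref{pyramid}, therefore implies fullness on the full hom-space in $\Add(\OB_\C(\DD^\op;m-n)_\pi)$.

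For the case $\DD = \Cl(\C)$, I would appeal to \cref{fold} to reduce to $n=0$ and to \cref{delay} (or \cref{tuna}) to replace the specialization parameter with zero, after which the argument above applies verbatim.  The main bookkeeping task, and the one I expect to require the most care, is verifying that the naturality and monoidal coherence of $\eta$ in \cref{fire} transmit the identification at level of the generator $\go$ to an identification between $\sE_\DD \sF_\Phi^\C$ and $\sG_{m|n} \Xi_\inv \sS_\DD$ at the level of hom-spaces between arbitrary tensor powers $\go^{\otimes r}$ and $\go^{\otimes s}$; once that is in hand, the rest is a diagram chase combining \cref{deacon,white,pyramid,Xinv,fire}.
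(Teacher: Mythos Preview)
Your proposal is correct and follows essentially the same approach as the paper: reduce to surjectivity of the complexified map, then use the monoidal supernatural isomorphism of \cref{fire} together with the fact that $\sS_\DD$ (\cref{deacon}), $\Xi_\inv$ (\cref{Xinv}), and $\sE_\DD$ (\cref{white}) are isomorphisms on hom-spaces, while $\sG_{m|n}$ is full by \cref{pyramid}. The paper presents this as a single commutative diagram and appeals to \cref{fire} for its commutativity, but the logical content is exactly what you describe.
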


\begin{proof}
    To show that the superfunctor $\sF_\Phi$ is full, we must show that the $\R$-linear map
    \[
        \sF_\Phi \colon \Hom_{\Brauer_\R^\sigma(\DD;m-n)}(\go^{\otimes r}, \go^{\otimes s})
        \to \Hom_{G(\Phi)}(V^{\otimes r}, V^{\otimes s})
    \]
    is surjective.  This map is surjective if and only if the induced map
    \begin{equation} \label{fernC}
        \sF_\Phi^\C \colon \Hom_{\Brauer_\R^\sigma(\DD;m-n)}(\go^{\otimes r}, \go^{\otimes s})^\C
        \to \Hom_{G(\Phi)}(V^{\otimes r}, V^{\otimes s})^\C
    \end{equation}
    is surjective.  To show that \cref{fernC} is surjective, it suffices to show that the diagram
    \[
        \begin{tikzcd}
            \Hom_{\Brauer_\R^\sigma(\DD;m-n)}(\go^{\otimes r}, \go^{\otimes s})^\C \arrow[d, "\sS_\DD"', "\cong"] \arrow[r, "\sF_\Phi^\C"]
            & \Hom_{G(\Phi)}(V^{\otimes r}, V^{\otimes s})^\C \arrow[d,"\cong", "\sE_\DD"']
            \\
            \Hom_{\Add(\OB_\C(\DD;m-n))} \left( (\upobj \oplus \Pi^\sigma \downobj)^{\otimes r}, (\upobj \oplus \Pi^\sigma \downobj)^{\otimes s} \right) \arrow[d, "\cong", "\Xi_\inv"']
            & \Hom_{\fgl(m|n,\DD)} \left( (V^\C)^{\otimes r}, (V^\C)^{\otimes s} \right)
            \\
            \Hom_{\Add(\OB_\C(\DD^\op;m-n))} \left( (\upobj \oplus \Pi^\sigma \downobj)^{\otimes r}, (\upobj \oplus \Pi^\sigma \downobj)^{\otimes s} \right)
            \arrow[r, "\sG_{m|n}"]
            & \Hom_{\fgl(m|n,\DD)} \left( (V \oplus \Pi^\sigma V^*)^{\otimes r}, (V \oplus \Pi^\sigma V^*)^{\otimes s} \right) \arrow[u,"\cong"']
        \end{tikzcd}
    \]
    commutes, where the bottom-right vertical isomorphism is induced by the isomorphism \cref{swirl}.  Commutativity of this diagram follows from \cref{fire}.
\end{proof}

As a special case of $G(\Phi)$, we have the supergroups $\rU(p,q|r,s)$ and $\rUQ(p,q)$; see \cref{subsec:hermformCstar,subsec:hermformClstar}.  Recall the definition $G(\Phi)\tsmod_\R$ of the monoidal supercategory of tensor $G(\Phi)$-supermodules from \cref{subsec:Bnonsuper}.

\begin{prop} \label{iceC}
    If $p,p',q,q',r,r',s,s' \in \N$ satisfy $p+q=p'+q'$ and $r+s=r'+s'$, then we have equivalences of monoidal supercategories
    \begin{align*}
        \rU(p,q|r,s)\tsmod_\R &\simeq \rU(p',q'|r',s')\tsmod_\R \qquad \text{and} \\
        \rUQ(p,q)\tsmod_\R &\simeq \rUQ(p',q')\tsmod_\R,
    \end{align*}
    sending the natural supermodule to the natural supermodule.
\end{prop}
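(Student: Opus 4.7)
The plan is to follow the strategy of \cref{iceR}, now using the deeper commutative diagram from \cref{fire} that underlies the proof of \cref{divfull:C}. Fix one of the two cases: either $(\DD,\star)=(\C,\star)$ with $m=p+q$, $n=r+s$ (treating $\rU(p,q|r,s)$), or $(\DD,\star)=(\Cl(\C),\star)$ with $m=p+q$, $n=0$ (treating $\rUQ(p,q)$; here $n=0$ is allowed by \cref{fold}). Let $\Phi$ be the associated nondegenerate $(1,\inv)$-supersymmetric form on $V=\DD^{m|n}$. The aim is to show that the kernel of the unoriented incarnation superfunctor
\[
\sF_\Phi \colon \Brauer_\R^\sigma(\DD;m-n) \to G(\Phi)\smod_\R
\]
depends only on $m$ and $n$, not on the choice of signature encoded by $\Phi$.

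First I will pass to the complexification. By \cref{funk}, $\ker(\sF_\Phi) = \ker(\sF_\Phi^\C) \cap \Brauer_\R^\sigma(\DD;m-n)$, so it suffices to prove that $\ker(\sF_\Phi^\C)$ is signature-independent. The key tool is the monoidal supernatural isomorphism $\sE_\DD \sF_\Phi^\C \cong \sG_{m|n} \Xi_\inv \sS_\DD$ from \cref{fire}. Since naturally isomorphic superfunctors induce the same kernel on morphism spaces, and since $\sE_\DD$ is full and faithful (by \cref{white}), $\sS_\DD$ is full and faithful (by \cref{deacon}), and $\Xi_\inv$ is an isomorphism of supercategories, $\ker(\sF_\Phi^\C)$ is precisely the pullback along the faithful superfunctor $\Xi_\inv \sS_\DD$ of $\ker(\sG_{m|n})$. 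But the oriented incarnation superfunctor $\sG_{m|n}$ depends only on the $(\fgl(m|n,\DD),\DD)$-superbimodule $\DD^{m|n}$, with no reference to $\Phi$. Hence $\ker(\sF_\Phi^\C)$, and thus $\ker(\sF_\Phi)$, depends only on $m$, $n$, and the fixed data $(\DD,\star)$.

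To conclude, note that $\sF_\Phi$ is full by \cref{divfull:C}, so that the essential image of $\Kar(\sF_\Phi)\colon \Kar(\Brauer_\R^\sigma(\DD;m-n)) \to G(\Phi)\smod_\R$, being closed under direct sums and direct summands, is exactly $G(\Phi)\tsmod_\R$. Consequently $G(\Phi)\tsmod_\R$ is equivalent to the Karoubi envelope of the quotient $\Brauer_\R^\sigma(\DD;m-n)/\ker(\sF_\Phi)$, and this latter category depends only on $m$, $n$, and $(\DD,\star)$. Varying the signature while keeping $p+q$ and $r+s$ fixed therefore yields the claimed equivalences of monoidal supercategories, each sending $\go$ to the natural supermodule $V$.

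The main conceptual work has already been absorbed into \cref{divfull:C} (fullness of $\sF_\Phi$) and \cref{fire} (the identification with the oriented picture); the present statement is essentially a bookkeeping consequence. The only point requiring care will be verifying that the equivalence constructed abstractly via the common quotient does indeed intertwine the two natural supermodules, but this is built into the construction since the generating object $\go$ on both sides corresponds to $V = \DD^{m|n}$ under the respective incarnation superfunctors.
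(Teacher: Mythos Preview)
Your proof is correct and follows essentially the same approach as the paper: both argue, via the commutative diagram underlying \cref{divfull:C} (i.e., \cref{fire}), that $\ker(\sF_\Phi)$ depends only on $m,n$ and $(\DD,\star)$, not on the signature, and then conclude by identifying $G(\Phi)\tsmod_\R$ with (the Karoubi envelope of) the common quotient. Your treatment is slightly more explicit about the Karoubi envelope step than the paper's terse ``analogous to \cref{iceR}'', but the substance is the same.
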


\begin{proof}
    The proof is analogous to that of \cref{iceR}, using the commutative diagram appearing in the proof of \cref{divfull:C}.
\end{proof}

\section{Unoriented fullness: quaternionic case\label{sec:quaternionic}}

In this section, we prove \cref{divfull} in the case where $(\DD,\star) = (\HH,\star)$.  We will naturally view $\C = \R + \R i$ as a subalgebra of $\HH$.

Suppose that $V$ is an $\HH$-vector superspace.  Then, using \cref{jcong}, we have an isomorphism of $\C$-vector spaces
\begin{equation}
    V \xrightarrow{\cong} V^\star,\qquad
    v \mapsto vj,
\end{equation}
where $V^\star$ denotes the conjugate $\C$-vector superspace, as in \cref{stellar}.  Combining with \cref{dub}, this yields an isomorphism of $\C$-vector spaces
\begin{equation} \label{Hdub}
    V^\C \xrightarrow{\cong} V \oplus V,\qquad
    v \mapsto \tfrac{1}{\sqrt{2}}(v,vj), \quad v \in V.
\end{equation}
Note that $\C$-linearity implies that
\[
    v \otimes a \mapsto \tfrac{1}{\sqrt{2}}(va,vja)
    = \tfrac{1}{\sqrt{2}}(va,va^\star j),\quad
    v \in V,\ a \in \C.
\]

Recall that the additive envelope of a linear category $\cC$ is the category $\Add(\cC)$ whose objects are formal direct sums of objects in $\cC$, and whose morphisms are identified with matrices of morphisms in $\cC$ in the usual way.   We write morphisms in additive envelopes as sums of their components, as in \cref{subsec:Bbasis}.  In what follows, we will often be considering the object $\go \oplus \go \in \Add(\Brauer_\C)$.  In order to distinguish the two copies of $\go$, we will color the first blue and the second red.  We will then color diagrams in such a way that the color of their endpoints indicate which copy of $\go$ is involved.  In order to make the diagrams also readable without color, blue strands will be made thick and the blue copy of $\go$ will be written in bold.  Thus, for example,
\[
    \idstrandb =
    \begin{pmatrix}
        1_\go & 0 \\
        0 & 0
    \end{pmatrix}
    ,\
    \idstrandr =
    \begin{pmatrix}
        0 & 0 \\
        0 & 1_\go
    \end{pmatrix}
    ,\
    \idstrandbr =
    \begin{pmatrix}
        0 & 0 \\
        1_\go & 0
    \end{pmatrix}
    ,\
    \idstrandrb =
    \begin{pmatrix}
        0 & 1_\go \\
        0 & 0
    \end{pmatrix}
    \in \Hom_{\Add(\Brauer_\C)}(\gob \oplus \gor, \gob \oplus \gor),
\]
And
\[
    \crossmorbr
    =
    \begin{pmatrix}
        0 & 0 & 0 & 0 \\
        0 & 0 & 0 & 0 \\
        0 & \crossmor & 0 & 0 \\
        0 & 0 & 0 & 0
    \end{pmatrix}
    \in \End_{\Add(\Brauer_\C)}((\gob \oplus \gor)^{\otimes 2})
    = \End_{\Add(\Brauer_\C)}((\gob \otimes \gob) \oplus (\gob \otimes \gor) \oplus (\gor \otimes \gob) \oplus (\gor \otimes \gor)).
\]

\begin{prop} \label{space}
    There exists a unique $\C$-linear monoidal functor
    \[
        \sS_\HH \colon \Brauer_\R^\sigma(\HH;d)^\C \to \Add(\Brauer_\C^\sigma(\C;-2d))
    \]
    such that $\sS_\HH(\go) = \gob \oplus \gor$ and
    \begin{gather}
        \sS_\HH \left( \crossmor \right) = - \crossmorbb - \crossmorrr - \crossmorbr - \crossmorrb,\qquad
        \sS_\HH \left( \capmor \right) = \capmorbr - \capmorrb\, ,\qquad
        \sS_\HH \left( \cupmor \right) = \cupmorrb - \cupmorbr\, ,
        \\
        \sS_\HH \left( \tokstrand[i] \right) = i\ \idstrandr - i\ \idstrandb\, ,\qquad
        \sS_\HH \left( \tokstrand[j] \right) = \idstrandbr - \idstrandrb\, ,\qquad
        \sS_\HH \left( \tokstrand[k] \right) = i\ \idstrandbr + i\ \idstrandrb\, .
    \end{gather}
    The functor $\sS_\HH$ is full and faithful.
\end{prop}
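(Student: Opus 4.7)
The plan is to follow the same two-step strategy used in \cref{bulb,deacon}: first verify that $\sS_\HH$ respects every defining relation of $\Brauer_\R^\sigma(\HH;d)$, and then use the basis theorem \cref{basisthm} together with an explicit construction of preimages to establish fullness and faithfulness simultaneously via dimension-matching.

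For well-definedness, I will check each relation in \cref{brauer} and \cref{tokrel}, plus the specialization relation \cref{burst}. The Brauer relations reduce, after expanding $\sS_\HH(\crossmor), \sS_\HH(\capmor), \sS_\HH(\cupmor)$ into their four (respectively two) color-components, to case-by-case identities in $\Add(\Brauer_\C^\sigma(\C;-2d))$; each case follows from the Brauer relations of $\Brauer_\C^\sigma(\C;-2d)$ together with elementary composition identities for the idempotents $\idstrandb, \idstrandr, \idstrandbr, \idstrandrb$ (for example, $\idstrandbr \circ \idstrandrb = \idstrandr$ and $\idstrandbr \circ \idstrandbr = 0$). The token multiplication relation in \cref{tokrel} amounts to checking the quaternion algebra relations $ij=k$, $jk=i$, $ki=j$, $i^2=j^2=k^2=-1$ for the proposed images, which is a direct calculation; similarly, the relations governing sliding tokens over crossings and caps become straightforward sign computations, using that $a^\inv = a^\star$ on the purely even superalgebra $\HH$, so that $i^\inv = -i$, $j^\inv = -j$, $k^\inv = -k$. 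Finally, the specialization relation \cref{burst} follows from the factor of $-2$ in the target specialization parameter: each bubble on the $\HH$-side maps, after color expansion, to two copies of a bubble on the $\C$-side, each specializing to $-d\,\str_\C(\proj_\C(a))$, whose sum recovers $d\,\str_\HH(a)$.

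For fullness and faithfulness, \cref{basisthm} gives
\[
    \dim_\C \Hom_{\Brauer_\R^\sigma(\HH;d)}(\go^{\otimes r},\go^{\otimes s})^\C
    = 4^{(r+s)/2} (r+s-1)!!
\]
when $r+s$ is even (zero otherwise), since $\dim_\R \HH = 4$. On the target side, $\Hom_{\Add(\Brauer_\C^\sigma(\C;-2d))}((\gob\oplus\gor)^{\otimes r},(\gob\oplus\gor)^{\otimes s})$ decomposes as a direct sum over $2^{r+s}$ endpoint-color configurations, each of dimension $(r+s-1)!!$ over $\C$, giving the same total $2^{r+s}(r+s-1)!! = 4^{(r+s)/2}(r+s-1)!!$. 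Hence it suffices to prove surjectivity on each $\Hom$-space.

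Surjectivity follows by exhibiting preimages of all colored generators, mirroring the approach in the proof of \cref{deacon}. The key observation is that $\sS_\HH\bigl(\tfrac{1}{2}\,\idstrand + \tfrac{i}{2}\,\tokstrand[i]\bigr) = \idstrandb$ and $\sS_\HH\bigl(\tfrac{1}{2}\,\idstrand - \tfrac{i}{2}\,\tokstrand[i]\bigr) = \idstrandr$, so that these elements act as colored projectors. Pre- and post-composing $\crossmor$, $\capmor$, $\cupmor$ with appropriate tensor products of such projectors isolates individual colored versions (for example, $(p_\gob \otimes p_\gob)\circ \crossmor \circ (p_\gob \otimes p_\gob) \mapsto -\crossmorbb$, and analogously for the other three colored crossings and the four colored cup/cap variants). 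The main obstacle is purely computational bookkeeping: the verification of the token sliding relations over caps and cups, combined with the sign patterns in $\sS_\HH(\crossmor) = -\crossmorbb - \crossmorrr - \crossmorbr - \crossmorrb$ and the alternating signs in $\sS_\HH(\capmor)$ and $\sS_\HH(\cupmor)$, must conspire correctly so that the colored Brauer category relations are satisfied; I expect this bookkeeping (rather than any conceptual subtlety) to be the lengthiest part of the argument.
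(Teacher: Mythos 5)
Your overall strategy is exactly the paper's: check the defining relations, then combine the dimension count from \cref{basisthm} with an explicit surjectivity argument on each Hom-space. The dimension bookkeeping ($2^{r+s}(r+s-1)!!$ on both sides) is right, and the relation checks you describe are the correct ones. However, one step of your surjectivity argument fails as stated. The projectors $p_\gob, p_\gor$ with $\sS_\HH(p_\gob) = \idstrandb$ and $\sS_\HH(p_\gor) = \idstrandr$ do isolate all four colored crossings, because $\sS_\HH(\crossmor)$ already contains all four color components. But $\sS_\HH(\capmor) = \capmorbr - \capmorrb$ has \emph{no} $\capmorbb$ or $\capmorrr$ component, so precomposing $\capmor$ with tensor products of $p_\gob, p_\gor$ can only produce $\capmorbr$, $-\capmorrb$, or $0$; in particular $\sS_\HH(\capmor\circ(p_\gob\otimes p_\gob)) = 0$, not $\capmorbb$. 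The same applies to $\cupmorbb$ and $\cupmorrr$. To reach these four generators you must also use the off-diagonal color-changers $\idstrandbr, \idstrandrb$, which are the images of $\pm\tfrac12\,\tokstrand[j] - \tfrac{i}{2}\,\tokstrand[k]$; for instance $\capmorbb = \tfrac12\,\sS_\HH\bigl(\ \begin{tikzpicture}[anchorbase]\draw (-0.15,-0.15) -- (-0.15,0) arc(180:0:0.15) -- (0.15,-0.15);\token{0.15,0}{west}{j};\end{tikzpicture} - i\ \begin{tikzpicture}[anchorbase]\draw (-0.15,-0.15) -- (-0.15,0) arc(180:0:0.15) -- (0.15,-0.15);\token{0.15,0}{west}{k};\end{tikzpicture}\ \bigr)$. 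This is an easy repair using ingredients you already listed, but the recipe as written misses half of the colored cup/cap generators.

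Your treatment of the specialization relation \cref{burst} also has the factors wrong. After reducing to $a \in \R$ (for $\sigma=0$ the purely imaginary part is killed by \cref{Oregon}; for $\sigma=1$ one has $d=0$ by \cref{belgium} and there is nothing to check), the image of the empty bubble is $-2$ times the empty bubble of $\Brauer_\C^\sigma(\C;-2d)$ — only the two color-matched compositions of $\capmorbr - \capmorrb$ with $\cupmorrb - \cupmorbr$ survive, each contributing $-1$ — and each such bubble specializes to $-2d$, giving $(-2)(-2d)a = 4da = d\,\str_\HH^\R(a)$. Your accounting ("two copies ... each specializing to $-d\,\str_\C(\proj_\C(a))$, whose sum recovers $d\,\str_\HH(a)$") yields $-2da$ instead of $4da$, so the claimed consistency check does not close as written, even though the correct factors do match once computed properly.
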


\begin{proof}
    To show that $\sS_\HH$ is well defined, we must show that it respects the relations \cref{brauer,tokrel}.  For the first relation in \cref{brauer}, we verify that
    \[
        \sS_\HH
        \left(
            \begin{tikzpicture}[centerzero]
                \draw (0.2,-0.4) to[out=135,in=down] (-0.15,0) to[out=up,in=225] (0.2,0.4);
                \draw (-0.2,-0.4) to[out=45,in=down] (0.15,0) to[out=up,in=-45] (-0.2,0.4);
            \end{tikzpicture}
        \right)
        =
        \begin{tikzpicture}[centerzero]
            \draw[bcolor] (0.2,-0.4) to[out=135,in=down] (-0.15,0) to[out=up,in=225] (0.2,0.4);
            \draw[bcolor] (-0.2,-0.4) to[out=45,in=down] (0.15,0) to[out=up,in=-45] (-0.2,0.4);
        \end{tikzpicture}
        +
        \begin{tikzpicture}[centerzero]
            \draw[rcolor] (0.2,-0.4) to[out=135,in=down] (-0.15,0) to[out=up,in=225] (0.2,0.4);
            \draw[rcolor] (-0.2,-0.4) to[out=45,in=down] (0.15,0) to[out=up,in=-45] (-0.2,0.4);
        \end{tikzpicture}
        +
        \begin{tikzpicture}[centerzero]
            \draw[bcolor] (0.2,-0.4) to[out=135,in=down] (-0.15,0) to[out=up,in=225] (0.2,0.4);
            \draw[rcolor] (-0.2,-0.4) to[out=45,in=down] (0.15,0) to[out=up,in=-45] (-0.2,0.4);
        \end{tikzpicture}
        +
        \begin{tikzpicture}[centerzero]
            \draw[rcolor] (0.2,-0.4) to[out=135,in=down] (-0.15,0) to[out=up,in=225] (0.2,0.4);
            \draw[bcolor] (-0.2,-0.4) to[out=45,in=down] (0.15,0) to[out=up,in=-45] (-0.2,0.4);
        \end{tikzpicture}
        =
        \begin{tikzpicture}[centerzero]
            \draw[bcolor] (-0.2,-0.4) -- (-0.2,0.4);
            \draw[bcolor] (0.2,-0.4) -- (0.2,0.4);
        \end{tikzpicture}
        +
        \begin{tikzpicture}[centerzero]
            \draw[rcolor] (-0.2,-0.4) -- (-0.2,0.4);
            \draw[rcolor] (0.2,-0.4) -- (0.2,0.4);
        \end{tikzpicture}
        +
        \begin{tikzpicture}[centerzero]
            \draw[bcolor] (-0.2,-0.4) -- (-0.2,0.4);
            \draw[rcolor] (0.2,-0.4) -- (0.2,0.4);
        \end{tikzpicture}
        +
        \begin{tikzpicture}[centerzero]
            \draw[rcolor] (-0.2,-0.4) -- (-0.2,0.4);
            \draw[bcolor] (0.2,-0.4) -- (0.2,0.4);
        \end{tikzpicture}
        =
        \sS_\HH
        \left(
            \begin{tikzpicture}[centerzero]
                \draw (-0.2,-0.4) -- (-0.2,0.4);
                \draw (0.2,-0.4) -- (0.2,0.4);
            \end{tikzpicture}
        \right).
    \]
    The proof of the second relation in \cref{brauer} is similar, since both sides are mapped by $\sS_\HH$ to the negative of the sum over all possible colorings of the strands.

    For the third relation in \cref{brauer}, we have
    \[
        \sS_\HH
        \left(
            \begin{tikzpicture}[centerzero]
                \draw (-0.3,0.4) -- (-0.3,0) arc(180:360:0.15) arc(180:0:0.15) -- (0.3,-0.4);
            \end{tikzpicture}
        \right)
        =
        \begin{tikzpicture}[centerzero]
            \draw[bcolor] (-0.3,0.4) -- (-0.3,0) arc(180:270:0.15);
            \draw[rcolor] (-0.15,-0.15) arc(270:360:0.15) arc(180:90:0.15);
            \draw[bcolor] (0.15,0.15) arc(90:0:0.15) -- (0.3,-0.4);
        \end{tikzpicture}
        +
        \begin{tikzpicture}[centerzero]
            \draw[rcolor] (-0.3,0.4) -- (-0.3,0) arc(180:270:0.15);
            \draw[bcolor] (-0.15,-0.15) arc(270:360:0.15) arc(180:90:0.15);
            \draw[rcolor] (0.15,0.15) arc(90:0:0.15) -- (0.3,-0.4);
        \end{tikzpicture}
        =\,
        \begin{tikzpicture}[centerzero]
            \draw[bcolor] (0,-0.4) -- (0,0.4);
        \end{tikzpicture}
        \, +\,
        \begin{tikzpicture}[centerzero]
            \draw[rcolor] (0,-0.4) -- (0,0.4);
        \end{tikzpicture}
        \, = \sS_\HH
        \left(\
            \begin{tikzpicture}[centerzero]
                \draw (0,-0.4) -- (0,0.4);
            \end{tikzpicture}\
        \right).
    \]
    The proof of the fourth relation in \cref{brauer} is analogous.
    \details{
        We have
        \[
            \sS_\HH
            \left(
                \begin{tikzpicture}[centerzero]
                    \draw (-0.3,-0.4) -- (-0.3,0) arc(180:0:0.15) arc(180:360:0.15) -- (0.3,0.4);
                \end{tikzpicture}
            \right)
            =
            \left(
                \begin{tikzpicture}[centerzero]
                    \draw[bcolor] (-0.3,-0.4) -- (-0.3,0) arc(180:90:0.15);
                    \draw[rcolor] (-0.15,0.15) arc(90:0:0.15) arc(180:270:0.15);
                    \draw[bcolor] (0.15,-0.15) arc(-90:0:0.15) -- (0.3,0.4);
               \end{tikzpicture}
               +
               \begin{tikzpicture}[centerzero]
                    \draw[rcolor] (-0.3,-0.4) -- (-0.3,0) arc(180:90:0.15);
                    \draw[bcolor] (-0.15,0.15) arc(90:0:0.15) arc(180:270:0.15);
                    \draw[rcolor] (0.15,-0.15) arc(-90:0:0.15) -- (0.3,0.4);
               \end{tikzpicture}
            \right)
            = (-1)^\sigma
            \left(
                \begin{tikzpicture}[centerzero]
                    \draw[bcolor] (0,-0.4) -- (0,0.4);
                \end{tikzpicture}
                \, +\,
                \begin{tikzpicture}[centerzero]
                    \draw[rcolor] (0,-0.4) -- (0,0.4);
                \end{tikzpicture}
            \right)
            \, = \sS_\HH
            \left( (-1)^\sigma\
                \begin{tikzpicture}[centerzero]
                    \draw (0,-0.4) -- (0,0.4);
                \end{tikzpicture}\
            \right).
        \]
    }

    To verify the fifth relation in \cref{brauer}, we compute
    \[
        \sS_\HH
        \left(
            \begin{tikzpicture}[anchorbase]
                \draw (-0.15,-0.4) to[out=60,in=-90] (0.15,0) arc(0:180:0.15) to[out=-90,in=120] (0.15,-0.4);
            \end{tikzpicture}
        \right)
        =
        \begin{tikzpicture}[anchorbase]
            \draw[bcolor] (-0.15,-0.4) to[out=60,in=-90] (0.15,0) arc(0:90:0.15);
            \draw[rcolor] (0,0.15) arc(90:180:0.15) to[out=-90,in=120] (0.15,-0.4);
        \end{tikzpicture}
        -
        \begin{tikzpicture}[anchorbase]
            \draw[rcolor] (-0.15,-0.4) to[out=60,in=-90] (0.15,0) arc(0:90:0.15);
            \draw[bcolor] (0,0.15) arc(90:180:0.15) to[out=-90,in=120] (0.15,-0.4);
        \end{tikzpicture}
        =
        \begin{tikzpicture}[anchorbase]
            \draw[bcolor] (-0.15,-0.6) -- (-0.15,-0.4);
            \draw[rcolor] (-0.15,-0.4) to[out=up,in=-90] (0.15,0) arc(0:180:0.15) to[out=-90,in=up] (0.15,-0.4) -- (0.15,-0.6);
        \end{tikzpicture}
        -
        \begin{tikzpicture}[anchorbase]
            \draw[rcolor] (-0.15,-0.6) -- (-0.15,-0.4);
            \draw[bcolor] (-0.15,-0.4) to[out=up,in=-90] (0.15,0) arc(0:180:0.15) to[out=-90,in=up] (0.15,-0.4) -- (0.15,-0.6);
        \end{tikzpicture}
        = \left( \capmorbr - \capmorrb \right)
        = \sS_\HH \left( \capmor \right).
    \]
    For the sixth relation in \cref{brauer}, we have
    \[
        \sS_\HH
        \left(
            \begin{tikzpicture}[centerzero]
                \draw (-0.2,-0.3) -- (-0.2,-0.1) arc(180:0:0.2) -- (0.2,-0.3);
                \draw (-0.3,0.3) \braiddown (0,-0.3);
            \end{tikzpicture}
        \right)
        =
        \begin{tikzpicture}[centerzero]
            \draw[rcolor] (-0.2,-0.3) -- (-0.2,-0.1) arc(180:90:0.2);
            \draw[bcolor] (0,0.1) arc(90:0:0.2) -- (0.2,-0.3);
            \draw[bcolor] (-0.3,0.3) \braiddown (0,-0.3);
        \end{tikzpicture}
        -
        \begin{tikzpicture}[centerzero]
            \draw[bcolor] (-0.2,-0.3) -- (-0.2,-0.1) arc(180:90:0.2);
            \draw[rcolor] (0,0.1) arc(90:0:0.2) -- (0.2,-0.3);
            \draw[bcolor] (-0.3,0.3) \braiddown (0,-0.3);
        \end{tikzpicture}
        +
        \begin{tikzpicture}[centerzero]
            \draw[rcolor] (-0.2,-0.3) -- (-0.2,-0.1) arc(180:90:0.2);
            \draw[bcolor] (0,0.1) arc(90:0:0.2) -- (0.2,-0.3);
            \draw[rcolor] (-0.3,0.3) \braiddown (0,-0.3);
        \end{tikzpicture}
        -
        \begin{tikzpicture}[centerzero]
            \draw[bcolor] (-0.2,-0.3) -- (-0.2,-0.1) arc(180:90:0.2);
            \draw[rcolor] (0,0.1) arc(90:0:0.2) -- (0.2,-0.3);
            \draw[rcolor] (-0.3,0.3) \braiddown (0,-0.3);
        \end{tikzpicture}
        =
        \begin{tikzpicture}[centerzero]
            \draw[rcolor] (-0.2,-0.3) -- (-0.2,-0.1) arc(180:90:0.2);
            \draw[bcolor] (0,0.1) arc(90:0:0.2) -- (0.2,-0.3);
            \draw[bcolor] (0.3,0.3) \braiddown (0,-0.3);
        \end{tikzpicture}
        -
        \begin{tikzpicture}[centerzero]
            \draw[bcolor] (-0.2,-0.3) -- (-0.2,-0.1) arc(180:90:0.2);
            \draw[rcolor] (0,0.1) arc(90:0:0.2) -- (0.2,-0.3);
            \draw[bcolor] (0.3,0.3) \braiddown (0,-0.3);
        \end{tikzpicture}
        +
        \begin{tikzpicture}[centerzero]
            \draw[rcolor] (-0.2,-0.3) -- (-0.2,-0.1) arc(180:90:0.2);
            \draw[bcolor] (0,0.1) arc(90:0:0.2) -- (0.2,-0.3);
            \draw[rcolor] (0.3,0.3) \braiddown (0,-0.3);
        \end{tikzpicture}
        -
        \begin{tikzpicture}[centerzero]
            \draw[bcolor] (-0.2,-0.3) -- (-0.2,-0.1) arc(180:90:0.2);
            \draw[rcolor] (0,0.1) arc(90:0:0.2) -- (0.2,-0.3);
            \draw[rcolor] (0.3,0.3) \braiddown (0,-0.3);
        \end{tikzpicture}
        = \sS_\HH
        \left(
            \begin{tikzpicture}[centerzero]
                \draw (-0.2,-0.3) -- (-0.2,-0.1) arc(180:0:0.2) -- (0.2,-0.3);
                \draw (0.3,0.3) \braiddown (0,-0.3);
            \end{tikzpicture}
        \right).
    \]

    The first two relations in \cref{tokrel} are straightforward.  For the third relation in \cref{tokrel}, it suffices to consider the cases where $a,b \in \{1,i,j,k\}$.  These are all straightforward computations.  For example, we have
    \begin{align*}
        \sS_\HH
        \left(
            \begin{tikzpicture}[centerzero]
                \draw (0,-0.35) -- (0,0.35);
                \token{0,-0.15}{east}{i};
                \token{0,0.15}{east}{i};
            \end{tikzpicture}
        \right)
        &= - \idstrandr - \idstrandb
        = \sS_\HH
        \left(
            \begin{tikzpicture}[centerzero]
                \draw (0,-0.35) -- (0,0.35);
                \token{0,0}{west}{-1};
            \end{tikzpicture}
        \right),
        &
        \sS_\HH
        \left(
            \begin{tikzpicture}[centerzero]
                \draw (0,-0.35) -- (0,0.35);
                \token{0,-0.15}{east}{j};
                \token{0,0.15}{east}{j};
            \end{tikzpicture}
        \right)
        &= -\,
        \begin{tikzpicture}[centerzero]
            \draw[bcolor] (0,-0.4) -- (0,-0.15);
            \draw[rcolor] (0,-0.15) -- (0,0.15);
            \draw[bcolor] (0,0.15) -- (0,0.4);
        \end{tikzpicture}
        \, -\,
        \begin{tikzpicture}[centerzero]
            \draw[rcolor] (0,-0.4) -- (0,-0.15);
            \draw[bcolor] (0,-0.15) -- (0,0.15);
            \draw[rcolor] (0,0.15) -- (0,0.4);
        \end{tikzpicture}
        = - \idstrandb - \idstrandr
        = \sS_\HH
        \left(
            \begin{tikzpicture}[centerzero]
                \draw (0,-0.35) -- (0,0.35);
                \token{0,0}{west}{-1};
            \end{tikzpicture}
        \right),
        \\
        \sS_\HH
        \left(
            \begin{tikzpicture}[centerzero]
                \draw (0,-0.35) -- (0,0.35);
                \token{0,-0.15}{east}{j};
                \token{0,0.15}{east}{i};
            \end{tikzpicture}
        \right)
        &= i\ \idstrandbr + i\ \idstrandrb\,
        = \sS_\HH
        \left(
            \begin{tikzpicture}[centerzero]
                \draw (0,-0.35) -- (0,0.35);
                \token{0,0}{west}{k};
            \end{tikzpicture}
        \right),
        &
        \sS_\HH
        \left(
            \begin{tikzpicture}[centerzero]
                \draw (0,-0.35) -- (0,0.35);
                \token{0,-0.15}{east}{i};
                \token{0,0.15}{east}{j};
            \end{tikzpicture}
        \right)
        &= - i\ \idstrandrb - i\ \idstrandbr\,
        = \sS_\HH
        \left(
            \begin{tikzpicture}[centerzero]
                \draw (0,-0.35) -- (0,0.35);
                \token{0,0}{west}{-k};
            \end{tikzpicture}
        \right).
    \end{align*}
    The other cases are analogous.  It is also straightforward to verify the fourth relation in \cref{tokrel} by considering the cases $a \in \{1,i,j,k\}$.  For the fifth relation in \cref{tokrel}, we again consider the cases $a \in \{1,i,j,k\}$.  When $a=i$, we have
    \[
        \sS_\HH
        \left(
            \begin{tikzpicture}[anchorbase]
                \draw (-0.2,-0.2) -- (-0.2,0) arc (180:0:0.2) -- (0.2,-0.2);
                \token{-0.2,0}{east}{i};
            \end{tikzpicture}
        \right)
        = -i\ \capmorrb - i\ \capmorbr
        = \sS_\HH
        \left(
            \begin{tikzpicture}[anchorbase]
                \draw (-0.2,-0.2) -- (-0.2,0) arc (180:0:0.2) -- (0.2,-0.2);
                \token{0.2,0}{west}{-i};
            \end{tikzpicture}
        \right)
        = \sS_\HH
        \left(
            \begin{tikzpicture}[anchorbase]
                \draw (-0.2,-0.2) -- (-0.2,0) arc (180:0:0.2) -- (0.2,-0.2);
                \token{0.2,0}{west}{i^\star};
            \end{tikzpicture}
        \right).
    \]
    The other cases are analogous.

    Finally, we show that $\sS_\HH$ respects \cref{burst}.  First note that, by \cref{Oregon}, any bubble with a purely imaginary token is zero.  By \cref{toklin}, any bubble with a token labelled by $a \in \R$ is equal to $a$ times a bubble with no token.  Then the fact that $\sS_\HH$ respects \cref{burst} follows from the computation
    \[
        \sS_\HH
        \left(
            \begin{tikzpicture}[centerzero]
                \bub{0,0};
            \end{tikzpicture}
        \right)
        = -
        \begin{tikzpicture}[centerzero]
            \draw[bcolor] (0,-0.2) arc(-90:90:0.2);
            \draw[rcolor] (0,0.2) arc(90:270:0.2);
        \end{tikzpicture}
        -
        \begin{tikzpicture}[centerzero]
            \draw[rcolor] (0,-0.2) arc(-90:90:0.2);
            \draw[bcolor] (0,0.2) arc(90:270:0.2);
        \end{tikzpicture}
        = -2\
        \begin{tikzpicture}[centerzero]
            \bub{0,0};
        \end{tikzpicture}
        \ ,
    \]
    and the fact that, by \cref{delay},
    \[
        \str_\HH^\R(a) = 4 a
        \qquad \text{and} \qquad
        \str_\C^\C(a) = a
        \qquad \text{for all } a \in \R.
    \]

    It remains to prove that $\sS_\HH$ is full and faithful.  For $r,s \in \N$, the functor $\sS_\HH$ induces a $\C$-linear map
    \begin{equation} \label{piano}
        \Hom_{\Brauer_\R^\sigma(\HH;d)}(\go^{\otimes r}, \go^{\otimes s})^\C
        \to \Hom_{\Add(\Brauer_\C^\sigma(\C;-2d))}((\gob \oplus \gor)^{\otimes r}, (\gob \oplus \gor)^{\otimes s}).
    \end{equation}
    By \cref{basisthm}, $\Hom_{\Brauer_\R^\sigma(\HH;d)}(\go^{\otimes r}, \go^{\otimes s})^\C$ has $\C$-basis $\bD^\bullet(r,s)$.  Thus, it has dimension
    \[
        4^{(r+s)/2} (r+s-1)!! = 2^{r+s} (r+s-1)!!
    \]
    if $r+s$ is even and dimension zero if $r+s$ is odd.  (We use here the fact that the number of perfect matchings of a set of size $2n$ is $(2n)!! := (2n-1)(2n-3) \dotsm 1$, and that $\dim_\R \HH = 4$.)  On the other hand, $\Hom_{\Add(\Brauer_\C^\sigma(\C;-2d))}((\gob \oplus \gor)^{\otimes r}, (\gob \oplus \gor)^{\otimes s})$ has the same dimension, since, when $r+s$ is even, there are $(r+s-1)!!$ perfect matchings of the $r+s$ endpoints, and then $2^{r+s}$ ways to choose one of the colors blue or red for the endpoints of the strings.  Thus, to prove that \cref{piano} is an isomorphism, it suffices to prove that it is surjective.  To do this, it is enough to show that all possible colorings of the generating morphisms of $\Brauer_\C^\sigma(\C;-2d)$ lie in the image of $\sS_\HH$.

    First note that
    \begin{align}
        \sS_\HH \left( \tfrac{1}{2}\ \idstrand + \tfrac{i}{2}\ \tokstrand[i] \right) &= \idstrandb\, ,&
        \sS_\HH \left( \tfrac{1}{2}\ \idstrand - \tfrac{i}{2}\ \tokstrand[i] \right) &= \idstrandr\, ,
        \\ \label{mirage}
        \sS_\HH \left( - \tfrac{1}{2}\ \tokstrand[j] - \tfrac{i}{2}\ \tokstrand[k] \right) &= \idstrandrb\, ,&
        \sS_\HH \left( \tfrac{1}{2}\ \tokstrand[j] - \tfrac{i}{2}\ \tokstrand[k] \right) &= \idstrandbr\, .
    \end{align}
    Next, we compute
    \begin{align*}
        - \tfrac{1}{4} \sS_\HH
        \left(
            \crossmor
            + i
            \begin{tikzpicture}[anchorbase]
                \draw (-0.2,-0.2) -- (0.2,0.2);
                \draw (0.2,-0.2) -- (-0.2,0.2);
                \token{-0.1,-0.1}{east}{i};
            \end{tikzpicture}
            + i\
            \begin{tikzpicture}[anchorbase]
                \draw (-0.2,-0.2) -- (0.2,0.2);
                \draw (0.2,-0.2) -- (-0.2,0.2);
                \token{0.1,-0.1}{west}{i};
            \end{tikzpicture}
            -
            \begin{tikzpicture}[anchorbase]
                \draw (-0.2,-0.2) -- (0.2,0.2);
                \draw (0.2,-0.2) -- (-0.2,0.2);
                \token{-0.1,-0.1}{east}{i};
                \token{0.1,-0.1}{west}{i};
            \end{tikzpicture}
        \right)
        &= \crossmorbb,
        &
        \sS_\HH \left( \tfrac{1}{2}\ \tokstrand[a] + \tfrac{i}{2}\ \tokstrand[ia] \right)
        &=
        \begin{tikzpicture}[centerzero]
            \draw[bcolor] (0,-0.2) -- (0,0.2);
            \tokenb{0,0}{west}{a};
        \end{tikzpicture}
        ,\ a \in \C,
        \\
        \tfrac{1}{2} \sS_\HH
        \left(
            \begin{tikzpicture}[anchorbase]
                \draw (-0.15,-0.15) -- (-0.15,0) arc(180:0:0.15) -- (0.15,-0.15);
                \token{0.15,0}{west}{j};
            \end{tikzpicture}
            - i\
            \begin{tikzpicture}[anchorbase]
                \draw (-0.15,-0.15) -- (-0.15,0) arc(180:0:0.15) -- (0.15,-0.15);
                \token{0.15,0}{west}{k};
            \end{tikzpicture}
        \right)
        &=
        \begin{tikzpicture}[anchorbase]
            \draw[bcolor] (-0.15,-0.35) -- (-0.15,0) arc(180:90:0.15);
            \draw[rcolor] (0,0.15) arc(90:0:0.15) -- (0.15,-0.15);
            \draw[bcolor] (0.15,-0.15) -- (0.15,-0.35);
        \end{tikzpicture}
        = \capmorbb\, ,
        &
        \tfrac{1}{2} \sS_\HH
        \left(
            \begin{tikzpicture}[anchorbase]
                \draw[-] (-0.15,0.15) -- (-0.15,0) arc(180:360:0.15) -- (0.15,0.15);
                \token{0.15,0}{west}{j};
            \end{tikzpicture}
            + i\
            \begin{tikzpicture}[anchorbase]
                \draw[-] (-0.15,0.15) -- (-0.15,0) arc(180:360:0.15) -- (0.15,0.15);
                \token{0.15,0}{west}{k};
            \end{tikzpicture}
        \right)
        &=
        \begin{tikzpicture}[anchorbase]
            \draw[bcolor] (-0.15,0.35) -- (-0.15,0) arc(180:270:0.15);
            \draw[rcolor] (0,-0.15) arc(270:360:0.15) -- (0.15,0.15);
            \draw[bcolor] (0.15,0.15) --(0.15,0.35);
        \end{tikzpicture}
        = \cupmorbb.
    \end{align*}
    Then, composing with the morphisms in \cref{mirage} to change colors of strands, we see that all possible colorings of the generating morphisms lie in the image of $\sS_\HH$, as desired.
\end{proof}

Fix $m,n \in \N$, set $V = \HH^{m|n}$, and let $\varphi$ be a nondegenerate $(\nu,\star)$-superhermitian form on $V$ of parity $\sigma$.  (See \cref{subsec:hermformH,subsec:hermit-periplectic} for a classification.)  Recall, from \cref{fries}, that the induced form $\varphi^j \colon \C^{2m|2n} \times \C^{2m|2n} \to \C$ is nondegenerate, $(-\nu,\id)$-supersymmetric, and of parity $\sigma$.  Let $\Phi$ denote the $(\nu,\star)$-supersymmetric form defined as in \cref{moon}, where we recall that the Frobenius form $\form$ on $\HH$ is projection onto the real part.  Recall, from \cref{subsec:HCform}, that $G(\Phi)=G(\varphi)$ and $\fg(\Phi) = \fg(\varphi)$.

\begin{lem} \label{blue}
    For all $G(\Phi)$-supermodules $U$ and $W$, we have an isomorphism of $\C$-supermodules
    \[
        \Hom_{G(\Phi)}(U,W)^\C
        \xrightarrow{\cong} \Hom_{G(\varphi^j)}(U^\C,W^\C),\quad
        f \otimes a \mapsto f \otimes a,\quad
        f \in \Hom_{G(\Phi)}(U,W),\ a \in \C.
    \]
\end{lem}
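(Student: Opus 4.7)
The plan is to follow the strategy of the analogous proofs of \cref{light} (real case) and \cref{white} (complex case), using Proposition \cref{compHform} as the key new ingredient. Specifically, \cref{compHform} provides a canonical identification $\fg(\Phi)^\C = \fg(\varphi)^\C \cong \fg(\varphi^j)$, which reduces the problem from one about Lie superalgebras to one about matching group-level equivariance conditions.

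First, I would combine \cref{skool2} with \cref{compHform} to obtain
\[
    \Hom_{G(\Phi)}(U,W)^\C \cong \Hom_{X_1,\dotsc,X_r,\fg(\varphi^j)}(U^\C,W^\C),
\]
where $X_1,\dotsc,X_r$ are chosen representatives of the connected components of $G_\rd(\Phi)$ not containing the identity. On the target side, we have
\[
    \Hom_{G(\varphi^j)}(U^\C,W^\C) = \Hom_{Y_1,\dotsc,Y_s,\fg(\varphi^j)}(U^\C,W^\C),
\]
where $Y_1,\dotsc,Y_s$ are representatives of the non-identity components of $G_\rd(\varphi^j)$. The map $f\otimes a\mapsto f\otimes a$ factors through both descriptions, and what must be checked is that the equivariance conditions imposed by the $X_t$ and the $Y_t$ cut out the same subspace of $\Hom_{\fg(\varphi^j)}(U^\C,W^\C)$.

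Second, I would do a case analysis based on $\sigma$ and $\nu$, using the classification of the superhermitian forms and the explicit descriptions of $G_\rd(\varphi)$ and $G_\rd(\varphi^j)$ recorded in the appendix. When $\sigma=1$ (the periplectic case), both groups are connected---so $r=s=0$ and the assertion reduces immediately to \cref{skool} together with \cref{compHform}. When $\sigma=0$ and $\nu=1$, the quaternionic group $G_\rd(\varphi)$ is (a product involving) $\Sp(p,q)\times \rO(n,\HH)$, and $G_\rd(\varphi^j)$ is (a product involving) $\Sp(2m,\C)\times\rO(2n,\C)$; the non-identity component on the complex side is detected by the determinant, and as in the proof of \cref{light} any element of $\rO(n,\HH)\subseteq \rO(2n,\C)$ with determinant $-1$ provides a single $X_t$ whose image already represents the non-identity component of $G_\rd(\varphi^j)$, so that the two systems of equivariance conditions coincide. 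The case $\sigma=0$, $\nu=-1$ is analogous with the roles of the orthogonal and symplectic factors interchanged.

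The main obstacle is the component matching in Step~2: one must verify, using the explicit inclusions of the real quaternionic groups into their complexifications via \cref{Pauli}, that the chosen non-identity components of $G_\rd(\Phi)$ surject (modulo the identity component of $G_\rd(\varphi^j)$) onto the full component group of $G_\rd(\varphi^j)$, and conversely that no extra equivariance condition is imposed by the complex group beyond those already present for the real group. Once this bookkeeping is done, the isomorphism on morphism spaces follows formally, and the fact that it is given by $f\otimes a\mapsto f\otimes a$ is immediate from the construction of the isomorphism in \cref{skool2}.
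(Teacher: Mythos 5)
Your proposal is correct and follows essentially the same route as the paper: reduce to the Lie superalgebra level via \cref{compHform}, treat $\sigma=1$ as the connected case using \cref{skool}, and for $\sigma=0$ (where one may normalize to $\nu=1$) match the component groups by choosing a single element $X \in G_\rd(\Phi)$ with $\det(X)=-1$, which also represents the non-identity component of $G_\rd(\varphi^j)$, so that \cref{skool2} and \cref{breath} give the identification. The component-matching "obstacle" you flag is exactly the point the paper's proof addresses, using the explicit descriptions of $\rO(n,\HH)$ and $\rO(2n,\C)$ from the appendix.
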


\begin{proof}
    First suppose that $\sigma=0$.  Then, as explained in \cref{subsec:hermit-prelim}, we may assume that $\nu=1$.  By the results of \cref{subsec:hermformH}, we see that $G_\rd(\Phi)$ has two connected components when $n \ge 1$.  (The case $n=0$ is easier, and similar to the $\sigma=1$ case discussed below.)  Fix $X \in G_\rd(\Phi)$ with $\det(X)=-1$, so that $X$ is in the connected component of $G_\rd(\Phi)$ not containing the identity.  Then, using \cref{compHform} we have
    \[
        \Hom_{G(\Phi)}(U,W)^\C
        \overset{\cref{skool2}}{=} \Hom_{X,\fg(\varphi)^\C}(U^\C,W^\C)
        \cong \Hom_{X,\fg(\varphi^j)}(U^\C,W^\C)
        \overset{\cref{breath}}{=} \Hom_{G(\varphi^j)}(U^\C,W^\C),
    \]
    where, in the final equality, we used the fact that $G_\rd(\varphi^j)$ has two connected components, and $X$ lies in the connected component not containing the identity, as explained in \cref{subsec:hermformCid}.

    The case $\sigma=1$ is easier.  As explained in \cref{subsec:hermit-periplectic}, $G(\Phi)$ and $G(\varphi^j)$ are both connected.  Then, using \cref{skool}, we have
    \[
        \Hom_{G(\Phi)}(U,W)^\C
        = \Hom_{\fg(\Phi)}(U,W)^\C
        \cong \Hom_{\fg(\varphi^j)}(U^\C,W^\C)
        = \Hom_{G(\varphi^j)}(U^\C,W^\C).
        \qedhere
    \]
\end{proof}

It follows from \cref{blue} that we have a canonical full and faithful superfunctor
\begin{equation}
    \sE_\HH \colon (G(\Phi)\smod_\R)^\C \to G(\varphi^j)\smod_\C
\end{equation}
sending $V$ to $V^\C$.  The next result shows that the diagram
\[
    \begin{tikzcd}
        \Brauer_\R^\sigma(\HH,\nu(m-n))^\C \arrow[rr, "\sS_\HH"] \arrow[d, "\sF_\Phi^\C"']
        & & \Add(\Brauer_\C^\sigma(\C,\nu(2n-2m))) \arrow[d, two heads, "\sF_{\varphi^j}"]
        \\
        \left( G(\Phi)\smod_\R \right)^\C \arrow[rr, "\sE_{\HH}"]
        & & G(\varphi^j)\smod_\C
    \end{tikzcd}
\]
commutes up to supernatural isomorphism.

\begin{prop} \label{smoke}
    There is a monoidal supernatural isomorphism of functors
    \[
        \theta \colon \sE_\HH \sF_\Phi^\C \xrightarrow{\cong} \sF_{\varphi^j} \sS_\HH
    \]
    determined by
    \begin{equation} \label{divide}
        \theta_\go \colon V^\C \xrightarrow{\cong} V \oplus V,\qquad
        \theta_\go(v) = \tfrac{1}{\sqrt{2}} (v, vj),\quad v \in V.
    \end{equation}
\end{prop}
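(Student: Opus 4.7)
The plan is to follow the same pattern as \cref{jack,fire}: define the supernatural transformation $\theta$ on the generating object $\go$, extend it to all objects via the monoidal coherence data, and then verify the naturality square on each generating morphism of $\Brauer_\R^\sigma(\HH,\inv)^\C$.

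First I would check that $\theta_\go$ is a well-defined parity-preserving isomorphism of $\fg(\varphi^j)$-supermodules. The map is the composition of the $\C$-linear isomorphism $V^\C \xrightarrow{\cong} V \oplus V$ from \cref{Hdub} with the identification $V \oplus V = \sF_{\varphi^j}(\sS_\HH(\go))$. Since $\fg(\Phi) \subseteq \fgl(m|n,\HH)$ acts on $V$ by $\HH$-linear endomorphisms, it commutes with right multiplication by $j$, and so $\theta_\go$ intertwines the $\fg(\Phi)^\C$-action on $V^\C$ with the diagonal action on $V \oplus V$, via the isomorphism $\fg(\Phi)^\C \cong \fg(\varphi^j)$ of \cref{compHform}. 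Then I would extend $\theta$ to all objects in the canonical way, tracking parity-shift signs as in \cref{dubpi}.

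Next I would verify $\theta_Y \circ \sE_\HH \sF_\Phi^\C(f) = \sF_{\varphi^j} \sS_\HH(f) \circ \theta_X$ for each generating morphism $f \in \{\crossmor, \capmor, \cupmor, \tokstrand[a] : a \in \HH\}$. For $f = \crossmor$, both sides evaluate to $\nu \flip$ on $(V \oplus V)^{\otimes 2}$: the LHS because $\sF_\Phi(\crossmor) = \nu \flip$, and the RHS because $\sF_{\varphi^j}(\crossmor) = -\nu \flip$ (as $\varphi^j$ is $(-\nu,\id)$-supersymmetric by \cref{fries}), combined with the overall minus sign in \cref{space}. For $f = \tokstrand[a]$, it suffices to check $a \in \{i,j,k\}$; these cases reduce to matching the explicit descriptions of right multiplication by $i$, $j$, and $k$ on $V \cong \C^{2m|2n}$ from \cref{subsec:compHform} with the images of $i\,\idstrandr - i\,\idstrandb$, $\idstrandbr - \idstrandrb$, and $i\,\idstrandbr + i\,\idstrandrb$ under $\sF_{\varphi^j}$. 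For $f = \capmor$, after expanding $\theta_\go(v) \otimes \theta_\go(w) = \tfrac{1}{2}(v \otimes w + v \otimes wj + vj \otimes w + vj \otimes wj)$ in $(V \oplus V)^{\otimes 2}$, only the two mixed-color terms survive $\sF_{\varphi^j}(\capmorbr - \capmorrb)$, yielding $\tfrac{1}{2}(\varphi^j(v,wj) - \varphi^j(vj,w)) = \RP \varphi(v,w) = \Phi(v,w)$ by \cref{house}.

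The main obstacle will be the verification for $f = \cupmor$, which requires matching the $\R$-basis formula $\sF_\Phi(\cupmor) = \Phi'$ from \cref{jellyfish} with the expansion coming through $\sS_\HH$ and $\sF_{\varphi^j}$. A convenient approach is to fix an $\HH$-basis $\{e_\ell\}$ of $V$ with $\varphi$-left-dual basis $\{e_\ell^\vee\}$, compute the $\Phi$-left-dual of the induced $\R$-basis $\{e_\ell \alpha : \alpha \in \{1,i,j,k\}\}$ directly from the sesquilinearity \cref{sesquilinear} and the formula $\Phi(v\alpha, w\beta) = \RP(\alpha^\star \varphi(v,w) \beta)$, and likewise express the $\varphi^j$-left-dual of the $\C$-basis $\{e_\ell, e_\ell j\}$ of $V \cong \C^{2m|2n}$ in terms of $\{e_\ell^\vee, e_\ell^\vee j\}$. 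A careful sign-tracking calculation then reduces the compatibility check to the identity $\proj^\HH_{j\C}(qj) + \proj^\HH_{j\C}(jq) = 2\RP(q)$ implicit in \cref{house}, analogous to the $\cupmor$-case in the proof of \cref{fire}.
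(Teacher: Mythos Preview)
Your proposal is correct and follows essentially the same approach as the paper: verify $\theta_\go$ is an isomorphism via \cref{Hdub}, then check naturality on each generating morphism, using \cref{house} for $\capmor$ and the sign cancellation between $-\nu$ for $\varphi^j$ and the minus in $\sS_\HH(\crossmor)$. Two minor simplifications relative to the paper: it checks only $\tokstrand[i]$ and $\tokstrand[j]$ (since $\tokstrand[k] = \tokstrand[i] \circ \tokstrand[j]$), and for $\cupmor$ it works with a $\C$-basis $\bB_V^\C$ and the $\R$-basis $\{v, vi : v \in \bB_V^\C\}$ rather than an $\HH$-basis, which streamlines the dual-basis bookkeeping; also note that no parity-shift signs arise here since $\sS_\HH(\go) = \go \oplus \go$ involves no $\Pi$.
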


\begin{proof}
    To simplify notation, we set $\sS = \sS_\HH$ and $\sE = \sE_\HH$.  First note that $\theta_\go$ is the isomorphism \cref{Hdub}.  On objects, we have
    \[
        \sE \sF_\Phi^\C (\go)
        = V^\C
        \xrightarrow[\cong]{\theta_\go} \blue{V} \oplus \red{V}
        = \sF_{\varphi^j}(\gob \oplus \gor)
        = \sF_{\varphi^j} \sS_\HH(\go).
    \]
    Here, and it what follows, in the isomorphism $V^\C \cong \blue{V} \oplus \red{V}$, the first copy of $V$ and its elements are denoted by bold blue characters and the second copy of $V$ and its elements are denoted by red non-bold characters.  This matches our diagrammatic conventions introduced earlier.

    For morphisms, we need to show that
    \[
        \theta_Y \circ \sE \sF_\Phi^\C (f)
        = \sF_{\varphi^j} \sS (f) \circ \theta_X
        \quad \text{for } f \in \left\{ \crossmor, \capmor, \cupmor, \tokstrand[i], \tokstrand[j] \right\},
    \]
    where $X$ and $Y$ are the domain and codomain of $f$, respectively.  (Since $\tokstrand[k] = \tokstrand[i] \circ \tokstrand[j]$, we do not need to check $f = \tokstrand[k]$.)

    For $f = \crossmor$, we have
    \[
        \theta_{\go \otimes \go} \circ \sE \sF_\Phi^\C (\crossmor)
        = \nu \theta_{\go \otimes \go} \circ \flip_{V^\C,V^\C}
        = \nu \flip_{\blue{V} \oplus \red{V}, \blue{V} \oplus \red{V}} \circ \theta_{\go \otimes \go}
        = \sF_{\varphi^j} \sS(\crossmor) \circ \theta_{\go \otimes \go}.
    \]
    Note that the negative sign appearing in the definition of $\sS(\crossmor)$ cancels with the negative sign arising from the fact that $\varphi^j$ is $(-\nu,\id)$-supersymmetric.

    For $f = \capmor$, noting that $\theta_\one$ is the identity map $\C \to \C$, we have, for $v,w \in V$,
    \[
        \theta_\one \circ \sE \sF_\Phi^\C(\capmor)
        \colon V^\C \otimes_\C V^\C \to \C,\qquad
        v \otimes w \mapsto \RP \varphi(v,w),
    \]
    and
    \begin{gather*}
        \sF_{\varphi^j} \sS (\capmor) \circ \theta_{\go \otimes \go}
        = \sF_{\varphi^j} (\capmorbr - \capmorrb) \circ \theta_{\go \otimes \go}
        \colon V^\C \otimes_\C V^\C \to \C,
        \\
        v \otimes w
        \mapsto \tfrac{1}{2} (\blue{v}, \red{vj}) \otimes (\blue{w}, \red{wj})
        \mapsto \tfrac{1}{2} (\varphi^j(v,wj) - \varphi^j(vj,w))
        \overset{\cref{house}}{=} \RP \varphi(v,w).
    \end{gather*}
    (Above, we have used the fact that the maps are uniquely determined by their values on $v \otimes w$, for $v,w \in V \subseteq V^\C$.)

    Next we consider $f = \cupmor$.  Choose a $\C$-basis $\bB_V^\C$ of $V$.  Then $\{v, vi : v \in \bB_V^\C \}$ is an $\R$-basis of $V$, and it is straightforward to verify that $(vi)^\vee = v^\vee i$.
    \details{
        For $v,w \in \bB_V^\C$, we have
        \[
            \Phi(w^\vee i, v i)
            = \RP \varphi(w^\vee i, v i)
            = \RP \varphi^1(w^\vee i, v i)
            = \RP (-i \varphi^1(w^\vee,v) i)
            = \RP(\delta_{wv})
            = \delta_{wv}
        \]
        and
        \[
            \Phi(w^\vee i, v)
            = \RP \varphi(w^\vee i,v)
            = \RP \varphi^1(w^\vee i,v)
            = \RP (-i \varphi^1(w^\vee v))
            = \RP (-i \delta_{wv})
            = 0.
        \]
        Thus $(vi)^\vee = v^\vee i$, as desired.
    }
    Identifying $\blue{w} \in \blue{V}$ and $\red{v} \in \red{V}$ and with $(\blue{w},\red{0}), (\blue{0},\red{v}) \in \blue{V} \oplus \red{V}$, we can write $(\blue{w},\red{v})$ as $\blue{w} + \red{v}$.  Using this convention, we have that
    \[
        \theta_{\go \otimes \go} \circ \sE \sF_\Phi^\C (\cupmor)
        \colon \C \to (\blue{V} \oplus \red{V}) \otimes_\C (\blue{V} \oplus \red{V})
    \]
    is the map given by
    \begin{align*}
        1 &\mapsto \sum_{v \in \bB_V^\R} (-1)^{\sigma\bar{v}} v \otimes v^\vee
        = \sum_{v \in \bB_V^\C} (-1)^{\sigma\bar{v}} (v \otimes v^\vee + v i \otimes v^\vee i)
        \\
        &\mapsto \frac{1}{2} \sum_{v \in \bB_V^\C} (-1)^{\sigma\bar{v}} \big( (\blue{v}+\red{vj}) \otimes (\blue{v^\vee}+\red{v^\vee j}) + (\blue{vi}+\red{vij}) \otimes (\blue{v^\vee i} + \red{v^\vee ij}) \big)
        \\
        &= \sum_{v \in \bB_V^\C} (-1)^{\sigma\bar{v}} \blue{v} \otimes \red{v^\vee j} + \sum_{v \in \bB_V^\C} (-1)^{\sigma\bar{v}} \red{vj} \otimes \blue{v^\vee}.
    \end{align*}
    On the other hand, for all $v,w \in \bB_V^\C$, we have
    \[
        \varphi^j(w^\vee j, v)
        \overset{\cref{boar}}{=} - \varphi^1(w^\vee,v)
        = - \delta_{vw}
        \qquad \text{and} \qquad
        \varphi^j(w^\vee, vj)
        \overset{\cref{rabbit}}{=} \varphi^1(w^\vee,v)^\star
        = \delta_{vw}.
    \]
    Thus, the $\C$-bases left dual to $\bB_V^\C$ and $\{vj : v \in \bB_V^\C\}$ with respect to $\varphi^j$ are $\{-v^\vee j : v \in \bB_V^\C\}$ and $\{v^\vee : v \in \bB_V^\C\}$, respectively.  Therefore,
    \begin{gather*}
        \sF_{\varphi_j} \sS ( \cupmor ) \circ \eta_\one
        = \sF_{\varphi_j} (\cupmorrb - \cupmorbr) \colon \C \to (\red{V} \otimes_\C \blue{V}) \oplus (\blue{V} \otimes_\C \red{V}),
        \\
        1 \mapsto \sum_{v \in \bB_V^\C} (-1)^{\sigma\bar{v}} \red{vj} \otimes \blue{v^\vee} + \sum_{v \in \bB_V^\C} (-1)^{\sigma\bar{v}} \blue{v} \otimes \red{v^\vee j}.
    \end{gather*}

    For $f = \tokstrand[i]$, we have
    \[
        \theta_\go \circ \sE \sF_\Phi^\C \left( \tokstrand[i] \right)
        \colon V^\C \to \blue{V} \oplus \red{V},\quad
        v \mapsto -vi \mapsto \tfrac{1}{\sqrt{2}} (\blue{-vi}, \red{-vij})
        = \tfrac{1}{\sqrt{2}} (\blue{-vi}, \red{vji})
    \]
    and
    \[
        \sF_{\varphi^j} \sS \left( \tokstrand[i] \right) \circ \theta_\go
        = \sF_{\varphi^j} \left( i\ \idstrandr - i\ \idstrandb\, \right) \circ \eta_\go
        \colon V^\C \to \blue{V} \oplus \red{V},\quad
        v \mapsto \tfrac{1}{\sqrt{2}} (\blue{v},\red{vj})
        \mapsto \tfrac{1}{\sqrt{2}} (\blue{-vi}, \red{vji}).
    \]

    For $f = \tokstrand[j]$, we have
    \[
        \theta_\go \circ \sE \sF_\Phi^\C \left( \tokstrand[j] \right)
        \colon V^\C \to \blue{V} \oplus \red{V},\quad
        v \mapsto -vj
        \mapsto \tfrac{1}{\sqrt{2}} (\blue{-vj}, \red{v})
    \]
    and
    \[
        \sF_{\varphi^j} \sS \left( \tokstrand[j] \right) \circ \theta_\go
        = \sF_{\varphi^j} \left( \idstrandbr - \idstrandrb\, \right) \circ \eta_\go
        \colon V^\C \to \blue{V} \oplus \red{V},\quad
        v \mapsto \tfrac{1}{\sqrt{2}} (\blue{v},\red{vj})
        \mapsto \tfrac{1}{\sqrt{2}} (\blue{-vj}, \red{v}).
    \]
    \details{
        As a consistency check, we verify that, for $f = \tokstrand[k]$, we have
        \[
            \theta_\go \circ \sE \sF_\Phi^\C \left( \tokstrand[k] \right)
            \colon V^\C \to \blue{V} \oplus \red{V},\quad
            v \mapsto -vk
            \mapsto \tfrac{1}{\sqrt{2}} (\blue{-vk}, \red{-vkj})
            = \tfrac{1}{2} (\blue{vji}, \red{vi})
        \]
        and
        \[
            \sF_{\varphi^j} \sS \left( \tokstrand[k] \right) \circ \theta_\go
            = \sF_{\varphi^j} \left( i\ \idstrandbr + i\ \idstrandrb\, \right) \circ \eta_\go
            \colon V^\C \to \blue{V} \oplus \red{V},\quad
            v \mapsto \tfrac{1}{\sqrt{2}} (\blue{v},\red{vj})
            \mapsto \tfrac{1}{\sqrt{2}} (\blue{vji}, \red{vi}).
        \]
    }
\end{proof}

\begin{prop} \label{divfull:H}
    \Cref{divfull} holds when $(\DD,\star) = (\HH,\star)$.
\end{prop}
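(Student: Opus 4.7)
The plan is to mimic almost verbatim the strategy used in the proofs of \cref{divfull:R,divfull:C}, replacing the input data with the quaternionic analogues developed in \cref{sec:quaternionic}. The target statement is the surjectivity, for all $r,s \in \N$, of the $\R$-linear map
\[
    \sF_\Phi \colon \Hom_{\Brauer_\R^\sigma(\HH;\nu(m-n))}(\go^{\otimes r}, \go^{\otimes s}) \to \Hom_{G(\Phi)}(V^{\otimes r}, V^{\otimes s}).
\]
As in the previous cases, this map is surjective if and only if its complexification $\sF_\Phi^\C$ is surjective, by the general fact about complexification of $\R$-linear maps recalled in \cref{subsec:complexification}.

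The first step is to assemble the commutative square
\[
    \begin{tikzcd}
        \Hom_{\Brauer_\R^\sigma(\HH;\nu(m-n))}(\go^{\otimes r}, \go^{\otimes s})^\C \arrow[d, "\sS_\HH"', "\cong"] \arrow[r, "\sF_\Phi^\C"]
        & \Hom_{G(\Phi)}(V^{\otimes r}, V^{\otimes s})^\C \arrow[d,"\cong", "\sE_\HH"']
        \\
        \Hom_{\Add(\Brauer_\C^\sigma(\C;\nu(2n-2m)))}((\gob\oplus\gor)^{\otimes r}, (\gob\oplus\gor)^{\otimes s}) \arrow[r, "\sF_{\varphi^j}"]
        & \Hom_{G(\varphi^j)}\bigl((V^\C)^{\otimes r}, (V^\C)^{\otimes s}\bigr)
    \end{tikzcd}
\]
The left vertical arrow is an isomorphism by \cref{space}, the right vertical arrow is an isomorphism by \cref{blue}, and commutativity of the square is exactly the content of \cref{smoke} (the supernatural isomorphism $\theta$ extends to tensor powers since $\theta$ is a monoidal supernatural isomorphism).

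The final step is to observe that the bottom horizontal map is surjective: the specialization parameter on the bottom-left is $\nu(2n-2m) = (-\nu)(2m-2n)$, and by \cref{fries} the form $\varphi^j$ on $\C^{2m|2n}$ is nondegenerate $(-\nu,\id)$-supersymmetric of parity $\sigma$. Thus the fullness assertion for the bottom map is precisely \cref{soup} applied to $\varphi^j$ (with the roles of $m,n$ and $\nu$ shifted appropriately), extended along the additive envelope in the usual way: any morphism in the additive envelope splits into matrix components, each of which lies in the image of $\sF_{\varphi^j}$ on the subcategory $\Brauer_\C^\sigma(\C;\nu(2n-2m))$. Chasing the square then gives surjectivity of $\sF_\Phi^\C$, and hence of $\sF_\Phi$.

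The main conceptual obstacle, already resolved in the preceding sections, was constructing the ``doubling'' functor $\sS_\HH$ of \cref{space} so that complexifying $\HH$-modules corresponds to splitting strands into a blue/red pair, together with matching this against the form $\varphi^j$ via the isomorphism $V^\C\cong V\oplus V$ of \cref{Hdub}; once \cref{space,blue,smoke} are in hand, the proof of \cref{divfull:H} is a formal diagram chase, structurally identical to that of \cref{divfull:R,divfull:C}.
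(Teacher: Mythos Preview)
Your proposal is correct and follows essentially the same approach as the paper's own proof: complexify, invoke \cref{space} for $\sS_\HH$, \cref{blue} for $\sE_\HH$, \cref{smoke} for commutativity, and \cref{soup} for surjectivity of the bottom map. The only cosmetic difference is that the paper draws the diagram as a pentagon with an extra node $\Hom_{G(\varphi^j)}((V\oplus V)^{\otimes r},(V\oplus V)^{\otimes s})$ and a diagonal isomorphism induced by \cref{divide}, whereas you absorb that identification into the square directly.
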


\begin{proof}
    We wish to show that, for all $r,s \in \N$, the $\R$-linear map
    \[
        \sF_\Phi \colon \Hom_{\Brauer_\R^\sigma(\HH;\nu(m-n))}(\go^{\otimes r}, \go^{\otimes s})
        \to \Hom_{G(\Phi)}(V^{\otimes r}, V^{\otimes s})
    \]
    is surjective.  This map is surjective if and only if the map
    \begin{equation} \label{maple}
        \sF_\Phi^\C \colon \Hom_{\Brauer_\R^\sigma(\HH,\star;\nu(m-n))}(\go^{\otimes r}, \go^{\otimes s})^\C
        \to \Hom_{G(\Phi)}(V^{\otimes r}, V^{\otimes s})^\C
    \end{equation}
    is surjective.  To show that \cref{maple} is surjective, it suffices to show that the diagram
    \[
        \begin{tikzcd}
            \Hom_{\Brauer_\R^\sigma(\HH;\nu(m-n))}(\go^{\otimes r}, \go^{\otimes s})^\C \arrow[d, "\sS_\HH"', "\cong"] \arrow[r, "\sF_\Phi^\C"]
            & \Hom_{G(\Phi)}(V^{\otimes r}, V^{\otimes s}) \otimes_\R \C \arrow[d,"\cong", "\sE_\HH"']
            \\
            \Hom_{\Add(\Brauer_\C^\sigma(\C;\nu(2n-2m)))} \left( (\go\oplus \go)^{\otimes r}, (\go \oplus \go)^{\otimes s} \right) \arrow[d, two heads, "\sF_{\varphi^j}"']
            & \Hom_{G(\varphi^j)} \left( (V^\C)^{\otimes r}, (V^\C)^{\otimes s} \right) \arrow[dl,"\cong"]
            \\
            \Hom_{G(\varphi^j)}((V \oplus V)^{\otimes r}, (V \oplus V)^{\otimes s})
        \end{tikzcd}
    \]
    commutes, where the diagonal isomorphism is induced by the isomorphism \cref{divide}, and surjectivity of the bottom-left vertical arrow follows from \cref{soup}.  Commutativity of this diagram follows from \cref{smoke}.
\end{proof}

As a special case of $G(\Phi)$, we have the quaternionic orthosymplectic supergroups $\OSp^*(n|p,q)$; see \cref{subsec:hermformH}.  Recall the definition $G(\Phi)\tsmod_\R$ of the monoidal supercategory of tensor $G(\Phi)$-supermodules from \cref{subsec:Bnonsuper}.

\begin{prop} \label{iceH}
    If $p,p',q,q',n \in \N$ satisfy $p+q=p'+q'$, then we have an equivalence of monoidal supercategories
    \[
        \OSp^*(n|p,q)\tsmod_\R \simeq \OSp^*(n|p',q')\tsmod_\R
    \]
    sending the natural supermodule to the natural supermodule.
\end{prop}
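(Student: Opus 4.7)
The plan is to mimic the proofs of \cref{iceR,iceC} exactly, using the commutative diagram built in the proof of \cref{divfull:H}. Write $\Phi$ for the $(\nu,\inv)$-supersymmetric form on $\HH^{m|n}$ corresponding to the superhermitian form for $\OSp^*(n|p,q)$, so that $G(\Phi) = \OSp^*(n|p,q)$. Viewing $\Brauer_\R^\sigma(\HH;\nu(m-n))$ inside its complexification, I will combine the kernel identity \cref{funk} with the commutativity of the diagram
\[
    \begin{tikzcd}
        \Hom_{\Brauer_\R^\sigma(\HH;\nu(m-n))}(\go^{\otimes r},\go^{\otimes s})^\C \arrow[r,"\sF_\Phi^\C"] \arrow[d,"\sS_\HH"',"\cong"]
        & \Hom_{G(\Phi)}(V^{\otimes r},V^{\otimes s})^\C \arrow[d,"\sE_\HH"',"\cong"]
        \\
        \Hom_{\Add(\Brauer_\C^\sigma(\C;\nu(2n-2m)))}((\go \oplus \go)^{\otimes r},(\go\oplus\go)^{\otimes s}) \arrow[r,"\sF_{\varphi^j}"]
        & \Hom_{G(\varphi^j)}((V^\C)^{\otimes r},(V^\C)^{\otimes s})
    \end{tikzcd}
\]
to conclude that
\[
    \ker(\sF_\Phi) \;=\; \sS_\HH^{-1}\!\left(\ker(\sF_{\varphi^j})\right)\cap \Brauer_\R^\sigma(\HH;\nu(m-n)).
\]
In particular, the kernel depends only on the equivalence class of the induced complex form $\varphi^j$, not on the pair $(p,q)$ separately.

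Next I will observe that by \cref{fries}, $\varphi^j$ is a nondegenerate $(-\nu,\id)$-supersymmetric form on $\C^{2m|2n}$, and by the Sylvester-type classification of such forms over $\C$ it depends, up to equivalence, only on the dimensions $2m$ and $2n$, hence only on $p+q$ and $n$. Therefore $\ker(\sF_\Phi)$ is the same for every $(p,q)$ with $p+q = m$ (and the same fixed $n$).

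Finally, since $\OSp^*(n|p,q)\tsmod_\R$ is idempotent complete and contains the natural supermodule and all its tensor powers, \cref{divfull:H} shows that $\Kar(\sF_\Phi)$ is full and factors through an equivalence of monoidal supercategories
\[
    \Kar(\Brauer_\R^\sigma(\HH;\nu(m-n)))\big/\ker(\sF_\Phi) \;\xrightarrow{\simeq}\; \OSp^*(n|p,q)\tsmod_\R,
\]
sending $\go$ to the natural supermodule. Because the left-hand side is independent of the choice of $(p,q)$ with $p+q$ fixed, the same quotient is equivalent to $\OSp^*(n|p',q')\tsmod_\R$, yielding the desired equivalence. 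There is no real obstacle here: the argument is a direct transcription of those in \cref{iceR,iceC}, with the only new input being the classification invariance of $\varphi^j$, which is immediate from \cref{fries} and the absence of a signature for symmetric/skew-symmetric forms over $\C$.
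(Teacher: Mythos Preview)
Your proposal is correct and follows essentially the same approach as the paper: the paper's proof simply states that it is analogous to that of \cref{iceR}, using the commutative diagram from the proof of \cref{divfull:H}, and your write-up spells out precisely that analogy. The only cosmetic difference is that you make the passage to the Karoubi envelope explicit when identifying $\OSp^*(n|p,q)\tsmod_\R$ with the quotient by $\ker(\sF_\Phi)$, which the paper's proof of \cref{iceR} leaves implicit.
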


\begin{proof}
    The proof is analogous to that of \cref{iceR}, using the commutative diagram appearing in the proof of \cref{divfull:H}.
\end{proof}

\appendix
\section{Classification of superhermitian forms\label{app:hermit}}

In this appendix, we give the classification of $(\nu,\star)$-superhermitian forms for the various choices of involutive division superalgebra.  In each case, we also give the corresponding Harish-Chandra superpair.  The explicit descriptions given in this appendix show that the Lie superalgebras of the form $\fg(\varphi)$, together with the Lie superalgebras $\fgl(r|s,\DD)$ for a real division superalgebra $\DD$, give \emph{all} the real forms of $\fgl(m|n,\C)$, $\osp(m|2n,\C)$, $\fp(m,\C)$, and $\fq(m,\C)$; see \cref{breakthrough}.

\subsection{Preliminaries\label{subsec:hermit-prelim}}

For any supermatrix $X$, let
\[
    X^\sharp := (X^\star)^\st = (X^\st)^\star.
\]
The isomorphism \cref{hopping}, together with the isomorphism of superalgebras $\Mat_{m|n}(A^\op) \xrightarrow{\cong} \Mat_{m|n}(A)$, $X_\op \mapsto X^\star$, shows that
\begin{equation} \label{sharpie}
    (XY)^\sharp = (-1)^{\bar{X} \bar{Y}} Y^\sharp X^\sharp
\end{equation}
whenever the product $XY$ is defined.  It follows from \cref{quiet} that, for $X \in \Mat_{(m|n) \times (r|s)}(A)$, we have
\[
    (X^\sharp)^\sharp = S_{m|n} X S_{r|s}
    \qquad \text{where} \quad
    S_{p|q} =
    \begin{pmatrix}
        I_p & 0 \\
        0 & -I_q
    \end{pmatrix}.
\]
We will often omit the subscripts on $S_{p,q}$ when its size is clear from the context.  Since we identify $A^{m|n} = \Mat_{(m|n) \times (1|0)}(A)$, and $S_{1|0} = I_1$, we have
\begin{equation} \label{underground}
    \begin{pmatrix} v_0 \\ v_1 \end{pmatrix}^\sharp = \begin{pmatrix} v_0^{\star,\tr} & -v_1^{\star,\tr} \end{pmatrix}
    \qquad \text{and} \qquad
    (v^\sharp)^\sharp = Sv
    \qquad \text{for} \quad
    v = \begin{pmatrix} v_0 \\ v_1 \end{pmatrix} \in A^{m|n}.
\end{equation}

Every sesquilinear form on $A^{m|n}$ is of the form
\begin{equation} \label{German}
    \varphi(v,w) = v^\sharp M w
    \qquad \text{where} \quad
    M \in \Mat_{m|n}(A)
    \text{ is homogeneous}.
\end{equation}

\begin{lem} \label{drop}
    The form $\varphi$ given by \cref{German} is $(\nu,\star)$-superhermitian if and only if $M^\sharp = \nu (-1)^{\bar{M}} MS$.
\end{lem}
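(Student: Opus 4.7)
The plan is to translate the superhermitian identity \cref{hermitian} for $\varphi(v,w) = v^\sharp M w$ directly into a matrix identity, using only \cref{sharpie}, \cref{underground}, and the observation that for any $a \in A$ viewed as a $1{\times}1$ supermatrix of parity $\bar a$, direct inspection of \cref{supertranspose} gives $a^\sharp = a^\star$.

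First I would rewrite $\varphi(w,v)^\star$ as $(w^\sharp M v)^\sharp$ via that observation, then apply \cref{sharpie} twice — first to peel $v^\sharp$ off on the right, then to separate $w^\sharp$ from $M$ — and invoke \cref{underground} to simplify $(w^\sharp)^\sharp = S w$. This yields
\[
    \varphi(w,v)^\star = (w^\sharp M v)^\sharp = (-1)^{(\bar w + \bar M)\bar v + \bar w\bar M}\, v^\sharp M^\sharp S w.
\]
Multiplying by $\nu(-1)^{\bar v \bar w}$ and reducing the exponent modulo $2$, the superhermitian condition becomes the equivalent pointwise identity
\[
    v^\sharp M w = \nu(-1)^{\bar M(\bar v + \bar w)}\, v^\sharp M^\sharp S w \qquad (\ast)
\]
for all homogeneous $v,w \in A^{m|n}$.

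The remaining task is to match $(\ast)$ with the matrix identity $M^\sharp = \nu(-1)^{\bar M}MS$. For the forward direction I would specialize $(\ast)$ to standard basis vectors $v=e_j$, $w=e_i$; the left and right sides read off, respectively, $\epsilon_j M_{ji}$ and $\epsilon_i\epsilon_j (M^\sharp)_{ji}$, where $\epsilon_k = \pm 1$ encodes whether $k \le m$, so $(\ast)$ becomes $(M^\sharp)_{ji} = \nu(-1)^{\bar M(p(i)+p(j))}\epsilon_i M_{ji}$. For the backward direction, I would multiply $M^\sharp = \nu(-1)^{\bar M}MS$ on the right by $S$ (using $S^2 = I$) to obtain $\nu M^\sharp S = (-1)^{\bar M} M$, and then the right-hand side of $(\ast)$ becomes $(-1)^{\bar M(\bar v+\bar w+1)} v^\sharp M w$.

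The comparison is immediate when $\bar M = 0$. When $\bar M = 1$, the paper's convention (recorded in \cref{subsec:invalg}) forces $A$ to be purely even, so an entry $M_{ji}$ of parity $\bar M + p(i)+p(j) = 1+p(i)+p(j)$ can be nonzero only when $p(i)+p(j) = 1$; on exactly those surviving entries one has $(-1)^{\bar M(p(i)+p(j))} = (-1)^{\bar M}$, so the two expressions agree, and symmetrically the sign $(-1)^{\bar M(\bar v+\bar w+1)}$ equals $+1$ on every pair $(v,w)$ for which $v^\sharp M w$ can be nonzero. The main obstacle is thus only the careful tracking of super-signs and the use of the purely-even hypothesis to reconcile the matrix identity with the pointwise form $(\ast)$ in the odd-parity case; no conceptual difficulty is involved.
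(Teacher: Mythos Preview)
Your proof is correct and follows essentially the same route as the paper: both compute $\varphi(w,v)^\star = (-1)^{\bar v\bar w + \bar M(\bar v+\bar w)}\, v^\sharp M^\sharp S w$ via \cref{sharpie} and \cref{underground}, then split on the parity of $M$ and use that $\varphi(v,w)$ vanishes unless $\bar v+\bar w = \bar M$ to kill the residual sign. One small correction: the purely-even hypothesis you invoke in the $\bar M=1$ case is the parenthetical remark following \cref{sesquilinear}, not anything in \cref{subsec:invalg}; otherwise your argument is, if anything, slightly more careful than the paper's in localizing where that hypothesis is actually needed.
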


\begin{proof}
    For $v,w \in A^{m|n}$, we have
    \begin{equation} \label{rock}
        \varphi(w,v)^\star
        = \varphi(w,v)^\sharp
        \overset{\cref{sharpie}}{=} (-1)^{\bar{v}\bar{w} + \bar{M}(\bar{v}+\bar{w})} v^\sharp M^\sharp (w^\sharp)^\sharp
        \overset{\cref{underground}}{=} (-1)^{\bar{v}\bar{w} + \bar{M}(\bar{v}+\bar{w})} v^\sharp M^\sharp S w.
    \end{equation}
    Now, if $\bar{\varphi} = \bar{M} = 0$, then $\varphi(w,v) = 0 = \varphi(v,w)$ unless $\bar{v} + \bar{w} = 0$.  Then \cref{rock} implies that $\varphi$ is $(\star,\nu)$-superhermitian if and only if $M = \nu M^\sharp S$.  On the other hand, if $\bar{\varphi} = \bar{M} = 1$, then $\varphi(w,v) = 0 = \varphi(v,w)$ unless $\bar{v} + \bar{w} = 1$.  Then \cref{rock} implies that $\varphi$ is $(\star,\nu)$-superhermitian if and only if $M = - \nu M^\sharp S$.  Combining both cases, and using the fact that $S^2 = I$, the result follows.
\end{proof}

\begin{lem} \label{purple}
    For $\varphi$ as in \cref{German}, we have
    \[
        X^\dagger = (M^\sharp S)^{-1} X^\sharp M^\sharp S.
    \]
\end{lem}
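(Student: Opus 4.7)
The plan is to unravel the defining identity $\varphi(v,Xw) = (-1)^{\bar X\bar v}\varphi(X^\dagger v,w)$ in terms of the matrix $M$ and then solve for $X^\dagger$. First I would rewrite both sides using the formula $\varphi(v,w) = v^\sharp M w$. The left side becomes $v^\sharp (MX) w$. For the right side, I use \cref{sharpie} (noting $\overline{X^\dagger} = \bar X$) to move the $\sharp$ across the product: $(X^\dagger v)^\sharp = (-1)^{\bar X\bar v} v^\sharp (X^\dagger)^\sharp$, so the double sign cancels and the right side simplifies to $v^\sharp (X^\dagger)^\sharp M w$. Nondegeneracy of $\varphi$ (equivalently, invertibility of $M$) lets me conclude the matrix identity
\[
    (X^\dagger)^\sharp = M X M^{-1}.
\]

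Next I would apply $\sharp$ to both sides. On the left, $\bigl((X^\dagger)^\sharp\bigr)^\sharp = S X^\dagger S$ by the square-matrix case of \cref{underground} (which says $(Y^\sharp)^\sharp = SYS$, using $S = S_{m|n}$ throughout). On the right, I compute $(MXM^{-1})^\sharp$ by two applications of \cref{sharpie}, paying careful attention to the signs $(-1)^{\bar M\bar X}$ and $(-1)^{(\bar M+\bar X)\bar M}$; one also needs $(M^{-1})^\sharp = (-1)^{\bar M}(M^\sharp)^{-1}$, obtained by applying $\sharp$ to $MM^{-1} = I$. The various $(-1)^{\bar M}$ and $(-1)^{\bar M\bar X}$ factors cancel in pairs, yielding
\[
    (MXM^{-1})^\sharp = (M^\sharp)^{-1} X^\sharp M^\sharp.
\]
Therefore $SX^\dagger S = (M^\sharp)^{-1} X^\sharp M^\sharp$, so $X^\dagger = S (M^\sharp)^{-1} X^\sharp M^\sharp S$.

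Finally I would massage this into the stated form by noting $S^2 = I$, so $S(M^\sharp)^{-1} = (M^\sharp S)^{-1}$, giving $X^\dagger = (M^\sharp S)^{-1} X^\sharp M^\sharp S$. The only subtle point is keeping the parity signs straight through the two applications of \cref{sharpie}; everything else is formal manipulation, so this should be a short and purely computational argument.
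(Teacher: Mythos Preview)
Your proposal is correct and follows essentially the same route as the paper: write out the defining identity in matrix form to obtain $(X^\dagger)^\sharp M = MX$, then apply $\sharp$ and solve for $X^\dagger$. The only difference is cosmetic---the paper applies $\sharp$ directly to $(X^\dagger)^\sharp M = MX$ rather than first inverting $M$, which avoids the need to compute $(M^{-1})^\sharp$ and makes the sign bookkeeping slightly lighter.
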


\begin{proof}
    For $v,w \in A^{m|n}$ and $X \in \fgl(m|n,A)$, we have
    \[
        (-1)^{\bar{X}\bar{v}} \varphi(X^\dagger v, w)
        \overset{\cref{sharpie}}{=} v^\sharp (X^\dagger)^\sharp M w
        \qquad \text{and} \qquad
        \varphi(v,Xw) = v^\sharp MX w.
    \]
    Thus
    \[
        (X^\dagger)^\sharp M = M X
        \implies M^\sharp S X^\dagger S = X^\sharp M^\sharp
        \implies X^\dagger = (M^\sharp S)^{-1} X^\sharp M^\sharp S.
        \qedhere
    \]
\end{proof}

For the remainder of this section $(\DD,\star)$, will denote an involutive division superalgebra over $\kk \in \{\R,\C\}$.  Our goal is to classify the $(\nu,\star)$-superhermitian forms over such superalgebras.  An \emph{even} $(\nu,\star)$-superhermitian form $V$ is equivalent to an even $(-\nu,\star)$-superhermitian form on $\Pi V$.  Thus, for even forms, we will only treat the $(1,\star)$-superhermitian case.  For \emph{odd} forms, we will need to treat both the $\nu=1$ and $\nu=-1$ cases.  (However, these lead to isomorphic Lie algebras; see \cref{wine}.)

An even $(1,\star)$-superhermitian form on a $\DD$-supermodule $V = V_0 \oplus V_1$ corresponds, when viewing $V$ as a (non-super) $\kk$-module, to a superhermitian form on $V_0$ and an skew-superhermitian form on $V_1$.  This allows us to use well-known results classifying such forms up to equivalence.  (See, for example, \cite{Lew82}.)  They are typically classified by dimension or signature.

\subsection{Case $(\C,\id)$, $\kk = \C$, even form\label{subsec:hermformCid}}

There are no even nondegenerate $(1,\id)$-superhermitian forms on $\C^{m|n}$ when $n \in 2\Z + 1$.  Every even nondegenerate $(1,\id)$-superhermitian form on $\C^{m|2n}$ is equivalent to
\begin{equation} \label{hermformCid}
    \varphi_{m|2n} \colon (v,w) \mapsto
    v^\transpose
    \begin{pmatrix}
        I_m & 0 & 0 \\
        0 & 0 & I_n \\
        0 & -I_n & 0
    \end{pmatrix}
    w.
\end{equation}
Then $G(\varphi_{m|2n})$ is the \emph{complex orthosymplectic supergroup} $\OSp(m|2n,\C)$.  We have
\[
    G_\rd(\varphi_{m|2n}) = \rO(m,\C) \times \Sp(2n,\C)
    \qquad \text{and} \qquad
    \fg(\varphi_{m|2n}) = \osp(m|2n,\C).
\]
The group $\Sp(2n,\C)$ is connected, whereas, when $m \ge 1$, $\rO(m,\C)$ has two connected components: the identity component $\SO(n,\C)$ and the elements of $\rO(m,\C)$ with determinant $-1$.  It follows that, for any nondegenerate even $(\nu,\id)$-superhermitian form $\varphi$ on $\C^{m|n}$, the group $G_\rd(\varphi)$ has two connected components.  Any $X \in G_\rd(\varphi)$ with $\det(X)=-1$ is in the connected component not containing the identity.

\subsection{Case $(\R,\id)$, $\kk=\R$, even form\label{subsec:hermformR}}

There are no even nondegenerate  $(1,\id)$-superhermitian forms on $\R^{m|n}$ when $n \in 2\Z + 1$.  Every even nondegenerate  $(1,\id)$-superhermitian form on $\R^{m|2n}$ is equivalent to
\begin{equation} \label{hermformR}
    \varphi_{p,q|2n} \colon (v,w) \mapsto
    v^\transpose
    \begin{pmatrix}
        I_p & 0 & 0 & 0 \\
        0 & -I_q & 0 & 0 \\
        0 & 0 & 0 & I_n \\
        0 & 0 & -I_n & 0
    \end{pmatrix}
    w,\qquad v,w \in \R^{m|2n},
\end{equation}
for some $p,q \in \N$, $p+q=m$.  Furthermore, $\varphi_{p,q|2n} \sim \varphi_{p',q'|2n}$ if and only if $(p',q') = (p,q)$ or $(p',q') = (q,p)$.  The supergroup $G(\varphi_{p,q|2n})$ is the \emph{indefinite orthosymplectic supergroup} $\OSp(p,q|2n,\R)$.  We have
\[
    G_\rd(\varphi_{p,q|2n}) = \rO(p,q) \times \Sp(2n,\R)
    \qquad \text{and} \qquad
    \fg(\varphi_{p,q|2n}) = \osp(p,q|2n,\R).
\]
The real symplectic group $\Sp(2n,\R)$ is connected.  When $p,q \ge 1$, the indefinite orthogonal group $\rO(p,q)$ has four connected components, corresponding to the two choices $\pm 1$ of determinant for the restriction to the two subspaces $\R^p \times 0^q$ and $0^p \times \R^q$ of $\R^m$.  When $p=0$ or $q=0$, but $p+q \ge 1$, $\rO(p,q)$ has two connected components.

Since $\varphi_{p,q|2n}^\C$ is equivalent to the form $\varphi_{m|2n}$ of \cref{hermformCid}, it follows from \cref{compRform} that
\[
    \osp(p,q|2n,\R)^\C
    = \fg(\varphi_{p,q|2n})^\C
    \cong \fg(\varphi_{p,q|2n}^\C)
    \cong \osp(m|2n,\C).
\]

\subsection{Case $(\C,\star)$, $\kk = \R$, even form\label{subsec:hermformCstar}}

Every even nondegenerate  $(1,\star)$-superhermitian form on $\C^{m|n}$ is equivalent to
\begin{equation} \label{form:Cstar}
    \varphi_{p,q|r,s} \colon (v,w) \mapsto
     v^\sharp
    \begin{pmatrix}
        I_p & 0 & 0 & 0 \\
        0 & -I_q & 0 & 0 \\
        0 & 0 & iI_r & 0 \\
        0 & 0 & 0 & -iI_s
    \end{pmatrix}
    w
\end{equation}
for some $p,q,r,s \in \N$, $p+q=m$, $r+s=n$.  Furthermore, $\varphi_{p,q|r,s} \sim \varphi_{p',q'|r',s'}$ if and only if $(p',q',r',s') \in \{(p,q,r,s), (q,p,s,r), (r,s,p,q), (s,r,q,p)\}$.  We call $\rU(p,q|r,s) := G(\varphi_{p,q|r,s})$ the \emph{indefinite unitary supergroup}.  We have
\[
    G_\rd(\varphi_{p,q|r,s}) = \rU(p,q) \times \rU(r,s)
    \qquad \text{and} \qquad
    \fg(\varphi_{p,q|r,s}) = \fu(p,q|r,s).
\]
Since the indefinite unitary group $U(p,q)$ is connected for all $p,q \in \N$, it follows that $G_\rd(\varphi)$ is connected for any nondegenerate even $(\nu,\star)$-superhermitian form $\varphi$ on $\C^{m|n}$.
\details{
    For any matrix $X \in \rU(p,q)$, there exists a path to an element of $\mathrm{SU}(p,q)$.  Indeed, we can take the path $(\det X)^{-t} X$, $0 \le t \le 1$.  By \cite[Prop.~1.145]{Kna02}, $\mathrm{SU}(p,q)$ is connected.  Hence $\rU(p,q)$ is connected.
}

It follows from \cref{compCform} that we have an isomorphism of complex Lie superalgebras
\[
    \fu(p,q|r,s)^\C
    = \fg(\varphi_{p,q|r,s})^\C
    \cong \fgl(m|n,\C).
\]

\subsection{Case $(\HH,\star)$, $\kk=\R$, even form\label{subsec:hermformH}}

Every even nondegenerate  $(1,\star)$-superhermitian form on $\HH^{m|n}$ is equivalent to
\begin{equation} \label{form:H}
    \varphi_{p,q|n} \colon (v,w) \mapsto
     v^\sharp
    \begin{pmatrix}
        I_p & 0 & 0 \\
        0 & -I_q & 0 \\
        0 & 0 & j I_n
    \end{pmatrix}
    w
\end{equation}
for some $p,q \in \N$, $p+q=m$.  (See, for example, \cite[\S 5, \S 6]{Lew82}.)  We have
\[
    \varphi_{p,q|n} \sim \varphi_{p',q'|n'}
    \iff
    (p',q',n) \in \{(p,q,n), (q,p,n)\}.
\]
We call $\OSp^*(n|p,q) := G(\varphi_{p,q|n})$ the \emph{quaternionic orthosymplectic supergroup}.  We have
\[
    G_\rd(\varphi_{p,q|n}) = \rO(n,\HH) \times \rU(p,q,\HH)
    \qquad \text{and} \qquad
    \fg(\varphi_{p,q|n}) = \osp^*(n|p,q),
\]
where $\rO(n,\HH)$ is the quaternionic orthogonal group, sometimes denoted $\rO^*(2n)$ in the literature, and $\rU(p,q,\HH)$ is the indefinite quaternionic unitary group, which is equal to the indefinite symplectic group $\Sp(p,q)$.
\details{
    We have
    \begin{align*}
        \begin{pmatrix}
            X & 0 \\
            0 & Y
        \end{pmatrix}
        \in G_\rd(\varphi_{p,q|n})
        &\iff
        \begin{pmatrix}
            X^\sharp & 0 \\
            0 & Y^\sharp
        \end{pmatrix}
        \begin{pmatrix}
            I_p & 0 & 0 \\
            0 & -I_q & 0 \\
            0 & 0 & j I_n
        \end{pmatrix}
        \begin{pmatrix}
            X & 0 \\
            0 & Y
        \end{pmatrix}
        =
        \begin{pmatrix}
            I_p & 0 & 0 \\
            0 & -I_q & 0 \\
            0 & 0 & j I_n
        \end{pmatrix}
        \\ &\iff
        X^\sharp
        \begin{pmatrix}
            I_p & 0 \\
            0 & -I_q
        \end{pmatrix}
        X
        =
        \begin{pmatrix}
            I_p & 0 \\
            0 & -I_q
        \end{pmatrix}
        \quad \text{and} \quad
        Y^\sharp j Y = jI_n.
    \end{align*}
    The first equality is equivalent to $X \in U(p,q,\HH)$.  Viewing $Y$ as an element of $\Mat_{2n}(\C)$ and using \cref{mayor}, we have
    \[
        Y^\sharp j Y = jI_n
        \iff J Y^\transpose Y = J
        \iff Y^\transpose Y = I
        \iff Y \in \rO(n,\HH).
    \]
}
The notation for $\fg(\varphi_{p,q|n})$ is not consistent in the literature.  For example, it is denoted $\osp^*(n|2m,2p)$ in \cite{Ser83}.  The indefinite symplectic group $\Sp(p,q)$ is connected; see \cite[Prop.~1.145]{Kna02}.  The quaternionic orthogonal group $\rO(n,\HH)$ has two connected components when $n \ge 1$.  Viewing elements of $\rO(n,\HH)$ as $2n \times 2n$ complex matrices, the identity component, $\SO(n,\HH)$, of $\rO(n,\HH)$ consists of those elements with determinant $1$, while the other component consists of those elements with determinant $-1$.
\details{
    An element $X \in \GL(2n,\C)$ lies in $\rO(n,\HH)$ if and only if $J X^\star J^{-1} = X$ (see \cref{aiel}) and
    \begin{align*}
        (Xv)^\sharp (jI) Xw &= v^\sharp (jI) w \qquad \forall\ v,w \in \C^{2n}
        \\ \iff
        v^\sharp X^\sharp (jI) Xw &= v^\sharp (jI) w \qquad \forall\ v,w \in \C^{2n}
        \\ \overset{\cref{mayor}}{\iff}
        J v^\transpose X^\transpose Xw &= J v^\transpose w \qquad \forall\ v,w \in \C^{2n}
        \\ \iff
        X^\transpose X &= I.
    \end{align*}
    The condition $X^\transpose X = I$ forces $\det(X) = \pm 1$.  The subgroup $\SO(m,\HH)$ is connected by \cite[Prop.~1.145]{Kna02}.  If $X,Y \in \rO(n,\HH)$ satisfy $\det(X) = \det(Y) = -1$, then we have $X = (XY^{-1})Y$.  Since $\det(XY^{-1}) = \det(X)\det(Y)^{-1} = 1$, there is path in $\SO(m,\HH)$ from $XY^{-1}$ to the identity.  This gives a path from $X$ to $Y$.  Hence, $\{X \in \rO(m,\HH) : \det(X)=-1\}$ is connected.
}
It follows that, for any nondegenerate even $(\nu,\star)$-superhermitian form $\varphi$ on $\HH^{m|n}$, $n \ge 1$, the group $G_\rd(\varphi)$ has two connected components.  Any $X \in G_{\rd}(\varphi)$ with $\det(X)=-1$ is in the connected component not containing the identity.

\begin{lem}
    We have an isomorphism of complex Lie superalgebras
    \[
        \fg(\varphi_{p,q|n})^\C  \cong \osp(2n|2m,\C).
    \]
    (Note the reversal in the order of $m$ and $n$ on the right-hand side.)
\end{lem}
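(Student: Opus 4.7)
The plan is to reduce to the complex $(\nu,\id)$-supersymmetric case via \cref{compHform}, and then match the resulting data to the defining form of $\osp(2n|2m,\C)$ from \cref{subsec:hermformCid}, with the crucial role played by a parity shift.

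First, I would apply \cref{compHform} to obtain an isomorphism
\[
    \fg(\varphi_{p,q|n})^\C \cong \fg(\varphi_{p,q|n}^j),
\]
where $\varphi_{p,q|n}^j = \proj^\HH_{j\C} \circ \varphi_{p,q|n}$ is the $\C$-valued form on $\C^{2m|2n}$ obtained by extracting the $j\C$-component of $\varphi_{p,q|n}$, using the identification $\HH = \C \oplus j\C$ from \cref{jcong}. By \cref{fries}, since $\varphi_{p,q|n}$ is $(\nu,\star)$-superhermitian with $\nu=1$ and parity $\sigma=0$, the form $\varphi_{p,q|n}^j$ is a nondegenerate \emph{even} $(-1,\id)$-supersymmetric form on $\C^{2m|2n}$.

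Next, I would invoke the general principle, recalled in \cref{sec:hermitian}, that an even $(\nu,\star)$-superhermitian form on a supermodule $V$ is equivalent to an even $(-\nu,\star)$-superhermitian form on $\Pi V$. Applied with $\star = \id$ and $\nu = -1$, this means that $(\C^{2m|2n}, \varphi_{p,q|n}^j)$ corresponds to $(\Pi\C^{2m|2n}, \tilde\varphi) = (\C^{2n|2m}, \tilde\varphi)$, where $\tilde\varphi$ is a nondegenerate even $(1,\id)$-supersymmetric form. Since $\End_\C(V) = \End_\C(\Pi V)$ as Lie superalgebras, the defining condition $\varphi^j(Xv,w) = -(-1)^{\bar X \bar v}\varphi^j(v,Xw)$ is unchanged under this identification, so
\[
    \fg(\varphi_{p,q|n}^j) \cong \fg(\tilde\varphi).
\]

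Finally, by the classification in \cref{subsec:hermformCid}, every nondegenerate even $(1,\id)$-supersymmetric form on $\C^{2n|2m}$ is equivalent to the standard form $\varphi_{2n|2m}$ of \cref{hermformCid}, and the corresponding Lie superalgebra is $\osp(2n|2m,\C)$ by definition. Assembling these equivalences yields
\[
    \fg(\varphi_{p,q|n})^\C \cong \fg(\varphi_{p,q|n}^j) \cong \fg(\tilde\varphi) \cong \osp(2n|2m,\C),
\]
with the swap of $m$ and $n$ on the right-hand side arising precisely from the parity shift in the second step. The only non-routine point is ensuring that the sign conventions in \cref{supersymmetric} for $\tilde\varphi$ on $\Pi V$ are consistent with those for $\varphi^j$ on $V$; this is a direct diagrammatic verification using the isomorphism \cref{quirk} of $\Pi$ with the identity up to sign, and should not present a serious obstacle.
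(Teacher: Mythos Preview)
Your proposal is correct and follows essentially the same route as the paper: apply \cref{compHform} to reduce to $\fg(\varphi_{p,q|n}^j)$, observe via \cref{fries} that $\varphi^j$ is an even $(-1,\id)$-supersymmetric form on $\C^{2m|2n}$, then use the parity shift to convert it to a $(1,\id)$-supersymmetric form on $\C^{2n|2m}$ and invoke the classification in \cref{subsec:hermformCid}. The paper's proof is just the terse version of what you wrote; your only superfluous remark is the appeal to \cref{quirk} at the end, which is not really the relevant sign check here --- the compatibility of the $\fg$-conditions under parity shift is a direct computation with the definitions in \cref{supersymmetric}, not a statement about $\Pi^2$.
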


\begin{proof}
    Consider the form $(\varphi_{p,q|n})^j$ in the notation of \cref{subsec:compHform}.  Since a skew-supersymmetric form on $\C^{2m|2n}$ is equivalent to a supersymmetric form on $\C^{2n|2m}$ (via the parity shift map $\C^{2m|2n} \to \C^{2n|2m}$), we see that $(\varphi_{p,q|n})^j$ is equivalent to the form $\varphi_{2n|2m}$ given by replacing $m$ and $n$ in \cref{hermformCid} by $2n$ and $2m$, respectively.  Thus, the result follows from \cref{compHform}.
\end{proof}

\subsection{Case $(\Cl(\C),\star)$, $\kk=\R$, even form\label{subsec:hermformClstar}}

In light of \cref{fold}, we assume in this case that $n=0$.  Every even nondegenerate  $(1,\star)$-superhermitian form on $\Cl(\C)^m$ is equivalent to
\begin{equation} \label{form:Clstar}
    \varphi_{p,q} \colon (v,w) \mapsto
    v^\sharp
    \begin{pmatrix}
        I_p & 0 \\
        0 & -I_q
    \end{pmatrix}
    w
\end{equation}
for unique $p,q \in \N$, $p+q=m$.
\details{
    The restriction to the even part $\C^m \subseteq \Cl(\C)^m$ must give a $(1,\star)$-superhermitian form on $\C^m$, which must of the form \cref{form:Clstar}; see \cref{subsec:hermformCstar}.  Then, for $v,w \in \C^m$, we have
    \[
        \varphi(\varepsilon v, \varepsilon w)
        = \varepsilon i \varphi(v,w) \varepsilon
        = i \varphi(v,w)
        = i v^\sharp
        \begin{pmatrix}
            I_p & 0 \\
            0 & -I_q
        \end{pmatrix}
        w
        = (\varepsilon v)^\sharp
        \begin{pmatrix}
            I_p & 0 \\
            0 & -I_q
        \end{pmatrix}
        (\varepsilon w).
    \]
    Hence $\varphi$ is of the form \cref{form:Clstar} on all of $\Cl(\C)^m$.
}
We call $\rUQ(p,q) := G(\varphi_{p,q})$ the \emph{indefinite isomeric unitary supergroup}.  We have
\[
    G_\rd(\varphi_{p,q}) = \rU(p,q).
\]
The notation for $\fg(\varphi_{p,q})$ is not consistent in the literature.  It is sometimes denoted by $\mathfrak{uq}(p,q)$.  Its simple quotient is denoted $upsq(n,p)$ in \cite{Ser83}.  Since the indefinite unitary group $\rU(p,q)$ is connected, it follows that, for any nondegenerate $(\nu,\star)$-superhermitian form on $\Cl(\C)^m$, the group $G_\rd(\varphi)$ is connected.

It follows from \cref{compCform} that we have an isomorphism of complex Lie superalgebras
\[
    \fg(\varphi_{p,q})^\C \cong \fgl(m,\Cl(\C)) = \fq(m,\C).
\]

\subsection{Odd forms\label{subsec:hermit-periplectic}}

Let $\kk \in \{\R,\C\}$ and let $(\DD,\star)$ be an arbitrary involutive division $\kk$-superalgebra.  If $\kk=\R$ and $\varphi$ is a nondegenerate $(\nu,\star)$-superhermitian form on $\Cl(\C)^{m|n}$, then $\varepsilon(1+i) \varphi$ is a nondegenerate $(\nu,\star)$-superhermitian form on $\Cl(\C)$ of parity $\bar{\varphi}+1$.
\details{
    Suppose $\varphi$ is a nondegenerate $(\nu,\star)$-superhermitian form on $\Cl(\C)^{m|n}$.  Then, for all $v,w \in \Cl(\C)^{m|n}$, we have
    \[
        \varepsilon(1+i)\varphi(v,w)
        = \nu (-1)^{\bar{v}\bar{w}} \varepsilon(1+i) \varphi(w,v)^\star
        = \nu (-1)^{\bar{v}\bar{w}} (\varepsilon(1+i) \varphi(w,v))^\star.
    \]
}
Since we already treated the even forms above, we assume in this subsection that $\DD \in \{\R,\C,\HH\}$.

An odd form on $\DD^{m|n}$ can only be nondegenerate when $m=n$.  Any odd nondegenerate $(\nu,\star)$-superhermitian form on $\DD^{m|m}$ is equivalent to
\begin{equation} \label{form:odd}
    \varphi_m^\nu \colon (v,w) \mapsto v^\sharp
    \begin{pmatrix}
        0 & I_m \\
        - \nu I_m & 0
    \end{pmatrix}
    w.
\end{equation}
\details{
    \Cref{drop} shows that $\varphi$ is odd and $(\nu,\star)$-superhermitian if and only if
    \[
        M =
        \begin{pmatrix}
            0 & M_{01} \\
            - \nu M_{01}^{\star,\transpose} & 0
        \end{pmatrix}
    \]
    form some $M_{01} \in \Mat_m(A)$.  Then we can change basis to make $M_{01} = I_m$.
}
(Recall, from \cref{underground}, the sign appearing in $v^\sharp$.)  With this form, it follows from \cref{purple} that
\[
    \begin{pmatrix}
        X_{00} & X_{01} \\
        X_{10} & X_{11}
    \end{pmatrix}^\dagger
    =
    \begin{pmatrix}
        X_{11}^\sharp & - \nu X_{01}^\sharp \\
        \nu X_{10}^\sharp & X_{00}^\sharp.
    \end{pmatrix}.
\]
\details{
    We have
    \[
        \begin{pmatrix}
            0 & I_m \\
            - \nu I_m & 0
        \end{pmatrix}^\sharp
        S_{m|m}
        =
        \begin{pmatrix}
            0 & - \nu I_m \\
            I_m & 0
        \end{pmatrix}.
    \]
    Hence, by \cref{purple},
    \begin{align*}
        \begin{pmatrix}
            X_{00} & X_{01} \\
            X_{10} & X_{11}
        \end{pmatrix}^\dagger
        &=
        \begin{pmatrix}
            0 & I_m \\
            - \nu I_m & 0
        \end{pmatrix}
        \begin{pmatrix}
            X_{00} & X_{01} \\
            X_{10} & X_{11}
        \end{pmatrix}^\sharp
        \begin{pmatrix}
            0 & - \nu I_m \\
            I_m & 0
        \end{pmatrix}
        \\
        &=
        \begin{pmatrix}
            0 & I_m \\
            - \nu I_m & 0
        \end{pmatrix}
        \begin{pmatrix}
            X_{00}^\sharp & -X_{10}^\sharp \\
            X_{01}^\sharp & X_{11}^\sharp
        \end{pmatrix}
        \begin{pmatrix}
            0 & - \nu I_m \\
            I_m & 0
        \end{pmatrix}
        \\
        &=
        \begin{pmatrix}
            0 & I_m \\
            - \nu I_m & 0
        \end{pmatrix}
        \begin{pmatrix}
            - X_{10}^\sharp & - \nu X_{00}^\sharp \\
            X_{11}^\sharp & - \nu X_{01}^\sharp
        \end{pmatrix}
        =
        \begin{pmatrix}
            X_{11}^\sharp & -\nu X_{01}^\sharp \\
            \nu X_{10}^\sharp & X_{00}^\sharp.
        \end{pmatrix}.
    \end{align*}
}
Thus,
\[
    \fg(\varphi_m^\nu)
    =
    \left\{
        \begin{pmatrix}
            X & Y \\
            Z & - X^\sharp
        \end{pmatrix}
        :
        X,Y,Z \in \Mat_m(\DD),\ Y = \nu Y^\sharp,\ Z = -\nu Z^\sharp
    \right\}.
\]
In particular, we have an isomorphism of Lie superalgebras
\begin{equation} \label{wine}
    \fg(\varphi_m^\nu) \xrightarrow{\cong} \fg(\varphi_m^{-\nu}),\qquad
    X \mapsto X^\#.
\end{equation}
We also have
\[
    G_\rd(\varphi_m^\nu)
    =
    \left\{
        \begin{pmatrix}
            X & 0 \\
            0 & (X^\sharp)^{-1}
        \end{pmatrix}
        : A = \GL(m,\DD)
    \right\}
    \cong \GL(m,\DD).
\]
\details{
    Since $\DD_1=0$, the elements of $G_\rd(\varphi_m^\nu)$ are block diagonal.  We have
    \begin{align*}
        \begin{pmatrix}
            X & 0 \\
            0 & Y
        \end{pmatrix}
        \in G_\rd(\varphi_m^\nu)
        &\iff
        \begin{pmatrix}
            X^\sharp & 0 \\
            0 & Y^\sharp
        \end{pmatrix}
        \begin{pmatrix}
            0 & I_m \\
            -\nu I_m & 0
        \end{pmatrix}
        \begin{pmatrix}
            X & 0 \\
            0 & Y
        \end{pmatrix}
        =
        \begin{pmatrix}
            0 & I_m \\
            -\nu I_m & 0
        \end{pmatrix}
        \\ &\iff
        \begin{pmatrix}
            0 & X^\sharp Y \\
            - \nu Y^\sharp X & 0
        \end{pmatrix}
        =
        \begin{pmatrix}
            0 & I_m \\
            -\nu I_m
        \end{pmatrix}
        \\ &\iff
        Y = (X^\sharp)^{-1}.
    \end{align*}
}
It follows that $G_\rd(\varphi)$ is connected for any nondegenerate odd $(\nu,\star)$-superhermitian form $\varphi$ on $\DD^{m|n}$.  When $(\DD,\star)$ is equal to $(\R,\id)$ or $(\C,\id)$, the Lie superalgebras
\[
    \fg(\varphi_m^\nu) = \fp(m,\R)
    \qquad \text{and} \qquad
    \fg(\varphi_m^\nu) = \fp(m,\C)
\]
are the real and complex periplectic Lie superalgebras, respectively.  The notation for the other cases is less standard.  When $(\DD,\star) = (\C,\star)$, the Lie superalgebra $\fg(\varphi_m)$ is sometimes denoted $\mathfrak{up}(m)$.  When $(\DD,\star) = (\HH,\star)$, $\fg(\varphi_m^\nu)$ is sometimes denoted $\fp^*(m)$.  Their simple quotients are denoted $us\pi(m)$ and $s\pi^*(m)$, respectively, in \cite{Ser83}.

It follows from \cref{compRform,compCform,compHform,wine} that we have isomorphisms of complex Lie superalgebras
\[
    \fg(\varphi_m^\nu)^\C \cong
    \begin{cases}
        \fp(m,\C) & \text{if } (\DD,\star) = (\R,\id), \\
        \fgl(m|m,\C) & \text{if } (\DD,\star) = (\C,\star), \\
        \fp(2m,\C) & \text{if } (\DD,\star) = (\HH,\star).
    \end{cases}
\]

\section{Classification of real forms}

A classification of the real forms of the classical Lie algebras can be found in \cite[\S26.1]{FH91}.  The classification of the real simple Lie superalgebras was first given in \cite{Ser83}.  In particular, \cite[Table~3]{Ser83} lists all the real forms of the simple subquotients of $\fgl(m|n,\C)$, $\osp(m|2n,\C)$, $\fp(m,\C)$, and $\fq(m,\C)$.  However, because $\fgl(m|n,\C)$, $\fp(m,\C)$ and $\fq(m,\C)$ are \emph{not} simple, they are not covered by this classification.  Since the real forms of these Lie superalgebras do not seem to have appeared in the literature, we give a classification here.

Throughout this subsection $\fg$ denotes one of the superalgebras $\fgl(m|n,\C)$, $\fq(m,\C)$, or $\fp(m,\C)$, $m,n \in \N$.  Let
\[
    \fg' = [\fg,\fg]
    \qquad \text{and} \qquad
    \fg'' = \fg'/Z(\fg'),
\]
where $[\fg,\fg]$ denotes the ideal of $\fg$ generated by $[X,Y]$, $X,Y \in \fg$, and $Z(\fg') = \{ X \in \fg' : [X,\fg'] = 0\}$ denotes the center of $\fg'$.  For $m,n \in \N$,
\begin{align*}
    \fgl(m|n,\C)'
    &= \fsl(m|n,\C)
    = \{ X \in \fgl(m|n,\C) : \str(X) = 0 \},
    \\
    \fq(m,\C)'
    &= \{X \in \fq(m,\C) = \fgl(m,\Cl(\C)) : \tr(X)_1 = 0 \},
    \\
    \fp(m,\C)'
    &=
    \left\{
        \begin{pmatrix} X_{00} & X_{01} \\ X_{10} & -X_{00}^\transpose \end{pmatrix} \in \fgl(m|m,\C)
        : \tr(X_{00}) = 0,\ X_{01}^\transpose = X_{01},\ X_{10}^\transpose = - X_{10}
    \right\}.
\end{align*}
Since, for $m \ne n$,
\[
    Z(\fsl(m|n,\C)) = 0,\quad
    Z(\fsl(m|m,C)) = \C I_{2m},\quad
    Z(\fq(m,\C)') = \C I_m,\quad
    Z(\fp(m,\C)') = 0,
\]
we have
\begin{align*}
    \fgl(m|n,\C)'' &= \fsl(m|n,\C),
    \\
    \fgl(m|m,\C)'' &= \fsl(m|m,\C)/\C I_{2m} = \fpsl(m|m,\C),
    \\
    \fq(m,\C)'' &= \fq(m,\C)'/\C I_m,
    \\
    \fp(m,\C)'' &= \fp(m,\C)'.
\end{align*}

The adjoint action of $\fg$ on itself is given by
\[
    \Ad \colon \fg \to \End_\fg(\fg),\qquad
    \Ad(X)(Y) = [X,Y],\quad X,Y \in \fg.
\]
This restricts to give an action $\Ad' \colon \fg \to \End_\fg(\fg')$ of $\fg$ on $\fg'$.

\begin{lem} \label{hype}
    We have $\ker(\Ad') = Z(\fg)$.
\end{lem}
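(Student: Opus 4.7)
The plan is to prove both inclusions. The inclusion $Z(\fg) \subseteq \ker(\Ad')$ is immediate: an element supercommuting with all of $\fg$ certainly supercommutes with the subspace $\fg' \subseteq \fg$. For the reverse inclusion, I would argue case by case on the three possibilities for $\fg$, in each case exhibiting a sufficiently rich subset $\fh \subseteq \fg'$ such that its supercentralizer in $\fg$ is already $Z(\fg)$.

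For $\fg = \fgl(m|n,\C)$, the ideal $\fg' = \fsl(m|n,\C)$ contains every matrix unit $E_{ij}$ with $i \neq j$. Imposing $[X,E_{ij}]=0$ for all such units and computing entry by entry gives $X_{ki}\delta_{jl} = (-1)^{\bar X\,\bar{E_{ij}}}\delta_{ik}X_{jl}$ for all $k,l$, which first forces $X$ to be diagonal and then $X_{ii} = (-1)^{\bar X\,\bar{E_{ij}}}X_{jj}$. When $X$ is even this yields $X_{ii}=X_{jj}$ across both the even-even and odd-odd blocks (and, using odd matrix units $E_{ij}$ with $i \le m < j$, also across blocks), so $X = \lambda I$; when $X$ is odd, its diagonal entries vanish by parity, so $X=0$. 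In either case $X \in \C I = Z(\fgl(m|n,\C))$.

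For $\fg = \fq(m,\C) = \fgl(m,\Cl(\C))$, write $W = A + \varepsilon B$ with $A,B \in \Mat_m(\C)$. The even part $\Mat_m(\C)$ of $\fg$ is contained in $\fg'$, so the classical centralizer calculation for $\fgl(m,\C)$ forces $A = aI$ and $B = bI$. The odd elements $\varepsilon Y$ with $\tr Y = 0$ also lie in $\fg'$; using $\varepsilon^2 = 1$ and the fact that $\varepsilon$ supercommutes with $\C$, a direct computation gives $[\varepsilon bI,\varepsilon Y] = 2bY$, which must vanish for all traceless $Y$, forcing $b=0$. Hence $W = aI \in Z(\fq(m,\C))$.

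For $\fg = \fp(m,\C)$, write $X = \bigl(\begin{smallmatrix} A & B \\ C & -A^\transpose\end{smallmatrix}\bigr)$. Imposing $[X,Y]=0$ for every even $Y = \mathrm{diag}(M,-M^\transpose)$ with $\tr M = 0$ in $\fg'$ forces $[A,M]=0$, $MB+BM^\transpose=0$, and $CM+M^\transpose C=0$ for all traceless $M$. Testing the latter two identities against $M = E_{kl}$ ($k\neq l$) and $M = E_{kk}-E_{jj}$ forces $B=C=0$ and $A = aI$. A final test against an odd element $X' = \bigl(\begin{smallmatrix} 0 & B' \\ C' & 0\end{smallmatrix}\bigr) \in \fg'$ yields $[aS,X'] = 2a\bigl(\begin{smallmatrix} 0 & B' \\ -C' & 0\end{smallmatrix}\bigr)$ with $S = \mathrm{diag}(I,-I)$, so $a=0$, and thus $\ker(\Ad') = 0 = Z(\fp(m,\C))$.

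The main obstacle is not any single deep step but rather the careful bookkeeping of supersigns and block structure in each case, together with the fact that the three algebras behave very differently modulo $\fg'$: for $\fgl$ one can directly use matrix units, while for $\fq$ the essential new input is the supercommutator identity $[\varepsilon I,\varepsilon Y] = 2Y$ arising from $\varepsilon^2 = 1$, and for $\fp$ one must use the symmetry/antisymmetry constraints on the off-diagonal blocks $B$ and $C$ to extract the final constraint from odd elements of $\fg'$.
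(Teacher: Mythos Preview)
Your approach matches the paper's: a case-by-case computation with explicit test elements drawn from $\fg'$. The paper in fact writes out only the $\fgl(m|n,\C)$ case in the main text and declares the other two analogous, so your treatment of $\fq$ and $\fp$ is more explicit than what appears there.

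One small gap: in the $\fp(m,\C)$ case with $m=2$, testing against even $Y=\mathrm{diag}(M,-M^\transpose)$ alone does not force $C=0$, since every $2\times 2$ antisymmetric $C$ satisfies $CM+M^\transpose C=0$ for all $M\in\fsl(2,\C)$. The fix is already implicit in your final step: the supercommutator $[X,X']$ with your odd test element $X'$ has, in addition to the odd component $2a\bigl(\begin{smallmatrix}0 & B'\\ -C' & 0\end{smallmatrix}\bigr)$ you record, an even component $\bigl(\begin{smallmatrix}B'C & 0\\ 0 & CB'\end{smallmatrix}\bigr)$ coming from the anticommutator of the odd parts; taking $B'=I$ forces $C=0$ before you extract $a=0$. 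So the strategy is sound, and only the ordering of the last two deductions needs adjusting for small $m$.
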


\begin{proof}
    We show this for the case $\fg = \fgl(m|n,\C)$, since the other cases are analogous.  Suppose $X = \sum_{r,s=1}^{m+n} a_{rs} E_{rs}$, $a_{rs} \in \C$, is a homogeneous element of $\ker(\Ad')$, where $E_{rs}$ denotes the matrix with a $1$ in position $(r,s)$, and a $0$ in all other positions.  Then, for all $1 \le t,u \le m+n$, we have
    \[
        0 = [X,E_{tu}]
        = \sum_{r=1}^{m+n} a_{rt} E_{ru} \pm \sum_{s=1}^{m+n} a_{us} E_{ts}.
    \]
    Thus, $a_{rs}=0$ whenever $r \ne s$.  So $X$ is diagonal, hence even.  Then, for all $1 \le t,u \le m+n$, we have
    \[
        0 = [X,E_{tu}]
        = (a_{tt} - a_{uu}) E_{tu}.
    \]
    Thus $a_{tt} = a_{uu}$, and so $X \in \C I_{m+n} = Z(\fgl(m|n,\C))$.
    \details{
        For $\fg = \fq_m(\C) = \fgl(m,\Cl(\C))$, the first part of the argument shows that any $X \in \ker(\Ad')$ must be diagonal.  In the second part, the matrices $E_{tu}$ are all even (even though $X$ may not be), and so the argument remains valid.

        For
        \[
            \fp(m,\C) =
            \left\{
                \begin{pmatrix} X_{00} & X_{01} \\ X_{10} & -X_{00}^\transpose \end{pmatrix}
                : X_{01}^\transpose = X_{01},\ X_{10}^\transpose = - X_{10}
            \right\},
        \]
        we only consider $1 \le t,u \le m$ and $m < t,u \le 2m$ in the first part of the argument.  This still shows that any $X \in \ker(\Ad')$ must be diagonal.  Similarly, the second part for the argument shows that $a_{tt} = a_{uu}$ if $1 \le t,u \le m$ or $m < t,u \le 2m$.  So it remains to show that $a_{mm} = a_{m+1,m+1}$.  To see this, note that $E_{m,m+1} + E_{1,2m} \in \fp(m,\C)'$ and
        \[
            0 = [X,E_{m,m+1} + E_{1,2m}]
            = (a_{mm} - a_{m+1,m+1}) E_{m,m+1} + (a_{11} - a_{2m,2m}) E_{1,2m}.
        \]
        This implies that $a_{mm} = a_{m+1,m+1}$, as desired.
    }
\end{proof}

A \emph{conjugate-linear involution} of $\fg$ (also called a \emph{real structure} on $\fg$) is an automorphism $\kappa$ of $\fg$, considered as a \emph{real} Lie superalgebra, satisfying $\kappa^2 = \id$ and $\kappa(aX) = a^\star \kappa(X)$ for all $a \in \C$.  Every real form of $\fg$ is isomorphic to
\[
    \fg^\kappa = \{ X \in \fg : \kappa(X) = X \}
\]
for some conjugate-linear involution $\kappa$ of $\fg$.

If $\kappa$ is a conjugate-linear involution of $\fg$, then $\kappa$ restricts to a conjugate-linear involution $\kappa'$ of $\fg'$, which, in turn, induces a conjugate-linear involution $\kappa''$ of $\fg''$.

\begin{lem} \label{sweep}
    Suppose $\kappa$ and $\chi$ are conjugate-linear involutions of $\fg$ such that $\kappa'' = \chi''$.  Then $\kappa' = \chi'$ and $\kappa(X) - \chi(Z) \in Z(\fg)$ for all $X \in \fg$.
\end{lem}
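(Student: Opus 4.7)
I plan to address the two assertions in sequence. For the first, introduce the $\R$-linear, parity-preserving map $\delta := \kappa' - \chi' \colon \fg' \to \fg'$. The hypothesis $\kappa'' = \chi''$ is exactly the statement that $\delta$ takes values in $Z(\fg')$. The key preliminary step is to check that $\delta$ vanishes on brackets: for $X, Y \in \fg'$, substituting $\chi'(X) = \kappa'(X) - \delta(X)$ and $\chi'(Y) = \kappa'(Y) - \delta(Y)$ into
\begin{equation*}
\delta([X,Y]) = [\kappa'(X), \kappa'(Y)] - [\chi'(X), \chi'(Y)]
\end{equation*}
and expanding, every cross term contains a factor in $Z(\fg')$ and therefore vanishes, yielding $\delta([X,Y]) = 0$.

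To upgrade this to $\delta = 0$ on all of $\fg'$, I will combine two observations. First, $\delta$ is parity-preserving while $Z(\fg')$ is purely even in each of the relevant cases---namely $\C I_{2m}$ for $\fgl(m|m,\C)$, $\C I_m$ for $\fq(m,\C)$, and $0$ for $\fgl(m|n,\C)$ with $m \ne n$ as well as for $\fp(m,\C)$---so $\delta$ automatically vanishes on $\fg'_1$. Second, a direct case-by-case verification gives $\fg'_0 \subseteq [\fg', \fg']$: the cases with $Z(\fg') = 0$ are vacuous; for $\fgl(m|m,\C)$ with $m \ge 2$ one writes $E_{il} = [E_{i,m+j}, E_{m+j,l}]$ (for $i \ne l$) and $E_{ii} + E_{m+j,m+j} = [E_{i,m+j}, E_{m+j,i}]$, which together generate $\fsl(m|m,\C)_0$; for $\fq(m,\C)$ with $m \ge 2$ the analogous odd brackets $[\varepsilon E_{ij}, \varepsilon E_{ji}] = E_{ii} + E_{jj}$ combined with $[\fq(m,\C)_0, \fq(m,\C)_0] = \fsl(m,\C)$ suffice. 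The two small cases $\fgl(1|1,\C)$ and $\fq(1,\C)$, where $\fg'$ fails to be perfect, require separate inspection: in $\fgl(1|1,\C)$ the parity argument kills $\delta(E_{12})$ and $\delta(E_{21})$, and $I_2 = [E_{12}, E_{21}]$ handles the remaining even direction; in $\fq(1,\C)$ any conjugate-linear involution of $\Cl(\C)$ restricts to ordinary complex conjugation on $\C = \fg'$ via the forced identity $\kappa(1) = \tfrac{1}{2}[\kappa(\varepsilon), \kappa(\varepsilon)] = 1$.

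Once $\kappa' = \chi'$ is in hand, I deduce part~2 as follows. Fix $X \in \fg$. For any $Y \in \fg'$, the ideal property gives $[X,Y] \in \fg'$, whence
\begin{equation*}
[\kappa(X), \kappa'(Y)] = \kappa([X,Y]) = \kappa'([X,Y]) = \chi'([X,Y]) = [\chi(X), \kappa'(Y)].
\end{equation*}
Therefore $[\kappa(X) - \chi(X), \kappa'(Y)] = 0$ for every $Y \in \fg'$, and since $\kappa'$ is an automorphism of $\fg'$ its image exhausts $\fg'$, so $\kappa(X) - \chi(X) \in \ker(\Ad')$. An appeal to \cref{hype}, which identifies $\ker(\Ad')$ with $Z(\fg)$, then completes the proof.

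The main obstacle will be the case analysis in part~1 establishing $\fg'_0 \subseteq [\fg', \fg']$; the sporadic cases $\fgl(1|1,\C)$ and $\fq(1,\C)$ are the subtle ones, where the parity argument together with an explicit identification of $\fg'_0$ with a commutator of $\fg'$ is essential. Once part~1 is secured, part~2 is a short ideal-plus-adjoint-kernel argument using the preceding lemma.
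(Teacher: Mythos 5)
Your second assertion is proved exactly as in the paper: the identity $[\kappa(X)-\chi(X),\kappa'(Y)]=0$ for all $Y\in\fg'$, surjectivity of $\kappa'$ on $\fg'$, and then \cref{hype}. Your first assertion, however, follows a genuinely different route. The paper argues that $\fg''_1=\fg_1$ forces $\kappa|_{\fg_1}=\chi|_{\fg_1}$ and then invokes ``$[\fg_1,\fg_1]=\fg'$''. That chain is fragile: for $\fq(m,\C)$ the odd part of $\fg'$ is the set of \emph{traceless} odd matrices, a proper subspace of $\fg_1$, and for $\fq(2,\C)$ the span of $[\fg'_1,\fg'_1]$ is only $\C I_2$. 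Your argument --- $\delta:=\kappa'-\chi'$ lands in the purely even $Z(\fg')$, kills brackets, hence vanishes on $\fg'_1$ by parity and on $\fg'_0$ once $\fg'_0\subseteq\operatorname{span}[\fg',\fg']$ --- sidesteps both problems, and your verifications for $\fgl(m|m,\C)$ with $m\ge 2$, for $\fq(m,\C)$ with $m\ge 2$ (where adjoining $[\fg'_0,\fg'_0]=\fsl(m,\C)$ is exactly what is needed), and for $\fgl(1|1,\C)$ are correct. This is a real improvement on the paper's write-up of the first half.

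The genuine gap is the case $\fq(1,\C)$. The identity $\kappa(1)=\tfrac12[\kappa(\varepsilon),\kappa(\varepsilon)]$ is correct, but its value is not forced to be $1$: writing $\kappa(\varepsilon)=\varepsilon z$, the condition $\kappa^2=\id$ only gives $|z|=1$, and $\tfrac12[\varepsilon z,\varepsilon z]=z^2$ can be any unit complex number. Concretely, $\chi(a+\varepsilon b):=-a^\star+\varepsilon i b^\star$ is a conjugate-linear involution of $\Cl(\C)=\fq(1,\C)$ restricting to $-(\cdot)^\star$ on $\fg'=\C$, whereas the standard $\kappa(a+\varepsilon b)=a^\star+\varepsilon b^\star$ restricts to $(\cdot)^\star$; since $\fg''=0$ here, the hypothesis $\kappa''=\chi''$ is vacuous and both conclusions of the lemma fail ($\kappa(\varepsilon)-\chi(\varepsilon)=\varepsilon(1-i)\notin Z(\fg)=\C$). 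So this degenerate case cannot be repaired by a sharper computation --- the statement itself needs $m\ge 2$ for $\fq$ (the paper's own proof also does not survive it). In your write-up you should exclude $\fq(1,\C)$ from the lemma and deal with its real forms separately in \cref{breakthrough}, rather than attempt the ``forced identity'' argument.
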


\begin{proof}
    Since the odd part of $\fg''$ is equal to the odd part of $\fg$, we have $\kappa|_{\fg_1} = \chi|_{\fg_1}$.  We see by inspection that $[\fg_1,\fg_1] = \fg'$.  Thus, $\kappa' = \chi'$.  Then, for all $X \in \fg$, $Y \in \fg'$, we have
    \[
        [\kappa(X), \chi(Y)]
        = [\kappa(X), \kappa(Y)]
        = \kappa([X,Y])
        = \chi([X,Y])
        = [\chi(X),\chi(Y)].
    \]
    It follows that $\Ad'(\kappa(X)) = \Ad'(\chi(X))$.  Hence, by \cref{hype}, we have $\kappa(X) - \chi(X) \in Z(\fg)$.
\end{proof}

\begin{prop} \label{breakthrough}
    Every real form of $\fgl(m|n,\C)$, $\osp(m|2n,\C)$, $\fp(m,\C)$, $\fq(m,\C)$, $m,n \in \N$, is isomorphic to either
    \begin{itemize}
        \item $\fgl(r|s;\DD)$ for a real division superalgebra $\DD$, or
        \item $\fg(\varphi)$ for a $(\nu,\star)$-superhermitian form $\varphi$ on $\DD^{r|s}$, where $(\DD,\star)$ is an involutive real division superalgebra,
    \end{itemize}
    for some $r,s \in \N$.
\end{prop}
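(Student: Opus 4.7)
The plan is to combine \cref{sweep} with the classification of real forms of the simple Lie superalgebras due to Serganova \cite{Ser83}. For the forward direction, each candidate $\fh$ on the proposed list has already been shown to complexify to one of the four target Lie superalgebras: \cref{glcomplex} takes care of $\fh = \fgl(r|s,\DD)$, and for $\fh = \fg(\varphi)$ the relevant complexifications were computed in \cref{compRform,compCform,compHform} together with the case analysis of \cref{subsec:hermformCid,subsec:hermformR,subsec:hermformCstar,subsec:hermformH,subsec:hermformClstar,subsec:hermit-periplectic}. This establishes that every entry in the list is genuinely a real form of one of $\fgl(m|n,\C)$, $\osp(m|2n,\C)$, $\fp(m,\C)$, or $\fq(m,\C)$.

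For the converse, I would start with an arbitrary conjugate-linear involution $\kappa$ of $\fg$, pass to the induced involution $\kappa''$ on the simple subquotient $\fg''$, and use \cite[Table~3]{Ser83} to identify $(\fg'')^{\kappa''}$ with the simple subquotient of the derived subalgebra of some $\fh$ on the proposed list. This is a direct comparison between two finite lists. The key reduction step is then \cref{sweep}: after composing $\kappa$ with an inner complex automorphism of $\fg$, one may assume $\kappa'' = \chi''$, where $\chi$ is the standard conjugate-linear involution on $\fg$ with $\fg^\chi = \fh$; \cref{sweep} then forces $\kappa' = \chi'$ on $\fg'$ and $(\kappa - \chi)(\fg) \subseteq Z(\fg)$. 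Thus $\kappa$ is determined, modulo central data, by its image on a complement of $\fg'$ in $\fg$.

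The hard part will be the final central-extension step: enumerating, up to isomorphism of real Lie superalgebras, the admissible values of $\kappa$ on $\fg/\fg'$ subject to $\kappa^2 = \id$ and $\C$-antilinearity. For $\osp(m|2n,\C)$ the step is vacuous because $\osp$ is perfect with zero center, and for $\fp(m,\C)$ a similar observation reduces everything to Serganova's list. The two substantive cases are $\fgl$ and $\fq$. For $\fgl(m|n,\C)$ the center is $\C I_{m+n}$, and I would check that the admissible values of $\kappa(I_{m+n})$, modulo rescaling by $\C^\times$, produce exactly $\fgl(r|s,\R)$, $\fu(p,q|r,s)$, and (when $m$ and $n$ are both even) $\fgl(r|s,\HH)$; the subcase $m=n$ requires additional attention because $\fsl(m|m,\C)$ then has nontrivial center and $\fg'' = \fpsl(m|m,\C)$, but the strategy is the same. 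The $\fq$ case should be the most delicate, since $\fq(m,\C)/\fq(m,\C)'$ is one-dimensional and \emph{odd}: the admissible pairs (involution on $\fq'$, central twist) are expected to match precisely the involutive real Clifford division superalgebras, yielding $\fgl(r|s,\Cl_k(\R))$ for $k \in \{1,2,3,5,6,7\}$ together with the $\fg(\varphi_{p,q})$ associated to $(\Cl(\C),\star)$. Careful bookkeeping of these lifts, together with the verification that no further isomorphism classes appear beyond those on the list, completes the proof.
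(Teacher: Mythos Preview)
Your broad strategy---reduce to Serganova's classification of real forms of the simple subquotient $\fg''$, then control the lift via \cref{sweep}---is precisely the paper's. The difference lies in the final step, which you anticipate as ``the hard part'' but which the paper handles by a short uniqueness argument rather than an enumeration.

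Instead of listing all admissible central twists of a given $\chi'$ and matching them against the proposed list, the paper shows directly that the lift is unique: if $\kappa'' = \chi''$ then already $\fg^\kappa \cong \fg^\chi$. Since the proposed list exhausts the real forms of $\fg''$ (by direct comparison with \cite[Table~3]{Ser83}), this finishes the proof immediately. The uniqueness splits into short cases. For $\fp(m,\C)$ one has $Z(\fg)=0$, so \cref{sweep} gives $\kappa=\chi$ outright. For $\fgl(m|n,\C)$ with $m\neq n$ and for $\fq(m,\C)$, the paper uses a decomposition $\fg = \fg' \oplus \C I$ with $\C I = Z(\fg)$ to conclude $\fg^\kappa \cong (\fg')^{\kappa'}\oplus\R = (\fg')^{\chi'}\oplus\R \cong \fg^\chi$. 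For $\fgl(m|m,\C)$, writing $\kappa(X)=\chi(X)+a\,\str(X)\,I$ (which is the general form allowed by \cref{sweep}) and imposing $\kappa^2=\id$ on $X=E_{11}$ and $X=iE_{11}$ forces $a=0$, hence $\kappa=\chi$.

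Your description of the enumeration is therefore somewhat muddled: you write that the admissible values of $\kappa(I_{m+n})$, for \emph{fixed} $\kappa'$, should ``produce exactly $\fgl(r|s,\R)$, $\fu(p,q|r,s)$, and \dots'', but by the uniqueness above there is only one lift per real form of $\fg''$; distinct entries on the list already differ at the level of $\fg''$, not merely via a central twist. A related bookkeeping slip in your $\fq$ discussion: by \cref{glcomplex}, $\fgl(m,\Cl_2(\R))$ and $\fgl(m,\Cl_6(\R))$ complexify to $\fgl(m|m,\C)$, not to $\fq(m,\C)$, so they belong in your $\fgl$ case rather than your $\fq$ case.
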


\begin{proof}
    In the case of $\osp(m|2n,\C)$, which is simple, we see from \cite[Table~3]{Ser83} that the real forms are $\osp(p,q|2n,\R)$, $p+q=m$ and, when $m$ is even, $\osp^*(\frac{m}{2}|p,q)$, $p+q=n$.  Then the result follows from \cref{subsec:hermformR,subsec:hermformH}.

    Now assume that $\fg$ is one of the Lie superalgebras $\fgl(m|n,\C)$, $\fq(m,\C)$, or $\fp(m,\C)$, $m,n \in \N$.  All of the real forms described in \cref{subsec:hermformR,subsec:hermformCstar,subsec:hermformH,subsec:hermformClstar,subsec:hermit-periplectic} induce real forms of $\fg''$.  Comparing to the real forms given \cite[Table~3]{Ser83} shows that we obtain \emph{all} real forms of $\fg''$ in this way.  Thus, it remains to show that, up to isomorphism, a real form of $\fg$ is determined by the corresponding real form of $\fg''$.  Equivalently, it suffices to show that, if $\kappa$ and $\chi$ are conjugate-linear involutions of $\fg$ such that $\kappa'' = \chi''$, then $\kappa = \chi$.

    Suppose $\kappa$ and $\chi$ are conjugate-linear involutions of $\fg$ such that $\kappa'' = \chi''$.  Then, by \cref{sweep}, $\kappa' = \chi'$.  If $\fg = \fp(m,\C)$, then $Z(\fg)=0$, and it follows from \cref{sweep} that $\kappa = \chi$, and we are done.

    Now suppose that $\fg = \fg(m|n,\C)$, $m \ne n$, or $\fg = \fq(m,\C)$.  Then $\fg = \fg' \oplus \C I$, where $I$ denotes the identity matrix.  Both $\kappa$ and $\chi$ must leave $Z(\fg) = \C I$ invariant, hence the corresponding real forms must be isomorphic to $(\fg')^\kappa \oplus \R = (\fg')^\chi \oplus \R$, where $\R$ denotes the one-dimensional abelian real Lie algebra.

    Finally, suppose that $\fg = \fgl(m|m,\C)$.  We have a short exact sequence of Lie superalgebras
    \[
        0 \to \fsl(m|m,\C) \to \fgl(m|m,\C) \xrightarrow{\str} Z(\fg) = \C \to 0.
    \]
    Since $\kappa$ and $\chi$ agree on $\fsl(m|m,\C)$, it follows from \cref{sweep} that there exists $a \in \C$ such that
    \[
        \kappa(X) = \chi(X) + a \str(X) I,\qquad X \in \fg.
    \]
    Then, for all $X \in \fg$, we have
    \[
        X = \kappa^2(X)
        = \kappa \left( \chi(X) + a \str(X) I \right) \\
        = X + a^\star \str(X)^\star \chi(I) + a \str(X) I.
    \]
    Thus,
    \[
        a^\star \str(X)^\star \chi(I) + a \str(X) = 0 \qquad
        \text{for all } X \in \fg.
    \]
    Choosing $X = E_{11}$ and $X = i E_{11}$, implies that $a^\star \chi(I) + a = 0$ and $a^\star \chi(I) - a = 0$.  Hence $a=0$, and so $\kappa = \chi$, as desired.
\end{proof}


\bibliographystyle{alphaurl}
\bibliography{DiagSup}

\end{document}